%% file: disjoint_data.tex
\documentclass[11pt,a4paper,reqno]{amsart}

\input{preamble}
\selectfont
\title[A sharp spectral threshold for disjoint data]{Uniqueness and nonuniqueness in the\\ disjoint data inverse problem:\\ a sharp spectral threshold}
\author{Lauri Ylinen}
\address{Department of Mathematics and Statistics, University of Jyväskylä,\newline Finland}
\email{Lauri.A.Ylinen@jyu.fi}
\thanks{The author was supported by the Research Council of Finland (Centre of Excellence of Inverse Modelling and Imaging \#353092, Flagship of Advanced Mathematics for Sensing, Imaging and Modelling \#359208).}
\date{}
\subjclass[2020]{Primary 35R30; Secondary 35L05, 35R01, 58J45, 58J50}
\keywords{Inverse problem, wave equation, disjoint data, source-to-solution operator, spectral inequality, nonuniqueness, counterexample}
\hypersetup{pdftitle={Uniqueness and nonuniqueness in the disjoint data inverse problem: a sharp spectral threshold}, pdfkeywords={Inverse problem, wave equation, disjoint data, source-to-solution operator, spectral inequality, nonuniqueness, counterexample}}

\begin{document}

\vspace*{-15pt}
\input{abstract.tex}

\maketitle
\vspace*{-15pt}
\begingroup
\footnotesize
\tableofcontents
\endgroup
\input{intro.tex}
\input{problem.tex}
\input{uniqueness.tex}
\input{nonuniqueness.tex}
\appendix
\crefalias{section}{appendix}
\input{appendix.tex}

\makeatletter
\begingroup
\let\@tocwrite\@gobbletwo
\section*{Use of generative AI}\label{sec:AI-disclosure}

During the development of this work, the author used Claude Code, ChatGPT, and OpenAI Codex, primarily with the Claude Opus 5 and GPT-5.6-sol models, to generate and refine ideas and to test conjectures and candidate arguments. In particular, GPT-5.6-sol produced the initial nonuniqueness construction. The simpler and more general construction presented in \cref{sec:nonuniqueness} was developed in subsequent work by the author with these systems. The systems also assisted with grammatical and stylistic revisions and with writing the TikZ code for the figures. The author takes full responsibility for the mathematical content and final form of the paper.

\endgroup
\makeatother

\begingroup
\setlength{\emergencystretch}{2em}
\printbibliography
\endgroup

\end{document}

%% file: preamble.tex
\usepackage[T1]{fontenc}
\usepackage{mathtools}
\usepackage{amssymb,amsthm}
\usepackage{stmaryrd}       
\usepackage{enumitem}
\usepackage[backend=biber, style=alphabetic, maxbibnames=99]{biblatex}
\AtEveryBibitem{%
  \clearfield{issn}%
  \iffieldundef{doi}{}{\clearfield{url}\clearfield{urldate}}%
}
\makeatletter
\let\aliascnt@PackageWarning\PackageWarning
\def\PackageWarning#1#2{}
\usepackage{aliascnt}       
\let\PackageWarning\aliascnt@PackageWarning
\makeatother
\usepackage[%
pdfusetitle,
colorlinks=true,
citecolor=blue,
linkcolor=blue,
urlcolor=blue]{hyperref}
\usepackage{cleveref}
\usepackage{microtype}
\usepackage{tikz}
\usepackage{interval}
\intervalconfig{soft open fences}

\numberwithin{equation}{section}

\theoremstyle{plain}
\newtheorem{theorem}{Theorem}[section]
\newaliascnt{proposition}{theorem}
\newtheorem{proposition}[proposition]{Proposition}
\aliascntresetthe{proposition}
\newaliascnt{lemma}{theorem}
\newtheorem{lemma}[lemma]{Lemma}
\aliascntresetthe{lemma}
\newaliascnt{corollary}{theorem}
\newtheorem{corollary}[corollary]{Corollary}
\aliascntresetthe{corollary}

\theoremstyle{definition}
\newaliascnt{definition}{theorem}
\newtheorem{definition}[definition]{Definition}
\aliascntresetthe{definition}
\newaliascnt{example}{theorem}

\aliascntresetthe{example}
\newaliascnt{assumption}{theorem}
\newtheorem{assumption}[assumption]{Assumption}
\aliascntresetthe{assumption}

\theoremstyle{remark}
\newaliascnt{remark}{theorem}
\newtheorem{remark}[remark]{Remark}
\aliascntresetthe{remark}

\newenvironment{stepproof}[1][\proofname]{%
  \begin{proof}[#1]%
  }{
\end{proof}}

\crefname{equation}{}{}
\crefname{figure}{figure}{figures}
\crefname{example}{example}{examples}
\Crefname{example}{Example}{Examples}
\crefname{assumption}{assumption}{assumptions}
\Crefname{assumption}{Assumption}{Assumptions}
\AtBeginDocument{}

\newlist{datacond}{enumerate}{1}
\setlist[datacond]{label=(D\arabic*), ref=(D\arabic*), start=0,
itemsep=\medskipamount, leftmargin=*, labelindent=\parindent}
\crefname{datacondi}{}{}
\Crefname{datacondi}{}{}

\newlist{param}{enumerate}{1}
\setlist[param]{label=(P\arabic*), ref=(P\arabic*)}
\crefname{parami}{}{}
\Crefname{parami}{}{}

\newlist{step}{enumerate}{1}
\setlist[step]{label=(\roman*), ref=(\roman*)}
\crefname{stepi}{step}{steps}
\Crefname{stepi}{Step}{Steps}

\newlist{cond}{enumerate}{1}
\setlist[cond]{label=\arabic*., ref=\arabic*}
\crefname{condi}{condition}{conditions}
\Crefname{condi}{Condition}{Conditions}

\newcommand{\R}{\mathbb{R}}
\newcommand{\C}{\mathbb{C}}
\newcommand{\Z}{\mathbb{Z}}
\newcommand{\bbS}{\mathbb{S}}

\newcommand{\calB}{\mathcal{B}}
\newcommand{\calD}{\mathcal{D}}
\newcommand{\calH}{\mathcal{H}}
\newcommand{\calJ}{\mathcal{J}}
\newcommand{\calL}{\mathcal{L}}
\newcommand{\calR}{\mathcal{R}}
\newcommand{\tcalR}{\widetilde{\calR}}

\DeclareMathOperator{\id}{id}
\DeclareMathOperator{\re}{Re}
\DeclareMathOperator{\linspan}{span}
\DeclareMathOperator{\spec}{spec}
\DeclareMathOperator{\supp}{supp}
\DeclareMathOperator{\interior}{int}
\DeclareMathOperator{\sv}{sv}
\DeclareMathOperator{\trace}{tr}
\DeclareMathOperator{\off}{off}
\DeclareMathOperator{\diag}{diag}
\DeclareMathOperator{\diam}{diam}
\DeclareMathOperator{\dist}{dist}
\DeclareMathOperator{\pr}{pr}
\DeclareMathOperator{\res}{res}
\DeclareMathOperator{\ext}{ext}

\DeclarePairedDelimiter{\abs}{\lvert}{\rvert}
\DeclarePairedDelimiter{\norm}{\lVert}{\rVert}
\DeclarePairedDelimiter{\pointnorm}{\lvert}{\rvert}
\DeclarePairedDelimiterX{\inner}[2]{\langle}{\rangle}{#1,#2}

\newcommand{\wt}[1]{\widetilde{#1}}
\newcommand{\M}{M}
\newcommand{\tM}{\widetilde{M}}
\newcommand{\tg}{\widetilde{g}}
\newcommand{\tN}{\wt{N}}
\newcommand{\tx}{\wt{x}}
\newcommand{\ty}{\wt{y}}
\newcommand{\tq}{\wt{q}}

\newcommand{\src}{S}
\newcommand{\rcv}{R}
\newcommand{\tsrc}{\widetilde{\src}}
\newcommand{\trcv}{\widetilde{\rcv}}

\newcommand{\El}{E_\ell}
\newcommand{\tEl}{\widetilde{E}_\ell}

\newcommand{\Grcv}{G^\rcv}
\newcommand{\Glrcv}{G^\rcv_\ell}
\newcommand{\Glsrc}{G^\src_\ell}

\DeclarePairedDelimiterXPP{\Cc}[1]{C_c^\infty}{(}{)}{}{#1}
\DeclarePairedDelimiterXPP{\Lloc}[1]{L^1_{\mathrm{loc}}}{(}{)}{}{#1}

\newcommand{\LM}{L^2(\M)}
\newcommand{\Lsrc}{L^2(\src)}
\newcommand{\Lrcv}{L^2(\rcv)}
\DeclarePairedDelimiterXPP{\normM}[1]{}{\lVert}{\rVert}{_{L^2(\M)}}{#1}
\DeclarePairedDelimiterXPP{\normsrc}[1]{}{\lVert}{\rVert}{_{L^2(\src)}}{#1}
\DeclarePairedDelimiterXPP{\normrcv}[1]{}{\lVert}{\rVert}{_{L^2(\rcv)}}{#1}
\newcommand{\innerM}[2]{\inner{#1}{#2}_{\LM}}
\newcommand{\innerrcv}[2]{\inner{#1}{#2}_{\Lrcv}}

\newcommand{\tLM}{L^2(\tM)}
\newcommand{\tLsrc}{L^2(\tsrc)}
\newcommand{\tLrcv}{L^2(\trcv)}
\DeclarePairedDelimiterXPP{\tnormM}[1]{}{\lVert}{\rVert}{_{L^2(\tM)}}{#1}
\DeclarePairedDelimiterXPP{\tnormsrc}[1]{}{\lVert}{\rVert}{_{L^2(\tsrc)}}{#1}
\DeclarePairedDelimiterXPP{\tnormrcv}[1]{}{\lVert}{\rVert}{_{L^2(\trcv)}}{#1}
\newcommand{\tinnerM}[2]{\inner{#1}{#2}_{\tLM}}
\newcommand{\tinnerrcv}[2]{\inner{#1}{#2}_{\tLrcv}}

\newcommand{\HWM}{\calH_W(\M)}
\newcommand{\tHWM}{\calH_{\widetilde{W}}(\tM)}
\DeclarePairedDelimiterXPP{\normW}[1]{}{\lVert}{\rVert}{_{W}}{#1}
\DeclarePairedDelimiterXPP{\tnormW}[1]{}{\lVert}{\rVert}{_{\widetilde{W}}}{#1}

\newcommand{\cosprop}{C^{\mathsf{W}}}
\newcommand{\sinprop}{S^{\mathsf{W}}}

\newcommand{\LSRw}{\Lambda_{\rcv\shortleftarrow\src}}
\newcommand{\tLSRw}{\widetilde{\Lambda}_{\trcv\shortleftarrow\tsrc}}

\newcommand{\RSR}{R_{\rcv\shortleftarrow\src}}
\newcommand{\tRSR}{\widetilde{R}_{\trcv\shortleftarrow\tsrc}}

\newcommand{\PlRS}[1][\ell]{P_{#1,\rcv\shortleftarrow\src}}
\newcommand{\tPlRS}[1][\ell]{\widetilde{P}_{#1,\trcv\shortleftarrow\tsrc}}
\newcommand{\PlSR}[1][\ell]{P_{#1,\src\shortleftarrow\rcv}}
\newcommand{\tPlSR}[1][\ell]{\widetilde{P}_{#1,\tsrc\shortleftarrow\trcv}}

\newcommand{\word}[1]{\mathtt{#1}}
\newcommand{\bzero}{\word{0}}
\newcommand{\bone}{\word{1}}

\DeclarePairedDelimiterXPP{\Leven}[1]{L^2_{\mathrm{even}}}{(}{)}{}{#1}
\DeclarePairedDelimiterXPP{\Lodd}[1]{L^2_{\mathrm{odd}}}{(}{)}{}{#1}

\newcommand{\portsC}{;{\scriptstyle C_\rcv\shortleftarrow C_\src}}
\newcommand{\portsI}{;{\scriptstyle I_\rcv\shortleftarrow I_\src}}
\newcommand{\Rport}[2][\mu]{R_{#1}^{#2}}
\newcommand{\tRport}[2][\mu]{\widetilde{R}_{#1}^{#2}}

\newcommand{\Tmat}[2]{T_{#2\shortleftarrow #1}}
\newcommand{\tTmat}[2]{\widetilde{T}_{#2\shortleftarrow #1}}

%% file: abstract.tex
\begin{abstract}
  \noindent
  We study the inverse problem of determining a connected, closed Riemannian manifold of dimension at least two from wave observations. The sources are supported in an open set $\src$, and the resulting waves are observed on an open receiver set $\rcv$, which may be at a positive distance from $\src$. We prove uniqueness up to isometry within the class of manifolds that satisfy a subcritical spectral lower bound: $\normrcv{\phi|_\rcv} \ge ce^{-C\lambda^\theta}\normM{\phi}$ for every eigenfunction $\phi$, where $\lambda$ is the corresponding eigenvalue, $\theta\in\rinterval{0}{1/2}$, and the constants $c,C>0$ do not depend on $\phi$. If a bound of this type holds also on $\src$, then uniqueness holds within the class of all connected, closed manifolds. By the spectral inequality, the bound always holds with $\theta=1/2$. Uniqueness nevertheless fails at this endpoint: in every dimension $n\ge 2$, we construct a pair of nonisometric connected, closed Riemannian $n$-manifolds that produce identical wave observations.
\end{abstract}

%% file: intro.tex
\section{Introduction}\label{sec:introduction}

We study the recovery of a Riemannian manifold from waves generated in one open set and observed in another. Let $(\M,g)$ be a connected, closed Riemannian manifold of dimension at least two. Throughout, all manifolds are smooth, and \emph{closed} means compact without boundary. For a source $s\in\Cc[\big]{\ointerval{0}{\infty}\times\M}$, consider the inhomogeneous wave equation
\begin{equation}\label{eq:wave-equation}
  \left\{
    \begin{aligned}
      (\partial_t^2-\Delta_g)\,v &= s && \text{in } \ointerval{0}{\infty}\times\M,\\
      v(0,\cdot) &= 0 && \text{in } \M,\\
      \partial_t v(0,\cdot) &= 0 && \text{in } \M.
    \end{aligned}
    \right.
  \end{equation}
  Here $\Delta_g$ denotes the Laplace--Beltrami operator of $(\M,g)$, and $v=v^s\in C^\infty\bigl(\ointerval{0}{\infty}\times\M\bigr)$ is the unique solution of \cref{eq:wave-equation}.

  Fix nonempty open sets $\src,\rcv\subset\M$. We generate waves using sources supported in $\src$ and observe the resulting solutions on $\rcv$. The possibility that $\src$ and $\rcv$ are disjoint motivates the term \emph{disjoint data}. This measurement configuration is encoded by the \emph{local source-to-solution operator}
  \begin{equation}\label{eq:LSRw}
    \LSRw\colon \Cc[\big]{\ointerval{0}{\infty}\times\src} \to C^\infty\bigl(\ointerval{0}{\infty}\times\rcv\bigr);
    \qquad
    \LSRw s := v^s|_{\ointerval{0}{\infty}\times\rcv}.
  \end{equation}

  The \emph{data} of the inverse problem consist of the local source-to-solution operator $\LSRw$ and the sets $\src$ and $\rcv$ together with their induced smooth structures. We study conditions under which they determine $(\M,g)$ uniquely up to a Riemannian isometry. Note that $\M\setminus(\src\cup\rcv)$ is not given even as a set; the missing portion of the smooth manifold, together with the Riemannian metric there, must be recovered from the data.

  In many measurement settings, waves cannot be generated and observed in the same place: in exploration seismology, for instance, the sources are often explosive, so measurements near the shot points are impractical. Access to the medium may also be restricted to a few separated locations, with the region one wishes to image lying between them.

  Allowing the sets $\src$ and $\rcv$ to be at a positive distance from each other changes the nature of the problem. If $\src=\rcv$, then uniqueness up to isometry is known to hold~\cite{MR3826551}. (If $\src\cap\rcv\neq\emptyset$, then replacing $\src$ and $\rcv$ with their intersection reduces the problem to $\src=\rcv$.) The difficulty in the disjoint data problem is that the global norm of a wave is not directly available from the data. By contrast, when $\src=\rcv$, the Blagoveščenskiĭ identity~\cite{MR282060,MR298226} (see also~\cite[Theorem~12]{MR3826551}) computes $\normM{v^s(t)}$ from the source $s$ and the observation $v^s|_{\ointerval{0}{2t}\times\rcv}$. Knowing the norm makes it possible to create sequences of waves that in the limit localize to arbitrary locations on the manifold, which is a key step in the proof of uniqueness. For disjoint data, this localization argument cannot be applied in the same way.

  \subsection{Description of main results}

  The statements in this subsection are streamlined descriptions of our results; precise formulations appear in the results cited.

  We prove both uniqueness and nonuniqueness results, and we identify the sharp threshold between them within the scale of power-type eigenfunction bounds. More precisely, consider the lower bound
  \begin{equation}\label{eq:intro-subcritical-bound}
    \normrcv{\phi|_\rcv} \ge ce^{-C\lambda^\theta}\normM{\phi},
  \end{equation}
  where $\phi$ is an eigenfunction of $-\Delta_g$ with eigenvalue $\lambda$, and $c,C>0$ and $\theta\in\interval{0}{1/2}$ are independent of $\phi$. Throughout, we use the convention $0^0=1$. By the spectral inequality~\cite[Theorem~14.6]{MR1743865}, every connected, closed manifold and every nonempty open set $\rcv$ admit constants $c,C>0$ for which \cref{eq:intro-subcritical-bound} holds with $\theta=1/2$. Thus the endpoint estimate imposes no additional restriction on the manifold, whereas our uniqueness result assumes the strictly stronger estimate with $\theta<1/2$.

  Our main results are as follows:
  \begin{enumerate}
    \item Within the class of connected, closed Riemannian manifolds satisfying \cref{eq:intro-subcritical-bound} for some $c,C>0$ and $\theta<1/2$, the data determine the manifold up to isometry (\cref{item:subcritical-two-manifold} of \cref{prop:subcritical-result}).

    \item If \cref{eq:intro-subcritical-bound} and the analogous inequality with $\rcv$ replaced by $\src$ both hold for some $c,C>0$ and $\theta<1/2$, then the data determine $(\M,g)$ up to isometry within the class of all connected, closed manifolds (\cref{item:subcritical-single-manifold} of \cref{prop:subcritical-result}).

    \item For every integer $n\ge 2$, there exist two connected, closed Riemannian $n$-manifolds that have the same data but are not related by a Riemannian isometry (\cref{prop:nonuniqueness}).
  \end{enumerate}

  Together, these results show that the restriction $\theta<1/2$ is sharp within the power scale. Beyond the power scale, however, the uniqueness results extend even closer to the threshold $\theta=1/2$: for each of the two uniqueness statements above, the corresponding conclusion remains valid (\cref{prop:main-beta-bounded} combined with \cref{prop:Ingham}) if, for $\lambda>1$, the expression $\lambda^\theta$ in the eigenfunction bounds~\labelcref{eq:intro-subcritical-bound} is replaced by
  \begin{equation*}
    \sqrt{\lambda}\,a(\sqrt{\lambda}),
  \end{equation*}
  where $a\colon\rinterval{1}{\infty}\to\ointerval{0}{\infty}$ is nonincreasing and satisfies
  \begin{equation}\label{eq:intro-Ingham}
    \int_1^\infty \frac{a(\omega)}{\omega}\,d\omega < \infty.
  \end{equation}

  The obstruction to extending the uniqueness proof to the universal endpoint $\theta=1/2$ comes from an uncertainty principle in Fourier analysis: a function and its Fourier transform cannot both be very small. More precisely, \cref{eq:intro-Ingham} is Ingham's condition~\cite{MR1574706}: it holds precisely when there exist nonzero compactly supported functions $\rho$ on $\R$ whose Fourier transforms satisfy bounds of the form $\abs{\widehat{\rho}(\omega)}\le C e^{-\abs{\omega}a(\abs{\omega})}$ for $\abs{\omega}\ge 1$. Such functions provide the localized time mollifiers used in our uniqueness proof.

  We also obtain a complementary uniqueness result requiring no eigenfunction bound. If, in addition to the data, the Riemannian geometry is known on a connected open set meeting both $\src$ and $\rcv$, then the manifold is determined uniquely up to isometry (\cref{item:connected-U} of \cref{prop:main-corollaries}).

  The nonuniqueness result is an explicit construction of a pair of manifolds with the same data, together with a proof that the manifolds are not related by a Riemannian isometry. The same construction works in every dimension $n\ge2$. We believe that the mechanism underlying it may also be useful in establishing nonuniqueness for other inverse problems. A large language model played a significant part in the discovery of this result; see the \hyperref[sec:AI-disclosure]{generative-AI disclosure} preceding the references.

  \subsection{Relation to previous work}\label{sec:relation-to-others}

  Nonuniqueness results in geometric inverse problems have been relatively rare. Sunada's classic construction of isospectral, nonisometric closed manifolds~\cite{MR782558} may be viewed as one: eigenvalues alone do not determine a closed manifold up to isometry. Transformation-optics constructions form another family, in which singular changes of variables produce cloaking and exact invisibility~\cite{MR2457072}. More recently, Daudé, Kamran, and Nicoleau proved nonuniqueness for the anisotropic Calderón problem with data on disjoint sets~\cite{MR3973447,MR3916964}. Their construction and ours both use two warped Riemannian metrics on a smooth product manifold, but our combinatorial word mechanism appears to be new in geometric inverse problems. Further nonuniqueness results concern singular or nonsmooth metrics~\cite{MR4106946,MR4061977}, the anisotropic Calderón problem with a zeroth-order term~\cite{DEHKN-2026}, and smooth, time-dependent Lorentzian metrics~\cite{MR4382875,OS-2026}.

  We now narrow the discussion to uniqueness results for inverse problems related to ours. Much of this literature concerns the boundary analogue: for a manifold with boundary and nonempty open subsets $\src$ and $\rcv$ of the boundary, one prescribes Dirichlet data supported in $\src$ and observes the corresponding Neumann data on $\rcv$, seeking to determine $(\M,g)$. We call this the \emph{boundary} problem and ours the \emph{interior} problem. In either setting, we speak of \emph{disjoint data} when $\src$ and $\rcv$ are allowed to be disjoint. We begin with the case $\src=\rcv$.

  In dimensions $n\ge 2$, the boundary control method was put forward by Belishev~\cite{MR924687}, who combined finite-speed arguments with control theory to solve the boundary problem for domains of Euclidean space. Belishev and Kurylev~\cite{MR1177292} then extended the method to the recovery of a Riemannian manifold. A geometric formulation based on the boundary-distance representation was subsequently developed by Katchalov, Kurylev, and Lassas; see their monograph~\cite{MR1889089}.

  Inverse problems for eigenfunction data restricted to an interior hypersurface of a closed manifold were studied by Krupchyk, Kurylev, and Lassas~\cite{MR2435213}. Under bounded-geometry assumptions, Bosi, Kurylev, and Lassas obtained stable reconstruction from spectral data on an open ball~\cite{MR4409880}. Helin, Lassas, Oksanen, and Saksala proved uniqueness for the interior problem with $\src=\rcv$ on complete manifolds~\cite{MR3826551}; the geometric part of their argument forms the core of our uniqueness proof (see \cref{sec:HLOS-balls}). A single suitably chosen measurement on one open set determines the manifold for the heat and space--time fractional diffusion equations~\cite{MR4109393}, and the metric of a given closed manifold for the strongly damped wave equation~\cite{MR4713820}.

  Relatively few results are available for inverse problems with disjoint source and receiver sets. For a one-dimensional wave equation, Rakesh showed in~\cite{MR1766761} that boundary measurements with sources and receivers on different ends determine the coefficient. In $\R^3$, Rakesh and Sacks~\cite{MR2794398} recovered a potential in an annulus from a point source and receivers on a surrounding sphere, under an angular-control condition. Imanuvilov, Uhlmann, and Yamamoto~\cite{MR2819949} determined a conductivity, and likewise a potential for the stationary Schrödinger equation, on a known planar domain from Dirichlet data on one boundary set and Neumann data on another whose closures are disjoint, under an additional condition on the arrangement of the boundary sets.

  For the boundary and interior disjoint data problems, unconditional uniqueness had remained open, and our nonuniqueness result explains why. For the boundary problem, Lassas and Oksanen showed in~\cite{MR2661691} that uniqueness holds if the closures of $\src$ and $\rcv$ intersect. In a substantial advancement~\cite{MR3192525}, they proved that uniqueness holds under exact controllability from $\src$, or under the more general Hassell--Tao condition for eigenvalues and Dirichlet eigenfunctions:
  \begin{equation}\label{eq:intro-Hassell-Tao}
    \normsrc{\partial_\nu\phi} \ge c\sqrt{\lambda}\normM{\phi}.
  \end{equation}

  For the interior disjoint data problem, uniqueness was proved in~\cite{MR4833220} under a condition similar to~\labelcref{eq:intro-Hassell-Tao}: for every eigenfunction $\phi$ of $-\Delta_g$,
  \begin{equation}\label{eq:intro-condition-C}
    \normsrc{\phi|_\src} \ge c \normM{\phi}.
  \end{equation}
  This condition is implied by exact controllability from $\src$, and on a closed surface whose geodesic flow is Anosov it holds for every nonempty open set $\src$~\cite{MR4374954}. Li~\cite{Li_2026} treated a space--time fractional Schrödinger equation with disjoint data under the same condition.

  The condition~\labelcref{eq:intro-condition-C} corresponds to $\theta=0$ in our \cref{eq:intro-subcritical-bound}. (The data are symmetric in $\src$ and $\rcv$ by \cref{rem:data-symmetry}, so the bound may be assumed on either set.) Even in the case $\theta=0$, our result extends that of~\cite{MR4833220} in two ways: (i)~we assume only that the smooth structures of $\src$ and $\rcv$ are known, rather than the Riemannian metric; and (ii)~we prove that any two manifolds with the same data admit an isometry restricting to the identification maps on $\src$ and $\rcv$, whereas~\cite{MR4833220} proves only that an isometry exists.

  We are not aware of any uniqueness results, for the interior or the boundary problem, under a hypothesis corresponding to $\theta\in\ointerval{0}{1/2}$. Ingham-type spectral conditions have recently appeared in related settings: Daudé, Enciso, Helffer, Kamran, and Nicoleau~\cite{DEHKN2-2026} prove local-to-global propagation results for Dirichlet-to-Neumann maps, with sources and observations on a common open boundary set, under quasianalytic spectral hypotheses. These involve the divergent logarithmic-integral condition, complementary to the convergent condition~\labelcref{eq:intro-Ingham} used here.

  Recovery, up to the natural gauge, of lower-order coefficients from disjoint data when the metric is known was proved in~\cite{MR3950480,MR5079202}.

  As explained above, in disjoint data problems the global norm of a solution is not directly available from the data. A related loss of normalization occurs in passive measurements~\cite{FK-2025} and in the Calderón problem for fractional Schrödinger operators~\cite{F-2026}, where the eigenfunctions carry no normalization or orthogonality information. In the latter, the corresponding Gel'fand inverse spectral problem is solved under~\labelcref{eq:intro-condition-C}, with $\src$ replaced by the observation set, together with an additional geometric condition involving antipodal sets.

  For certain fractional and nonlinear equations, uniqueness can hold even with disjoint source and receiver sets~\cite{MR4078233,MR4336252}. In these settings, nonlocality or nonlinear interaction can provide information unavailable to the linear wave equation. For the use of nonlinear interaction in inverse problems, we refer the reader to~\cite{MR3802298}.

  \subsection{Outline of the paper}

  In \cref{sec:data-inverse-problem}, we introduce cross-spectral data, which extend local spectral data to possibly disjoint source and receiver sets, and prove that they are equivalent to the local source-to-solution data. In \cref{sec:uniqueness}, we prove the uniqueness results by pairing waves with functions in spectrally weighted spaces and by a geometric ball-inclusion argument. In \cref{sec:nonuniqueness}, we prove the nonuniqueness result by a construction based on warped products and the transfer matrices of binary words. Finally, in \cref{sec:HLOS-balls}, we prove the geometric reconstruction theorem used in the uniqueness argument.

%% file: problem.tex
\section{The disjoint data inverse problem}\label{sec:data-inverse-problem}

This section develops the common framework for the uniqueness and nonuniqueness arguments. We introduce cross-spectral data and establish equivalent formulations in terms of cross-compressed resolvents and spectral projectors, as well as the local source-to-solution operator for the wave equation. For any two manifolds with equivalent data, we also show that the given identifications of the source and receiver sets are Riemannian isometries, and we construct matching maps between corresponding eigenspaces.

\subsection{Assumptions and notation}\label{sec:assumptions}

\begin{assumption}[Standing assumption]\label{ass:standing}
  Let $(\M,g)$ and $(\tM,\tg)$ be smooth, connected, closed Riemannian manifolds of dimension at least two. Furthermore, let $\src,\rcv\subset\M$ and $\tsrc,\trcv\subset\tM$ be nonempty open subsets, with diffeomorphisms
  \begin{equation*}
    \Phi_\src\colon\src\to\tsrc
    \quad\text{and}\quad
    \Phi_\rcv\colon\rcv\to\trcv.
  \end{equation*}
\end{assumption}
\noindent These hypotheses and this notation are used throughout without further comment.

The maps $\Phi_\src$ and $\Phi_\rcv$ identify the subsets $\src$ and $\rcv$ with $\tsrc$ and $\trcv$, respectively. They are assumed to be diffeomorphisms, but not a priori to be Riemannian isometries. Thus, the smooth structures of $\src$ and $\rcv$ are known, whereas their induced metrics---and hence their volume measures---are not.

We use the following terminology for functions that agree under the identifications $\Phi_\src$ and $\Phi_\rcv$:
\begin{definition}[Matched functions]\label{def:matched}
  Measurable functions $\psi$ on $\src$ and $\wt{\psi}$ on $\tsrc$ are said to be \emph{$\Phi_\src$-matched} if $\psi=\wt{\psi}\circ\Phi_\src$ almost everywhere on $\src$.

  Time-dependent sources $s\in\Lloc{\rinterval{0}{\infty};\Lsrc}$ and $\tilde{s}\in\Lloc{\rinterval{0}{\infty};\tLsrc}$ are \emph{$\Phi_\src$-matched} if $s(t)$ and $\tilde{s}(t)$ are $\Phi_\src$-matched for almost every $t\in\rinterval{0}{\infty}$. The $\Phi_\rcv$-matched functions are defined similarly. When it is clear from the domains which of $\Phi_\src,\Phi_\rcv$ is meant, we simply say that the functions are \emph{matched}.
\end{definition}

The notion of matching is well defined on $L^2$-equivalence classes, since $\Phi_\src$ and its inverse preserve null sets; the same holds for $\Phi_\rcv$.

We sometimes regard functions of time $t$ and space $x$ as functions of $t$ alone, valued in a space of functions of $x$. In $L^2$-based settings, we identify functions on $\src$ or $\rcv$ with their zero extensions to $\M$, and analogously on $\tM$. Inclusions such as $\Cc{\src}\subset\LM$ or $\Cc[\big]{\ointerval{0}{\infty}\times\src}\subset\Lloc{\rinterval{0}{\infty};\LM}$ are understood in light of these conventions.

The following notation is used throughout the paper. Unless stated otherwise, all vector spaces are over $\R$.

\begin{itemize}
  \item The Laplace--Beltrami operator on $(\M,g)$ is denoted by $\Delta_g$. It is a self-adjoint, nonpositive operator with domain $H^2(\M)$. The \emph{distinct} eigenvalues of $-\Delta_g$ are $0=\lambda_0<\lambda_1<\cdots\to\infty$, indexed by the nonnegative integers $\ell$. The corresponding eigenspaces are the finite-dimensional spaces $\El:=\ker(-\Delta_g - \lambda_\ell)\subset C^\infty(\M)$, and the spectral projectors $P_\ell\colon\LM\to \El$ are the orthogonal projectors onto them. Each $\El$ is equipped with the inner product inherited from $\LM$. The spectrum of an operator $A$ is denoted by $\spec(A)$, so $\spec(-\Delta_g)=\{\lambda_\ell: \ell\ge 0\}$. For these and other properties of the Laplace--Beltrami operator, we refer the reader to~\cite{MR2569498,MR768584}.

  \item Similarly, the \emph{distinct} eigenvalues of $-\Delta_{\tg}$ on $\tM$ are denoted by $0=\wt{\lambda}_0<\wt{\lambda}_1<\cdots\to\infty$, the corresponding eigenspaces are $\tEl\subset C^\infty(\tM)$, and the spectral projectors are $\wt{P}_\ell\colon\tLM\to\tEl$.

  \item For a normed space $X$, $\calB(X)$ denotes the space of bounded linear operators from $X$ to itself.

  \item The wave propagators on $\LM$ are denoted by
    \begin{align}
      \cosprop\colon\R&\to\calB(\LM); \quad
      \cosprop_t\psi  := \sum_{\ell\ge 0} \cos(\omega_\ell t)\,P_\ell\psi,
      \label{eq:cosprop}\\
      \sinprop\colon\R&\to\calB(\LM); \quad
      \sinprop_t\psi  := \sum_{\ell\ge 0} \sigma_\ell(t)\,P_\ell\psi,
      \label{eq:sinprop}
    \end{align}
    where $\omega_\ell:=\sqrt{\lambda_\ell}$ and
    \begin{equation*}
      \sigma_0(t):=t
      \quad\text{and}\quad
      \sigma_\ell(t):=\frac{\sin(\omega_\ell t)}{\omega_\ell},
      \quad \ell\ge1.
    \end{equation*}
    For every $t\in\R$ and $\psi\in\LM$, both series~\labelcref{eq:cosprop,eq:sinprop} converge in $\LM$. For a source $f\in\Lloc{\rinterval{0}{\infty};\LM}$, the solution $v^f\in C(\rinterval{0}{\infty};\LM)$ to the wave equation with zero initial data is
    \begin{equation}\label{eq:wave-solution}
      v^f(t) := \int_0^t \sinprop_{t-\tau} f(\tau)\,d\tau,
    \end{equation}
    where integration is in the sense of Bochner. The analogous solution on $(\tM,\tg)$ with source $\tilde{f}$ is denoted by $\wt{v}^{\tilde{f}}$. When a source is spatially supported in $\src$ or $\rcv$, we often denote it by $s$ or $r$, respectively, and use $\tilde{s}$ or $\tilde{r}$ on $\tM$.

\end{itemize}

\subsection{Cross-spectral data}\label{sec:cross-spectral-data}

For each $\ell$, let $K_\ell\in C^\infty(\M\times\M)$ be the \emph{spectral kernel} of $P_\ell$, that is, its Schwartz kernel with respect to the Riemannian volume measure $dV_g$:
\begin{equation}\label{eq:P-kernel}
  (P_\ell \psi)(x) = \int_\M K_\ell(x,y)\,\psi(y)\,dV_g(y),
  \qquad \psi\in\LM,\quad x\in\M.
\end{equation}
Concretely, let $(\phi_j)_j$ be an orthonormal eigenbasis of $-\Delta_g$ in $\LM$, and set $\calJ_\ell:=\{j:\phi_j\in\El\}$. Then
\begin{equation}\label{eq:K-kernel}
  K_\ell(x,y)=\sum_{j\in\calJ_\ell}\phi_j(x)\phi_j(y).
\end{equation}
Define $\wt{K}_\ell\in C^\infty(\tM\times\tM)$ analogously for $(\tM,\tg)$.

By \cref{eq:K-kernel}, the kernel $K_\ell$ is symmetric, and each of its slices $K_\ell(x,\cdot)$, $x\in\M$, belongs to $\El$. Applying \cref{eq:P-kernel} to $\psi=\phi\in\El$ shows that
\begin{equation}\label{eq:K-reproducing}
  \innerM{K_\ell(x,\cdot)}{\phi}=\phi(x),
  \qquad \phi\in\El,\quad x\in\M;
\end{equation}
in particular, $\innerM{K_\ell(x,\cdot)}{K_\ell(x',\cdot)}=K_\ell(x,x')$ for all $x,x'\in\M$. The same identities hold on $(\tM,\tg)$.

\begin{definition}[Cross-spectral data]\label{def:cross-spectral-data}
  Let $K_\ell$ and $\wt{K}_\ell$ be as above.
  \begin{enumerate}
    \item The functions $K_\ell|_{\src\times\rcv}\in C^\infty(\src\times\rcv)$ are the \emph{cross-spectral kernels} of $(\M,g)$ relative to the pair $(\src,\rcv)$.
    \item The family $\bigl(\lambda_\ell,K_\ell|_{\src\times\rcv}\bigr)_\ell$ is the \emph{cross-spectral data} of $(\M,g)$ relative to the pair $(\src,\rcv)$.
    \item The manifolds $(\M,g)$ and $(\tM,\tg)$ have \emph{equivalent cross-spectral data} relative to the identifications $(\Phi_\src,\Phi_\rcv)$ if, for every $\ell$,
      \begin{equation*}
        \lambda_\ell = \wt{\lambda}_\ell\quad\text{and}\quad
        K_\ell(x,y)=\wt{K}_\ell(\Phi_\src(x),\Phi_\rcv(y))\quad\text{for all}\quad (x,y)\in\src\times\rcv.
      \end{equation*}
  \end{enumerate}
\end{definition}

When the pair of maps $(\Phi_\src,\Phi_\rcv)$ is clear from the context, we simply speak of equivalent cross-spectral data.

The inverse problem is to recover $(\M,g)$, up to Riemannian isometry, from the cross-spectral data. In \cref{prop:data-summary}, we prove that the cross-spectral data determine and are determined by the observations on $\rcv$ of waves generated by sources in $\src$.

\begin{remark}\label{rem:classical-data}
  Let $(\lambda'_j)_j$ denote the nondecreasing sequence of eigenvalues of $-\Delta_g$, \emph{repeated according to multiplicity}; the prime distinguishes it from the sequence $(\lambda_\ell)_\ell$ of distinct eigenvalues. In the classical case $\src=\rcv=U$, the local spectral data are of the form
  \begin{equation}\label{eq:classical-spectral-data}
    (\lambda'_j,\phi_j|_U)_j,
  \end{equation}
  where $(\phi_j)_j$ is an orthonormal eigenbasis of $-\Delta_g$ in $\LM$, with corresponding eigenvalues $(\lambda'_j)_j$~\cite{MR3826551}. After choosing such an eigenbasis, one could analogously take as data for the disjoint-data problem the products $\phi_j|_\src\otimes\phi_j|_\rcv$. Instead, we take their sums over each eigenspace---the cross-spectral kernels (see \cref{eq:K-kernel}). These sums are independent of the choice of orthonormal eigenbasis. When $\src=\rcv=U$, the local spectral data~\labelcref{eq:classical-spectral-data} determine the cross-spectral data.
\end{remark}

The following \namecref{prop:trace-spaces} shows that the slices $K_\ell(x,\cdot)$ with $x$ ranging over a nonempty open set already span $\El$. Consequently, the cross-spectral data determine the restrictions of each eigenspace $\El$ to $\src$ and to $\rcv$, and identify them with the corresponding restrictions of $\tEl$.

\begin{lemma}\label{prop:trace-spaces}
  Write $\El|_\src:=\{\phi|_\src:\phi\in\El\}$ for the restriction of $\El$ to $\src$, and define $\El|_\rcv$ analogously.
  \begin{enumerate}
    \item\label{item:K-spans} For every $\ell$ and every nonempty open set $U\subset\M$,
      \begin{equation}\label{eq:K-spans}
        \linspan\{K_\ell(x,\cdot) : x\in U\} = \El.
      \end{equation}

    \item\label{item:trace-spaces} Assume that $(\M,g)$ and $(\tM,\tg)$ have equivalent cross-spectral data. Then, for every $\ell$,
      \begin{equation}\label{eq:trace-spaces}
        \El|_\src = \{\wt{\phi}\circ\Phi_\src : \wt{\phi}\in\tEl\}
        \quad\text{and}\quad
        \El|_\rcv = \{\wt{\phi}\circ\Phi_\rcv : \wt{\phi}\in\tEl\}.
      \end{equation}
  \end{enumerate}
\end{lemma}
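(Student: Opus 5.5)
For \cref{item:K-spans}, I will argue by orthogonality inside the finite-dimensional space $\El$. Every slice $K_\ell(x,\cdot)$ lies in $\El$, so the span on the left of \cref{eq:K-spans} is a subspace $V\subset\El$. If $V\neq\El$, pick a nonzero $\phi\in\El$ orthogonal to $V$ in $\LM$. The reproducing identity \cref{eq:K-reproducing} then gives $\phi(x)=\innerM{K_\ell(x,\cdot)}{\phi}=0$ for all $x\in U$. Since $\phi$ solves $(-\Delta_g-\lambda_\ell)\phi=0$ and vanishes on a nonempty open set, and $\M$ is connected, unique continuation for second-order elliptic operators with smooth coefficients gives $\phi\equiv0$, a contradiction. (Alternatively, I would use real-analyticity of eigenfunctions in harmonic coordinates.) This unique continuation step is the only nontrivial input; everything else is linear algebra.

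For \cref{item:trace-spaces}, I will prove the statement for $\src$; the $\rcv$ case is symmetric because the kernels are symmetric, $K_\ell(x,y)=K_\ell(y,x)$. Apply \cref{item:K-spans} with $U=\rcv$. It shows that $\El$ is spanned by $K_\ell(\cdot,y)=K_\ell(y,\cdot)$ with $y\in\rcv$, so $\El|_\src$ is spanned by the functions $K_\ell(\cdot,y)|_\src$ for $y\in\rcv$. In the same way, $\tEl$ is spanned by $\wt K_\ell(\cdot,\tilde y)$ with $\tilde y\in\trcv=\Phi_\rcv(\rcv)$. Hence $\{\wt\phi\circ\Phi_\src:\wt\phi\in\tEl\}$ is spanned by
\[
x\mapsto\wt K_\ell\bigl(\Phi_\src(x),\Phi_\rcv(y)\bigr),\qquad y\in\rcv .
\]
By the equivalence of cross-spectral data, each of these functions equals $K_\ell(x,y)$ for $x\in\src$. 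The two spanning sets therefore coincide, and so do the spans.

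I expect no real obstacle. The one point to check is that we are using the span of the slices, so no normalization or volume measure on $\src$ or $\rcv$ is ever needed. This matters because $\Phi_\src$ and $\Phi_\rcv$ are only diffeomorphisms, not isometries.
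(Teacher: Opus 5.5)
Your proof is correct and follows the paper's argument essentially step for step. For (1) you use orthogonality together with the reproducing identity and unique continuation. For (2) you compare the spanning sets of slices via kernel symmetry and the data, noting that $\Phi_\rcv$ maps $\rcv$ onto $\trcv$.

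Drop the parenthetical alternative, though: eigenfunctions of a merely smooth metric need not be real-analytic, even in harmonic coordinates, so unique continuation (as in Aronszajn's theorem) is the right tool here.
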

\begin{proof}
  \begin{enumerate}
    \item If $\phi\in\El$ is orthogonal to $K_\ell(x,\cdot)$ for every $x\in U$, then $\phi$ vanishes on $U$ by \cref{eq:K-reproducing}, and hence $\phi=0$ by unique continuation~\cite{MR92067}. Therefore, \cref{eq:K-spans} holds.

    \item We prove the first identity in \cref{eq:trace-spaces}; the second is proved similarly. We have
      \begin{align*}
        \El|_\src
        &= \linspan\{K_\ell(y,\cdot)|_\src : y\in\rcv\}\\
        &= \linspan\{\wt{K}_\ell(\Phi_\rcv(y),\Phi_\src(\cdot)) : y\in\rcv\}\\
        &= \linspan\{\wt{K}_\ell(\ty,\Phi_\src(\cdot)) : \ty\in\trcv\}\\
        &= \{\wt{\phi}\circ\Phi_\src : \wt{\phi}\in\tEl\}.
      \end{align*}
      The first equality is \cref{eq:K-spans} with $U=\rcv$, restricted to $\src$; the second follows from the equivalence of the cross-spectral data and the symmetry of the kernels; the third holds because $\Phi_\rcv$ maps $\rcv$ onto $\trcv$; and the fourth is \cref{eq:K-spans} on $(\tM,\tg)$ with $U=\trcv$, restricted to $\tsrc$ and composed with $\Phi_\src$.\qedhere
  \end{enumerate}
\end{proof}

\subsection{Equivalent formulations and the metric on \texorpdfstring{$\src$ and $\rcv$}{S and R}}\label{sec:equivalent-data}

We now give four further equivalent formulations of the cross-spectral data: in terms of cross-compressed resolvents, cross-compressed spectral projectors in the two directions, and local source-to-solution operators. Passing between the cross-spectral kernels and their operator realizations is not immediate, because the Riemannian volume measure is not a priori part of the data. In particular, we do not yet know whether pullback by $\Phi_\rcv$ defines a bounded map from $\tLrcv$ to $\Lrcv$.

Testing a pullback against a fixed $\chi\in\Cc{\rcv}$ is nevertheless possible. Indeed, let $J\in C^\infty(\rcv)$ be the positive function (Jacobian of $\Phi_\rcv$) so that $\int_\rcv (\wt{\xi}\circ\Phi_\rcv)\,J\,dV_g = \int_{\trcv} \wt{\xi}\,dV_{\tg}$ for every measurable nonnegative function $\wt{\xi}$ on $\trcv$. Set $\gamma_\chi:=(\chi/J)\circ\Phi_\rcv^{-1}\in\Cc{\trcv}$. Then
\begin{equation}\label{eq:Gamma-chi}
  \Gamma_\chi\wt{\xi}
  := \int_\rcv (\wt{\xi}\circ\Phi_\rcv)\,\chi\,dV_g
  = \tinnerrcv{\wt{\xi}}{\gamma_\chi},
  \qquad \wt{\xi}\in\tLrcv,
\end{equation}
where the integral converges absolutely; in particular, $\Gamma_\chi$ is a bounded linear functional on $\tLrcv$.

The following lemma shows that matching (\cref{def:matched}) passes to limits and Bochner integrals. We state it for $\Phi_\rcv$; the version for $\Phi_\src$ is identical.

\begin{lemma}\label{prop:matched-calculus}
  \begin{enumerate}
    \item\label{item:matched-limit} Let $u_j\in\Lrcv$ and $\wt{u}_j\in\tLrcv$ be $\Phi_\rcv$-matched for every $j$. If $u_j\to u$ in $\Lrcv$ and $\wt{u}_j\to\wt{u}$ in $\tLrcv$, then $u$ and $\wt{u}$ are $\Phi_\rcv$-matched.

    \item\label{item:matched-integral} Let $I\subset\R$ be a nonempty interval. If $F\in L^1(I;\Lrcv)$ and $\wt{F}\in L^1(I;\tLrcv)$ are such that $F(\tau)$ and $\wt{F}(\tau)$ are $\Phi_\rcv$-matched for a.e.\ $\tau\in I$, then $\int_I F\,d\tau$ and $\int_I \wt{F}\,d\tau$ are $\Phi_\rcv$-matched.
  \end{enumerate}
\end{lemma}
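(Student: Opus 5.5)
The plan is to test matching against the functionals $\Gamma_\chi$ from \cref{eq:Gamma-chi}. This avoids needing pullback by $\Phi_\rcv$ to be bounded between the $L^2$ spaces.

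First we record a criterion. Let $u\in\Lrcv$ and $\wt{u}\in\tLrcv$. Then $u$ and $\wt{u}$ are $\Phi_\rcv$-matched if and only if
\begin{equation*}
  \int_\rcv u\,\chi\,dV_g = \Gamma_\chi\wt{u}\quad\text{for every }\chi\in\Cc{\rcv}.
\end{equation*}
Indeed, $\wt{u}\circ\Phi_\rcv$ is locally integrable on $\rcv$: on compact subsets the Jacobian $J$ is bounded above and below, so $\wt{u}\circ\Phi_\rcv\in L^2_{\mathrm{loc}}(\rcv)$. The identity therefore says that $u-\wt{u}\circ\Phi_\rcv$ integrates to zero against every test function with respect to $dV_g$. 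By the fundamental lemma of the calculus of variations, it vanishes almost everywhere. The converse direction is immediate.

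\textbf{Part \ref{item:matched-limit}.} Fix $\chi\in\Cc{\rcv}$. By matching, $\innerrcv{u_j}{\chi}=\Gamma_\chi\wt{u}_j$ for every $j$. The left side converges to $\innerrcv{u}{\chi}$ by $L^2(\rcv)$ convergence. The right side equals $\tinnerrcv{\wt{u}_j}{\gamma_\chi}$ and converges to $\Gamma_\chi\wt{u}$, because $\Gamma_\chi$ is a bounded functional on $\tLrcv$. The criterion then gives that $u$ and $\wt{u}$ are matched.

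\textbf{Part \ref{item:matched-integral}.} Bounded linear functionals commute with Bochner integrals, which gives
\begin{equation*}
  \innerrcv[\Big]{\int_I F\,d\tau}{\chi}=\int_I\innerrcv{F(\tau)}{\chi}\,d\tau
\end{equation*}
and
\begin{equation*}
  \Gamma_\chi\int_I\wt{F}\,d\tau=\int_I\Gamma_\chi\wt{F}(\tau)\,d\tau.
\end{equation*}
The two integrands agree for almost every $\tau$ by the pointwise matching and the easy direction of the criterion. Hence the integrals coincide for every $\chi$, and the criterion shows that $\int_I F\,d\tau$ and $\int_I\wt{F}\,d\tau$ are matched.

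The only step requiring care is the criterion itself. The point is that $\wt{u}\circ\Phi_\rcv$ need not lie in $\Lrcv$, so we must check that it is only locally integrable, which suffices for the fundamental lemma. This is where compact support of $\chi$ and local boundedness of $J$ and $1/J$ are used. Everything else follows from continuity of the functionals $\Gamma_\chi$.
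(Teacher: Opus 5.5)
Your proof is correct. For the integral statement it is the paper's argument: pair with $\chi\in\Cc{\rcv}$, commute the bounded functionals $\innerrcv{\cdot}{\chi}$ and $\Gamma_\chi$ with the Bochner integrals, and conclude a.e.\ equality.

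For the limit statement the paper takes a different, measure-theoretic route. It passes to a common subsequence along which $u_j\to u$ a.e.\ on $\rcv$ and $\wt{u}_j\to\wt{u}$ a.e.\ on $\trcv$. It then takes the a.e.\ limit in $u_j=\wt{u}_j\circ\Phi_\rcv$, using only that $\Phi_\rcv^{-1}$ maps null sets to null sets. That argument needs no information about the Jacobian $J$.

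You instead run both parts through a single duality criterion. This costs a local lower bound on $J$. In return it makes explicit the step the paper leaves tacit at the end of the integral statement: $\wt{u}\circ\Phi_\rcv\in L^2_{\mathrm{loc}}(\rcv)$. This is what guarantees that vanishing of $\int_\rcv(u-\wt{u}\circ\Phi_\rcv)\,\chi\,dV_g$ for all test functions $\chi$ forces $u=\wt{u}\circ\Phi_\rcv$ a.e.
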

\begin{proof}
  \begin{enumerate}
    \item After passing to a common subsequence, $u_j\to u$ almost everywhere on $\rcv$ and $\wt{u}_j\to\wt{u}$ almost everywhere on $\trcv$. Since $\Phi_\rcv^{-1}$ preserves null sets, the latter convergence gives $\wt{u}_j\circ\Phi_\rcv\to\wt{u}\circ\Phi_\rcv$ almost everywhere on $\rcv$. Letting $j\to\infty$ in $u_j=\wt{u}_j\circ\Phi_\rcv$ gives the claim.

    \item Let $\chi\in\Cc{\rcv}$. Bounded functionals commute with Bochner integrals, so, by \cref{eq:Gamma-chi},
      \begin{align*}
        \innerrcv{\int_I F\,d\tau}{\chi}
        &= \int_I \innerrcv{F(\tau)}{\chi}\,d\tau\\
        &= \int_I \Gamma_\chi\bigl(\wt{F}(\tau)\bigr)\,d\tau\\
        &= \Gamma_\chi\Bigl(\int_I \wt{F}\,d\tau\Bigr).
      \end{align*}
      It follows that $\int_I F\,d\tau$ and $\bigl(\int_I \wt{F}\,d\tau\bigr)\circ\Phi_\rcv$ agree almost everywhere.\qedhere
  \end{enumerate}
\end{proof}

\subsubsection*{The five conditions.}
Recall from \cref{eq:LSRw} the local source-to-solution operator $\LSRw\colon \Cc[\big]{\ointerval{0}{\infty}\times\src} \to C^\infty\bigl(\ointerval{0}{\infty}\times\rcv\bigr)$ for the wave equation on $(\M,g)$. The corresponding operator on $(\tM,\tg)$ is denoted by $\tLSRw$.

Let $\res_\rcv\colon \LM\to\Lrcv$ and $\ext_\src\colon \Lsrc\to \LM$ be, respectively, the restriction and extension-by-zero operators on $\M$, and let $\res_{\trcv}$ and $\ext_{\tsrc}$ be the analogous operators on $\tM$. Define the \emph{cross-compressed resolvents}
\begin{align}
  \RSR(z)&:=\res_{\rcv}\circ(-\Delta_g-z)^{-1}\circ\ext_{\src},
  \qquad z\in\R\setminus\spec(-\Delta_g),\label{eq:RSR}\\
  \tRSR(z)&:=\res_{\trcv}\circ(-\Delta_{\tg}-z)^{-1}\circ\ext_{\tsrc},
  \qquad z\in\R\setminus\spec(-\Delta_{\tg}).\label{eq:tRSR}
\end{align}
These are bounded operators $\Lsrc\to\Lrcv$ and $\tLsrc\to\tLrcv$, respectively. Similarly, the \emph{cross-compressed spectral projectors} are
\begin{equation}\label{eq:PlRS}
  \PlRS := \res_{\rcv}\circ P_\ell\circ\ext_{\src}
  \quad\text{and}\quad
  \tPlRS := \res_{\trcv}\circ\wt{P}_\ell\circ\ext_{\tsrc},
\end{equation}
which are bounded operators between the same pairs of spaces. The operators $\PlSR\colon\Lrcv\to\Lsrc$ and $\tPlSR\colon\tLrcv\to\tLsrc$ below are defined by reversing the roles of $\src$ and $\rcv$ in \cref{eq:PlRS}.

Condition \cref{data:cross-spectral} below restates the equivalence of cross-spectral data introduced in \cref{def:cross-spectral-data}, while \crefrange{data:resolvents}{data:projectors-opposite} are alternative formulations. We prove in \cref{prop:data-summary} that all five conditions are equivalent.

\begin{datacond}
\item\label{data:cross-spectral} The manifolds $(\M,g)$ and $(\tM,\tg)$ have equivalent cross-spectral data relative to $(\Phi_\src,\Phi_\rcv)$.

\item\label{data:resolvents} For every pair of $\Phi_\src$-matched functions $\psi\in\Cc{\src}$, $\wt{\psi}\in\Cc{\tsrc}$ and every $z<0$, the functions $\RSR(z)\psi$ and $\tRSR(z)\wt{\psi}$ are $\Phi_\rcv$-matched.

\item\label{data:projectors} For every $\ell$, we have $\lambda_\ell=\wt{\lambda}_\ell$ and, for every pair of $\Phi_\src$-matched functions $\psi\in\Cc{\src}$, $\wt{\psi}\in\Cc{\tsrc}$, the functions $\PlRS\psi$ and $\tPlRS\wt{\psi}$ are $\Phi_\rcv$-matched.

\item\label{data:wave} For every pair of $\Phi_\src$-matched sources $s\in\Cc[\big]{\ointerval{0}{\infty}\times\src}$, $\tilde{s}\in\Cc[\big]{\ointerval{0}{\infty}\times\tsrc}$ and every $t>0$, the observations $\LSRw s\,(t)$ and $\tLSRw \tilde{s}\,(t)$ are $\Phi_\rcv$-matched.
\end{datacond}

\begin{remark}\label{rem:data-symmetry}
  Since $K_\ell(x,y)=K_\ell(y,x)$, the cross-spectral data relative to $(\src,\rcv)$ determine those relative to $(\rcv,\src)$. Once \crefrange{data:cross-spectral}{data:wave} have been shown equivalent, this yields the three corresponding formulations with the source and receiver roles reversed. We state only the reversed projector formulation, which will be used in \cref{prop:Gl-on-wave}.
  \begin{datacond}[resume]
  \item\label{data:projectors-opposite} For every $\ell$, we have $\lambda_\ell=\wt{\lambda}_\ell$ and, for every pair of $\Phi_\rcv$-matched functions $\psi\in\Cc{\rcv}$, $\wt{\psi}\in\Cc{\trcv}$, the functions $\PlSR\psi$ and $\tPlSR\wt{\psi}$ are $\Phi_\src$-matched.
  \end{datacond}
\end{remark}

\subsubsection*{Equivalence of the conditions.}
In~\cite[Appendix~A]{MR4833220}, conditions \cref{data:projectors,data:wave} were shown to be equivalent under the assumption that the identifications $\Phi_\src$ and $\Phi_\rcv$ are Riemannian isometries. The results below remove this assumption: the identifications are initially only diffeomorphisms, and equivalence of the data forces them to be Riemannian isometries.

We first establish the equivalence of \crefrange{data:resolvents}{data:wave}. We then recover the Riemannian metric on $\src$ and $\rcv$ and use the resulting measure preservation to incorporate \cref{data:cross-spectral}. Finally, symmetry gives \cref{data:projectors-opposite}.

\begin{lemma}\label{prop:operator-data-equivalence}
  The conditions \cref{data:resolvents,data:projectors,data:wave} are equivalent.
\end{lemma}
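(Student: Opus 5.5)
I will prove the cycle of implications \cref{data:projectors}$\Rightarrow$\cref{data:wave}$\Rightarrow$\cref{data:resolvents}$\Rightarrow$\cref{data:projectors}. Throughout, the only tool for transporting matchedness through limits and integrals is \cref{prop:matched-calculus}. The recurring difficulty is that we may not use that pullback by $\Phi_\rcv$ is bounded $\tLrcv\to\Lrcv$. So every limit must converge separately on both manifolds, and \cref{item:matched-limit} of \cref{prop:matched-calculus} then closes the argument.

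\textbf{\cref{data:projectors}$\Rightarrow$\cref{data:wave}.} Take matched sources $s,\tilde s$ and fix $t>0$. By \cref{eq:wave-solution}, $\res_\rcv v^s(t)=\sum_\ell\int_0^t\sigma_\ell(t-\tau)\,\PlRS s(\tau)\,d\tau$. Since $\lambda_\ell=\wt\lambda_\ell$, the scalar functions $\sigma_\ell$ agree on both sides. For each $\ell$ and each $\tau$, the integrands are matched by \cref{data:projectors}, and \cref{item:matched-integral} makes the $\ell$-th terms matched. The partial sums over $\ell\le L$ converge in $\Lrcv$ and in $\tLrcv$ respectively, because the series \cref{eq:sinprop} converges in $\LM$ and the time integral is dominated. \cref{item:matched-limit} then gives that $\LSRw s(t)$ and $\tLSRw\tilde s(t)$ are matched.

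\textbf{\cref{data:wave}$\Rightarrow$\cref{data:resolvents}.} For $z=-\kappa^2<0$, I use the Laplace-transform representation
\[(-\Delta_g+\kappa^2)^{-1}\psi=\kappa^2\int_0^\infty e^{-\kappa t}\,v^{h\psi}(t)\,dt,\]
where $h(\tau)\equiv 1$ (or a constant-in-time source). Spectrally this reads $\int_0^\infty e^{-\kappa t}(1-\cos\omega t)\omega^{-2}\,dt=\kappa^{-1}(\kappa^2+\omega^2)^{-1}$, so the factor is $\kappa$ rather than $\kappa^2$ and I will adjust the constant. The catch is that \cref{data:wave} allows only $C_c^\infty((0,\infty)\times\src)$ sources. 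I will therefore take $s_j(\tau,x)=\eta_j(\tau)\psi(x)$ with $\eta_j\in\Cc{(0,\infty)}$, $0\le\eta_j\le 1$, $\eta_j\to 1$ on $(0,T_j)$, and also truncate the $t$-integral. Since $\normM{v^f(t)}\le C(1+t)^2\sup\norm{f}$ (the $\ell=0$ term grows like $t^2$), the factor $e^{-\kappa t}$ gives dominated convergence on each side separately. Matchedness passes through the Bochner integral in $t$ by \cref{item:matched-integral} and through the limit $j\to\infty$ by \cref{item:matched-limit}.

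\textbf{\cref{data:resolvents}$\Rightarrow$\cref{data:projectors}.} This is the step I expect to be the main obstacle, because the spectra must be shown equal from resolvent data on $z<0$ only. I first fix $\chi\in\Cc{\rcv}$ and $\psi$, and consider the scalar function $F(z)=\innerrcv{\RSR(z)\psi}{\chi}=\sum_\ell\frac{\inner{\PlRS\psi}{\chi}}{\lambda_\ell-z}$. By \cref{eq:Gamma-chi} and \cref{data:resolvents}, $F(z)=\sum_\ell\frac{\Gamma_\chi(\tPlRS\wt\psi)}{\wt\lambda_\ell-z}$ for $z<0$. Both sums are meromorphic in $\C$ with simple poles, so they agree everywhere by analytic continuation. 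Uniqueness of the partial-fraction (Mittag-Leffler) decomposition then matches the residues at each point.

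To conclude $\lambda_\ell=\wt\lambda_\ell$, I must show that every $\lambda_\ell$ is actually a pole, i.e.\ $\PlRS\ne 0$ for each $\ell$. This holds because $K_\ell|_{\src\times\rcv}\not\equiv 0$ by unique continuation applied twice, as in \cref{item:K-spans} of \cref{prop:trace-spaces}. Hence there exist $\psi,\chi$ with nonzero residue, and the same argument on $\tM$ gives the reverse inclusion of spectra. Finally, the residue identity $\innerrcv{\PlRS\psi}{\chi}=\Gamma_\chi(\tPlRS\wt\psi)$ holds for all $\chi\in\Cc{\rcv}$, which yields a.e.\ matchedness exactly as in the proof of \cref{item:matched-integral}.
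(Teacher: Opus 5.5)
Your proof is correct and follows essentially the same route as the paper's proof. It uses the same cycle \labelcref{data:resolvents}$\Rightarrow$\labelcref{data:projectors}$\Rightarrow$\labelcref{data:wave}$\Rightarrow$\labelcref{data:resolvents}, with meromorphic continuation and residue comparison for the spectral step. There you should state the bound $\sum_\ell\abs{\innerrcv{\PlRS\psi}{\chi}}\le\normM{\psi}\normM{\chi}$, obtained from Cauchy--Schwarz and Parseval, since it is what justifies meromorphy. The wave step again uses termwise matching together with \cref{prop:matched-calculus}, and the return step uses a Laplace transform.

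The one real difference is in \labelcref{data:wave}$\Rightarrow$\labelcref{data:resolvents}. The paper avoids your truncation limit $\eta_j\to1$ by taking a single nonnegative $\rho\in\Cc[\big]{\ointerval{0}{\infty}}$ with $\rho\neq0$. It then divides the identity $\calL v^s(p)=\calL\rho(p)\,(-\Delta_g+p^2)^{-1}\psi$ by $\calL\rho(p)>0$.
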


\begin{proof}
  (\labelcref{data:resolvents} $\Rightarrow$ \labelcref{data:projectors}) Let $\psi\in\Cc{\src}$ and $\wt{\psi}\in\Cc{\tsrc}$ be matched, and let $\chi\in\Cc{\rcv}$. We first show that
  \begin{equation}\label{eq:meromorphic}
    \sum_{\ell\ge 0} \frac{\innerrcv{\PlRS\psi}{\chi}}{\lambda_\ell-z}
    \quad\text{and}\quad
    \sum_{\ell\ge 0} \frac{\Gamma_\chi\bigl(\tPlRS\wt{\psi}\bigr)}{\wt{\lambda}_\ell-z}
  \end{equation}
  are meromorphic functions of $z\in\C$ that agree for $z\notin\spec(-\Delta_g)\cup\spec(-\Delta_{\tg})$. Here $\Gamma_\chi$ is the functional defined in \cref{eq:Gamma-chi}.

  The coefficients of the first series are
  \begin{equation*}
    a_\ell
    :=\innerrcv{\PlRS\psi}{\chi}
    =\innerM{P_\ell\psi}{P_\ell\chi},
  \end{equation*}
  so by the Cauchy--Schwarz inequality and Parseval's identity,
  \begin{equation}\label{eq:coefficients-summable}
    \sum_{\ell\ge 0}\abs{a_\ell}
    \le \normM{\psi}\,\normM{\chi}
    <\infty.
  \end{equation}
  By \cref{eq:coefficients-summable} and the Weierstrass $M$-test, the series on the left-hand side of \cref{eq:meromorphic} converges uniformly on compact subsets of $\C\setminus\spec(-\Delta_g)$, so it defines a holomorphic function there. By the same reasoning, for each $\ell_0=0,1,\dots$, the series either extends holomorphically to $\lambda_{\ell_0}$ (if $a_{\ell_0}=0$) or has a simple pole there, so the series defines a meromorphic function on $\C$.

  The right-hand side of \cref{eq:meromorphic} is handled similarly. By \cref{eq:Gamma-chi}, its coefficients are
  \begin{equation*}
    b_\ell
    := \Gamma_\chi\bigl(\tPlRS\wt{\psi}\bigr)
    = \tinnerrcv{\tPlRS\wt{\psi}}{\gamma_\chi}.
  \end{equation*}
  This is of the same form as $a_\ell$, so the preceding argument applies to $(b_\ell)_\ell$ as well, proving that the second series is also meromorphic on $\C$.

  For $z<0$, applying the spectral decompositions of the resolvents and using the boundedness of $\xi\mapsto\innerrcv{\xi}{\chi}$ on $\Lrcv$ and of $\Gamma_\chi$ on $\tLrcv$ yields
  \begin{align*}
    \innerrcv{\RSR(z)\psi}{\chi}
    &=\sum_{\ell\ge 0}\frac{a_\ell}{\lambda_\ell-z};\\
    \Gamma_\chi\bigl(\tRSR(z)\wt{\psi}\bigr)
    &=\sum_{\ell\ge 0}\frac{b_\ell}{\wt{\lambda}_\ell-z}.
  \end{align*}
  By the above and \labelcref{data:resolvents}, the two series define meromorphic functions that agree for $z<0$ and hence on the connected open set $\C\setminus\bigl(\spec(-\Delta_g)\cup\spec(-\Delta_{\tg})\bigr)$. This proves the claim above.

  It follows from unique continuation that $\PlRS\neq0$ for every $\ell$, so that $\innerrcv{\PlRS\psi}{\chi}\neq 0$ for a suitable choice of $\psi$ and $\chi$. For such a choice, $\lambda_\ell$ is a pole of the function on the left-hand side of \cref{eq:meromorphic}, hence also a pole of the function on the right-hand side, so that $\spec(-\Delta_g)\subset\spec(-\Delta_{\tg})$. The opposite inclusion follows similarly, and the ordering of the eigenvalues then gives $\lambda_\ell=\wt{\lambda}_\ell$ for all $\ell$. Comparing the residues at $z=\lambda_\ell$ shows that $\innerrcv{\PlRS\psi}{\chi}=\Gamma_\chi\bigl(\tPlRS\wt{\psi}\bigr)$ for every matched pair $\psi,\wt{\psi}$ and every $\chi\in\Cc{\rcv}$, whence $\PlRS\psi = (\tPlRS\wt{\psi})\circ\Phi_\rcv$ in $\Lrcv$.

  (\labelcref{data:projectors} $\Rightarrow$ \labelcref{data:wave}) Let $s\in\Cc[\big]{\ointerval{0}{\infty}\times\src}$ and $\tilde{s}\in\Cc[\big]{\ointerval{0}{\infty}\times\tsrc}$ be matched, and denote $s_\ell(\tau) :=\PlRS s(\tau)\in\Lrcv$ and $\tilde{s}_\ell(\tau) := \tPlRS \tilde{s}(\tau)\in\tLrcv$. Both sides of the matching identity $s_\ell(\tau) = \tilde{s}_\ell(\tau)\circ\Phi_\rcv$ are continuous in $(\tau,y)$ and agree almost everywhere; hence they agree pointwise on $\rcv$ for every $\tau\ge 0$.

  For every $t>0$, every $y\in\rcv$, and every $\ell$,
  \begin{align*}
    \left[\int_0^t\sigma_\ell(t-\tau)\, s_\ell(\tau)\,d\tau\right](y)
    &= \int_0^t\sigma_\ell(t-\tau)\, s_\ell(\tau,y)\,d\tau\\
    &= \int_0^t\sigma_\ell(t-\tau)\, \tilde{s}_\ell(\tau,\Phi_\rcv(y))\,d\tau\\
    &= \left[\int_0^t\sigma_\ell(t-\tau)\, \tilde{s}_\ell(\tau)\,d\tau\right]\bigl(\Phi_\rcv(y)\bigr),
  \end{align*}
  where the first and last equalities follow from the fact that $s_\ell$ and $\tilde{s}_\ell$ take their values in finite-dimensional spaces, and the second holds by hypothesis (the weight $\sigma_\ell$ is the same on both manifolds because $\lambda_\ell=\wt{\lambda}_\ell$). When summed over $\ell$, the terms in brackets converge in $\Lrcv$ and $\tLrcv$ to $\LSRw s(t)$ and $\tLSRw \tilde{s}(t)$, so \cref{item:matched-limit} of \cref{prop:matched-calculus} proves the implication.

  (\labelcref{data:wave} $\Rightarrow$ \labelcref{data:resolvents}) Let $\psi\in\Cc{\src}$ and $\wt{\psi}\in\Cc{\tsrc}$ be matched, fix a nonnegative $\rho\in \Cc[\big]{\ointerval{0}{\infty}}$ with $\rho\neq0$, and apply \labelcref{data:wave} to the matched sources $s(\tau):=\rho(\tau)\psi$ and $\tilde{s}(\tau):=\rho(\tau)\wt{\psi}$. For $p>0$, let us write $\calL u(p):=\int_0^\infty e^{-pt}u(t)\,dt$ for the Laplace transform of a Banach-space-valued function $u$ whenever the Bochner integral exists. The transforms below converge absolutely, since $\normM{v^s(t)}\le Ct$ and $\tnormM{\wt{v}^{\tilde{s}}(t)}\le Ct$. Applying the bounded operator $P_\ell$ to \cref{eq:wave-solution} and using $\calL\sigma_\ell(p)=1/(p^2+\lambda_\ell)$ shows that
  \begin{equation*}
    \calL v^s (p)
    = \calL\rho (p) \sum_{\ell\ge 0} \frac{1}{p^2+\lambda_\ell} P_\ell\psi
    = \calL\rho (p) R(-p^2)\psi,
  \end{equation*}
  where $R(-p^2)\in\calB(\LM)$ is the resolvent at $-p^2<0$. An analogous identity holds on $(\tM,\tg)$. Since $\calL\rho(p)>0$, applying $\res_\rcv$ to the displayed identity and $\res_{\trcv}$ to its tilded counterpart and using \cref{item:matched-integral} of \cref{prop:matched-calculus} yields, for a.e.\ $y\in\rcv$,
  \begin{align*}
    \bigl(\RSR(-p^2)\psi\bigr)(y)
    &= \frac{1}{\calL\rho(p)} \bigl(\calL(\LSRw s)(p)\bigr)(y)\\
    &= \frac{1}{\calL\rho(p)} \bigl(\calL(\tLSRw \tilde{s})(p)\bigr)\bigl(\Phi_\rcv(y)\bigr)\\
    &= \bigl(\tRSR(-p^2)\wt{\psi}\bigr)\bigl(\Phi_\rcv(y)\bigr).
  \end{align*}
  This proves the final implication.
\end{proof}

The next lemma isolates the mechanism by which each of the conditions \crefrange{data:cross-spectral}{data:wave} forces $\Phi_\src$ and $\Phi_\rcv$ to be Riemannian isometries.

\begin{lemma}\label{prop:trace-isometry}
  Assume that, for every $\ell$, $\lambda_\ell=\wt{\lambda}_\ell$ and the identities~\labelcref{eq:trace-spaces} hold. Then the diffeomorphisms $\Phi_\src$ and $\Phi_\rcv$ of \cref{ass:standing} are Riemannian isometries.
\end{lemma}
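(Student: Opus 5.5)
The plan is to compare the two Laplace--Beltrami operators on $\src$ through the pulled-back metric, and to show that their difference annihilates too many functions to be nonzero. We treat $\Phi_\src$; the argument for $\Phi_\rcv$ is identical, using the second identity in \cref{eq:trace-spaces}. Let $h:=\Phi_\src^*\tg$ be the pulled-back metric on $\src$. Then $\Phi_\src\colon(\src,h)\to(\tsrc,\tg)$ is an isometry by construction, and naturality of the Laplacian gives
\begin{equation*}
  \Delta_h(\wt{u}\circ\Phi_\src)=(\Delta_{\tg}\wt{u})\circ\Phi_\src
\end{equation*}
for smooth $\wt{u}$ on $\tsrc$. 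It therefore suffices to show that $h=g$ on $\src$.

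\textbf{Step 1: the difference operator kills every eigenfunction on $\src$.} Consider the differential operator $Q:=\Delta_g-\Delta_h$ on $\src$. It has smooth coefficients and order at most two, with no zeroth-order term. Fix $\ell$ and $\phi\in\El$. By \cref{eq:trace-spaces} there is $\wt{\phi}\in\tEl$ with $\phi|_\src=\wt{\phi}\circ\Phi_\src$. Since $\lambda_\ell=\wt{\lambda}_\ell$, naturality gives
\begin{equation*}
  \Delta_h\phi|_\src=(\Delta_{\tg}\wt{\phi})\circ\Phi_\src=-\lambda_\ell\,\phi|_\src=\Delta_g\phi|_\src .
\end{equation*}
Hence $Q(\phi|_\src)=0$. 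Because $Q$ is linear, this extends to every finite sum $\sum_{\ell\le N}\phi_\ell$ with $\phi_\ell\in\El$. Mixing different eigenvalues is harmless here, since $Q$ annihilates each summand separately.

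\textbf{Step 2: density in $C^2$.} Let $f\in C^\infty(\M)$ be arbitrary, and let $f_N:=\sum_{\ell\le N}P_\ell f$. For smooth $f$, the spectral expansion converges in every Sobolev space $H^k(\M)$: indeed, $(1-\Delta_g)^{k/2}f\in\LM$, so the tails decay in the $H^k$-norm. By Sobolev embedding with $k>n/2+2$, it follows that $f_N\to f$ in $C^2(\M)$. Since $Q$ has order at most two with continuous coefficients, $Qf_N\to Qf$ pointwise on $\src$. By Step~1, $Qf_N=0$ for every $N$, so $Qf=0$ on $\src$ for every $f\in C^\infty(\M)$.

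\textbf{Step 3: conclusion.} Fix $x\in\src$ and local coordinates around it. Choose smooth $f$, cut off to be supported in the chart, whose $2$-jet at $x$ is an arbitrary quadratic polynomial. Step~2 then forces all coefficients of $Q$ to vanish at $x$. In particular its principal symbol $g^{ij}-h^{ij}$ vanishes, so $g=h$ on $\src$, which is exactly the statement that $\Phi_\src$ is a Riemannian isometry.

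I expect the only delicate point to be Step~2. Step~1 gives $Q=0$ only on the span of restricted eigenfunctions, and one needs this span to be rich enough to detect the coefficients of a second-order operator. The $H^k$ convergence of eigenfunction expansions of smooth functions, combined with Sobolev embedding, handles this. Notably, the argument uses only one inclusion in \cref{eq:trace-spaces} and needs no unique continuation.
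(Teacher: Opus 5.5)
Your proof is correct and follows essentially the same route as the paper. Both show that $\Delta_g$ and the Laplacian of the pulled-back metric agree on restrictions of eigenfunctions, extend this to all test functions via the eigenfunction expansion, and read off equality of the metrics. The differences are technical. The paper passes to the limit in $\calD'(\rcv)$ using only the $L^2$ convergence of the expansion, since differential operators are continuous on distributions; this avoids your $H^k$/Sobolev-embedding step. Your 2-jet argument, in turn, spells out the final implication from equal Laplacians to equal metrics, which the paper leaves implicit.
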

\begin{proof}
  We only prove $(\Phi_\rcv)^\ast\tg=g|_\rcv$; the other case is similar. If $\wt{\phi}\in\tEl$, then $\wt{\phi}\circ\Phi_\rcv\in\El|_\rcv$ by hypothesis, so
  \begin{equation*}
    \Delta_{(\Phi_\rcv)^\ast\tg}(\wt{\phi}\circ\Phi_\rcv)
    = \bigl(\Delta_{\tg}\wt{\phi}\bigr) \circ \Phi_\rcv
    = -\lambda_\ell\, \wt{\phi}\circ\Phi_\rcv
    = \Delta_g (\wt{\phi}\circ\Phi_\rcv).
  \end{equation*}
  It follows that $\Delta_{(\Phi_\rcv)^\ast\tg}=\Delta_g$ on $\linspan\bigcup_\ell \El|_\rcv$. For $\psi\in\Cc{\rcv}$, the expansion $\psi=\sum_{\ell\ge 0}P_\ell\psi$ converges in $\LM$, hence in $\calD'(\rcv)$ after restriction to $\rcv$, and both operators are continuous on $\calD'(\rcv)$. Therefore, $\Delta_{(\Phi_\rcv)^\ast\tg}=\Delta_g$ on $\Cc{\rcv}$, which implies $(\Phi_\rcv)^\ast\tg = g|_\rcv$.
\end{proof}

Under \cref{data:projectors}, we recover $\El|_\src$ from its orthogonal complement using the following elementary lemma, whose proof is included for completeness.

\begin{lemma}\label{prop:orthogonality-test}
  Let $H$ be a Hilbert space, $D\subset H$ a dense subspace, and $X,Y\subset H$ finite-dimensional subspaces. Assume that, for every $\psi\in D$, $\psi\perp X$ if and only if $\psi\perp Y$. Then $X=Y$.
\end{lemma}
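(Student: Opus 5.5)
By symmetry between $X$ and $Y$ it suffices to prove $X\subset Y$; the reverse inclusion then follows by exchanging the roles of the two subspaces. Let $\Pi_X$ and $\Pi_Y$ denote the orthogonal projectors onto $X$ and $Y$. Both subspaces are finite-dimensional, hence closed, so these projectors are well defined and bounded. The plan is to upgrade the hypothesis from the dense subspace $D$ to all of $H$, and then read off the inclusion from orthogonal complements.

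\emph{Step 1: extend the hypothesis to $H$.} I claim that $X^\perp\subset Y^\perp$. Fix $h\in X^\perp$. Because $D$ is only dense, $h$ itself need not belong to $D$, so I approximate it from inside $D\cap X^\perp$. Take $\psi_j\in D$ with $\psi_j\to h$, and consider the corrections $\psi_j-\Pi_X\psi_j$. These are orthogonal to $X$ but need not lie in $D$, since $\Pi_X\psi_j\in X$ may fail to be in $D$. I expect this to be the main obstacle.

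\emph{Resolving the obstacle.} I will use the finite-dimensionality of $X$ to produce correctors inside $D$. Choose an orthonormal basis $x_1,\dots,x_k$ of $X$. The map $\Pi_X|_D\colon D\to X$ is surjective: its image is a subspace of the finite-dimensional space $X$, and it is dense there because $D$ is dense and $\Pi_X$ is continuous, so it is all of $X$. Hence there exist $d_1,\dots,d_k\in D$ with $\Pi_X d_i=x_i$. Now set
\[
\psi_j':=\psi_j-\sum_{i=1}^k\inner{\psi_j}{x_i}\,d_i .
\]
Then $\psi_j'\in D$ and $\Pi_X\psi_j'=0$, so $\psi_j'\perp X$. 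By hypothesis, $\psi_j'\perp Y$. Moreover $\psi_j'\to h-\sum_{i=1}^k\inner{h}{x_i}\,d_i=h$, because $\inner{h}{x_i}=0$ for each $i$. Since $Y^\perp$ is closed, $h\in Y^\perp$, which proves the claim $X^\perp\subset Y^\perp$.

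\emph{Step 2: conclude.} Taking orthogonal complements in $X^\perp\subset Y^\perp$ reverses the inclusion and gives $Y^{\perp\perp}\subset X^{\perp\perp}$. Since $X$ and $Y$ are closed, this reads $Y=\overline{Y}\subset\overline{X}=X$. The symmetric argument, with the roles of $X$ and $Y$ exchanged, gives $X\subset Y$, and therefore $X=Y$.
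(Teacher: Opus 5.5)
Your proof is correct and is essentially the paper's argument: you pick elements of $D$ biorthogonal to a basis of $X$ (which exist because $D$ maps onto the finite-dimensional space $X$), use them to correct an approximating sequence from $D$ into $D\cap X^\perp$, and then transfer the inclusion to orthogonal complements. One cosmetic slip: you announce that you will prove $X\subset Y$, but Step~1 actually yields $Y\subset X$; since the hypothesis is symmetric in $X$ and $Y$, exchanging them gives the other inclusion, as you note at the end.
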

\begin{proof}
  It suffices to prove that $D\cap X^\perp$ is dense in $X^\perp$ for every finite-dimensional subspace $X\subset H$. Indeed, once this density statement is proved, the hypothesis gives $D\cap X^\perp=D\cap Y^\perp$, and hence
  \begin{equation*}
    X^\perp
    =\overline{D\cap X^\perp}
    =\overline{D\cap Y^\perp}
    =Y^\perp.
  \end{equation*}
  Since $X$ and $Y$ are finite-dimensional and therefore closed, it follows that $X=(X^\perp)^\perp=(Y^\perp)^\perp=Y$.

  Let $e_1,\dots,e_k$ be a basis of $X$. The image of $D$ under the coordinate map $\psi\mapsto(\inner{\psi}{e_1},\dots,\inner{\psi}{e_k})$ is all of $\R^k$. Indeed, if it were a proper subspace, there would be a nonzero $v\in\R^k$ with $\inner{\psi}{\sum_i v_ie_i}=0$ for every $\psi\in D$, so that $\sum_i v_ie_i=0$, which contradicts the linear independence of $e_1,\dots,e_k$. Hence we may choose $\eta_1,\dots,\eta_k\in D$ with $\inner{\eta_i}{e_j}=\delta_{ij}$.

  Now let $u\in X^\perp$ and $(\psi_j)_j\subset D$ satisfy $\psi_j\to u$. Then
  \begin{equation*}
    \psi'_j := \psi_j-\sum_{i=1}^k\inner{\psi_j}{e_i}\,\eta_i \in D\cap X^\perp,
  \end{equation*}
  and $\inner{\psi_j}{e_i}\to\inner{u}{e_i}=0$ for every $i$, so $\psi'_j\to u$.
\end{proof}

\begin{lemma}\label{prop:data-isometry}
  Under any of the conditions \crefrange{data:cross-spectral}{data:wave}, the diffeomorphisms $\Phi_\src$ and $\Phi_\rcv$ are Riemannian isometries.
\end{lemma}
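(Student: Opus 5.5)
The plan is to reduce every case to \cref{prop:trace-isometry}: show that each of the conditions yields $\lambda_\ell=\wt{\lambda}_\ell$ together with the trace identities~\labelcref{eq:trace-spaces}. Under \cref{data:cross-spectral} this is exactly \cref{item:trace-spaces} of \cref{prop:trace-spaces}, so \cref{prop:trace-isometry} applies directly. By \cref{prop:operator-data-equivalence}, conditions \cref{data:resolvents,data:wave} imply \cref{data:projectors}. It therefore remains to derive~\labelcref{eq:trace-spaces} from \cref{data:projectors}, which already supplies the equality of the eigenvalues.

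For the receiver side, we use that $\PlRS\psi=(P_\ell\psi)|_\rcv$, and that $\{P_\ell\psi:\psi\in\Cc{\src}\}$ is dense in, and hence equal to, the finite-dimensional space $\El$. The only way this could fail is if some $\phi\in\El$ were orthogonal to $\Cc{\src}$; then $\phi$ would vanish on $\src$ and so $\phi=0$ by unique continuation. Consequently
\[
\El|_\rcv=\{\PlRS\psi:\psi\in\Cc{\src}\},
\]
and likewise on $\tM$. Since $\Phi_\src$ induces a bijection between $\Cc{\src}$ and $\Cc{\tsrc}$ via $\wt{\psi}\mapsto\wt{\psi}\circ\Phi_\src$, the matching in \cref{data:projectors} gives $\El|_\rcv=\{\wt{\phi}\circ\Phi_\rcv:\wt{\phi}\in\tEl\}$. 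Here we also use that the functions are smooth, so almost-everywhere equality upgrades to pointwise equality.

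The source side is the main obstacle, since \cref{data:projectors} only transports functions from $\src$ to $\rcv$, and \cref{data:projectors-opposite} is not yet available. The idea is to use orthogonality instead, via \cref{prop:orthogonality-test}. For $\psi\in\Cc{\src}$, we have $\PlRS\psi=0$ if and only if $P_\ell\psi$ vanishes on $\rcv$, which by unique continuation holds if and only if $P_\ell\psi=0$, that is, if and only if $\psi\perp\El|_\src$ in $\Lsrc$. The same equivalence holds on $\tM$, and matching preserves vanishing, so
\[
\psi\perp\El|_\src \iff \wt{\psi}\perp\tEl|_{\tsrc}\ \text{in } \tLsrc
\]
for matched pairs $\psi,\wt{\psi}$.

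To place both spaces in one Hilbert space, we pull back by $\Phi_\src$ using the Jacobian $J_\src$ of $\Phi_\src$, defined as in~\eqref{eq:Gamma-chi}. This gives
\[
\inner{\wt{\psi}}{\wt{\phi}}_{\tLsrc}=\int_\src \psi\,(\wt{\phi}\circ\Phi_\src)\,J_\src\,dV_g .
\]
Hence $\wt{\psi}\perp\tEl|_{\tsrc}$ if and only if $\psi\perp Y:=\{J_\src\,(\wt{\phi}\circ\Phi_\src):\wt{\phi}\in\tEl\}$ in $\Lsrc$. One wrinkle is that $J_\src$ need not be bounded on $\src$, so $Y$ need not lie in $\Lsrc$. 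To avoid this, we localize to a relatively compact open ball $B\Subset\src$ and work in $L^2(B)$ with the dense subspace $\Cc{B}$. Restrictions of eigenfunctions to $B$ are injective by unique continuation, so this loses nothing. \Cref{prop:orthogonality-test} then gives $\El|_B=J_\src\cdot\{\wt{\phi}\circ\Phi_\src:\wt{\phi}\in\tEl\}|_B$.

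Thus for every $\ell$ the space $\El|_B$ equals $J_\src$ times the pulled-back space. To get rid of the factor $J_\src$, we use the constant eigenfunctions at $\ell=0$. Since $E_0$ and $\wt{E}_0$ consist of constants, this forces $J_\src$ to be constant on $B$. As $B$ was arbitrary, $J_\src$ is locally constant on $\src$, and hence~\labelcref{eq:trace-spaces} holds on $B$ for every $\ell$. The argument of \cref{prop:trace-isometry} is local, so it gives $\Phi_\src^\ast\tg=g$ on each such $B$, and therefore on all of $\src$.
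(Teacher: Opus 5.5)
Your proof is correct and follows the paper's route. Like the paper, you reduce to \cref{data:cross-spectral,data:projectors}, read the receiver identity off the range of $\PlRS$ and the source identity off its kernel via \cref{prop:orthogonality-test}, and use $\ell=0$ to control the volume measure. The only difference is the order of steps: the paper first matches the constants $\PlRS[0]\psi$ directly, which gives $J_\src\equiv\operatorname{vol}(\tM)/\operatorname{vol}(\M)$ on all of $\src$, so the orthogonality test runs in $\Lsrc$ itself and your localization to balls $B\Subset\src$, along with the local rerun of \cref{prop:trace-isometry}, is not needed.
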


\begin{proof}
  By \cref{prop:operator-data-equivalence}, it suffices to treat \cref{data:cross-spectral,data:projectors}. In both cases, we verify the hypotheses of \cref{prop:trace-isometry}; the eigenvalues agree by definition under either condition. Under \cref{data:cross-spectral}, the identities~\labelcref{eq:trace-spaces} hold by \cref{item:trace-spaces} of \cref{prop:trace-spaces}.

  Let us assume \cref{data:projectors}. Then the two identities in \cref{eq:trace-spaces} can be read from the range and from the kernel of $\PlRS$ as follows.

  For the range, it follows from unique continuation that the operator $P_\ell$ maps $\Cc{\src}$ onto $\El$. Hence $\{\PlRS\psi : \psi\in\Cc{\src}\}=\El|_\rcv$, and likewise on $(\tM,\tg)$. Since composition with $\Phi_\src$ defines a bijection $\Cc{\tsrc}\to\Cc{\src}$, \cref{data:projectors} states exactly that the second identity in \cref{eq:trace-spaces} holds.

  For the kernel, first consider $\ell=0$. Since $\M$ is connected and closed, $E_0$ consists of the constant functions, and so $\PlRS[0]\psi$ is the constant function $\operatorname{vol}(\M)^{-1}\int_\src\psi\,dV_g$ on $\rcv$, and similarly on $(\tM,\tg)$. Matching the two constants therefore gives
  \begin{equation}\label{eq:measure-up-to-c}
    \int_{\tsrc}\wt{\psi}\,dV_{\tg} = c\int_\src \wt{\psi}\circ\Phi_\src\,dV_g
    \quad\text{for all}\quad
    \wt{\psi}\in\Cc{\tsrc},
  \end{equation}
  where $c:=\operatorname{vol}(\tM)/\operatorname{vol}(\M)$.

  Let $\psi\in\Cc{\src}$ and $\wt{\psi}\in\Cc{\tsrc}$ be matched. It follows from unique continuation that $\PlRS\psi=0$ if and only if $P_\ell\psi=0$. This implies that $\PlRS\psi=0$ if and only if $\psi\perp\El|_\src$ in $\Lsrc$. The same computation on $(\tM,\tg)$, followed by the change of variables~\labelcref{eq:measure-up-to-c}, shows that $\tPlRS\wt{\psi}=0$ if and only if $\psi\perp\{\wt{\phi}\circ\Phi_\src : \wt{\phi}\in\tEl\}$ in $\Lsrc$. By \cref{data:projectors}, $\PlRS\psi=0$ if and only if $\tPlRS\wt{\psi}=0$, so a function $\psi\in\Cc{\src}$ is orthogonal to $\El|_\src$ if and only if it is orthogonal to $\{\wt{\phi}\circ\Phi_\src : \wt{\phi}\in\tEl\}$. These two spaces are finite-dimensional subspaces of $\Lsrc$, and $\Cc{\src}$ is dense in $\Lsrc$, so \cref{prop:orthogonality-test} shows that they coincide. This is the first identity in \cref{eq:trace-spaces}.
\end{proof}

\begin{lemma}\label{prop:data-equivalence}
  The conditions \crefrange{data:cross-spectral}{data:wave} are equivalent.
\end{lemma}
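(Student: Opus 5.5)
Since \cref{prop:operator-data-equivalence} already shows that \cref{data:resolvents,data:projectors,data:wave} are equivalent, it remains to show that \cref{data:cross-spectral} is equivalent to \cref{data:projectors}. We prove the two implications \cref{data:cross-spectral} $\Rightarrow$ \cref{data:projectors} and \cref{data:projectors} $\Rightarrow$ \cref{data:cross-spectral}. Both directions rest on the same preliminary step. Under either condition, \cref{prop:data-isometry} shows that $\Phi_\src$ and $\Phi_\rcv$ are Riemannian isometries, so $(\Phi_\src)^\ast dV_{\tg}=dV_g$ on $\src$, and likewise on $\rcv$. This measure preservation lets us pass between kernels and operators.

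Assume \cref{data:cross-spectral}. The eigenvalues agree by definition. Let $\psi\in\Cc{\src}$ and $\wt{\psi}\in\Cc{\tsrc}$ be matched, and let $y\in\rcv$. By \cref{eq:P-kernel} and the symmetry of $K_\ell$,
\begin{equation*}
  (\PlRS\psi)(y)=\int_\src K_\ell(x,y)\,\psi(x)\,dV_g(x)
  =\int_\src \wt{K}_\ell\bigl(\Phi_\src(x),\Phi_\rcv(y)\bigr)\,\wt{\psi}\bigl(\Phi_\src(x)\bigr)\,dV_g(x).
\end{equation*}
We then change variables $\tx=\Phi_\src(x)$. Because $\Phi_\src$ preserves volume, the right-hand side equals $(\tPlRS\wt{\psi})(\Phi_\rcv(y))$. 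This gives \cref{data:projectors}.

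Conversely, assume \cref{data:projectors}. Fix $y\in\rcv$. Both functions
\begin{equation*}
  x\mapsto K_\ell(x,y)
  \quad\text{and}\quad
  x\mapsto \wt{K}_\ell\bigl(\Phi_\src(x),\Phi_\rcv(y)\bigr)
\end{equation*}
are smooth on $\src$. We test them against an arbitrary $\psi\in\Cc{\src}$. The same change of variables as above expresses the two pairings as $(\PlRS\psi)(y)$ and $(\tPlRS\wt{\psi})(\Phi_\rcv(y))$, where $\wt{\psi}:=\psi\circ\Phi_\src^{-1}$. By \cref{data:projectors}, these two quantities agree for almost every $y$.

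To get equality for every $y$, note that both sides are continuous in $y$: each projector is a finite-rank operator with smooth kernel. Hence the identity holds for every $y\in\rcv$. Since the pairings agree for all test functions $\psi$, the two smooth functions of $x$ coincide on $\src$. This proves $K_\ell(x,y)=\wt{K}_\ell(\Phi_\src(x),\Phi_\rcv(y))$ on $\src\times\rcv$, which is \cref{data:cross-spectral}.

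The main point needing care is the order of the argument. The change of variables is legitimate only once \cref{prop:data-isometry} has shown that the identifications preserve volume. Without that, the two kernel pairings would differ by the Jacobian $J$. The only other subtlety is upgrading almost-everywhere matching to pointwise matching, which the continuity argument above handles.
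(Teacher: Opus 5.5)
Your proposal is correct and follows essentially the same route as the paper. You reduce to \cref{data:cross-spectral}$\Leftrightarrow$\cref{data:projectors} via \cref{prop:operator-data-equivalence}, invoke \cref{prop:data-isometry} to make $\Phi_\src$ measure-preserving, compare the kernel integrals for $\PlRS\psi$ and $\tPlRS\wt{\psi}$ in both directions, and use continuity to pass from almost-everywhere equality to equality of the kernels on $\src\times\rcv$.
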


\begin{proof}
  By \cref{prop:operator-data-equivalence}, it is enough to prove that \cref{data:cross-spectral} and \cref{data:projectors} are equivalent. Under either of them, the eigenvalues agree, and $\Phi_\src$ is measure-preserving by \cref{prop:data-isometry}, so for every matched pair $\psi\in\Cc{\src}$, $\wt{\psi}\in\Cc{\tsrc}$, and every $\ell$ and $y\in\rcv$,
  \begin{equation}\label{eq:kernels-vs-projectors-1}
    \begin{split}
      (\tPlRS\wt{\psi})(\Phi_\rcv(y))
      &= \int_{\tsrc}\wt{K}_\ell(\tx,\Phi_\rcv(y))\,\wt{\psi}(\tx)\,dV_{\tg}(\tx)\\
      &= \int_{\src}\wt{K}_\ell(\Phi_\src(x),\Phi_\rcv(y))\,\psi(x)\,dV_g(x),
    \end{split}
  \end{equation}
  while
  \begin{equation}\label{eq:kernels-vs-projectors-2}
    (\PlRS\psi)(y)=\int_\src K_\ell(x,y)\,\psi(x)\,dV_g(x).
  \end{equation}

  If \cref{data:cross-spectral} holds, the last integrand in \cref{eq:kernels-vs-projectors-1} and the one in \cref{eq:kernels-vs-projectors-2} coincide, so $\PlRS\psi$ and $\tPlRS\wt{\psi}$ are $\Phi_\rcv$-matched, which gives \cref{data:projectors}. Conversely, if \cref{data:projectors} holds, then
  \begin{equation*}
    \int_\src\bigl[K_\ell(x,y)-\wt{K}_\ell(\Phi_\src(x),\Phi_\rcv(y))\bigr]\,\psi(x)\,dV_g(x)=0
  \end{equation*}
  for every $\psi\in\Cc{\src}$ and a.e.\ $y\in\rcv$, so the two kernels, being continuous, agree on $\src\times\rcv$, and \cref{data:cross-spectral} follows.
\end{proof}

Under any of the conditions \crefrange{data:cross-spectral}{data:wave}, the identifications $\Phi_\src$ and $\Phi_\rcv$ are not independent of one another. In fact, a Riemannian isometry $(\M,g)\to(\tM,\tg)$ extending the receiver identification $\Phi_\rcv$ necessarily extends the source identification $\Phi_\src$ as well.

\begin{lemma}\label{prop:isometry-identifications}
  Assume that $(\M,g)$ and $(\tM,\tg)$ have equivalent cross-spectral data, and let $\Phi\colon(\M,g)\to(\tM,\tg)$ be a Riemannian isometry with $\Phi|_\rcv=\Phi_\rcv$. Then $\Phi|_\src=\Phi_\src$.
\end{lemma}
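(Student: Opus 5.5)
Since $\Phi$ is an isometry, pullback by $\Phi$ maps $\tEl$ isometrically onto $\El$ (eigenvalues agree by the data), and $\Phi^\ast$ intertwines the spectral projectors. The spectral kernels therefore transform as
\begin{equation*}
  K_\ell(x,y)=\wt{K}_\ell(\Phi(x),\Phi(y)),\qquad x,y\in\M,
\end{equation*}
because $\wt{K}_\ell(\Phi(\cdot),\Phi(\cdot))$ is the kernel of $P_\ell$ with respect to $dV_g=\Phi^\ast dV_{\tg}$. Combining this with equivalence of cross-spectral data and $\Phi|_\rcv=\Phi_\rcv$ gives, for all $x\in\src$, $y\in\rcv$,
\begin{equation*}
  \wt{K}_\ell(\Phi(x),\Phi_\rcv(y)) = \wt{K}_\ell(\Phi_\src(x),\Phi_\rcv(y)).
\end{equation*}
Since $\Phi_\rcv$ maps $\rcv$ onto $\trcv$, this reads $\wt{K}_\ell(\Phi(x),\ty)=\wt{K}_\ell(\Phi_\src(x),\ty)$ for every $\ty\in\trcv$.

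By \cref{item:K-spans} of \cref{prop:trace-spaces} with $U=\trcv$, together with the reproducing identity~\labelcref{eq:K-reproducing}, every $\wt{\phi}\in\tEl$ is a linear combination of slices $\wt{K}_\ell(\ty,\cdot)$ with $\ty\in\trcv$. Hence
\begin{equation*}
  \wt{\phi}(\Phi(x))=\wt{\phi}(\Phi_\src(x))
\end{equation*}
for all $\wt{\phi}\in\tEl$, all $\ell$, and all $x\in\src$. In other words, the points $\Phi(x)$ and $\Phi_\src(x)$ are not separated by any eigenfunction of $-\Delta_{\tg}$.

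It remains to show that eigenfunctions separate points of $\tM$. This is the step I would check most carefully, though it is standard. Suppose $p\ne q$ but $\wt{\phi}(p)=\wt{\phi}(q)$ for every eigenfunction $\wt{\phi}$. Take a nonnegative bump $\psi\in C^\infty(\tM)$ equal to $1$ near $p$ and vanishing near $q$. Its eigenfunction expansion converges uniformly, since it is smooth and Sobolev embedding applies to the partial sums converging in $H^k$ for large $k$. Passing to the limit gives $1=\psi(p)=\psi(q)=0$, a contradiction. Equivalently, the heat kernel at small time separates points. Therefore $\Phi(x)=\Phi_\src(x)$ for every $x\in\src$, which proves the claim.
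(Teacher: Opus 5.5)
Your proof is correct and takes essentially the paper's route. You derive the same identity $\wt{K}_\ell(\Phi(x),\ty)=\wt{K}_\ell(\Phi_\src(x),\ty)$ for $\ty\in\trcv$, extend it to all of $\tEl$, and conclude by point separation. For the extension you use the spanning property in \cref{prop:trace-spaces} together with kernel symmetry, whereas the paper applies unique continuation directly to the two slices; both rest on unique continuation. Your bump-function argument, via uniform convergence of the eigenfunction expansion, is simply an explicit proof of the density of the eigenfunction span in $C(\tM)$ that the paper invokes.
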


\begin{proof}
  Fix $\ell$ and $x\in\src$. Then, for every $y\in\rcv$,
  \begin{equation}\label{eq:spectral-kernels}
    \wt{K}_\ell\bigl(\Phi(x),\Phi(y)\bigr)
    =K_\ell(x,y)
    =\wt{K}_\ell\bigl(\Phi_\src(x),\Phi_\rcv(y)\bigr)
    =\wt{K}_\ell\bigl(\Phi_\src(x),\Phi(y)\bigr).
  \end{equation}
  The first equality follows from the invariance of the spectral kernels under $\Phi$, the second from the equivalence of the cross-spectral data, and the third from $\Phi|_\rcv=\Phi_\rcv$.

  The functions $\tM\ni\ty\mapsto\wt{K}_\ell\bigl(\Phi(x),\ty\bigr)$ and $\tM\ni\ty\mapsto\wt{K}_\ell\bigl(\Phi_\src(x),\ty\bigr)$ belong to $\tEl$ and, by \cref{eq:spectral-kernels}, agree on $\Phi_\rcv(\rcv)=\trcv$, so by unique continuation they agree everywhere. It follows that, for every $\wt{\psi}\in\tLM$,
  \begin{equation*}
    (\wt{P}_\ell \wt{\psi})\bigl(\Phi(x)\bigr)=(\wt{P}_\ell \wt{\psi})\bigl(\Phi_\src(x)\bigr).
  \end{equation*}

  Since $\ell$ was arbitrary, $\wt{\phi}(\Phi(x))=\wt{\phi}(\Phi_\src(x))$ for every $\wt{\phi}\in\bigcup_{\ell}\tEl$. The linear span of the eigenfunctions is dense in $C(\tM)$, so the two evaluation functionals agree on $C(\tM)$. Since continuous functions separate points, $\Phi(x)=\Phi_\src(x)$ for every $x\in\src$.
\end{proof}

We collect the results of this subsection.

\begin{proposition}\label{prop:data-summary}
  \begin{enumerate}
    \item\label{item:summary-equivalence} The conditions \crefrange{data:cross-spectral}{data:projectors-opposite} are equivalent.

    \item\label{item:summary-isometry} If they hold, the identifications $\Phi_\src$ and $\Phi_\rcv$ are Riemannian isometries.

    \item\label{item:summary-extension} If they hold and if $\Phi\colon(\M,g)\to(\tM,\tg)$ is a Riemannian isometry with $\Phi|_\rcv=\Phi_\rcv$, then $\Phi|_\src=\Phi_\src$.
  \end{enumerate}
\end{proposition}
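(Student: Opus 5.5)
This proposition collects the results of the subsection, so the plan is to assemble the earlier lemmas and add only the symmetry argument that brings in \cref{data:projectors-opposite}.

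For \cref{item:summary-equivalence}, \cref{prop:data-equivalence} already gives the equivalence of \crefrange{data:cross-spectral}{data:wave}. It remains to show that \cref{data:projectors-opposite} is equivalent to \cref{data:cross-spectral}. I would use the symmetry $K_\ell(x,y)=K_\ell(y,x)$, and likewise for $\wt{K}_\ell$, as in \cref{rem:data-symmetry}. Condition \cref{data:cross-spectral} relative to $(\Phi_\src,\Phi_\rcv)$ says
\begin{equation*}
  \lambda_\ell=\wt{\lambda}_\ell
  \quad\text{and}\quad
  K_\ell(x,y)=\wt{K}_\ell(\Phi_\src(x),\Phi_\rcv(y))\quad\text{on }\src\times\rcv.
\end{equation*}
By symmetry, this is the same as
\begin{equation*}
  \lambda_\ell=\wt{\lambda}_\ell
  \quad\text{and}\quad
  K_\ell(y,x)=\wt{K}_\ell(\Phi_\rcv(y),\Phi_\src(x))\quad\text{on }\rcv\times\src,
\end{equation*}
which is equivalence of cross-spectral data for the swapped pair $(\rcv,\src)$ with identifications $(\Phi_\rcv,\Phi_\src)$.

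The standing assumption is symmetric under swapping $\src$ and $\rcv$ together with their identifications. So \cref{prop:data-equivalence}, applied to the swapped configuration, shows that this swapped cross-spectral condition is equivalent to the swapped version of \cref{data:projectors}. That swapped version is, verbatim, \cref{data:projectors-opposite}. This closes the chain of equivalences.

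For \cref{item:summary-isometry}, I would invoke \cref{prop:data-isometry} under any of \crefrange{data:cross-spectral}{data:wave}. Since all five conditions are now equivalent, the conclusion holds whenever any one of them holds.

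For \cref{item:summary-extension}, once the conditions hold, \cref{data:cross-spectral} is available, and \cref{prop:isometry-identifications} gives the claim directly.

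I see no real obstacle here. The only point needing care is to check that \cref{prop:data-equivalence} and its supporting lemmas use nothing asymmetric between $\src$ and $\rcv$. Reviewing them, each argument treats $\src$ and $\rcv$ symmetrically, or states explicitly that the other case is similar. So the swap is legitimate.
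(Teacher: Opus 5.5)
Your proposal is correct and follows essentially the same route as the paper. You obtain the equivalence of \crefrange{data:cross-spectral}{data:wave} from \cref{prop:data-equivalence}, use the kernel symmetry together with \cref{prop:data-equivalence} with the roles of $\src$ and $\rcv$ interchanged to bring in \cref{data:projectors-opposite}, and cite \cref{prop:data-isometry,prop:isometry-identifications} for the remaining two claims.
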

\begin{proof}
  \Cref{prop:data-equivalence} proves the equivalence of \crefrange{data:cross-spectral}{data:wave}. As $K_\ell(x,y)=K_\ell(y,x)$, \cref{data:cross-spectral} is equivalent to the manifolds $(\M,g)$ and $(\tM,\tg)$ having equivalent cross-spectral data relative to $(\Phi_\rcv,\Phi_\src)$. By \cref{prop:data-equivalence} applied with the roles of $\src$ and $\rcv$ interchanged, this in turn implies the equivalence between \cref{data:cross-spectral} and \cref{data:projectors-opposite}. Hence, \crefrange{data:cross-spectral}{data:projectors-opposite} are all equivalent.

  \Cref{prop:data-isometry,prop:isometry-identifications} give the second and third claims, respectively.
\end{proof}

\begin{remark}\label{rem:data-equivalence}
  Assume that the conditions \crefrange{data:cross-spectral}{data:projectors-opposite} hold.
  \begin{enumerate}
    \item\label{rem:local-isometry} By \cref{item:summary-isometry} of \cref{prop:data-summary}, $\Phi_\src$ is measure-preserving, so $\Phi_\src$-matched functions have equal $L^2$-norms over $\src$ and $\tsrc$ (similarly for $\Phi_\rcv$-matched functions).

    \item\label{item:data-extension} By the density of $\Cc{\src}$ in $\Lsrc$, \cref{item:summary-isometry} of \cref{prop:data-summary}, and \cref{item:matched-limit} of \cref{prop:matched-calculus}, the matchings in \cref{data:resolvents,data:projectors} extend to every $\Phi_\src$-matched pair $\psi\in\Lsrc$, $\wt{\psi}\in\tLsrc$: for every such pair, every $z<0$, and every $\ell$, the pairs $\bigl(\RSR(z)\psi,\tRSR(z)\wt{\psi}\bigr)$ and $\bigl(\PlRS\psi,\tPlRS\wt{\psi}\bigr)$ are $\Phi_\rcv$-matched. Similarly, the matching in \cref{data:projectors-opposite} extends to every $\Phi_\rcv$-matched pair $\psi\in\Lrcv$, $\wt{\psi}\in\tLrcv$: for every such pair and every $\ell$, the functions $\PlSR\psi$ and $\tPlSR\wt{\psi}$ are $\Phi_\src$-matched.
  \end{enumerate}
\end{remark}

The five formulations serve different purposes. Condition \cref{data:wave} is the one accessible to measurement, and \cref{data:cross-spectral} is the form used in \cref{sec:Gl-maps}. The later sections use the compressed formulations in the $L^2$-extended form recorded in \cref{item:data-extension} of \cref{rem:data-equivalence}: \cref{data:projectors,data:projectors-opposite} in \cref{sec:uniqueness}, and \cref{data:resolvents} in \cref{sec:nonuniqueness}.

\subsection{The eigenfunction matching maps \texorpdfstring{$\Glsrc$ and $\Glrcv$}{G\_l\textasciicircum{}S and G\_l\textasciicircum{}R}}\label{sec:Gl-maps}

Equivalent cross-spectral data induce matching maps between the corresponding eigenspaces. Indeed, \cref{item:trace-spaces} of \cref{prop:trace-spaces} and unique continuation imply that every $\phi\in\El$ agrees with a unique element of $\tEl$ under each of the identifications $\Phi_\src$ and $\Phi_\rcv$. These two elements are not a priori known to coincide, nor is either a priori known to have the same global $L^2$-norm as $\phi$. The following \namecref{prop:Gl-maps} defines the resulting maps and records the relation between them.

\begin{proposition}\label{prop:Gl-maps}
  Assume that $(\M,g)$ and $(\tM,\tg)$ have equivalent cross-spectral data relative to $(\Phi_\src,\Phi_\rcv)$. Then, for each $\ell$, there exist unique maps $\Glsrc,\Glrcv\colon\El\to\tEl$ that satisfy
  \begin{equation}\label{eq:Gl-maps}
    \phi(x) = (\Glsrc\phi)(\Phi_\src(x))
    \quad\text{and}\quad
    \phi(y) = (\Glrcv\phi)(\Phi_\rcv(y))
  \end{equation}
  for every $\phi\in \El$, $x\in\src$, and $y\in\rcv$. These maps are linear isomorphisms, and they satisfy
  \begin{equation}\label{eq:Gl-adjoint}
    (\Glrcv)^\ast=(\Glsrc)^{-1},
  \end{equation}
  where the adjoint $(\Glrcv)^\ast$ is taken with respect to the inner products inherited from $\LM$ and $\tLM$. Furthermore, for every $\phi\in\El$,
  \begin{equation}\label{eq:Gl-norms}
    \tnormsrc[\big]{(\Glsrc\phi)|_{\tsrc}} = \normsrc{\phi|_\src}
    \quad\text{and}\quad
    \tnormrcv[\big]{(\Glrcv\phi)|_{\trcv}} = \normrcv{\phi|_\rcv}.
  \end{equation}
\end{proposition}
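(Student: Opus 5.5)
Existence and uniqueness come first. By \cref{item:trace-spaces} of \cref{prop:trace-spaces}, for each $\phi\in\El$ there is some $\wt{\phi}\in\tEl$ with $\phi|_\src=\wt{\phi}\circ\Phi_\src$. This $\wt{\phi}$ is unique: two such functions agree on the nonempty open set $\tsrc$, so their difference is an eigenfunction vanishing on an open set and hence zero by unique continuation. Set $\Glsrc\phi:=\wt{\phi}$, and define $\Glrcv$ in the same way using $\rcv$.

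Linearity follows from this uniqueness. Injectivity holds because $\Glsrc\phi=0$ forces $\phi|_\src=0$, hence $\phi=0$. Surjectivity uses the reverse inclusion in \cref{eq:trace-spaces}: given $\wt{\phi}\in\tEl$, the function $\wt{\phi}\circ\Phi_\src$ lies in $\El|_\src$. So both maps are linear isomorphisms. The norm identities \cref{eq:Gl-norms} are immediate, since by \cref{item:summary-isometry} of \cref{prop:data-summary} the maps $\Phi_\src$ and $\Phi_\rcv$ are Riemannian isometries and therefore measure-preserving.

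The main step is the adjoint relation \cref{eq:Gl-adjoint}. Equivalently, for all $\phi\in\El$ and $\wt{\eta}\in\tEl$,
\begin{equation*}
  \tinnerM{\Glrcv\phi}{\wt{\eta}}=\innerM{\phi}{(\Glsrc)^{-1}\wt{\eta}}.
\end{equation*}
By \cref{eq:K-spans}, it suffices to check this on spanning families. I would take $\phi=K_\ell(x,\cdot)$ with $x\in\src$, and $\wt{\eta}=\wt{K}_\ell(\ty,\cdot)$ with $\ty=\Phi_\rcv(y)$ for $y\in\rcv$. The key is to identify the images of these kernel slices.

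First compute $\Glrcv K_\ell(x,\cdot)$. For $y'\in\rcv$, the cross-spectral data and symmetry of the kernels give $K_\ell(x,y')=\wt{K}_\ell(\Phi_\src(x),\Phi_\rcv(y'))$. Hence $\Glrcv K_\ell(x,\cdot)=\wt{K}_\ell(\Phi_\src(x),\cdot)$ by uniqueness.

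Next compute $(\Glsrc)^{-1}\wt{K}_\ell(\Phi_\rcv(y),\cdot)$. For $x'\in\src$, the same identity gives $\wt{K}_\ell(\Phi_\rcv(y),\Phi_\src(x'))=K_\ell(y,x')$. Hence $(\Glsrc)^{-1}\wt{K}_\ell(\Phi_\rcv(y),\cdot)=K_\ell(y,\cdot)$.

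Using the reproducing property \cref{eq:K-reproducing}, the left side of the adjoint identity becomes $\tinnerM{\wt{K}_\ell(\Phi_\src(x),\cdot)}{\wt{K}_\ell(\Phi_\rcv(y),\cdot)}=\wt{K}_\ell(\Phi_\src(x),\Phi_\rcv(y))$. The right side becomes $\innerM{K_\ell(x,\cdot)}{K_\ell(y,\cdot)}=K_\ell(x,y)$. These are equal by \cref{def:cross-spectral-data}.

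Finally, bilinearity extends the identity to all of $\El\times\tEl$. This uses that the families $\{K_\ell(x,\cdot):x\in\src\}$ and $\{\wt{K}_\ell(\ty,\cdot):\ty\in\trcv\}$ span $\El$ and $\tEl$, by \cref{eq:K-spans}.

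The main obstacle is choosing these spanning families correctly so that each kernel slice is mapped to a kernel slice. Once that identification is made, the rest is bookkeeping with the reproducing identity.
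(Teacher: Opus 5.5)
Your proposal is correct and follows the paper's proof essentially step for step. Existence and uniqueness come from \cref{item:trace-spaces} of \cref{prop:trace-spaces} and unique continuation, the norm identities from measure preservation of $\Phi_\src$ and $\Phi_\rcv$, and the adjoint relation from testing on the kernel slices $K_\ell(x,\cdot)$, $x\in\src$, and $K_\ell(y,\cdot)$, $y\in\rcv$; the paper phrases this test as $\tinnerM{\Glrcv\phi}{\Glsrc\psi}=\innerM{\phi}{\psi}$, which is your identity with $\wt{\eta}=\Glsrc\psi$.
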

\begin{proof}
  Fix $\ell$. For any $\phi\in \El$, equation~\labelcref{eq:Gl-maps} determines the values of $\Glsrc\phi$ on the open set $\tsrc$, and hence $\Glsrc\phi\in\tEl$ itself by unique continuation. The same argument applies to $\Glrcv$. Thus the maps, if they exist, are unique.

  By \cref{item:trace-spaces} of \cref{prop:trace-spaces}, every $\phi\in\El$ satisfies $\phi|_\rcv=\wt{\phi}\circ\Phi_\rcv$ for some $\wt{\phi}\in\tEl$; this is the second identity in \cref{eq:trace-spaces}. Setting $\Glrcv\phi:=\wt{\phi}$, and defining $\Glsrc$ analogously from the first identity, gives maps satisfying \cref{eq:Gl-maps}. They are linear by uniqueness, injective by unique continuation, and surjective by the reverse inclusion in \cref{eq:trace-spaces}.

  To prove \cref{eq:Gl-adjoint}, we use two properties of the kernels recorded above: the reproducing identities~\labelcref{eq:K-reproducing} on $\M$ and on $\tM$; and, by \cref{item:K-spans} of \cref{prop:trace-spaces}, that the slices $K_\ell(x,\cdot)$, $x\in\src$, span $\El$, as do the slices $K_\ell(y,\cdot)$, $y\in\rcv$.

  Next, for $x\in\src$ and $y\in\rcv$,
  \begin{equation}\label{eq:Gl-on-slices}
    \Glrcv\bigl(K_\ell(x,\cdot)\bigr)=\wt{K}_\ell\bigl(\Phi_\src(x),\cdot\bigr)
    \quad\text{and}\quad
    \Glsrc\bigl(K_\ell(y,\cdot)\bigr)=\wt{K}_\ell\bigl(\Phi_\rcv(y),\cdot\bigr).
  \end{equation}
  Indeed, the right-hand sides belong to $\tEl$, and the equivalence of the cross-spectral data together with the symmetry of the kernels shows that they satisfy the corresponding matching identities in \cref{eq:Gl-maps}.

  As $\Glsrc$ is bijective, \cref{eq:Gl-adjoint} is equivalent to $\tinnerM{\Glrcv\phi}{\Glsrc\psi}=\innerM{\phi}{\psi}$ for all $\phi,\psi\in\El$. By the spanning property, it suffices to verify this for $\phi=K_\ell(x,\cdot)$ and $\psi=K_\ell(y,\cdot)$ with $x\in\src$ and $y\in\rcv$. For such $\phi$ and $\psi$, the identities~\labelcref{eq:Gl-on-slices}, the reproducing identities, and the equivalence of the data give
  \begin{align*}
    \tinnerM{\Glrcv\phi}{\Glsrc\psi}
    &= \tinnerM{\wt{K}_\ell(\Phi_\src(x),\cdot)}{\wt{K}_\ell(\Phi_\rcv(y),\cdot)}\\
    &= \wt{K}_\ell\bigl(\Phi_\src(x),\Phi_\rcv(y)\bigr)\\
    &= K_\ell(x,y)\\
    &= \innerM{\phi}{\psi}.
  \end{align*}

  Finally, \cref{eq:Gl-maps} and the measure-preserving property in \cref{rem:local-isometry} of \cref{rem:data-equivalence} give \cref{eq:Gl-norms}.
\end{proof}

Our main uniqueness result, \cref{prop:main-uniqueness}, is formulated in terms of the singular values of the receiver matching maps $\Glrcv$.

%% file: uniqueness.tex
\section{Uniqueness}\label{sec:uniqueness}

In this \namecref{sec:uniqueness}, we prove that, under a suitable spectral bound, the cross-spectral data of a manifold determine it uniquely up to a Riemannian isometry. By \cref{prop:data-summary}, the same conclusion holds under any of the equivalent conditions \crefrange{data:cross-spectral}{data:projectors-opposite}. The geometric core of our proof is a result of Helin, Lassas, Oksanen, and Saksala~\cite{MR3826551}. They consider the non-disjoint-data case; in \cref{sec:HLOS-balls}, we recast their geometric result in a form suitable for disjoint data. This reduces the inverse problem to determining, from the cross-spectral data, the ball-inclusion relations in \cref{item:HLOS-2} of \cref{prop:HLOS}.

The starting point for detecting the inclusion relations is the same as in~\cite{MR3826551}: we probe the part of the manifold outside $\src\cup\rcv$ with waves. At a fixed time, waves generated by suitably localized sources are supported in geodesic balls, but arbitrary $L^2$-functions supported in those balls can be produced only in the limit. With disjoint data, we cannot detect this limiting process directly from $L^2$-norms. Instead, we use wave mollification to construct localized functions in a spectrally weighted space $\HWM$ and test the waves against them. We define $\HWM$ in \cref{sec:HWM} and carry out this construction in \cref{sec:bump-functions}. In \cref{sec:ball-inclusion}, we use these functions to characterize the ball inclusions, and in \cref{sec:uniqueness-results}, we recover the inclusions from the data and prove our main uniqueness results.

The main analytic obstruction is that the eigenfunction matching maps $\Glrcv$ of \cref{prop:Gl-maps} need not preserve global $L^2$-norms. Although each $\Glrcv$ is a bounded isomorphism, its norm and that of its inverse need not be bounded uniformly in $\ell$. We compensate by choosing spectral weights subject to two competing requirements. They must make the functions in the two weighted spaces correspond and their $L^2$-pairings with waves agree. At the same time, they must grow slowly enough that $\HWM$ contains nonzero functions supported in arbitrary nonempty open subsets of $\M$. The latter requirement hinges on the decay of Fourier transforms of compactly supported functions on $\R$. We begin with this in \cref{sec:decay-bounds}.

\subsection{Admissible Fourier-decay bounds}\label{sec:decay-bounds}

For mollification purposes (see the proof of \cref{prop:wave-mollification}), we need a family of nonnegative functions with shrinking supports whose Fourier transforms decay in a controlled fashion. In fact, only the decay at the frequencies $\omega_\ell=\sqrt{\lambda_\ell}$ will matter. The convention used below for the Fourier transform of $\rho\in L^1(\R)$ is
\begin{equation*}
  \widehat{\rho}(\omega) := \int_\R e^{-i\omega x}\,\rho(x)\,dx.
\end{equation*}

\begin{definition}[Admissible Fourier-decay bounds]\label{def:decay-bound}
  Recall that $(\lambda_\ell)_\ell$ are the distinct eigenvalues of $-\Delta_g$ and that $\omega_\ell=\sqrt{\lambda_\ell}$. An \emph{admissible Fourier-decay bound} (for the manifold $(\M,g)$) is a sequence $(\beta_\ell)_\ell\subset\rinterval{0}{\infty}$ such that there exist functions $\rho_\delta\in L^1(\R)$ and constants $C_\delta>0$, indexed by $\delta\in\ointerval{0}{1}$, that satisfy
  \begin{enumerate}
    \item \label{item:decay-bound-rho}$\rho_\delta\ge 0$, $\int_\R\rho_\delta\,dx=1$, and $\supp\rho_\delta\subset\interval{-\delta}{\delta}$;
    \item \label{item:decay-bound-rho-hat} $\abs*{\widehat{\rho}_\delta(\omega_\ell)}\le C_\delta\,e^{-\beta_\ell}$ for all $\ell$.
  \end{enumerate}
\end{definition}
\begin{remark}\label{rem:same-sequences}
  If $(\M,g)$ and $(\tM,\tg)$ have equivalent cross-spectral data, then $\lambda_\ell=\wt{\lambda}_\ell$ for every $\ell$, and hence a sequence is an admissible Fourier-decay bound for $(\M,g)$ if and only if it is one for $(\tM,\tg)$.
\end{remark}

A supply of examples follows from Ingham's classical result.

\begin{proposition}[after Ingham~\cite{MR1574706}]\label{prop:Ingham}
  Fix $\delta>0$ and let $b\colon\rinterval{0}{\infty}\to\rinterval{0}{\infty}$ satisfy
  \begin{equation*}
    b(\omega^2)=\omega\,a(\omega),
    \qquad \omega\ge1,
  \end{equation*}
  where $a\colon\rinterval{1}{\infty}\to\ointerval{0}{\infty}$ is nonincreasing. Then the following are equivalent:
  \begin{enumerate}
    \item\label{item:Ingham-integral} The integral of $a(\omega)/\omega$ converges:
      \begin{equation}\label{eq:Ingham}
        \int_1^\infty \frac{a(\omega)}{\omega} \, d\omega < \infty.
      \end{equation}
    \item\label{item:Ingham-mollifier} There exist a constant $C>0$ and a function $\rho\in L^1(\R)$ such that $\rho\ge 0$, $\int_\R\rho\,dx=1$, $\supp\rho\subset\interval{-\delta}{\delta}$, and
      \begin{equation*}
        \abs*{\widehat{\rho}(\omega)}\le Ce^{-b(\omega^2)}=Ce^{-\abs{\omega}a(\abs{\omega})},
        \qquad \abs{\omega}\ge1.
      \end{equation*}
  \end{enumerate}

  In particular, if \cref{eq:Ingham} holds, then the sequence $(\beta_\ell)_\ell:=(b(\lambda_\ell))_\ell$ is an admissible Fourier-decay bound.
\end{proposition}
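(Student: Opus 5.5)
The plan is to reduce the statement to Ingham's classical theorem~\cite{MR1574706}, in the form: for a nonincreasing positive $a$ on $\rinterval{1}{\infty}$, there is a nonzero $f\in L^1(\R)$ with compact support and $\abs{\widehat f(\omega)}\le Ce^{-\abs{\omega}a(\abs{\omega})}$ for $\abs{\omega}\ge1$ if and only if \cref{eq:Ingham} holds. The additional features required in \cref{item:Ingham-mollifier}—nonnegativity, unit mass, support in $\interval{-\delta}{\delta}$ for the prescribed $\delta$—are obtained by elementary manipulations. The final assertion is then immediate: given $\delta\in\ointerval{0}{1}$, apply the equivalence with that $\delta$ to get $\rho_\delta$ and $C_\delta$. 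At $\omega_\ell\ge1$ the bound reads $\abs{\widehat\rho_\delta(\omega_\ell)}\le C_\delta e^{-b(\lambda_\ell)}$. The finitely many $\ell$ with $\omega_\ell<1$ are absorbed by enlarging $C_\delta$, since $\abs{\widehat\rho_\delta}\le1$ and $b(\lambda_\ell)$ is finite.

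\textbf{(\labelcref{item:Ingham-integral}$\Rightarrow$\labelcref{item:Ingham-mollifier}).} First I would construct $\rho$ directly via Ingham's infinite-convolution product. Choose a sequence $\epsilon_k>0$ with $\sum_k\epsilon_k\le\delta$, and let $\rho$ be the weak limit of $\chi_1*\chi_2*\cdots*\chi_N$, where $\chi_k=(2\epsilon_k)^{-1}\mathbf 1_{[-\epsilon_k,\epsilon_k]}$. Then $\rho\ge0$, $\int\rho=1$, $\supp\rho\subset\interval{-\delta}{\delta}$, and
\begin{equation*}
  \widehat\rho(\omega)=\prod_k\frac{\sin(\epsilon_k\omega)}{\epsilon_k\omega}.
\end{equation*}
To estimate this, for $\abs{\omega}$ large keep only the factors with $\epsilon_k\abs{\omega}\ge1$, each bounded by $1/(\epsilon_k\abs{\omega})$; the remaining factors are bounded by $1$. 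This gives
\begin{equation*}
  \log\abs{\widehat\rho(\omega)}\le-\sum_{k:\,\epsilon_k\abs{\omega}\ge1}\log(\epsilon_k\abs{\omega}).
\end{equation*}
The sequence should be chosen adapted to $a$, for instance $\epsilon_k\approx c\,a(e^{k})$ after a dyadic or exponential regrouping. Summability of $\sum_k\epsilon_k$ then becomes \cref{eq:Ingham} via a Cauchy condensation, using that $a$ is nonincreasing. Counting the factors with $\epsilon_k\ge1/\abs\omega$ should yield a lower bound of order $\abs\omega a(\abs\omega)$ for the sum, up to a constant factor. A constant factor in the exponent is harmless: replace $a$ by $Ka$, which preserves \cref{eq:Ingham} and monotonicity, so one can obtain the stronger decay $e^{-K\abs\omega a(\abs\omega)}$ and then discard the extra. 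This counting estimate is the step I expect to be the main obstacle. If it proves awkward, I would instead cite Ingham's theorem for some nonzero compactly supported $f$ with $Ka$ in place of $a$, and postprocess it: replace $f$ by $g=f*\overline{f(-\cdot)}$, so that $\widehat g=\abs{\widehat f}^2\ge0$. Squaring $\abs{\widehat f}$ only doubles the exponent, so the bound persists, but $g$ itself still need not be nonnegative. Hence the direct product construction, which is nonnegative by design, is the preferred route, and rescaling $x\mapsto x/\lambda$ would put the support inside $\interval{-\delta}{\delta}$ if needed. Note that dilation changes $a(\omega)$ to $a(\omega/\lambda)$; this is handled because $a(\omega/\lambda)\ge a(\omega)$ when $\lambda\ge1$, by monotonicity.

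\textbf{(\labelcref{item:Ingham-mollifier}$\Rightarrow$\labelcref{item:Ingham-integral}).} Here $\rho$ is nonzero with compact support, so $\widehat\rho$ extends to an entire function of exponential type that is not identically zero. By the log-integral (Paley--Wiener/Beurling) theorem,
\begin{equation*}
  \int_\R\frac{\log\abs{\widehat\rho(\omega)}}{1+\omega^2}\,d\omega>-\infty.
\end{equation*}
Inserting the bound $\log\abs{\widehat\rho(\omega)}\le\log C-\abs\omega a(\abs\omega)$ gives $\int_1^\infty a(\omega)\omega/(1+\omega^2)\,d\omega<\infty$, which is equivalent to \cref{eq:Ingham}.
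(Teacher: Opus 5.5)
Your proof of \labelcref{item:Ingham-mollifier}$\Rightarrow$\labelcref{item:Ingham-integral} via the log-integral theorem for the bounded exponential-type function $\widehat{\rho}$ is correct. The paper simply cites Ingham for this direction, and your argument also makes the paper's preliminary reduction to $a\to0$ unnecessary. Your handling of the final assertion is the same as the paper's.

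The gap is in \labelcref{item:Ingham-integral}$\Rightarrow$\labelcref{item:Ingham-mollifier}. The paper cites Ingham's explicit construction, which is already an infinite convolution of nonnegative functions. You set out to reprove it, but you leave its one nontrivial estimate open, and the concrete choice you suggest fails. Take one box of half-width $\epsilon_k=c\,a(e^k)$ for each $k$. Then $\sum_k\epsilon_k$ converges by condensation, but far too few factors satisfy $\epsilon_k\abs{\omega}\ge1$. For $a(u)=(1+\log u)^{-2}$ there are about $\sqrt{c\abs{\omega}}$ of them, so your counting estimate gives only a bound $e^{-C\sqrt{\abs{\omega}}}$. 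The target is $e^{-\abs{\omega}/(1+\log\abs{\omega})^{2}}$.

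The decay is governed by the counting function $\#\{k:\epsilon_k\ge e/\abs{\omega}\}$. Use the threshold $e$ rather than $1$, so that each counted factor contributes at least $1$ to $-\log\abs{\widehat{\rho}}$. This counting function must be at least of order $\abs{\omega}a(\abs{\omega})$. Convergence of $\sum_k\epsilon_k$ does not give this; the widths have to be repeated with large multiplicity.

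One choice that works is the following. For $k\ge k_0$, take $n_k:=\lceil e^{k+2}a(e^k)\rceil$ boxes of half-width $e^{-k}$. Since $a$ is nonincreasing, $\sum_{k\ge1}a(e^k)\le\int_1^\infty a(u)\,du/u$. Hence the total half-width satisfies $\sum_{k\ge k_0}n_ke^{-k}\le\sum_{k\ge k_0}\bigl(e^2a(e^k)+e^{-k}\bigr)$, which is finite and at most $\delta$ once $k_0$ is large.

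Now let $e^K\le\abs{\omega}<e^{K+1}$ with $K-1\ge k_0$. Each of the $n_{K-1}$ boxes of half-width $e^{1-K}$ contributes a factor of modulus at most $e^{-1}$, and every other factor has modulus at most $1$. Therefore
\begin{equation*}
  -\log\abs{\widehat{\rho}(\omega)}\ge n_{K-1}\ge e^{K+1}a(e^{K-1})\ge\abs{\omega}\,a(\abs{\omega}).
\end{equation*}
The range $1\le\abs{\omega}<e^{k_0+1}$ is absorbed into $C$.

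The support constraint must be met by discarding an initial segment of the factors, not by rescaling. In this construction, replacing $a$ by $Ka$ multiplies the total width by about $K$. Shrinking it back by scaling all widths, or by dilating $\rho$, costs the same factor in the exponent bound, so your step ``replace $a$ by $Ka$'' would then be circular. Your dilation remark also overlooks the prefactor $1/\lambda$ in $(\abs{\omega}/\lambda)\,a(\abs{\omega}/\lambda)$.

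Alternatively, just cite Ingham's construction as the paper does. The detour through $f*\overline{f(-\cdot)}$ is then unnecessary, because Ingham's function is already nonnegative.
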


\begin{proof}
  The equivalence is the result of~\cite{MR1574706} together with minor observations about its proof. If $a$ does not tend to $0$, then $a\ge c$ for some $c>0$, and both sides of the equivalence fail: the integral in \cref{eq:Ingham} diverges, and a compactly supported $\rho\in L^1(\R)$ with $\abs*{\widehat{\rho}(\omega)}\le Ce^{-c\abs{\omega}}$ for $\abs{\omega}\ge1$ vanishes identically~\cite[p.~29]{MR1574706}, so no such $\rho$ exists. We may therefore assume $a(\omega)\to 0$ as $\omega\to\infty$.

  The necessity of~\labelcref{eq:Ingham} is stated and proved in~\cite{MR1574706}. For sufficiency, Ingham constructs an explicit nonzero function $\rho\in L^1(\R)$ such that $\abs{\widehat{\rho}(\omega)}\le Ce^{-\abs{\omega}a(\abs{\omega})}$ for $\abs{\omega}\ge 1$. The construction shows that $\rho$ is the pointwise limit, as $k\to\infty$, of the convolutions $h_1\ast\dots\ast h_k$, where $h_j\ge 0$. This implies $\rho\ge 0$, and $\int_\R\rho\,dx=1$ is obtained after normalization.

  The final assertion follows by applying the equivalence to each $\delta\in\ointerval{0}{1}$ and enlarging $C_\delta$ to cover the finitely many indices $\ell$ for which $\lambda_\ell<1$.
\end{proof}

Note that the construction in \cref{prop:Ingham} does not use any property of the eigenvalue sequence $(\lambda_\ell)_\ell$, since the Fourier-decay estimate holds for every real $\omega$ with $\abs{\omega}\ge 1$.

For every $\theta\in\rinterval{0}{1/2}$, the sequence $(\lambda_\ell^\theta)_\ell$ is an admissible Fourier-decay bound by \cref{prop:Ingham}, applied to $b(\lambda):=\lambda^\theta$. At the endpoint $\theta=1/2$, the corresponding function $a$ is identically $1$, and the integral in \cref{eq:Ingham} diverges.

\Cref{prop:Ingham} also yields admissible Fourier-decay bounds closer to the critical scale. For example, given $\varepsilon>0$, a nonnegative sequence $(\beta_\ell)_\ell$ is admissible whenever
\begin{equation*}
  \beta_\ell
  =\frac{\sqrt{\lambda_\ell}}
  {(\log\lambda_\ell)^{1+\varepsilon}}
\end{equation*}
holds for all sufficiently large $\ell$.

We will also use the following elementary closure properties of admissible Fourier-decay bounds.

\begin{lemma}\label{prop:decay-bound-properties}
  Let $\beta=(\beta_\ell)_\ell$ and $\wt{\beta}=(\wt{\beta}_\ell)_\ell$ be admissible Fourier-decay bounds for $(\M,g)$, let $\gamma=(\gamma_\ell)_\ell$ be a nonnegative sequence satisfying $\gamma_\ell\le\beta_\ell$ for every $\ell$, and let $C,c\ge0$. Then $\gamma$, $\beta+\wt{\beta}$, and $C\beta+c$ are all admissible Fourier-decay bounds (the operations are defined elementwise).
\end{lemma}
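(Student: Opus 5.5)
The three claims follow directly from \cref{def:decay-bound}, using the families $(\rho_\delta, C_\delta)$ that witness admissibility of $\beta$ and $\widetilde\beta$. Write $(\rho_\delta,C_\delta)$ for the witnesses of $\beta$ and $(\wt\rho_\delta,\wt C_\delta)$ for those of $\wt\beta$.

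\emph{Domination.} For $\gamma\le\beta$, I reuse the same $\rho_\delta$ and $C_\delta$. Condition \cref{item:decay-bound-rho} is unchanged, and $e^{-\beta_\ell}\le e^{-\gamma_\ell}$ gives \cref{item:decay-bound-rho-hat}.

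\emph{Sums.} For $\beta+\wt\beta$, I take the convolution $\rho_{\delta/2}\ast\wt\rho_{\delta/2}$.
\begin{itemize}
  \item It is nonnegative and lies in $L^1$.
  \item Its integral is $1\cdot1=1$.
  \item Its support lies in $[-\delta/2,\delta/2]+[-\delta/2,\delta/2]=[-\delta,\delta]$.
  \item Its Fourier transform is the product $\widehat\rho_{\delta/2}\,\widehat{\wt\rho}_{\delta/2}$, so at $\omega_\ell$ it is bounded by $C_{\delta/2}\wt C_{\delta/2}\,e^{-\beta_\ell-\wt\beta_\ell}$.
\end{itemize}

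\emph{Affine combinations.} The additive constant is trivial: $e^{-(\beta_\ell+c)}=e^{-c}e^{-\beta_\ell}$, so it suffices to replace $C_\delta$ by $C_\delta e^{c}$ (a larger constant is allowed). When $C=0$, the sequence $C\beta+c$ is constant, and any fixed bump $\rho_\delta$ works with constant $e^{c}$, since $\abs{\widehat\rho_\delta}\le\int\rho_\delta=1$. For integer $C=m\ge1$, $m\beta$ is admissible by induction on the sum property (with $\wt\beta=\beta$).

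For non-integer $C>0$ this reduces to the main point: the case $C\in(0,1)$. Taking $m=\lceil C\rceil$, we have $C\beta\le m\beta$. Since $m\beta$ is admissible and $C\beta$ is nonnegative, the domination property then gives admissibility of $C\beta$. This handles every $C\ge0$ without a separate $C<1$ argument. Finally, $C\beta+c$ follows from the constant shift.

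The only points needing care are these:
\begin{itemize}
  \item halving $\delta$ in the convolution, so the support stays in $[-\delta,\delta]$;
  \item using $\abs{\widehat\rho}\le\norm{\rho}_{L^1}=1$ for the degenerate case $C=0$.
\end{itemize}
I do not expect a genuine obstacle.
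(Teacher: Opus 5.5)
Your proposal is correct and matches the paper's proof step for step: you reuse the same mollifiers for $\gamma$, take the convolution $\rho_{\delta/2}\ast\wt{\rho}_{\delta/2}$ for $\beta+\wt{\beta}$, absorb $e^{c}$ into $C_\delta$, and handle $C\beta$ by domination with $k\beta$ for an integer $k\ge C$. Your separate treatment of $C=0$ via $\abs{\widehat{\rho}_\delta}\le 1$ is harmless but unnecessary, since choosing $k=1$ in the domination step already covers it.
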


\begin{proof}
  For each $\delta\in\ointerval{0}{1}$, choose functions $\rho_\delta,\wt{\rho}_\delta\in L^1(\R)$ and constants $C_\delta,\wt{C}_\delta>0$ satisfying \cref{item:decay-bound-rho,item:decay-bound-rho-hat} of \cref{def:decay-bound} with respect to the sequences $\beta$ and $\wt{\beta}$, respectively.

  The same function $\rho_\delta$ with the same constant $C_\delta$ satisfies \cref{item:decay-bound-rho-hat} of \cref{def:decay-bound} with respect to $\gamma$ as well, so $\gamma$ is an admissible Fourier-decay bound.

  Put $\sigma_\delta := \rho_{\delta/2}\ast\wt{\rho}_{\delta/2}$. By the standard properties of convolutions, the function $\sigma_\delta\in L^1(\R)$ satisfies \cref{item:decay-bound-rho} of \cref{def:decay-bound}, and
  \begin{equation*}
    \abs*{\widehat{\sigma}_\delta (\omega_\ell)}
    = \abs*{\widehat{\rho}_{\delta/2}(\omega_\ell)\,\widehat{\wt{\rho}}_{\delta/2}(\omega_\ell)}
    \le C_{\delta/2}\wt{C}_{\delta/2}\, e^{-(\beta_\ell+\wt{\beta}_\ell)},
  \end{equation*}
  so it satisfies \cref{item:decay-bound-rho-hat} as well. Thus, $\beta+\wt{\beta}$ is admissible.

  For an additive constant $c\ge 0$, $\abs[\big]{\widehat{\rho}_{\delta}(\omega_\ell)}\le C_\delta e^{-\beta_\ell}=C_\delta e^{c}\, e^{-(\beta_\ell+c)}$, so $\beta+c$ is admissible.

  Finally, choose a positive integer $k\ge C$. Then $k\beta$ is admissible (a $k$-fold sum of $\beta$) and $C\beta\le k\beta$, hence so is $C\beta$, and therefore so is $C\beta + c$.
\end{proof}

\subsection{The spectrally weighted space \texorpdfstring{$\HWM$}{H\_W(M)}}\label{sec:HWM}

We define a subspace $\HWM\subset\LM$ by weighting the eigenspaces $\El$ separately. If the weights are uniformly bounded from below, then $\HWM$ is a Hilbert space under its weighted inner product. When two manifolds have equivalent cross-spectral data and spectral weight sequences related by \cref{eq:widetilde-W} below, the eigenfunction matching maps $\Glrcv$ of \cref{prop:Gl-maps} induce an isometric isomorphism between the weighted spaces that preserves $L^2$-pairings with wave solutions.

\begin{definition}[Spectrally weighted space $\HWM$]\label{def:HWM}
  Let $(\M,g)$ be as in \cref{ass:standing}.
  \begin{enumerate}
    \item A \emph{spectral weight sequence} for $(\M,g)$ is a sequence $W=(W_\ell)_\ell$ such that each $W_\ell\in\calB(\El)$ is self-adjoint and positive definite.

    \item For a spectral weight sequence $W$ and $\psi\in\LM$, define $\normW{\psi}\in\interval{0}{\infty}$ by
      \begin{equation*}
        \normW{\psi}^2
        := \sum_{\ell\ge 0} \innerM{W_\ell \psi_\ell}{\psi_\ell},
      \end{equation*}
      where $\psi_\ell := P_\ell\psi \in \El$.
    \item For a spectral weight sequence $W$, the \emph{spectrally weighted space} $\HWM$ consists of those functions $\psi\in\LM$ for which $\normW{\psi}<\infty$.
  \end{enumerate}
\end{definition}

The following elementary properties of $\HWM$ are standard; we omit the proofs.

\begin{proposition}\label{prop:HW}
  Let $W=(W_\ell)_\ell$ be a spectral weight sequence for $(\M,g)$.
  \begin{enumerate}
    \item $\HWM$ is an inner product space under the pairing induced by $\normW{\cdot}$: $\inner{\psi}{\psi'}_W:=\sum_{\ell\ge 0}\innerM{W_\ell\psi_\ell}{\psi'_\ell}$, where $\psi_\ell:=P_\ell\psi$ and $\psi'_\ell:=P_\ell\psi'$; the series converges absolutely.
    \item\label{item:HW-Hilbert} If $W$ is uniformly bounded from below, i.e., there exists $c>0$ such that $W_\ell\ge c\,\id_{\El}$ for every $\ell$, then $\HWM$ is a Hilbert space that embeds continuously into $\LM$.
  \end{enumerate}
\end{proposition}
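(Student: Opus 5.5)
The plan is to verify the inner product structure termwise, using the eigenspace decomposition $\LM=\bigoplus_\ell\El$, and then to identify $\HWM$ with a weighted $\ell^2$-direct sum of the finite-dimensional spaces $\El$.

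For the first claim, each $W_\ell$ is self-adjoint and positive definite on the finite-dimensional space $\El$. Hence $(u,v)\mapsto\innerM{W_\ell u}{v}$ is an inner product on $\El$, and the Cauchy--Schwarz inequality for it gives
\begin{equation*}
  \abs{\innerM{W_\ell\psi_\ell}{\psi'_\ell}}\le \innerM{W_\ell\psi_\ell}{\psi_\ell}^{1/2}\innerM{W_\ell\psi'_\ell}{\psi'_\ell}^{1/2}.
\end{equation*}
Summing over $\ell$ and applying the Cauchy--Schwarz inequality in $\ell^2$ bounds $\sum_\ell\abs{\innerM{W_\ell\psi_\ell}{\psi'_\ell}}$ by $\normW{\psi}\,\normW{\psi'}$. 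This gives absolute convergence. It also shows that $\HWM$ is a linear subspace, via the triangle inequality $\normW{\psi+\psi'}\le\normW{\psi}+\normW{\psi'}$, which follows from the same bound.

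Bilinearity and symmetry of $\inner{\cdot}{\cdot}_W$ follow termwise from the self-adjointness of each $W_\ell$. For definiteness, suppose $\normW{\psi}=0$. Then every term $\innerM{W_\ell\psi_\ell}{\psi_\ell}$ vanishes, so $\psi_\ell=0$ for all $\ell$ by positive definiteness. Since $\psi=\sum_\ell\psi_\ell$ in $\LM$, we get $\psi=0$.

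For the second claim, assume $W_\ell\ge c\,\id_{\El}$. Parseval's identity then gives
\begin{equation*}
  \normM{\psi}^2=\sum_\ell\normM{\psi_\ell}^2\le c^{-1}\normW{\psi}^2,
\end{equation*}
which is the continuous embedding into $\LM$. For completeness, let $(\psi^j)_j$ be a Cauchy sequence in $\HWM$. By the embedding it is Cauchy in $\LM$, so it converges in $\LM$ to some $\psi$, and $P_\ell\psi^j\to P_\ell\psi$ for each $\ell$.

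The only mildly delicate point is showing that $\psi\in\HWM$ and that $\psi^j\to\psi$ in $\normW{\cdot}$. I will use the standard Fatou-type argument. Fix $\varepsilon>0$ and choose $N$ such that $\normW{\psi^j-\psi^k}<\varepsilon$ for $j,k\ge N$. For each finite $L$ and each $j\ge N$,
\begin{equation*}
  \sum_{\ell\le L}\innerM{W_\ell(\psi^j_\ell-\psi^k_\ell)}{\psi^j_\ell-\psi^k_\ell}<\varepsilon^2 .
\end{equation*}
Each $W_\ell$ is bounded, so we may let $k\to\infty$ in this finite sum, which gives the same bound with $\psi^k_\ell$ replaced by $\psi_\ell$ (with $\le$ in place of $<$). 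Letting $L\to\infty$ then yields $\normW{\psi^j-\psi}\le\varepsilon$. It follows that $\psi=\psi^j-(\psi^j-\psi)\in\HWM$ and that $\psi^j\to\psi$ in $\HWM$.
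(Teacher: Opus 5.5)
Your proof is correct and complete. The paper states this proposition without proof, calling it standard. Your argument is the standard one: termwise Cauchy--Schwarz gives absolute convergence and the subspace property, Parseval gives the embedding $\normM{\psi}^2\le c^{-1}\normW{\psi}^2$, and a Fatou-type limit in finite partial sums gives completeness.
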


\subsubsection*{Relating spectral weight sequences.}
Let $\wt{W}=(\wt{W}_\ell)_\ell$ be a spectral weight sequence for $(\tM,\tg)$, and assume that $(\M,g)$ and $(\tM,\tg)$ have equivalent cross-spectral data. Define $W=(W_\ell)_\ell$ by
\begin{equation}\label{eq:widetilde-W}
  W_\ell := (\Glrcv)^\ast \wt{W}_\ell \Glrcv
  \in \calB(\El).
\end{equation}
Then $W$ is a spectral weight sequence for $(\M,g)$.

We record how the eigenfunction matching maps $\Glrcv$ and $\Glsrc$ act on waves generated by matched sources.

\begin{lemma}\label{prop:Gl-on-wave}
  Assume that the manifolds $(\M,g)$ and $(\tM,\tg)$ have equivalent cross-spectral data. Fix $T>0$.

  For matched sources $s\in L^1(\interval{0}{T};\Lsrc)$ and $\tilde{s}\in L^1(\interval{0}{T};\tLsrc)$, the corresponding wave solutions satisfy, for all $\ell$,
  \begin{equation}\label{eq:Glrcv-on-wave}
    \Glrcv P_\ell v^s(T)
    = \wt{P}_\ell\wt{v}^{\tilde{s}}(T).
  \end{equation}
  For matched sources $r\in L^1(\interval{0}{T};\Lrcv)$ and $\tilde{r}\in L^1(\interval{0}{T};\tLrcv)$, the corresponding wave solutions satisfy, for all $\ell$,
  \begin{equation}\label{eq:Glsrc-on-wave}
    \Glsrc P_\ell v^r(T)
    = \wt{P}_\ell\wt{v}^{\tilde{r}}(T).
  \end{equation}
\end{lemma}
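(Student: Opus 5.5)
We reduce both identities to the eigenspace $\El$ and test against the slices $K_\ell(y,\cdot)$, $y\in\rcv$ (respectively $K_\ell(x,\cdot)$, $x\in\src$). We treat \cref{eq:Glrcv-on-wave}; \cref{eq:Glsrc-on-wave} follows by the same argument with the roles of $\src$ and $\rcv$ exchanged, using \cref{data:projectors-opposite} in place of \cref{data:projectors}.

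First, apply the bounded operator $P_\ell$ to \cref{eq:wave-solution}. Since $P_\ell$ commutes with $\sinprop$ and $\sinprop_t$ acts on $\El$ as multiplication by $\sigma_\ell(t)$, we get
\begin{equation*}
  P_\ell v^s(T)=\int_0^T\sigma_\ell(T-\tau)\,P_\ell s(\tau)\,d\tau ,
\end{equation*}
as a Bochner integral in the finite-dimensional space $\El$. The analogous formula holds on $(\tM,\tg)$, with the same weight $\sigma_\ell$ because $\lambda_\ell=\wt{\lambda}_\ell$. Both sides of \cref{eq:Glrcv-on-wave} lie in $\tEl$. By the definition \cref{eq:Gl-maps} of $\Glrcv$ and unique continuation, it therefore suffices to show that the restrictions to $\rcv$ of $P_\ell v^s(T)$ and $\wt{P}_\ell\wt{v}^{\tilde{s}}(T)$ are $\Phi_\rcv$-matched.

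For a.e.\ $\tau$, the functions $s(\tau)\in\Lsrc$ and $\tilde{s}(\tau)\in\tLsrc$ are matched. By the $L^2$-extended form of \cref{data:projectors} in \cref{item:data-extension} of \cref{rem:data-equivalence}, the functions $\PlRS s(\tau)=\res_\rcv P_\ell s(\tau)$ and $\tPlRS\tilde{s}(\tau)$ are then $\Phi_\rcv$-matched. The maps $\tau\mapsto\sigma_\ell(T-\tau)\PlRS s(\tau)$ lie in $L^1(\interval{0}{T};\Lrcv)$, and likewise on $\tM$. Hence \cref{item:matched-integral} of \cref{prop:matched-calculus} shows that
\begin{equation*}
  \res_\rcv P_\ell v^s(T)=\int_0^T\sigma_\ell(T-\tau)\PlRS s(\tau)\,d\tau
\end{equation*}
and its tilded counterpart are matched. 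Here we use that $\res_\rcv$ commutes with the Bochner integral.

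This yields $P_\ell v^s(T)|_\rcv=(\wt{P}_\ell\wt{v}^{\tilde{s}}(T))\circ\Phi_\rcv$ almost everywhere. Both sides are smooth, so the identity holds pointwise, and the uniqueness in \cref{prop:Gl-maps} gives $\Glrcv P_\ell v^s(T)=\wt{P}_\ell\wt{v}^{\tilde{s}}(T)$. The only delicate point is the passage from $C_c^\infty$ matching to $L^1$-in-time $L^2$ sources. This is handled by \cref{rem:data-equivalence}, which relies on the fact that $\Phi_\src$ and $\Phi_\rcv$ are isometries (\cref{prop:data-summary}). The rest is bookkeeping.
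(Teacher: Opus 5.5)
Your proof is correct and follows essentially the paper's route. The paper first shows $\Glrcv P_\ell\psi=\wt{P}_\ell\wt{\psi}$ for each matched pair $\psi\in\Lsrc$, $\wt{\psi}\in\tLsrc$ (via the $L^2$-extended \cref{data:projectors} and unique continuation) and then pulls the linear map $\Glrcv$ through the time integral; you instead pass the matching through the integral with \cref{item:matched-integral} of \cref{prop:matched-calculus} and invoke the uniqueness of $\Glrcv$ only at the end, which uses the same ingredients in a different order and is equally valid.
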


\begin{proof}
  It suffices to prove \cref{eq:Glrcv-on-wave}, since \cref{eq:Glsrc-on-wave} follows from the same argument with the roles of the source and receiver sets reversed, using \cref{data:projectors-opposite} and its $L^2$-extension in \cref{item:data-extension} of \cref{rem:data-equivalence}.

  Fix $\ell$, and consider matched $\psi\in\Lsrc$ and $\wt{\psi}\in\tLsrc$. By \cref{item:data-extension} of \cref{rem:data-equivalence}, the functions $\PlRS\psi$ and $\tPlRS\wt{\psi}$ are matched. Combining this with \cref{eq:Gl-maps}, applied to $\phi=P_\ell\psi$, shows that $\wt{P}_\ell\wt{\psi}$ and $\Glrcv P_\ell\psi$ agree on $\trcv$, hence everywhere by unique continuation. Therefore, $\Glrcv P_\ell \psi = \wt{P}_\ell\wt{\psi}$. Using this, \cref{eq:wave-solution}, and $\lambda_\ell=\wt{\lambda}_\ell$, we get \cref{eq:Glrcv-on-wave}:
  \begin{align*}
    \Glrcv P_\ell v^s(T)
    &= \Glrcv P_\ell \int_0^T \sinprop_{T-\tau}s(\tau)\,d\tau\\
    &= \int_0^T \sigma_\ell(T-\tau) \wt{P}_\ell \tilde{s}(\tau)\,d\tau\\
    &= \wt{P}_\ell\wt{v}^{\tilde{s}}(T).\qedhere
  \end{align*}
\end{proof}

The relation~\labelcref{eq:widetilde-W} for the spectral weight sequences compensates for the failure of $\Glrcv$ and $\Glsrc$ to preserve the global $L^2$-norms: it allows the maps $\Glrcv$ to assemble into an isometric isomorphism between the weighted spaces. Moreover, the adjoint relation~\labelcref{eq:Gl-adjoint} and \cref{prop:Gl-on-wave} imply that this isomorphism preserves $L^2$-pairings with waves generated by matched sources in $\rcv$. This is the mechanism used in the proof of \cref{prop:main-uniqueness} to transfer the ball-inclusion criterion of \cref{prop:inf-balls} between the two manifolds.

\begin{proposition}\label{prop:HW-G-rcv}
  Assume that the manifolds $(\M,g)$ and $(\tM,\tg)$ have equivalent cross-spectral data, and let $W=(W_\ell)_\ell$ and $\wt{W}=(\wt{W}_\ell)_\ell$ be spectral weight sequences for $(\M,g)$ and $(\tM,\tg)$, respectively. Assume that they are related by \cref{eq:widetilde-W} and that both are uniformly bounded from below (see \cref{item:HW-Hilbert} of \cref{prop:HW} for the definition).
  \begin{enumerate}
    \item\label{item:G-rcv} Let $\psi\in\HWM$ and write $\psi = \sum_{\ell\ge 0} \psi_\ell$, where $\psi_\ell\in \El$. The series
      \begin{equation}\label{eq:HW-Grcv}
        \Grcv\,\psi := \sum_{\ell\ge 0} \Glrcv \psi_\ell
      \end{equation}
      is orthogonal in the Hilbert space $\tHWM$, converges in its norm, and defines an isometric isomorphism $\Grcv\colon\HWM\to\tHWM$.
    \item\label{item:HW-Grcv-inner-product} For every $T>0$, every $\psi\in\HWM$, and every matched pair of sources $r\in\Lloc{\rinterval{0}{\infty};\Lrcv}$ and $\tilde{r}\in\Lloc{\rinterval{0}{\infty};\tLrcv}$, with corresponding solutions $v^r$ and $\wt{v}^{\tilde{r}}$ to the wave equations on $\M$ and $\tM$, respectively,
      \begin{equation}\label{eq:HW-Grcv-inner-product}
        \innerM{\psi}{v^r(T)}
        = \tinnerM{\Grcv\psi}{\wt{v}^{\tilde{r}}(T)}.
      \end{equation}
  \end{enumerate}
\end{proposition}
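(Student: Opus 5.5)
For \cref{item:G-rcv}, I would start from a termwise isometry. For each $\ell$ and $\phi\in\El$, the relation \cref{eq:widetilde-W} gives
\begin{equation*}
  \tinnerM{\wt{W}_\ell\Glrcv\phi}{\Glrcv\phi}
  =\innerM{(\Glrcv)^\ast\wt{W}_\ell\Glrcv\phi}{\phi}
  =\innerM{W_\ell\phi}{\phi}.
\end{equation*}
Hence $\Glrcv\colon(\El,\inner{\cdot}{\cdot}_{W_\ell})\to(\tEl,\inner{\cdot}{\cdot}_{\wt{W}_\ell})$ is an isometric linear bijection, using \cref{prop:Gl-maps}.

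Distinct eigenspaces $\tEl$ are orthogonal in $\tHWM$, because the weighted inner product only pairs equal indices. So the terms $\Glrcv\psi_\ell$ are mutually orthogonal in $\tHWM$, and
\begin{equation*}
  \sum_\ell\tnormW{\Glrcv\psi_\ell}^2=\sum_\ell\innerM{W_\ell\psi_\ell}{\psi_\ell}=\normW{\psi}^2<\infty.
\end{equation*}
Since $\tHWM$ is complete by \cref{item:HW-Hilbert} of \cref{prop:HW} (this is where the uniform lower bound on $\wt{W}$ is used), the series converges and $\Grcv$ is an isometry. Because $\tHWM$ embeds continuously into $\tLM$ and $\wt{P}_\ell$ is continuous, we get $\wt{P}_\ell\Grcv\psi=\Glrcv\psi_\ell$. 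For surjectivity, given $\wt{\psi}\in\tHWM$, I would set $\psi:=\sum_\ell(\Glrcv)^{-1}\wt{P}_\ell\wt{\psi}$. The same computation shows this series converges in $\HWM$, which is complete since $W$ is bounded below, and that $\Grcv\psi=\wt{\psi}$.

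For \cref{item:HW-Grcv-inner-product}, I would expand both pairings spectrally. Since $\psi,v^r(T)\in\LM$, Parseval gives $\innerM{\psi}{v^r(T)}=\sum_\ell\innerM{\psi_\ell}{P_\ell v^r(T)}$, and similarly $\tinnerM{\Grcv\psi}{\wt{v}^{\tilde{r}}(T)}=\sum_\ell\tinnerM{\Glrcv\psi_\ell}{\wt{P}_\ell\wt{v}^{\tilde{r}}(T)}$, using the projection identity above. The sources are locally integrable, so their restrictions to $\interval{0}{T}$ lie in $L^1$, and \cref{eq:Glsrc-on-wave} of \cref{prop:Gl-on-wave} gives $\wt{P}_\ell\wt{v}^{\tilde{r}}(T)=\Glsrc P_\ell v^r(T)$. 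By the adjoint relation \cref{eq:Gl-adjoint},
\begin{equation*}
  \tinnerM{\Glrcv\psi_\ell}{\Glsrc P_\ell v^r(T)}
  =\innerM{\psi_\ell}{(\Glrcv)^\ast\Glsrc P_\ell v^r(T)}
  =\innerM{\psi_\ell}{P_\ell v^r(T)}.
\end{equation*}
The two series therefore agree term by term.

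The main point needing care is that the spectral expansion of $\Grcv\psi$ in $\tLM$ really has components $\Glrcv\psi_\ell$. This requires the series to converge in $\tLM$, which follows from the continuous embedding, so both Parseval expansions are legitimate. Everything else is bookkeeping.
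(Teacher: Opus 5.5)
Your proof is correct and follows the paper's argument: the termwise isometry from \cref{eq:widetilde-W}, orthogonality of distinct eigenspaces in $\tHWM$, and the chain through \cref{eq:Glsrc-on-wave} and \cref{eq:Gl-adjoint} for the pairing identity. The only differences are minor. You prove surjectivity by writing down the inverse series, whereas the paper argues that the range is closed (since $\HWM$ is complete) and dense (since it contains every $\tEl$). You also make explicit the identification $\wt{P}_\ell\Grcv\psi=\Glrcv\psi_\ell$, which the paper uses tacitly.
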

\begin{proof}
  \begin{enumerate}
    \item The series is orthogonal, because $\Glrcv$ maps into $\tEl$, and $\tEl\perp\wt{E}_{\ell'}$ in $\tHWM$ if $\ell\neq\ell'$. It converges, since
      \begin{align*}
        \tnormW[\big]{\Glrcv\,\psi_\ell}^2
        = \tinnerM{\wt{W}_\ell \Glrcv\,\psi_\ell}{\Glrcv\,\psi_\ell}
        = \innerM{W_\ell\psi_\ell}{\psi_\ell},
      \end{align*}
      and summing over $\ell$ yields $\tnormW[\big]{\Grcv\psi}^2=\normW{\psi}^2<\infty$, which also proves that $\Grcv$ is norm-preserving. Since $\HWM$ is complete, the range of $\Grcv$ is closed. It is also dense, since it contains every eigenspace $\tEl$; hence $\Grcv$ is an isometric isomorphism.

    \item Let $\psi\in\HWM$ and write $\psi=\sum_{\ell\ge 0} \psi_\ell$ with $\psi_\ell\in \El$. By the definition of $\Grcv$, \cref{eq:Glsrc-on-wave}, and \cref{eq:Gl-adjoint}, we obtain
      \begin{align*}
        \tinnerM{\Grcv\psi}{\wt{v}^{\tilde{r}}(T)}
        &= \sum_{\ell\ge 0} \tinnerM{\Glrcv\psi_\ell}{\wt{P}_\ell\,\wt{v}^{\tilde{r}}(T)}\\
        &= \sum_{\ell\ge 0} \tinnerM{\Glrcv\psi_\ell}{\Glsrc P_\ell\,v^{r}(T)}\\
        &= \sum_{\ell\ge 0} \innerM{\psi_\ell}{P_\ell\,v^{r}(T)}\\
        &= \innerM{\psi}{v^{r}(T)}.\qedhere
      \end{align*}
  \end{enumerate}
\end{proof}

\subsection{Bump functions in \texorpdfstring{$\HWM$}{H\_W(M)}}\label{sec:bump-functions}

In this subsection, we show that if a spectral weight sequence $W$ for $(\M,g)$ satisfies a suitable upper bound, then $\HWM$ contains bump functions supported in any prescribed nonempty open subset of $\M$. These functions are used in \cref{sec:ball-inclusion} to test inclusion relations between geodesic balls.

We construct these bump functions by wave mollification~\cite[Equation~(1.4)]{MR658471}, using the mollifiers $\rho_\delta$ associated with an admissible Fourier-decay bound (\cref{def:decay-bound}); see~\cite[Sections~2--3]{MR3112375} for related wave-equation regularizations.
\begin{proposition}\label{prop:wave-mollification}
  Let $(\beta_\ell)_\ell$ be an admissible Fourier-decay bound, and choose associated functions $\rho_\delta\in L^1(\R)$ and constants $C_\delta>0$, $\delta\in\ointerval{0}{1}$, as in \cref{def:decay-bound}. Fix a spectral weight sequence $W=(W_\ell)_\ell$ for $(\M,g)$ and assume that it satisfies, for every $\ell$, the bound
  \begin{equation}\label{eq:beta-bounded-W}
    W_\ell \le e^{2\beta_\ell}\,\id_{\El}.
  \end{equation}

  For every $\psi\in\LM$ and $\delta\in\ointerval{0}{1}$, the Bochner integral
  \begin{equation}
    \label{eq:wave-mollification}
    \psi_\delta := \int_\R \rho_\delta(t)\, \cosprop_t\psi\,dt \in\LM
  \end{equation}
  exists, and
  \begin{enumerate}
    \item\label{item:wave-mollification-space} $\psi_\delta\in\HWM$,
    \item\label{item:wave-mollification-convergence} $\psi_\delta\to\psi$ in $\LM$ as $\delta\to 0^+$, and
    \item\label{item:wave-mollification-support} $\supp\psi_\delta\subset\{x\in\M : \dist(x,\supp\psi)\le\delta\}$.
  \end{enumerate}
\end{proposition}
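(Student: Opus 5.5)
The plan is to work spectrally throughout, using that each $\cosprop_t$ is a contraction on $\LM$. First, the map $t\mapsto\cosprop_t\psi$ is continuous from $\R$ to $\LM$: each term of \cref{eq:cosprop} is continuous, and the tails $\sum_{\ell>L}\normM{P_\ell\psi}^2$ are uniformly small. It is also bounded by $\normM{\psi}$. Since $\rho_\delta\in L^1(\R)$, the Bochner integral \cref{eq:wave-mollification} therefore exists. Applying the bounded operator $P_\ell$ under the integral and using that $\rho_\delta$ is real and supported in $\interval{-\delta}{\delta}$, we get
\begin{equation*}
  P_\ell\psi_\delta=\int_\R\rho_\delta(t)\cos(\omega_\ell t)\,dt\;P_\ell\psi=\re\widehat{\rho}_\delta(\omega_\ell)\,P_\ell\psi .
\end{equation*}
Hence $\abs{P_\ell\psi_\delta}$ is controlled by $\abs{\widehat{\rho}_\delta(\omega_\ell)}\le C_\delta e^{-\beta_\ell}$.

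For \cref{item:wave-mollification-space}, combine this with \cref{eq:beta-bounded-W}. This gives
\begin{equation*}
  \innerM{W_\ell P_\ell\psi_\delta}{P_\ell\psi_\delta}\le e^{2\beta_\ell}C_\delta^2e^{-2\beta_\ell}\normM{P_\ell\psi}^2=C_\delta^2\normM{P_\ell\psi}^2 .
\end{equation*}
Summing over $\ell$ yields $\normW{\psi_\delta}\le C_\delta\normM{\psi}$.

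For \cref{item:wave-mollification-convergence}, the identity above gives
\begin{equation*}
  \normM{\psi_\delta-\psi}^2=\sum_\ell\abs{\re\widehat{\rho}_\delta(\omega_\ell)-1}^2\normM{P_\ell\psi}^2 .
\end{equation*}
Each factor is bounded by $4$, since $\abs{\widehat{\rho}_\delta}\le\int\rho_\delta=1$. Each factor also tends to $0$ for fixed $\ell$, because
\begin{equation*}
  \abs{\widehat{\rho}_\delta(\omega)-1}\le\int\rho_\delta(t)\abs{e^{-i\omega t}-1}\,dt\le\delta\abs{\omega}.
\end{equation*}
Dominated convergence for series then concludes. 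Alternatively, one can use $\normM{\cosprop_t\psi-\psi}\to0$ as $t\to0$ and $\rho_\delta\ge0$ with unit mass.

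The main obstacle is \cref{item:wave-mollification-support}, which rests on finite speed of propagation. For a test function $\chi\in\Cc{\M}$ supported in the open set $\{x:\dist(x,\supp\psi)>\delta\}$, it suffices to show $\innerM{\cosprop_t\psi}{\chi}=0$ for $\abs{t}\le\delta$. Integrating against $\rho_\delta$ then gives $\innerM{\psi_\delta}{\chi}=0$, since the pairing with $\chi$ is a bounded functional commuting with the Bochner integral. The function $u(t)=\cosprop_t\psi$ is the solution of the homogeneous wave equation with data $(\psi,0)$. For smooth $\psi$, the standard domain-of-dependence estimate (energy estimate on backward cones) gives $\supp u(t)\subset\{\dist(\cdot,\supp\psi)\le\abs t\}$. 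For general $\psi\in\LM$, I would approximate $\psi$ in $\LM$ by smooth functions supported in $\{\dist(\cdot,\supp\psi)<\varepsilon\}$, for instance by mollifying in charts, and use the $L^2$-contractivity of $\cosprop_t$. This gives vanishing of the pairing for $\chi$ supported where $\dist>\delta+\varepsilon$, for every $\varepsilon>0$, which suffices. If support is meant in the distributional sense, the closed set $\{\dist\le\delta\}$ then contains $\supp\psi_\delta$.
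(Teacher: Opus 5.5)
Your proposal is correct and is essentially the paper's proof: the same spectral identity $P_\ell\psi_\delta=\re\bigl(\widehat{\rho}_\delta(\omega_\ell)\bigr)P_\ell\psi$ gives $\normW{\psi_\delta}\le C_\delta\normM{\psi}$, and finite speed of propagation gives the support bound. The differences are minor. For item (ii), the paper uses exactly your alternative (strong continuity of $\cosprop$ at $t=0$, together with $\rho_\delta\ge0$, unit mass, and $\supp\rho_\delta\subset[-\delta,\delta]$) rather than dominated convergence over $\ell$; for item (iii), it cites finite speed of propagation directly rather than spelling out your reduction from $L^2$ data to smooth data.
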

\begin{proof}
  Since $t\mapsto \cosprop_t\psi$ is continuous and bounded by $\normM{\psi}$, and $\rho_\delta\in L^1(\R)$, the Bochner integral~\labelcref{eq:wave-mollification} exists. Recall that $\omega_\ell=\sqrt{\lambda_\ell}$. For every $\ell$,
  \begin{equation}
    \label{eq:wave-mollification-2}
    P_\ell\psi_\delta
    = \left(\int_\R\rho_\delta(t)\cos(\omega_\ell t)\,dt\right) P_\ell\psi
    = \re\bigl(\widehat{\rho}_\delta(\omega_\ell)\bigr)\,P_\ell\psi,
  \end{equation}
  so, by \cref{item:decay-bound-rho-hat} of \cref{def:decay-bound},
  \begin{equation*}
    \normM{P_\ell\psi_\delta}
    = \abs*{\re\bigl(\widehat{\rho}_\delta(\omega_\ell)\bigr)}\,\normM{P_\ell\psi}
    \le C_\delta\,e^{-\beta_\ell}\,\normM{P_\ell\psi}.
  \end{equation*}
  Therefore,
  \begin{align*}
    \normW{\psi_\delta}^2
    &= \sum_{\ell\ge 0} \innerM{W_\ell P_\ell\psi_\delta}{P_\ell\psi_\delta}\\
    &\le \sum_{\ell\ge 0} e^{2\beta_\ell} \normM{P_\ell\psi_\delta}^2\\
    &\le C_\delta^2\sum_{\ell\ge 0} \normM{P_\ell\psi}^2\\
    &= C_\delta^2\normM{\psi}^2\\
    &< \infty,
  \end{align*}
  and $\psi_\delta\in\HWM$. This proves the first item.

  The second item follows from the strong continuity of $\cosprop$ at $t=0$. Indeed, by \cref{item:decay-bound-rho} of \cref{def:decay-bound},
  \begin{equation*}
    \normM{\psi_\delta-\psi}
    \le \int_\R \rho_\delta(t)\,\normM[\big]{\cosprop_t\psi-\psi}\,dt
    \le \sup_{\abs{t}\le\delta}\normM[\big]{\cosprop_t\psi-\psi}
    \to 0
  \end{equation*}
  as $\delta\to 0^+$.

  By the finite speed of wave propagation~\cite[Chapter~6, Proposition~1.3]{MR2744150}, the third item follows, since $\rho_\delta(t)$ vanishes for $\abs{t}>\delta$.
\end{proof}

\begin{corollary}\label{prop:HW-dense-in-U}
  Let $U\subset\M$ be a nonempty open set and $W=(W_\ell)_\ell$ a spectral weight sequence for $(\M,g)$. Suppose that \cref{eq:beta-bounded-W} holds for some admissible Fourier-decay bound $(\beta_\ell)_\ell$. Then $\HWM\cap\Cc{U}$ is dense in $L^2(U)$.
\end{corollary}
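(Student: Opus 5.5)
The plan is to approximate an arbitrary $u\in L^2(U)$ first by a smooth function with compact support well inside $U$, and then to mollify it with \cref{prop:wave-mollification}. This gives functions in $\HWM$ whose supports stay inside $U$ and which converge in $L^2$.

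Fix $u\in L^2(U)$ and $\varepsilon>0$.

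First, by density of $\Cc{U}$ in $L^2(U)$, I choose $\psi\in\Cc{U}$ with $\normM{u-\psi}<\varepsilon/2$; here $\psi$ and $u$ are extended by zero to $\M$. The set $K:=\supp\psi$ is compact and contained in the open set $U$, so $d:=\dist(K,\M\setminus U)>0$ if $U\neq\M$. If $U=\M$, I set $d:=1$.

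Second, I apply \cref{prop:wave-mollification} to $\psi$ with the given admissible Fourier-decay bound, using the hypothesis \cref{eq:beta-bounded-W}. For $\delta\in\ointerval{0}{\min(1,d/2)}$, the function $\psi_\delta$ has three properties:
\begin{itemize}
  \item[(a)] it lies in $\HWM$ by \cref{item:wave-mollification-space};
  \item[(b)] its support is contained in the closed set $K_\delta:=\{x:\dist(x,K)\le\delta\}$ by \cref{item:wave-mollification-support}, and $K_\delta$ is compact (being closed in the closed manifold $\M$) and contained in $U$ because $\delta<d$;
  \item[(c)] $\psi_\delta\to\psi$ in $\LM$ as $\delta\to0^+$ by \cref{item:wave-mollification-convergence}.
\end{itemize}
I then choose $\delta$ so small that $\normM{\psi_\delta-\psi}<\varepsilon/2$. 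Combining the two steps gives $\normM{u-\psi_\delta}<\varepsilon$.

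The only remaining point is to check that $\psi_\delta\in\Cc{U}$, and in particular that it is smooth; this is the one step requiring some care. Since $\psi$ is smooth, it lies in every Sobolev space $H^k(\M)$, so $\normM{\lambda_\ell^{k}P_\ell\psi}$ is square summable for every $k$. By \cref{eq:wave-mollification-2}, the coefficients of $\psi_\delta$ are those of $\psi$ multiplied by $\re\widehat{\rho}_\delta(\omega_\ell)$, which is bounded in absolute value by $\int\rho_\delta=1$. Hence $\psi_\delta\in H^k(\M)$ for all $k$, and Sobolev embedding gives $\psi_\delta\in C^\infty(\M)$. Together with the support statement (b), this shows $\psi_\delta\in\HWM\cap\Cc{U}$, which completes the density argument.
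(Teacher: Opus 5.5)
Your proposal is correct and follows essentially the same route as the paper's proof. You approximate by some $\psi\in\Cc{U}$, apply the wave mollification of \cref{prop:wave-mollification} with its three properties, and get smoothness from $\abs{\widehat{\rho}_\delta}\le1$ via \cref{eq:wave-mollification-2} and Sobolev regularity. Your explicit choice of $\delta<d$ to keep $\supp\psi_\delta$ inside $U$ just spells out what the paper states as ``for all sufficiently small $\delta$''.
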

\begin{proof}
  Let $\psi\in L^2(U)$ and $\varepsilon>0$. Choose $\chi\in\Cc{U}$ such that $\norm{\psi-\chi}_{L^2(U)}<\varepsilon/2$. Let $(\beta_\ell)_\ell$ be as in the hypothesis and, for $\delta\in\ointerval{0}{1}$, let $\chi_\delta$ be the wave mollification of $\chi$ defined in \cref{prop:wave-mollification}. By \cref{item:wave-mollification-convergence} of \cref{prop:wave-mollification}, $\normM{\chi-\chi_\delta}<\varepsilon/2$ for all sufficiently small $\delta>0$. Since $\supp\chi$ is a compact subset of $U$, \cref{item:wave-mollification-support} of \cref{prop:wave-mollification} also gives $\supp\chi_\delta\subset U$ for all sufficiently small $\delta>0$.

  Moreover, each $\chi_\delta$ is smooth: the normalization and nonnegativity of $\rho_\delta$ give $\abs{\widehat{\rho}_\delta}\le 1$, so \cref{eq:wave-mollification-2} and $\chi\in\Cc{U}$ imply that $\chi_\delta\in H^k(\M)$ for every integer $k\ge0$. Fixing a small enough $\delta>0$, we have $\chi_\delta\in\HWM\cap\Cc{U}$ by \cref{item:wave-mollification-space} of \cref{prop:wave-mollification}, and the triangle inequality gives $\norm{\psi-\chi_\delta|_U}_{L^2(U)}<\varepsilon$. Since $\varepsilon>0$ was arbitrary, this proves the claim.
\end{proof}

\subsection{Ball inclusion and the singular-value condition}\label{sec:ball-inclusion}

The following proposition extends~\cite[Lemma~10]{MR3826551} to allow for disjoint $\src$ and $\rcv$, and is a key ingredient for our main uniqueness result (\cref{prop:main-uniqueness}). When $\src=\rcv$, the Blagoveščenskiĭ identity~\cite{MR282060,MR298226} (see also~\cite[Theorem~12]{MR3826551}) computes the norm of a wave directly from the data. This makes it possible to detect convergence of a sequence of waves. For disjoint $\src$ and $\rcv$, only the inner product of a pair of waves---one sourced on $\src$ and observed on $\rcv$, the other sourced on $\rcv$ and observed on $\src$---is computable from the data~\cite[Lemma~3.2]{MR4833220}. However, detecting even weak convergence of a sequence of waves by testing against other waves runs into the difficulty that, in general, the reachable waves are only dense in $L^2$ of their domain of influence. Concluding convergence therefore requires knowing that the sequence is bounded, but that is not known a priori.

The key idea below is to sidestep these difficulties by testing waves against functions in suitably related weighted spaces $\HWM$ and $\tHWM$. Their spectral weight sequences must balance two requirements: they must be uniformly bounded from below, so that the eigenfunction matching maps induce an isometric isomorphism that preserves $L^2$-pairings with matched waves (see \cref{prop:HW-G-rcv}), but also from above in terms of an admissible Fourier-decay bound, so that the spaces contain nonzero functions supported in arbitrary nonempty open subsets of $\M$ and $\tM$, respectively (see \cref{prop:HW-dense-in-U}). The ball-inclusion criterion in \cref{prop:inf-balls} below uses only the upper bound; the lower bound enters in \cref{sec:uniqueness-results}, where the two manifolds are compared.

Let $B_{\M}(x,\rho)\subset\M$ denote the open geodesic ball of radius $\rho>0$ centered at $x\in\M$; for $\tx\in\tM$, define $B_{\tM}(\tx,\rho)\subset\tM$ analogously.

\begin{proposition}\label{prop:inf-balls}
  Let $(\beta_\ell)_\ell$ be an admissible Fourier-decay bound and $W=(W_\ell)_\ell$ a spectral weight sequence for $(\M,g)$, and assume that, for every $\ell$,
  \begin{equation}\label{eq:W-assumption}
    W_\ell \le e^{2\beta_\ell}\,\id_{\El}.
  \end{equation}
  Let $x_j\in\rcv$ and $\rho_j\in\ointerval{0}{\infty}$, $j=0,1,2$, and let $\varepsilon,T>0$ be such that $B_{\M}(x_j,\varepsilon)\subset\rcv$ and $\rho_j-\varepsilon\in\ointerval{0}{T}$ for $j=0,1,2$. For $j=0,1,2$, set
  \begin{equation}\label{eq:calR-j}
    \calR_j := \Cc[\big]{\ointerval{T-(\rho_j-\varepsilon)}{T} \times B_{\M}(x_j,\varepsilon)}
    \subset \Cc[\big]{\ointerval{0}{T}\times\rcv}.
  \end{equation}

  The inclusion
  \begin{equation}\label{eq:ball-inclusion}
    B_{\M}(x_0,\rho_0) \subset \overline{B_{\M}(x_1,\rho_1) \cup B_{\M}(x_2,\rho_2)}
  \end{equation}
  holds if and only if, for every source $r_0\in\calR_0$ and every $\psi\in\HWM$,
  \begin{equation}\label{eq:ball-inf}
    \inf\left\{ \abs*{\innerM{\psi}{v^{r_0}(T) - v^{r_1+r_2}(T)}}
    : r_1\in\calR_1,\, r_2\in\calR_2 \right\} = 0.
  \end{equation}
\end{proposition}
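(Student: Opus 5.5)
The plan is to combine finite speed of propagation with approximate controllability (Tataru's unique continuation), in the same way as \cite[Lemma~10]{MR3826551}, and to use \cref{prop:HW-dense-in-U} for the direction that needs test functions. Two facts about the wave classes $\calR_j$ are needed first. For $r\in\calR_j$, the source is active only during the last $\rho_j-\varepsilon$ units of time on $[0,T]$ and sits in $B_\M(x_j,\varepsilon)$, so finite speed gives $\supp v^{r}(T)\subset\overline{B_\M(x_j,\rho_j)}$. Conversely, Tataru's unique continuation gives the approximate controllability statement: $\{v^{r}(T): r\in\calR_j\}$ is dense in $L^2(B_\M(x_j,\rho_j))$. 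These are the standard ingredients used in \cite{MR3826551}, so I would cite them rather than reprove them.

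(\emph{Inclusion $\Rightarrow$ \cref{eq:ball-inf}.}) Assume \cref{eq:ball-inclusion}.
\begin{itemize}
\item Fix $r_0\in\calR_0$, so that $u:=v^{r_0}(T)$ is supported in $\overline{B_\M(x_0,\rho_0)}$.
\item Geodesic spheres have measure zero. Hence $\overline{B_0}\setminus(B_1\cup B_2)$ is null, where $B_j:=B_\M(x_j,\rho_j)$, because $B_0\subset\overline{B_1\cup B_2}=\overline{B_1}\cup\overline{B_2}$. So $u\in L^2(B_1\cup B_2)$.
\item Split $u=u\mathbf 1_{B_1}+u\mathbf 1_{B_2\setminus B_1}$. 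By density, choose $r_1\in\calR_1$ and $r_2\in\calR_2$ with $v^{r_1+r_2}(T)=v^{r_1}(T)+v^{r_2}(T)$ arbitrarily close to $u$ in $\LM$.
\item Since $\psi\in\HWM\subset\LM$, pairing with $\psi$ is $L^2$-continuous, so the infimum in \cref{eq:ball-inf} is $0$.
\end{itemize}
This direction uses neither the upper bound on $W$ nor any weighted-space structure.

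(\emph{Converse.}) Suppose \cref{eq:ball-inclusion} fails. Then $B_0\setminus\overline{B_1\cup B_2}$ is a nonempty open set, and I pick a nonempty open $U$ with $\overline U\subset B_0\setminus\overline{B_1\cup B_2}$. By \cref{prop:HW-dense-in-U}, which is where \cref{eq:W-assumption} is needed, there exists $\psi\in\HWM\cap\Cc{U}$ with $\psi\neq0$. Density of reachable waves in $L^2(B_0)$ then gives $r_0\in\calR_0$ with $\innerM{\psi}{v^{r_0}(T)}\ne0$, since $\psi|_{B_0}\neq0$. Meanwhile $v^{r_1+r_2}(T)$ is supported in $\overline{B_1}\cup\overline{B_2}$, which is disjoint from $U\supset\supp\psi$. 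So the pairing $\innerM{\psi}{v^{r_0}(T)-v^{r_1+r_2}(T)}$ equals the nonzero constant $\innerM{\psi}{v^{r_0}(T)}$ for all $r_1,r_2$, and the infimum is positive. This contradicts \cref{eq:ball-inf}.

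The main obstacle is getting the approximate controllability statement in exactly this form: density in $L^2(B_\M(x_j,\rho_j))$ of the waves at time $T$ produced by smooth sources in $\ointerval{T-(\rho_j-\varepsilon)}{T}\times B_\M(x_j,\varepsilon)$. This is needed with the correct radius, namely (time window) plus $\varepsilon$. I would derive it by duality, as in \cite{MR3826551}. If $\phi\in L^2(B_j)$ is orthogonal to all such waves, then the backward solution $w$ of the homogeneous wave equation with data $(0,\phi)$ at $t=T$ vanishes on $\ointerval{T-(\rho_j-\varepsilon)}{T}\times B_\M(x_j,\varepsilon)$. Extending $w$ oddly in time about $t=T$, Tataru's theorem yields $\phi=0$ on the ball of radius $(\rho_j-\varepsilon)+\varepsilon=\rho_j$ around $x_j$.
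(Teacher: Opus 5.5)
Your proposal is correct and follows essentially the same route as the paper. In the forward direction you use finite speed of propagation and approximate controllability; the paper cites the latter from \cite[Theorem~11]{MR3826551} instead of rederiving it by duality and Tataru's theorem. In the converse you use \cref{prop:HW-dense-in-U} and disjointness of supports, as the paper does. The only cosmetic difference is the order of choices in the converse: you pick the nonzero $\psi\in\HWM\cap\Cc{U}$ first and then $r_0$ with $\innerM{\psi}{v^{r_0}(T)}\neq0$, whereas the paper picks $r_0$ with $v^{r_0}(T)|_U\neq0$ first and then $\psi$. Both orders work.
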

\begin{proof}
  Denote $B_j:=B_{\M}(x_j,\rho_j)$ for $j=0,1,2$. By the finite speed of wave propagation~\cite[Chapter~6, Proposition~1.3]{MR2744150} and \cref{eq:wave-solution}, we have $\supp v^{r_j}(T)\subset B_j$ for every $j$ and every source $r_j\in\calR_j$. Moreover, approximate controllability~\cite[Theorem~11]{MR3826551}, which follows from Tataru's unique continuation theorem~\cite{MR1326909}, implies that the set
  \begin{equation}\label{eq:approximate-controllability}
    \{v^{r_j}(T)|_{\overline{B}_j} : r_j\in\calR_j\}
  \end{equation}
  is dense in $L^2(\overline{B}_j)$. The formulations with open and closed balls are equivalent, since geodesic spheres have measure zero~\cite[Lemma~3.2]{MR2825736}.

  First assume that inclusion~\labelcref{eq:ball-inclusion} holds, and fix a source $r_0\in\calR_0$, a function $\psi\in\HWM$, and $\delta>0$. Then
  \begin{equation*}
    \supp v^{r_0}(T)
    \subset B_0
    \subset \overline{B_1 \cup B_2}
    = \overline{B}_1 \cup \overline{B}_2.
  \end{equation*}
  Writing $\gamma_1$ and $\gamma_2$ for the restrictions of $v^{r_0}(T)$ to $\overline{B}_1$ and to $\overline{B}_2\setminus\overline{B}_1$, extended by zero, we have $v^{r_0}(T)=\gamma_1+\gamma_2$ with $\gamma_j\in L^2(\overline{B}_j)$. By the density of the set~\labelcref{eq:approximate-controllability} in $L^2(\overline{B}_j)$, we can choose $r_j\in\calR_j$, $j=1,2$, such that
  \begin{equation*}
    \normM{v^{r_0}(T) - v^{r_1+r_2}(T)}\le\normM{\gamma_1 - v^{r_1}(T)}+\normM{\gamma_2 - v^{r_2}(T)}<\delta.
  \end{equation*}
  By Cauchy--Schwarz, the infimum~\labelcref{eq:ball-inf} is at most $\delta\normM{\psi}$ for every $\delta>0$, hence zero.

  For the reverse implication, suppose that inclusion~\labelcref{eq:ball-inclusion} does not hold. Set $U := B_0\setminus \overline{B_1 \cup B_2}$ and observe that $U$ is a nonempty open set. By the density of the set~\labelcref{eq:approximate-controllability} in $L^2(\overline{B}_0)$, we can choose a source $r_0\in\calR_0$ such that $v^{r_0}(T)|_U\neq 0$ in $L^2(U)$. By \cref{eq:W-assumption} and \cref{prop:HW-dense-in-U}, we can find $\psi\in\HWM$ with $\supp\psi\subset U$ and $\innerM{\psi}{v^{r_0}(T)} \neq 0$.

  Now let $r_1\in\calR_1$ and $r_2\in\calR_2$ be any sources. Then $\supp v^{r_1+r_2}(T)\subset B_1\cup B_2$, so $\supp v^{r_1+r_2}(T) \cap \supp\psi = \emptyset$, and it follows that
  \begin{align*}
    \innerM{\psi}{v^{r_0}(T) - v^{r_1+r_2}(T)}
    &= \innerM{\psi}{v^{r_0}(T)}
    - \innerM{\psi}{v^{r_1+r_2}(T)}\\
    &= \innerM{\psi}{v^{r_0}(T)}\\
    &\neq 0.
  \end{align*}
  Hence, the infimum~\labelcref{eq:ball-inf} is strictly positive.
\end{proof}

Let $\sv(L)$ denote the set of singular values of an operator $L$. We next characterize, in terms of singular values, when the upper and lower bounds on the two spectral weight sequences can be imposed simultaneously.

\begin{proposition}\label{prop:main-assumption-iff}
  Let $(\M,g)$ and $(\tM,\tg)$ be as in \cref{ass:standing}, and assume that they have equivalent cross-spectral data.

  For every admissible Fourier-decay bound $(\beta_\ell)_\ell$, the following are equivalent:
  \begin{enumerate}
    \item\label{item:iff-sv} For every $\ell$,
      \begin{equation}\label{eq:sg-Glrcv}
        \sv(\Glrcv) \subset \interval{e^{-\beta_\ell}}{e^{\beta_\ell}}.
      \end{equation}
    \item\label{item:iff-sv-2} For every $\ell$,
      \begin{equation}\label{eq:sg-Glsrc}
        \sv(\Glsrc) \subset \interval{e^{-\beta_\ell}}{e^{\beta_\ell}}.
      \end{equation}
    \item\label{item:iff-bounds} There exist spectral weight sequences $W=(W_\ell)_\ell$ and $\wt{W}=(\wt{W}_\ell)_\ell$ for $(\M,g)$ and $(\tM,\tg)$, respectively, that are related by \cref{eq:widetilde-W} and satisfy, for every $\ell$,
      \begin{equation}\label{eq:W-bounds}
        \id_{\El} \le W_\ell \le e^{2\beta_\ell}\,\id_{\El}
        \quad\text{and}\quad
        \id_{\tEl} \le \wt{W}_\ell \le e^{2\beta_\ell}\,\id_{\tEl}.
      \end{equation}
  \end{enumerate}
\end{proposition}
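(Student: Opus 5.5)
The plan is to treat everything one eigenspace at a time, since all three conditions are stated separately for each $\ell$. Fix $\ell$ and write $G:=\Glrcv$, $H:=\Glsrc$. These are linear isomorphisms between the finite-dimensional inner product spaces $\El$ and $\tEl$. By \cref{eq:Gl-adjoint}, $G^\ast=H^{-1}$, so $H=(G^\ast)^{-1}$. The singular values of $G^\ast$ coincide with those of $G$, and the singular values of an inverse are the reciprocals. Hence $\sv(H)=\{s^{-1}:s\in\sv(G)\}$. The interval $\interval{e^{-\beta_\ell}}{e^{\beta_\ell}}$ is invariant under $s\mapsto s^{-1}$, which gives the equivalence of \cref{item:iff-sv} and \cref{item:iff-sv-2} immediately.

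For \cref{item:iff-bounds} $\Rightarrow$ \cref{item:iff-sv}, I take the quadratic forms in \cref{eq:widetilde-W}. For $\phi\in\El$,
\begin{equation*}
\innerM{W_\ell\phi}{\phi}=\tinnerM{\wt{W}_\ell G\phi}{G\phi}.
\end{equation*}
The bounds on $\wt{W}_\ell$ give $\tnormM{G\phi}^2\le\innerM{W_\ell\phi}{\phi}\le e^{2\beta_\ell}\tnormM{G\phi}^2$. Combining this with $\normM{\phi}^2\le\innerM{W_\ell\phi}{\phi}\le e^{2\beta_\ell}\normM{\phi}^2$ yields
\begin{equation*}
e^{-2\beta_\ell}\normM{\phi}^2\le\tnormM{G\phi}^2\le e^{2\beta_\ell}\normM{\phi}^2,
\end{equation*}
which is exactly \cref{eq:sg-Glrcv}. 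Only one side of each bound is used in each direction, so no extra care is needed.

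For \cref{item:iff-sv} $\Rightarrow$ \cref{item:iff-bounds}, I must construct $\wt{W}_\ell$ so that both $\wt{W}_\ell$ and $W_\ell=G^\ast\wt{W}_\ell G$ lie between $\id$ and $e^{2\beta_\ell}\id$. This is the step needing a choice. Use the singular value decomposition $G=\sum_i s_i\,\tilde{u}_i\otimes u_i$ with orthonormal bases $(u_i)$ of $\El$ and $(\tilde{u}_i)$ of $\tEl$, so that $G u_i=s_i\tilde{u}_i$. Define $\wt{W}_\ell$ to be diagonal in $(\tilde{u}_i)$ with eigenvalues $w_i$. Then $W_\ell$ is diagonal in $(u_i)$ with eigenvalues $s_i^2w_i$. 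I need $1\le w_i\le e^{2\beta_\ell}$ and $1\le s_i^2w_i\le e^{2\beta_\ell}$. The choice $w_i:=\max(1,s_i^{-2})$ works. If $s_i\ge1$, then $w_i=1$ and $s_i^2\in\interval{1}{e^{2\beta_\ell}}$. If $s_i<1$, then $w_i=s_i^{-2}\in\ointerval{1}{e^{2\beta_\ell}}$ and $s_i^2w_i=1$. Both operators are self-adjoint and positive definite, so $W$ and $\wt{W}$ are spectral weight sequences related by \cref{eq:widetilde-W}.

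No step is a real obstacle. The only points to check are that the singular values of the adjoint and of the inverse behave as claimed with respect to the inherited inner products, which is standard linear algebra, and that the construction is carried out independently for each $\ell$.
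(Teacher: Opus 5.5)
Your proposal is correct and follows the paper's proof. It uses the same reciprocal-singular-value argument for (\labelcref{item:iff-sv} $\Leftrightarrow$ \labelcref{item:iff-sv-2}) and the same congruence bounds for (\labelcref{item:iff-bounds} $\Rightarrow$ \labelcref{item:iff-sv}). For (\labelcref{item:iff-sv} $\Rightarrow$ \labelcref{item:iff-bounds}) it even produces the same weights: your choice $w_i=\max(1,s_i^{-2})$ gives exactly the paper's $\wt{W}_\ell=\max(1,B^{-1})$ and $W_\ell=\max(1,A)$, with $A=G^\ast G$ and $B=GG^\ast$. The only difference is cosmetic: you check $W_\ell=G^\ast\wt{W}_\ell G$ directly in a singular-value basis, whereas the paper checks it through the intertwining $BG=GA$ and polynomial functional calculus.
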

\begin{proof}
  Fix an admissible Fourier-decay bound $(\beta_\ell)_\ell$ and an index $\ell$, and set $A:=(\Glrcv)^\ast\Glrcv\in\calB(\El)$ and $B:=\Glrcv(\Glrcv)^\ast\in\calB(\tEl)$, both self-adjoint and positive definite.

  (\labelcref{item:iff-sv} $\Leftrightarrow$ \labelcref{item:iff-sv-2}) This follows immediately from $(\Glsrc)^\ast \Glsrc = \bigl((\Glrcv)^\ast\Glrcv\bigr)^{-1}$ (see \cref{eq:Gl-adjoint}) and the fact that the interval $\interval{e^{-\beta_\ell}}{e^{\beta_\ell}}$ is closed under reciprocation.

  (\labelcref{item:iff-sv} $\Rightarrow$ \labelcref{item:iff-bounds}) Suppose \cref{eq:sg-Glrcv} holds. Then
  \begin{equation*}
    e^{-2\beta_\ell}\, \id_{\El}
    \le A
    \le e^{2\beta_\ell}\, \id_{\El}
    \quad\text{and}\quad
    e^{-2\beta_\ell}\, \id_{\tEl}
    \le B
    \le e^{2\beta_\ell}\, \id_{\tEl}.
  \end{equation*}
  Define $h,\wt{h}\colon\ointerval{0}{\infty}\to\ointerval{0}{\infty}$ by $h(\lambda)=\max(1,\lambda)$ and $\wt{h}(\lambda)=\max(1,1/\lambda)$, and set $W_\ell := h(A)$ and $\wt{W}_\ell := \wt{h}(B)$. Since $h$ and $\wt{h}$ map $\interval{e^{-2\beta_\ell}}{e^{2\beta_\ell}}$ into $\interval{1}{e^{2\beta_\ell}}$, both inequalities~\labelcref{eq:W-bounds} hold.

  Since $B\Glrcv = \Glrcv A$, induction gives $B^k\Glrcv = \Glrcv A^k$ for every $k\ge0$, and hence $p(B)\Glrcv = \Glrcv p(A)$ for every polynomial $p$. The set $\spec(A)\cup\spec(B)\subset\ointerval{0}{\infty}$ is finite, so we may choose a polynomial $p$ that agrees with $\wt{h}$ on it. Then $p(A)=\wt{h}(A)$ and $p(B)=\wt{h}(B)$, so $\wt{h}(B)\Glrcv = \Glrcv \wt{h}(A)$ and \cref{eq:widetilde-W} holds:
  \begin{equation*}
    (\Glrcv)^\ast\wt{W}_\ell \Glrcv
    = (\Glrcv)^\ast \wt{h}(B) \Glrcv
    = A \wt{h}(A)
    = h(A)
    = W_\ell,
  \end{equation*}
  where the third equality follows from $\lambda\max(1,1/\lambda)=\max(1,\lambda)$ for $\lambda>0$.

  (\labelcref{item:iff-bounds} $\Rightarrow$ \labelcref{item:iff-sv}) The map $X\mapsto (\Glrcv)^\ast X\Glrcv$ preserves order, so from $\wt{W}_\ell\ge \id_{\tEl}$ and $W_\ell\le e^{2\beta_\ell}\id_{\El}$ it follows that
  \begin{equation*}
    A \le (\Glrcv)^\ast \wt{W}_\ell \Glrcv
    = W_\ell
    \le e^{2\beta_\ell} \id_{\El}.
  \end{equation*}
  From $W_\ell\ge \id_{\El}$ and $\wt{W}_\ell\le e^{2\beta_\ell}\id_{\tEl}$, we similarly obtain
  \begin{equation*}
    \id_{\El}
    \le W_\ell
    = (\Glrcv)^\ast \wt{W}_\ell \Glrcv
    \le e^{2\beta_\ell} A.
  \end{equation*}
  It follows that $\spec(A)\subset \interval{e^{-2\beta_\ell}}{e^{2\beta_\ell}}$; hence $\sv(\Glrcv)\subset\interval{e^{-\beta_\ell}}{e^{\beta_\ell}}$.
\end{proof}

\subsection{Uniqueness results}\label{sec:uniqueness-results}

In this subsection, we combine the preceding results to show that two connected, closed Riemannian manifolds with equivalent cross-spectral data are isometric, provided that the singular values of the eigenfunction matching maps $\Glrcv$ defined in \cref{prop:Gl-maps} are controlled by an admissible Fourier-decay bound. As a consequence, we obtain uniqueness from lower bounds for the $L^2$-masses of eigenfunctions imposed either on the receiver sets of both manifolds or on the source and receiver sets of a single manifold. By \cref{prop:data-summary}, these results hold under any of the equivalent conditions \crefrange{data:cross-spectral}{data:projectors-opposite}.

\begin{theorem}\label{prop:main-uniqueness}
  Let $(\M,g)$ and $(\tM,\tg)$ be Riemannian manifolds as in \cref{ass:standing}, and assume that they have equivalent cross-spectral data. The following conditions are equivalent:
  \begin{cond}
  \item\label{cond:main-inclusion} There exists an admissible Fourier-decay bound $(\beta_\ell)_\ell$ for $(\M,g)$ such that, for every $\ell$, the singular values of $\Glrcv$ satisfy
    \begin{equation}\label{eq:main-inclusion}
      \sv(\Glrcv) \subset \interval{e^{-\beta_\ell}}{e^{\beta_\ell}}.
    \end{equation}

  \item\label{cond:main-isometry} There exists a Riemannian isometry $\Phi\colon (\M,g)\to (\tM,\tg)$ such that $\Phi|_\src=\Phi_\src$ and $\Phi|_\rcv=\Phi_\rcv$.
  \end{cond}

  If either condition holds, then every Riemannian isometry $\Phi$ as in \cref{cond:main-isometry} satisfies $\Glrcv\phi=\phi\circ\Phi^{-1}$ for every $\ell$ and $\phi\in\El$; consequently, $\sv(\Glrcv)=\{1\}$ for every $\ell$.
\end{theorem}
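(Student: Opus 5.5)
The plan is to prove the easy direction and the final assertion first, and then spend most of the effort on \cref{cond:main-inclusion} $\Rightarrow$ \cref{cond:main-isometry}.

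\emph{The direction \cref{cond:main-isometry} $\Rightarrow$ \cref{cond:main-inclusion} and the last claim.} Let $\Phi$ be an isometry with $\Phi|_\src=\Phi_\src$ and $\Phi|_\rcv=\Phi_\rcv$. Then $\phi\mapsto\phi\circ\Phi^{-1}$ maps $\El$ onto $\tEl$ and preserves $L^2$-norms. It also satisfies the defining identity~\labelcref{eq:Gl-maps} on $\rcv$. By the uniqueness part of \cref{prop:Gl-maps}, this map equals $\Glrcv$. Hence $\Glrcv$ is unitary, so $\sv(\Glrcv)=\{1\}$. Then \cref{eq:main-inclusion} holds with $\beta_\ell=0$, which is admissible: take any nonnegative normalized $\rho_\delta$ supported in $\interval{-\delta}{\delta}$, so that $\abs{\widehat\rho_\delta}\le1$.

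\emph{The direction \cref{cond:main-inclusion} $\Rightarrow$ \cref{cond:main-isometry}.} Given $\beta$, I apply \cref{prop:main-assumption-iff} to obtain weights $W,\wt W$ related by \cref{eq:widetilde-W} and satisfying \cref{eq:W-bounds}. By \cref{rem:same-sequences}, $\beta$ is also admissible for $\tM$. The upper bounds then let me apply \cref{prop:inf-balls} on both manifolds, and the lower bounds give the isometric isomorphism $\Grcv\colon\HWM\to\tHWM$ of \cref{prop:HW-G-rcv}.

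Fix $x_j\in\rcv$, radii $\rho_j$, and $\varepsilon,T$ as in \cref{prop:inf-balls}. Since $\Phi_\rcv$ is a Riemannian isometry by \cref{prop:data-summary}, it maps $B_\M(x_j,\varepsilon)\subset\rcv$ onto $B_{\tM}(\Phi_\rcv(x_j),\varepsilon)\subset\trcv$. Some care is needed here, since the ball in $\rcv$ is defined through the global distance. I would handle this by using that, for small $\varepsilon$, the ball lies in a convex neighbourhood determined by the local metric, possibly shrinking $\varepsilon$. As a result, composition with $\Phi_\rcv$ gives a bijection between the source classes $\calR_j$ and $\tcalR_j$ that preserves matching.

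For matched $r\leftrightarrow\tilde r$, \cref{eq:HW-Grcv-inner-product} gives $\innerM{\psi}{v^{r_0}(T)-v^{r_1+r_2}(T)}=\tinnerM{\Grcv\psi}{\wt v^{\tilde r_0}(T)-\wt v^{\tilde r_1+\tilde r_2}(T)}$. Since $\Grcv$ is onto, the criterion~\labelcref{eq:ball-inf} holds on $\M$ if and only if it holds on $\tM$. Therefore the ball-inclusion relations~\labelcref{eq:ball-inclusion} on $\M$ with centers in $\rcv$ agree with those on $\tM$ with centers $\Phi_\rcv(x_j)$.

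By the geometric reconstruction theorem \cref{prop:HLOS} of \cref{sec:HLOS-balls}, this yields an isometry $\Phi\colon\M\to\tM$ with $\Phi|_\rcv=\Phi_\rcv$. I am assuming here that \cref{prop:HLOS} is stated for exactly these inclusion data plus the Riemannian structure on $\rcv$. Finally, \cref{item:summary-extension} of \cref{prop:data-summary} gives $\Phi|_\src=\Phi_\src$.

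\emph{Main obstacle.} I expect the main difficulty to be matching the hypotheses of \cref{prop:HLOS}: which radii and small $\varepsilon$ are allowed, and how to pass between open and closed balls. The analytic transfer itself is immediate from \cref{prop:HW-G-rcv}.
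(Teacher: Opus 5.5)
Your proposal is correct and follows essentially the same route as the paper: weights from \cref{prop:main-assumption-iff}, the ball criterion of \cref{prop:inf-balls} on both manifolds, transfer of the infimum through $\Grcv$ and matched sources, then \cref{prop:HLOS} and the extension to $\src$; the converse uses the uniqueness of $\Glrcv$ in the same way. The only minor difference is that the paper does not need your convexity and shrinking argument. It observes that an ambient ball $B_{\M}(x_j,\varepsilon)\subset\rcv$ already coincides with the intrinsic ball of $\rcv$, since curves of length $<\varepsilon$ from $x_j$ cannot leave it. Hence $\Phi_\rcv$ maps it onto $B_{\tM}(\tx_j,\varepsilon)$ once $\varepsilon$ is chosen so that both balls lie in the receiver sets.
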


\begin{proof}
  (\labelcref{cond:main-inclusion} $\Rightarrow$ \labelcref{cond:main-isometry}) We show that \cref{item:HLOS-2} of \cref{prop:HLOS} holds for the manifolds $(\M,g)$ and $(\tM,\tg)$ with $U=\rcv$, $\wt{U}=\trcv$, and $\Phi_U=\Phi_\rcv$. By \cref{prop:main-assumption-iff}, we can find spectral weight sequences $W$ and $\wt{W}$ for $(\M,g)$ and $(\tM,\tg)$, respectively, that are related by \cref{eq:widetilde-W} and satisfy \cref{eq:W-bounds}.

  Fix $x_j\in\rcv$ and $\rho_j\in\ointerval{0}{\infty}$ for $j=0,1,2$, and define $\tx_j:=\Phi_\rcv(x_j)\in\trcv$. Choose $\varepsilon,T>0$ such that $B_{\M}(x_j,\varepsilon)\subset\rcv$, $B_{\tM}(\tx_j,\varepsilon)\subset\trcv$, and $\rho_j-\varepsilon\in\ointerval{0}{T}$ for every $j$. Let $\calR_j$ be defined by \cref{eq:calR-j} and set
  \begin{equation*}
    \tcalR_j := \Cc[\big]{\ointerval{T-(\rho_j-\varepsilon)}{T} \times B_{\tM}(\tx_j,\varepsilon)}
    \subset \Cc[\big]{\ointerval{0}{T}\times\trcv}.
  \end{equation*}
  With these choices, \cref{prop:inf-balls} applies to both $(\M,g)$ and $(\tM,\tg)$; in the latter case, admissibility of $(\beta_\ell)_\ell$ follows from \cref{rem:same-sequences}.

  We need to show that
  \begin{equation}\label{eq:main-proof-iff-1}
    B_{\M}(x_0,\rho_0) \subset \overline{B_{\M}(x_1,\rho_1) \cup B_{\M}(x_2,\rho_2)}
  \end{equation}
  if and only if
  \begin{equation}\label{eq:main-proof-iff-2}
    B_{\tM}(\tx_0,\rho_0) \subset \overline{B_{\tM}(\tx_1,\rho_1) \cup B_{\tM}(\tx_2,\rho_2)}.
  \end{equation}
  To compare these inclusions, define functions $\alpha\colon \calR_0\times\HWM\to\rinterval{0}{\infty}$ and $\wt{\alpha}\colon \tcalR_0\times\tHWM\to\rinterval{0}{\infty}$ by
  \begin{align*}
    \alpha(r_0,\psi)
    &:= \inf\left\{ \abs*{\innerM{\psi}{v^{r_0}(T) - v^{r_1+r_2}(T)}}
    : r_1\in\calR_1,\, r_2\in\calR_2 \right\},\\
    \wt{\alpha}(\tilde{r}_0,\wt{\psi})
    &:= \inf\left\{ \abs*{\tinnerM{\wt{\psi}}{\wt{v}^{\tilde{r}_0}(T) - \wt{v}^{\tilde{r}_1+\tilde{r}_2}(T)}}
    : \tilde{r}_1\in\tcalR_1,\, \tilde{r}_2\in\tcalR_2 \right\}.
  \end{align*}
  By \cref{prop:inf-balls}, \cref{eq:main-proof-iff-1} is equivalent to $\alpha$ vanishing identically, while \cref{eq:main-proof-iff-2} is equivalent to $\wt{\alpha}$ vanishing identically.

  Recall that $\Phi_\rcv\colon\rcv\to\trcv$ is a Riemannian isometry by \cref{prop:data-isometry}. For each $j=0,1,2$, the ambient balls $B_{\M}(x_j,\varepsilon)$ and $B_{\tM}(\tx_j,\varepsilon)$ coincide with the corresponding balls for the intrinsic distances on $\rcv$ and $\trcv$. Hence, $\Phi_\rcv$ restricts to an isometry $B_{\M}(x_j,\varepsilon)\to B_{\tM}(\tx_j,\varepsilon)$, and $\Phi_\rcv$-matching of sources defines an isomorphism $\calR_j\ni r_j\mapsto\tilde{r}_j\in\tcalR_j$.

  Combining these isomorphisms with \cref{item:HW-Grcv-inner-product} of \cref{prop:HW-G-rcv} yields $\alpha(r_0,\psi) = \wt{\alpha}(\tilde{r}_0,\Grcv\psi)$ for all $r_0\in\calR_0$ and $\psi\in\HWM$. By \cref{prop:HW-G-rcv}, $\Grcv\colon\HWM\to\tHWM$ is also an isomorphism. Therefore,
  \begin{equation*}
    \alpha \text{ vanishes identically}
    \Longleftrightarrow
    \wt{\alpha} \text{ vanishes identically},
  \end{equation*}
  so \cref{eq:main-proof-iff-1,eq:main-proof-iff-2} are equivalent. Since $x_0,x_1,x_2\in\rcv$ and $\rho_0,\rho_1,\rho_2\in\ointerval{0}{\infty}$ were arbitrary, \cref{item:HLOS-2} holds. By \cref{prop:HLOS}, there exists a Riemannian isometry $\Phi\colon (\M,g)\to(\tM,\tg)$ such that $\Phi|_\rcv = \Phi_\rcv$. It follows from \cref{prop:isometry-identifications} that $\Phi|_\src = \Phi_\src$.

  (\labelcref{cond:main-isometry} $\Rightarrow$ \labelcref{cond:main-inclusion}) Let $\Phi$ be as in \cref{cond:main-isometry} and fix $\ell$. Since $\Phi$ is a Riemannian isometry, the map $\El\ni\phi\mapsto\phi\circ\Phi^{-1}\in\tEl$ is a linear isomorphism; it is norm-preserving because $\Phi$ is measure-preserving. Moreover, $\Phi|_\rcv=\Phi_\rcv$ gives
  \begin{equation*}
    \bigl(\phi\circ\Phi^{-1}\bigr)\bigl(\Phi_\rcv(y)\bigr)
    = \bigl(\phi\circ\Phi^{-1}\bigr)\bigl(\Phi(y)\bigr)
    = \phi(y),
    \qquad \phi\in\El,\quad y\in\rcv,
  \end{equation*}
  which is the second relation in \cref{eq:Gl-maps}. By the uniqueness of the maps in \cref{prop:Gl-maps}, $\Glrcv\phi=\phi\circ\Phi^{-1}$ for every $\phi\in\El$; in particular, $\Glrcv$ is norm-preserving and $\sv(\Glrcv)=\{1\}$. Hence \cref{eq:main-inclusion} holds with the zero sequence, which is an admissible Fourier-decay bound. This proves the equivalence. Since \cref{cond:main-isometry} holds under either condition, the argument just given also proves the final assertions.
\end{proof}

\Cref{item:classical-result} of the following \namecref{prop:main-corollaries} recovers, within the present framework, the non-disjoint-data result~\cite[Corollary~2]{MR3826551} (which also follows from~\cite[Theorem~1]{MR2435213}). Neither the condition $\tsrc\cap\trcv\neq\emptyset$ nor the agreement of $\Phi_\src$ and $\Phi_\rcv$ on $\src\cap\rcv$ is assumed a priori; both follow from $\src\cap\rcv\neq\emptyset$ and the equivalence of the data. \Cref{item:connected-U} shows that if the metric is known on a connected open set meeting both $\src$ and $\rcv$, then uniqueness up to isometry holds without any further assumptions.

\begin{corollary}\label{prop:main-corollaries}
  Let $(\M,g)$ and $(\tM,\tg)$ be Riemannian manifolds as in \cref{ass:standing}, and assume that they have equivalent cross-spectral data.

  Suppose that one of the following two conditions holds:
  \begin{enumerate}
    \item\label{item:classical-result} $\src\cap\rcv\neq\emptyset$, or
    \item\label{item:connected-U} there exist connected open sets $U\subset\M$, $\wt{U}\subset\tM$, and a Riemannian isometry $\Psi\colon U\to\wt{U}$ such that $U\cap\src\neq\emptyset$, $U\cap\rcv\neq\emptyset$, and
      \begin{equation*}
        \Psi|_{U\cap\src} = \Phi_\src|_{U\cap\src}
        \quad\text{and}\quad
        \Psi|_{U\cap\rcv} = \Phi_\rcv|_{U\cap\rcv}.
      \end{equation*}
  \end{enumerate}
  Then there exists a Riemannian isometry $\Phi\colon (\M,g)\to (\tM,\tg)$ such that $\Phi|_\src=\Phi_\src$ and $\Phi|_\rcv=\Phi_\rcv$.
\end{corollary}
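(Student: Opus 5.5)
The plan is to reduce both cases to \cref{prop:main-uniqueness} with the zero sequence $\beta_\ell=0$, which is an admissible Fourier-decay bound. For this it suffices to show that $\Glsrc=\Glrcv$ for every $\ell$. Indeed, by \cref{eq:Gl-adjoint} we would then have $(\Glrcv)^\ast=(\Glrcv)^{-1}$, so $\Glrcv$ is unitary and $\sv(\Glrcv)=\{1\}\subset\interval{e^{0}}{e^{0}}$. \Cref{prop:main-uniqueness} then gives an isometry $\Phi$ with $\Phi|_\src=\Phi_\src$ and $\Phi|_\rcv=\Phi_\rcv$. Throughout, I use that $\Phi_\src$ and $\Phi_\rcv$ are Riemannian isometries, by \cref{prop:data-summary}.

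\emph{Case \labelcref{item:connected-U}.} Fix $\ell$ and $\phi\in\El$, and set $\wt{u}:=\phi\circ\Psi^{-1}$ on $\wt{U}$. Since $\Psi$ is a Riemannian isometry, $\wt{u}$ solves $(-\Delta_{\tg}-\lambda_\ell)\wt{u}=0$ on $\wt{U}$. By \cref{eq:Gl-maps} and $\Psi=\Phi_\rcv$ on $U\cap\rcv$, the function $\Glrcv\phi$ coincides with $\wt{u}$ on the nonempty open set $\Psi(U\cap\rcv)$. Both functions satisfy the same elliptic equation on the connected set $\wt{U}$, so unique continuation gives $\Glrcv\phi=\wt{u}$ on all of $\wt{U}$. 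The same argument on $\Psi(U\cap\src)$ gives $\Glsrc\phi=\wt{u}$ on $\wt{U}$. Hence $\Glsrc\phi-\Glrcv\phi\in\tEl$ vanishes on the open set $\wt{U}$, and therefore vanishes identically by unique continuation.

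\emph{Case \labelcref{item:classical-result}.} I would reduce to the previous case. Let $V$ be a small connected open ball contained in $\src\cap\rcv$. It suffices to prove that $\Phi_\src=\Phi_\rcv$ on $V$. Then $U=V$, $\wt{U}=\Phi_\rcv(V)$ and $\Psi=\Phi_\rcv|_V$ satisfy the hypotheses of \cref{item:connected-U}.

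The main obstacle is proving $\Phi_\src=\Phi_\rcv$ on $V$, since it is not assumed. I would use heat kernels. Summing the cross-spectral identities with weights $e^{-t\lambda_\ell}$, $t>0$ (the series converge absolutely and uniformly by the smoothness of the heat kernel), gives, for $x,y\in V$,
\begin{equation*}
  H(t,x,y)=\wt{H}\bigl(t,\Phi_\src(x),\Phi_\rcv(y)\bigr),
\end{equation*}
where $H$ and $\wt{H}$ are the heat kernels of $(\M,g)$ and $(\tM,\tg)$. Setting $y=x$, the left-hand side behaves like $(4\pi t)^{-n/2}$ as $t\to0^+$. By the Varadhan/Gaussian off-diagonal bounds, the right-hand side decays like $e^{-d^2/(5t)}$ whenever $d:=\dist_{\tg}\bigl(\Phi_\src(x),\Phi_\rcv(x)\bigr)>0$. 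This forces $d=0$, that is, $\Phi_\src(x)=\Phi_\rcv(x)$ for every $x\in V$, which completes the reduction.
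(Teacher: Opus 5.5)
Your proof is correct. The reduction to $\Glsrc=\Glrcv$ and your treatment of \cref{item:connected-U} match the paper's; the paper pulls $\Glsrc\phi$ and $\Glrcv\phi$ back to $U$ via $\Psi$, whereas you push $\phi$ forward to $\wt{U}$, which is cosmetic.

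The genuine difference is in \cref{item:classical-result}, in how you show $\Phi_\src=\Phi_\rcv$ on $\src\cap\rcv$.

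\emph{The paper's argument.} It works at the $L^2$ level. For $\Phi_\src$-matched $\psi\in\Cc{\src}$ and $\wt{\psi}\in\Cc{\tsrc}$, it sums the projector matching of \cref{data:projectors} over $\ell$. It uses $\sum_\ell P_\ell\psi=\psi$ in $\LM$ and the fact that composition with $\Phi_\rcv$ is an isometry $\tLrcv\to\Lrcv$. This gives $\psi=\wt{\psi}\circ\Phi_\rcv$ on $\rcv$. Evaluating on $\src\cap\rcv$ and letting $\wt{\psi}$ separate points then gives the claim.

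\emph{Your argument.} The heat-kernel identity is a regularized, pointwise version of the same completeness relation $\sum_\ell K_\ell(x,\cdot)=\delta_x$. It has the merit of working directly with the cross-spectral kernels and needing only $\lambda_\ell=\wt{\lambda}_\ell$ at that step, with no measure preservation. The cost is that it imports small-time heat-kernel asymptotics. For the off-diagonal part you only need $\wt{H}(t,\tx,\ty)\to0$ for $\tx\neq\ty$, not a precise Gaussian rate. This is heavier machinery than the paper's purely Hilbert-space argument.

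\emph{The final step.} Once $\Phi_\src|_V=\Phi_\rcv|_V$ is known, your detour through \cref{item:connected-U} is unnecessary. The functions $\Glsrc\phi$ and $\Glrcv\phi$ already agree on $\Phi_\src(V)=\Phi_\rcv(V)$ by \cref{eq:Gl-maps}, and unique continuation finishes the proof; this is how the paper concludes.
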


\begin{proof}
  In either case it suffices to prove that $\Glsrc=\Glrcv$ for every $\ell$. Indeed, \cref{eq:Gl-adjoint} then gives $(\Glrcv)^\ast\Glrcv=\id_{\El}$, so $\sv(\Glrcv)=\{1\}$. Since the zero sequence is an admissible Fourier-decay bound, \cref{eq:main-inclusion} holds with it, and \cref{prop:main-uniqueness} applies.
  \begin{enumerate}
    \item Set $V:=\src\cap\rcv$. We first show that $\Phi_\src|_V=\Phi_\rcv|_V$.

      Let $\psi\in\Cc{\src}$ and $\wt{\psi}\in\Cc{\tsrc}$ be $\Phi_\src$-matched. By \cref{data:projectors}, the functions $\PlRS\psi$ and $\tPlRS\wt{\psi}$ are $\Phi_\rcv$-matched for every $\ell$, that is,
      \begin{equation*}
        (P_\ell\psi)(y)
        = (\wt{P}_\ell\wt{\psi})\bigl(\Phi_\rcv(y)\bigr),
        \qquad y\in\rcv.
      \end{equation*}
      The expansions $\sum_{\ell\ge0}P_\ell\psi=\psi$ and $\sum_{\ell\ge0}\wt{P}_\ell\wt{\psi}=\wt{\psi}$ converge in $\LM$ and in $\tLM$, respectively, and $\Phi_\rcv$ is measure-preserving by \cref{prop:data-isometry}, so composition with $\Phi_\rcv$ is an isometry $\tLrcv\to\Lrcv$. Summing the displayed identity over $\ell$ therefore gives $\psi=\wt{\psi}\circ\Phi_\rcv$ in $\Lrcv$, hence everywhere on $\rcv$ by continuity. In particular,
      \begin{equation*}
        \wt{\psi}\bigl(\Phi_\rcv(x)\bigr)
        = \psi(x)
        = \wt{\psi}\bigl(\Phi_\src(x)\bigr),
        \qquad x\in V,
      \end{equation*}
      where the second equality holds since $\psi$ and $\wt{\psi}$ are $\Phi_\src$-matched. Since the matched pair was arbitrary and compactly supported smooth functions separate points, we conclude that $\Phi_\src|_V=\Phi_\rcv|_V$.

      Fix $\ell$ and $\phi\in\El$. By \cref{eq:Gl-maps}, the functions $\Glsrc\phi$ and $\Glrcv\phi$ agree on the nonempty open set $\Phi_\src(V)=\Phi_\rcv(V)$, and hence everywhere on $\tM$ by unique continuation. Since $\ell$ and $\phi$ were arbitrary, $\Glsrc=\Glrcv$ for every $\ell$.

    \item Fix $\ell$ and $\phi\in \El$. Define $\psi_\src := (\Glsrc\phi)\circ\Psi$ and $\psi_\rcv := (\Glrcv\phi)\circ\Psi$ on $U$. Since $\Psi$ is a Riemannian isometry and $\Glsrc\phi\in \tEl$, we have $-\Delta_g\psi_\src=\lambda_\ell\psi_\src$ on $U$. The function $\psi_\src-\phi|_U$ therefore solves the eigenvalue equation on $U$ and, by \cref{eq:Gl-maps}, vanishes on the nonempty open set $U\cap\src$. Since $U$ is connected, unique continuation gives $\psi_\src=\phi|_U$. The same argument gives $\psi_\rcv=\phi|_U$. Since $\Psi(U)=\wt{U}$ and $\psi_\src=\psi_\rcv$, the functions $\Glsrc\phi$ and $\Glrcv\phi$ agree on $\wt{U}$, and therefore everywhere on $\tM$ by unique continuation. Since $\ell$ and $\phi$ were arbitrary, $\Glsrc=\Glrcv$ for every $\ell$.\qedhere
  \end{enumerate}
\end{proof}

The following \namecref{prop:main-beta-bounded} converts lower bounds for the $L^2$-masses of eigenfunctions into the singular-value condition of \cref{prop:main-uniqueness}. In \cref{item:two-manifold-bounds}, the bounds are imposed on the receiver sets of both manifolds. In \cref{item:single-manifold-bounds}, they are imposed on the source and receiver sets of $(\M,g)$ alone; uniqueness then holds within the class of all connected, closed manifolds. Since the data are symmetric (see \cref{rem:data-symmetry}), one may replace both receiver sets in the first condition by the corresponding source sets.

\begin{theorem}\label{prop:main-beta-bounded}
  Let $(\M,g)$ and $(\tM,\tg)$ be Riemannian manifolds as in \cref{ass:standing}. Suppose that one of the following two conditions holds:
  \begin{enumerate}
    \item\label{item:two-manifold-bounds} there exist admissible Fourier-decay bounds $(\beta_\ell)_\ell$ and $(\wt{\beta}_\ell)_\ell$ for $(\M,g)$ and $(\tM,\tg)$, respectively, such that, for every $\ell$, $\phi\in \El$, and $\wt{\phi}\in\tEl$,
      \begin{equation}\label{eq:beta-bound}
        \normrcv[\big]{\phi|_\rcv}
        \ge
        e^{-\beta_\ell}\normM{\phi}
        \quad\text{and}\quad
        \tnormrcv[\big]{\wt{\phi}|_{\trcv}}
        \ge
        e^{-\wt{\beta}_\ell}\tnormM[\big]{\wt{\phi}},
      \end{equation}
      or
    \item\label{item:single-manifold-bounds} there exists an admissible Fourier-decay bound $(\beta_\ell)_\ell$ for $(\M,g)$ such that, for every $\ell$ and $\phi\in\El$,
      \begin{equation}\label{eq:single-manifold-bounds}
        \normsrc[\big]{\phi|_\src}
        \ge e^{-\beta_\ell}\normM{\phi}
        \quad\text{and}\quad
        \normrcv[\big]{\phi|_\rcv}
        \ge e^{-\beta_\ell}\normM{\phi}.
      \end{equation}
  \end{enumerate}

  If the manifolds $(\M,g)$ and $(\tM,\tg)$ have equivalent cross-spectral data, then there exists a Riemannian isometry $\Phi\colon (\M,g)\to (\tM,\tg)$ such that $\Phi|_\src=\Phi_\src$ and $\Phi|_\rcv=\Phi_\rcv$.
\end{theorem}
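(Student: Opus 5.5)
The plan is to reduce both cases to \cref{prop:main-uniqueness}. It suffices to find an admissible Fourier-decay bound $(\gamma_\ell)_\ell$ for $(\M,g)$ with $\sv(\Glrcv)\subset\interval{e^{-\gamma_\ell}}{e^{\gamma_\ell}}$ for every $\ell$. Equivalently, for all $\phi\in\El$,
\begin{equation*}
  e^{-\gamma_\ell}\normM{\phi}\le\tnormM{\Glrcv\phi}\le e^{\gamma_\ell}\normM{\phi}.
\end{equation*}
Throughout we use the norm identities \cref{eq:Gl-norms}, the adjoint relation \cref{eq:Gl-adjoint}, and the fact that each norm of a restriction is at most the global norm.

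\emph{Case \cref{item:two-manifold-bounds}.} For $\phi\in\El$, \cref{eq:Gl-norms} and the second bound in \cref{eq:beta-bound} applied to $\wt{\phi}=\Glrcv\phi$ give
\begin{equation*}
  \tnormM{\Glrcv\phi}\le e^{\wt{\beta}_\ell}\tnormrcv{(\Glrcv\phi)|_{\trcv}}=e^{\wt{\beta}_\ell}\normrcv{\phi|_\rcv}\le e^{\wt{\beta}_\ell}\normM{\phi}.
\end{equation*}
In the other direction, the first bound in \cref{eq:beta-bound} gives
\begin{equation*}
  \normM{\phi}\le e^{\beta_\ell}\normrcv{\phi|_\rcv}=e^{\beta_\ell}\tnormrcv{(\Glrcv\phi)|_{\trcv}}\le e^{\beta_\ell}\tnormM{\Glrcv\phi}.
\end{equation*}
Set $\gamma_\ell:=\beta_\ell+\wt{\beta}_\ell$. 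By \cref{rem:same-sequences}, $\wt{\beta}$ is also admissible for $(\M,g)$, since the eigenvalues agree. Then \cref{prop:decay-bound-properties} shows that $\gamma$ is admissible, and the two inequalities above place $\sv(\Glrcv)$ in $\interval{e^{-\gamma_\ell}}{e^{\gamma_\ell}}$.

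\emph{Case \cref{item:single-manifold-bounds}.} Here $\tM$ carries no hypothesis, so the upper bound on $\Glrcv$ must come from the source side through \cref{eq:Gl-adjoint}. The lower bound is as before: for $\phi\in\El$, the receiver bound gives
\begin{equation*}
  \normM{\phi}\le e^{\beta_\ell}\normrcv{\phi|_\rcv}=e^{\beta_\ell}\tnormrcv{(\Glrcv\phi)|_{\trcv}}\le e^{\beta_\ell}\tnormM{\Glrcv\phi}.
\end{equation*}
Hence every singular value of $\Glrcv$ is at least $e^{-\beta_\ell}$. The same argument with the source bound and the first identity in \cref{eq:Gl-norms} shows that every singular value of $\Glsrc$ is at least $e^{-\beta_\ell}$. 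Then \cref{eq:Gl-adjoint} gives $(\Glsrc)^\ast\Glsrc=\bigl((\Glrcv)^\ast\Glrcv\bigr)^{-1}$, as in the proof of \cref{prop:main-assumption-iff}. So the singular values of $\Glrcv$ are the reciprocals of those of $\Glsrc$, and are therefore at most $e^{\beta_\ell}$. Thus \cref{eq:main-inclusion} holds with $\gamma=\beta$.

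In both cases \cref{prop:main-uniqueness} yields the isometry $\Phi$ with $\Phi|_\src=\Phi_\src$ and $\Phi|_\rcv=\Phi_\rcv$. I expect no serious obstacle. The only points needing care are getting the upper bound on $\Glrcv$ in the single-manifold case via the adjoint relation, and checking in case \cref{item:two-manifold-bounds} that $\wt{\beta}$ is admissible for $\M$ so that the sum $\gamma$ is admissible.
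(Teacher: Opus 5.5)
Your proposal is correct and follows essentially the same route as the paper. In case \ref{item:two-manifold-bounds} you pass to the admissible sum $\beta+\wt{\beta}$ via \cref{rem:same-sequences,prop:decay-bound-properties} and bound $\Glrcv$ from both sides using \cref{eq:Gl-norms}; in case \ref{item:single-manifold-bounds} you bound $\Glrcv$ and $\Glsrc$ from below and take reciprocals through \cref{eq:Gl-adjoint}, then conclude with \cref{prop:main-uniqueness}.
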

\begin{proof}
  Assume that the manifolds have equivalent cross-spectral data.
  \begin{enumerate}
    \item Replacing both sequences by $(\beta_\ell+\wt{\beta}_\ell)_\ell$, which is an admissible Fourier-decay bound for both manifolds by \cref{rem:same-sequences,prop:decay-bound-properties}, we may assume that $(\beta_\ell)_\ell=(\wt{\beta}_\ell)_\ell$. Fix $\ell$ and $\phi\in \El$. By \cref{eq:Gl-norms} and the first inequality in \cref{eq:beta-bound},
      \begin{equation*}
        e^{-\beta_\ell}\normM{\phi}
        \le \normrcv[\big]{\phi|_\rcv}
        = \tnormrcv[\big]{(\Glrcv\phi)|_{\trcv}}
        \le \tnormM[\big]{\Glrcv\phi}.
      \end{equation*}
      Hence, $\sv(\Glrcv)\subset\rinterval{e^{-\beta_\ell}}{\infty}$.

      Applying the second inequality in \cref{eq:beta-bound} to $\wt{\phi}:=\Glrcv\phi\in\tEl$ and again using \cref{eq:Gl-norms}, we obtain
      \begin{equation*}
        e^{-\beta_\ell}\tnormM[\big]{\Glrcv\phi}
        \le \tnormrcv[\big]{(\Glrcv\phi)|_{\trcv}}
        = \normrcv[\big]{\phi|_\rcv}
        \le \normM{\phi}.
      \end{equation*}
      Hence, $\sv(\Glrcv)\subset\linterval{0}{e^{\beta_\ell}}$. Thus, \cref{eq:main-inclusion} holds, and \cref{prop:main-uniqueness} applies.

    \item Fix $\ell$ and $\phi\in\El$. By \cref{eq:Gl-norms,eq:single-manifold-bounds},
      \begin{equation*}
        \tnormM[\big]{\Glrcv\phi}
        \ge \tnormrcv[\big]{(\Glrcv\phi)|_{\trcv}}
        = \normrcv[\big]{\phi|_\rcv}
        \ge e^{-\beta_\ell}\normM{\phi}.
      \end{equation*}
      Thus, $\sv(\Glrcv)\subset\rinterval{e^{-\beta_\ell}}{\infty}$. The same argument with $\src$ and $\Glsrc$ shows that $\sv(\Glsrc)\subset\rinterval{e^{-\beta_\ell}}{\infty}$. By \cref{eq:Gl-adjoint}, the elements of $\sv(\Glsrc)$ are the reciprocals of those of $\sv(\Glrcv)$; hence \cref{eq:main-inclusion} holds, and \cref{prop:main-uniqueness} applies.\qedhere
  \end{enumerate}
\end{proof}

In the following \namecref{prop:subcritical-result}, the case $\theta=\wt{\theta}=0$ of \cref{item:subcritical-two-manifold} corresponds to condition~(C) in~\cite[Theorem~1.1]{MR4833220}. The \namecref{prop:subcritical-result} extends this result to $\theta,\wt{\theta}\in\ointerval{0}{1/2}$. Even when $\theta=\wt{\theta}=0$, the statement is stronger in two respects: the maps $\Phi_\src$ and $\Phi_\rcv$ are required only to be diffeomorphisms, rather than isometries, and the global isometry in the conclusion extends both maps.

By the spectral inequality~\cite[Theorem~{14.6}]{MR1743865} (see also~\cite{MR943927,MR1312710}), every connected, closed manifold satisfies the bounds~\labelcref{eq:subcritical-bounds,eq:subcritical-bounds-single} below at exponent $1/2$, with suitable constants. Thus the endpoint bounds impose no restriction beyond the standing assumptions. Together with \cref{prop:nonuniqueness}, this shows that the exponent range in both parts of \cref{prop:subcritical-result} is sharp.

\begin{corollary}\label{prop:subcritical-result}
  Let $(\M,g)$ and $(\tM,\tg)$ be Riemannian manifolds as in \cref{ass:standing}. Suppose that at least one of the following two conditions holds:
  \begin{enumerate}
    \item\label{item:subcritical-two-manifold} there exist constants $c,C,\tilde{c},\wt{C}>0$ and $\theta,\wt{\theta}\in\rinterval{0}{1/2}$ such that the following inequalities hold for every $\ell$, $\phi\in \El$, and $\wt{\phi}\in\tEl$:
      \begin{equation}\label{eq:subcritical-bounds}
        \normrcv[\big]{\phi|_\rcv} \ge ce^{-C\lambda_\ell^\theta}\normM{\phi}
        \quad\text{and}\quad
        \tnormrcv[\big]{\wt{\phi}|_{\trcv}} \ge \tilde{c}e^{-\wt{C}\wt{\lambda}_\ell^{\wt{\theta}}}\tnormM[\big]{\wt{\phi}},
      \end{equation}
      or
    \item\label{item:subcritical-single-manifold} there exist constants $c,C>0$ and $\theta\in\rinterval{0}{1/2}$ such that the following inequalities hold for every $\ell$ and $\phi\in\El$:
      \begin{equation}\label{eq:subcritical-bounds-single}
        \normsrc[\big]{\phi|_\src} \ge ce^{-C\lambda_\ell^\theta}\normM{\phi}
        \quad\text{and}\quad
        \normrcv[\big]{\phi|_\rcv} \ge ce^{-C\lambda_\ell^\theta}\normM{\phi}.
      \end{equation}
  \end{enumerate}

  If the manifolds $(\M,g)$ and $(\tM,\tg)$ have equivalent cross-spectral data, then there exists a Riemannian isometry $\Phi\colon (\M,g)\to (\tM,\tg)$ such that $\Phi|_\src=\Phi_\src$ and $\Phi|_\rcv=\Phi_\rcv$.
\end{corollary}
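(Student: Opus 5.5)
The plan is to deduce this directly from \cref{prop:main-beta-bounded}. The only work is to turn the power-type constants into admissible Fourier-decay bounds.

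For \cref{item:subcritical-two-manifold}, set
\begin{equation*}
  \beta_\ell := C\lambda_\ell^\theta + \max(0,-\log c)
  \quad\text{and}\quad
  \wt{\beta}_\ell := \wt{C}\wt{\lambda}_\ell^{\wt{\theta}} + \max(0,-\log\tilde{c}).
\end{equation*}
Then $ce^{-C\lambda_\ell^\theta}\ge e^{-\beta_\ell}$, so \cref{eq:subcritical-bounds} implies \cref{eq:beta-bound}. It remains to check admissibility of both sequences.

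First, $(\lambda_\ell^\theta)_\ell$ is admissible for $(\M,g)$. This follows from \cref{prop:Ingham} applied to $b(\lambda):=\lambda^\theta$, i.e.\ $a(\omega)=\omega^{2\theta-1}$. This $a$ is nonincreasing and positive on $\rinterval{1}{\infty}$ precisely because $\theta<1/2$, and it satisfies $\int_1^\infty \omega^{2\theta-2}\,d\omega<\infty$. (For $\theta=0$ one may instead note that the constant sequence $1$ is dominated by $\lambda_\ell^{1/4}+1$, or use the convention $0^0=1$ together with \cref{prop:decay-bound-properties}.) Then \cref{prop:decay-bound-properties} shows that $C\lambda_\ell^\theta+\max(0,-\log c)$ is admissible.

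The same argument on $(\tM,\tg)$ gives admissibility of $(\wt{\beta}_\ell)_\ell$ for $(\tM,\tg)$; Ingham's construction does not depend on the eigenvalue sequence. \Cref{item:two-manifold-bounds} of \cref{prop:main-beta-bounded} then yields the isometry.

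For \cref{item:subcritical-single-manifold}, the same choice of $\beta_\ell$ verifies \cref{eq:single-manifold-bounds} with an admissible bound for $(\M,g)$. Then \cref{item:single-manifold-bounds} of \cref{prop:main-beta-bounded} applies.

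The only delicate point is the endpoint bookkeeping in \cref{prop:Ingham}: $b$ must be defined on $\rinterval{0}{\infty}$ with $b(\omega^2)=\omega a(\omega)$ for $\omega\ge1$, and the finitely many $\lambda_\ell<1$ must be absorbed into $C_\delta$, which the proposition already handles. The case $\theta=0$ also needs the reduction to a larger admissible sequence noted above.
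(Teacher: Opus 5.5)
Your proposal is correct and matches the paper's proof. The paper likewise normalizes the additive constant (it assumes $c,\tilde{c}\le 1$ instead of using $\max(0,-\log c)$), obtains admissibility of $(C\lambda_\ell^\theta-\log c)_\ell$ from \cref{prop:Ingham} and \cref{prop:decay-bound-properties}, and then invokes \cref{prop:main-beta-bounded}.

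One small correction: $\theta=0$ needs no separate treatment. There $a(\omega)=\omega^{-1}$ is nonincreasing and $\int_1^\infty\omega^{-2}\,d\omega<\infty$, so \cref{prop:Ingham} applies directly. Also, $a$ is nonincreasing for every $\theta\le 1/2$; what actually requires $\theta<1/2$ is the integrability.
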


\begin{proof}
  By decreasing the constants if necessary, we may assume that $c\le 1$ and, under \cref{item:subcritical-two-manifold}, that $\tilde{c}\le 1$.

  Under \cref{item:subcritical-two-manifold}, \cref{prop:Ingham,prop:decay-bound-properties} show that $(C\lambda_\ell^\theta-\log c)_\ell$ and $(\wt{C}\wt{\lambda}_\ell^{\wt{\theta}}-\log\tilde{c})_\ell$ are admissible Fourier-decay bounds for $(\M,g)$ and $(\tM,\tg)$, respectively; hence \cref{eq:subcritical-bounds} takes the form~\labelcref{eq:beta-bound}.

  Under \cref{item:subcritical-single-manifold}, the same reasoning shows that $(C\lambda_\ell^\theta-\log c)_\ell$ is an admissible Fourier-decay bound for $(\M,g)$; hence \cref{eq:subcritical-bounds-single} takes the form~\labelcref{eq:single-manifold-bounds}.

  In either case, \cref{prop:main-beta-bounded} applies.
\end{proof}

%% file: nonuniqueness.tex
\section{Nonuniqueness}\label{sec:nonuniqueness}

In every dimension $n\ge 2$, we construct a smooth, connected, closed $n$-manifold $\M$, nonempty open sets $\src,\rcv\subset\M$, and two Riemannian metrics $g$ and $\tg$ on $\M$. The manifolds $(\M,g)$ and $(\M,\tg)$ have equivalent cross-spectral data relative to the identity maps $\id_\src$ and $\id_\rcv$, but are not related by any Riemannian isometry (\cref{prop:nonuniqueness}).

Our construction shares similarities with that of Daudé, Kamran, and Nicoleau for the anisotropic Calderón problem with data measured on disjoint sets (see~\cite{MR3973447,MR3916964}). Both constructions exploit a product structure that permits separation of variables in the second factor, reducing the inverse problem to a family of one-dimensional equations. For potentials on an interval, nonuniqueness with both Dirichlet and Neumann spectra prescribed was studied by Pierce~\cite{MR1907903} and Korotyaev~\cite{MR3962693}.

We construct the two warped-product metrics from ``warping profiles'' assembled by concatenating two types of fixed blocks, reducing nonuniqueness to a question about binary words and $2\times 2$ matrices. The construction works simultaneously at all frequencies, while that of Daudé, Kamran, and Nicoleau works at a single frequency.

The word mechanism of the construction has two kinds of antecedents. The first is Horowitz's work on universal trace identities between matrix words~\cite{MR314993}, with subsequent applications in other settings~\cite{MR553396,MR4704631}. Our construction uses identities for off-diagonal entries.

The second kind of antecedent is the analytic device of realizing letters as one-dimensional potential blocks and words as products of their transfer matrices. It appears in continuum substitution models of Damanik, Lenz, and Stolz~\cite{MR2244377} and, in a general finite-alphabet setting, in Damanik, Fillman, and Gorodetski~\cite{MR3205747}.

We proceed in five subsections. In \cref{sec:ingredients}, we introduce the parameters and define the product manifold, the two warped-product metrics determined by a pair of warping profiles, and the source and receiver sets. The next two subsections reduce the problem to conditions on that pair. In \cref{sec:reduction}, we show that the manifolds have equivalent cross-spectral data if and only if the transfer matrices of an associated family of one-dimensional Schrödinger equations have equal off-diagonal entries. In \cref{sec:isometry-reduction}, we show that, under a condition on the second factor of the product manifold, every isometry between the manifolds is a product of isometries of the two factors, so that the absence of an isometry becomes a condition on the warping profiles. In \cref{sec:profile-construction}, we construct profiles satisfying both conditions. Finally, \cref{sec:nonuniqueness-result} combines these ingredients and proves the nonuniqueness result, \cref{prop:nonuniqueness}.

\subsection{Setting}\label{sec:ingredients}

The manifolds are determined by the following parameters:
\begin{enumerate}
  \item $n\ge 2$, an integer, the dimension of the product manifold.

  \item $L>0$, a length, determining the first factor of the product manifold: the circle $C:=\R/(2L\Z)$. We equip $C$ with the flat metric $dr^2$ induced by the Euclidean metric on $\R$, and write $\pi_C\colon \R\to C$ for the quotient map, $\partial_r$ for the global vector field induced by $d/dx$, and $\partial_r^2$ for the Laplace--Beltrami operator. The map $\iota_C\colon C\to C$ is the reflection defined by $\iota_C(\pi_C(x)):=\pi_C(-x)$.

  \item $\eta\in\ointerval{0}{L/2}$, a length, determining the half-width of two opposite open arcs of $C$:
    \begin{equation}\label{eq:C-src-rcv}
      C_\src := \pi_C\bigl(\ointerval{-\eta}{\eta}\bigr)\subset C;
      \qquad
      C_\rcv := \pi_C\bigl(\ointerval{L-\eta}{L+\eta}\bigr)\subset C.
    \end{equation}

  \item $(F,h)$, the \emph{fiber manifold}, a smooth, connected, closed Riemannian manifold of dimension $m:=n-1$, forming the second factor of the product manifold. The distinct eigenvalues of $-\Delta_h$ are denoted by $0=\mu_0<\mu_1<\cdots$, with corresponding eigenspaces $Y_k$.

  \item $q,\tq\in C^\infty(C)$, the \emph{warping profiles}, determining the two metrics on the product manifold. They are assumed to satisfy
    \begin{align}
      q>0\quad&\text{and}\quad \tq>0 \quad\text{on}\quad C;\label{eq:q-positive}\\
      q = q\circ\iota_C\quad&\text{and}\quad \tq = \tq\circ\iota_C;\label{eq:q-even}\\
      q = \tq \quad&\text{on}\quad C_\src\cup C_\rcv.\label{eq:q-matching}
    \end{align}
\end{enumerate}
The parameters $n,L,\eta,(F,h),q,\tq$ are chosen explicitly in \cref{sec:nonuniqueness-result}, subject to these assumptions. The geometry of the circle is summarized in \cref{fig:nonuniqueness-geometry}.

\begin{figure}[tbp]
  \centering
  \input{nonuniqueness-geometry.tikz}
  \caption{The circle $C$ and its associated interval coordinates. (a) The sets $C_\src$ and $C_\rcv$ are $\iota_C$-invariant neighborhoods of the fixed points $\pi_C(0)$ and $\pi_C(L)$, respectively. The maps $j_\pm(x)=\pi_C(\pm x)$, $x\in\interval{0}{L}$, parametrize the positive and negative semicircles, respectively, where $\pi_C$ is the quotient map. (b) The endpoints of the fundamental interval $\interval{-L}{L}$ are identified, and the schematic even warping profiles $q$ and $\tq$ agree on the shaded representatives of $C_\src\cup C_\rcv$. The lower bars mark the intervals $I_\src,I_\rcv\subset\interval{0}{L}$ defined in \cref{eq:interval-ports}.}
  \label{fig:nonuniqueness-geometry}
\end{figure}

We use the above parameters to define the common underlying product manifold, the source and receiver sets $\src$ and $\rcv$, and the two Riemannian metrics as follows:
\begin{enumerate}
  \item The underlying manifold is
    \begin{equation}\label{eq:M-n}
      \M_n:=C\times F,
    \end{equation}
    which is smooth, connected, and closed, being a product of two such manifolds.

  \item The source and receiver sets $\src$ and $\rcv$ are the disjoint open sets
    \begin{equation}\label{eq:src-rcv}
      \src := C_\src \times F \subset \M_n
      \quad\text{and}\quad
      \rcv := C_\rcv \times F \subset \M_n.
    \end{equation}

  \item The two Riemannian metrics on $\M_n$ are the warped-product metrics
    \begin{equation}\label{eq:warped-product-metrics}
      g_q := dr^2 + a_q(r)^2 h
      \quad\text{and}\quad
      g_{\tq} := dr^2 + a_{\tq}(r)^2 h,
    \end{equation}
    where $a_q:=q^{2/m}$ and $a_{\tq}:={\tq}^{2/m}$, with $m=n-1$. (The exponent $2/m$ is chosen only to simplify some expressions below.)
\end{enumerate}

In the notation of \cref{ass:standing}, set
\begin{equation}\label{eq:standing-manifolds}
  (\M,g):=(\M_n,g_q),\qquad
  (\tM,\tg):=(\M_n,g_{\tq}),\qquad
  \tsrc:=\src,\qquad
  \trcv:=\rcv,
\end{equation}
and take $\Phi_\src:=\id_\src$ and $\Phi_\rcv:=\id_\rcv$. Then \cref{ass:standing} holds. Note that because of \cref{eq:q-matching}, the maps $\Phi_\src$ and $\Phi_\rcv$ are Riemannian isometries.

\subsubsection*{Notation.}
Functions on $C$ are identified with their pullbacks to $\M_n$ under the projection $\pr\colon\M_n=C\times F\to C$, given by $\pr(r,y)=r$.

The fixed points of $\iota_C$ are $\pi_C(0)$ and $\pi_C(L)$, and the sets $C_\src$ and $C_\rcv$ are $\iota_C$-invariant. A measurable function $w$ on $C$, or on an $\iota_C$-invariant measurable subset of $C$, is called \emph{even} if $w\circ\iota_C=w$ almost everywhere, and \emph{odd} if $w\circ\iota_C=-w$ almost everywhere; this is well defined on $L^2$-equivalence classes, since $\iota_C$ preserves null sets. Assumption~\labelcref{eq:q-even} states that the warping profiles are even; this is what lets the operators on $C$ split into a Dirichlet and a Neumann operator on an interval (\cref{prop:parity-decomposition}).

\subsection{Reduction to a transfer-matrix problem}\label{sec:reduction}

In this subsection, we reduce the comparison of the cross-spectral data of $(\M_n,g_q)$ and $(\M_n,g_{\tq})$ to equality of the corresponding off-diagonal entries of the transfer matrices for a family of one-dimensional Schrödinger equations on the semicircle $\interval{0}{L}$.

\subsubsection*{Separation of variables.}
Recall from \cref{eq:RSR} the cross-compressed resolvent $\RSR(z)$ of $-\Delta_{g_q}$. By \cref{prop:data-summary}, its values for $z<0$ determine and are determined by the cross-spectral data. Our goal here is to relate the cross-compressed resolvents of the two manifolds in \cref{eq:standing-manifolds} to those of a family of one-dimensional Schrödinger operators on $C$, indexed by the eigenvalues of $-\Delta_h$.

Since $a_q^m=q^2$, a direct computation gives the volume density and the Laplace--Beltrami operator of the warped product $(\M_n,g_q)$:
\begin{equation}\label{eq:warped-laplacian}
  dV_{g_q}=q^2\,dr\,dV_{h}
  \quad\text{and}\quad
  \Delta_{g_q}u=\frac{1}{q^2}\,\partial_r\bigl(q^2\,\partial_r u\bigr)+q^{-4/m}\,\Delta_{h}u.
\end{equation}
The eigenspace decomposition $L^2(F,dV_h)=\bigoplus_{k\ge0}Y_k$ and the product structure of the measure give the orthogonal decomposition
\begin{equation}\label{eq:L2-decomposition}
  L^2\bigl(\Omega\times F,\;dr\,dV_h\bigr)=\bigoplus_{k\ge0}L^2\bigl(\Omega,dr\bigr)\otimes Y_k,
  \qquad\Omega\in\{C,\,C_\src,\,C_\rcv\}.
\end{equation}
Here $\bigoplus$ denotes the orthogonal Hilbert-space direct sum, and the measures are the \emph{unwarped} product measures.

For $\mu\ge 0$ and any warping profile $q$, define
\begin{equation}\label{eq:potential}
  V_\mu(\cdot\,;q):=\frac{\partial_r^2q}{q}+\mu q^{-4/m}\in C^\infty(C),
\end{equation}
the smooth potential associated with $q$. We abbreviate $V_\mu:=V_\mu(\cdot\,;q)$ for the warping profile $q$ fixed in \cref{sec:ingredients}. Let $H_\mu^P$ ($P$ for periodic) be the self-adjoint operator on $L^2(C,dr)$ defined by
\begin{equation}\label{eq:operator-H-P}
  H_\mu^P := -\partial_r^2 + V_\mu
\end{equation}
with domain $\calD(H_\mu^P):=H^2(C)$. Since $q$ is even by \cref{eq:q-even} and $\partial_r^2$ preserves parity, the potential $V_\mu$ is even.

To diagonalize the warped Laplacian in terms of the operators $H_{\mu_k}^P$, we combine \cref{eq:L2-decomposition} with the Liouville transform given by multiplication by $q$. For the general Sturm--Liouville reduction, see, e.g.,~\cite[Lemma~1.4]{MR1889089}. By \cref{eq:warped-laplacian}, the map
\begin{equation*}
  U_{\M_n}\colon L^2\bigl(\M_n,dV_{g_q}\bigr)\to L^2\bigl(\M_n,dr\,dV_h\bigr)
\end{equation*}
given by this multiplication is unitary (a surjective linear isometry), and its adjoint is multiplication by $q^{-1}$.

\begin{lemma}\label{prop:Delta-diagonalization}
  Set
  \begin{equation*}
    B:=U_{\M_n}(-\Delta_{g_q})U_{\M_n}^\ast
    \quad\text{on }L^2\bigl(\M_n,dr\,dV_h\bigr),
    \qquad
    \calH_k:=L^2(C,dr)\otimes Y_k.
  \end{equation*}
  Then $\calD(B)=H^2(\M_n)$, each $\calH_k$ reduces $B$, and
  \begin{align}
    B|_{\calH_k\cap\calD(B)} &= H_{\mu_k}^P\otimes\id_{Y_k},\label{eq:Delta-block-operator}\\
    \calH_k\cap\calD(B) &= H^2(C)\otimes Y_k.\label{eq:Delta-block-domain}
  \end{align}
  Moreover, $H_\mu^P$ is nonnegative for every $\mu\ge0$.
\end{lemma}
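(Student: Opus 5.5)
The plan is to conjugate $-\Delta_{g_q}$ by the Liouville transform, verify the conjugated operator block by block in the fiber eigenspaces, and then identify the domain. First I will compute $B$ on smooth functions. For $w\in C^\infty(\M_n)$, set $u=q^{-1}w$ and use \cref{eq:warped-laplacian}:
\begin{equation*}
  \frac{1}{q^2}\partial_r\bigl(q^2\partial_r(q^{-1}w)\bigr)
  =\frac{1}{q}\partial_r^2 w-\frac{\partial_r^2 q}{q^2}\,w .
\end{equation*}
This follows from $q^2\partial_r(w/q)=q\,\partial_r w-w\,\partial_r q$, whose derivative is $q\,\partial_r^2w-w\,\partial_r^2q$. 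Multiplying by $q$ then gives
\begin{equation*}
  Bw=-\partial_r^2w+\frac{\partial_r^2q}{q}\,w-q^{-4/m}\Delta_h w .
\end{equation*}

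Next I take $w=f\otimes Y$ with $f\in C^\infty(C)$ and $Y\in Y_k$. Then $-\Delta_hY=\mu_kY$, so $Bw=(H^P_{\mu_k}f)\otimes Y$ with $V_{\mu_k}$ as in \cref{eq:potential}. Since multiplication by the smooth positive function $q$ is an isomorphism of $H^2(\M_n)$, the domain of the closed operator $B$ is $q\,H^2(\M_n)=H^2(\M_n)$. Thus $B$ is a self-adjoint elliptic second-order operator with smooth coefficients on $H^2(\M_n)$.

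For the reduction, I will use that the orthogonal projector $\Pi_k$ onto $\calH_k$ acts fiberwise as the spectral projector of $-\Delta_h$. It therefore commutes with $\partial_r$ and with multiplication by functions of $r$, and it commutes with $\Delta_h$. Consequently $\Pi_k$ maps $H^2(\M_n)$ into itself, with the $H^2$ norm computed via $\partial_r$ and $(1-\Delta_h)$. It also commutes with $B$ on $H^2(\M_n)$, so $\calH_k$ reduces $B$. For \cref{eq:Delta-block-domain}, write $w\in\calH_k\cap H^2$ as $\sum_i f_i\otimes Y_i$ over an orthonormal basis of the finite-dimensional space $Y_k$. Each coefficient is $f_i=\langle w,Y_i\rangle_{L^2(F)}$, which lies in $H^2(C)$ because $\partial_r^j$ commutes with pairing against $Y_i$. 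The converse inclusion $H^2(C)\otimes Y_k\subset H^2(\M_n)$ is clear. Formula \cref{eq:Delta-block-operator} then extends from smooth $f$ to all of $H^2(C)$ by density and continuity of both sides from $H^2$ to $L^2$.

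Finally, nonnegativity. For $f\in H^2(C)$, integrating by parts on the circle with no boundary terms, and using $(q\,\partial_r f-f\,\partial_r q)/q=q\,\partial_r(f/q)$, gives
\begin{equation*}
  \inner{H_\mu^P f}{f}=\int_C q^2\bigl|\partial_r(f/q)\bigr|^2\,dr+\mu\int_C q^{-4/m}|f|^2\,dr\ge0 .
\end{equation*}
Alternatively, this follows from the unitary equivalence with the nonnegative operator $-\Delta_{g_q}$ on the nonzero block $f\otimes Y$. The main obstacle is purely bookkeeping: making the domain identification rigorous, in particular that $\Pi_k$ preserves $H^2(\M_n)$. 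Everything else is direct computation.
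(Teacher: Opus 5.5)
Your proposal is correct and follows essentially the same route as the paper: conjugate by multiplication by $q$, show that the fiberwise projectors $\Pi_k$ preserve $H^2(\M_n)$ and commute with $B$, identify the block via $B(f\otimes Y)=(H^P_{\mu_k}f)\otimes Y$, and obtain nonnegativity from the Liouville factorization $\int_C q^2\lvert\partial_r(f/q)\rvert^2\,dr$, which is the paper's $L_\mu\ge0$ on $L^2(C,q^2\,dr)$ in disguise. The one genuine difference is that you identify $\calH_k\cap H^2(\M_n)$ directly, by differentiating the coefficients $\langle w,Y_i\rangle_{L^2(F)}$ in $r$, instead of using the paper's closedness argument; this is slightly more direct. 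Note also that your alternative nonnegativity argument via the block $f\otimes Y$ only gives $H^P_\mu\ge0$ for $\mu\in\{\mu_k\}$, not for every $\mu\ge0$, so the integration-by-parts identity is the argument you actually need.
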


\begin{proof}
  For $\mu\ge0$, let $L_\mu$ be the nonnegative operator on the space $L^2(C,q^2\,dr)$ defined by
  \begin{equation*}
    L_\mu:=-\frac{1}{q^2}\,\partial_r\bigl(q^2\,\partial_r\,\cdot\,\bigr)+\mu q^{-4/m},
    \qquad
    \calD(L_\mu):=H^2(C).
  \end{equation*}
  Let $U_C\colon L^2(C,q^2\,dr)\to L^2(C,dr)$ be multiplication by $q$. It is unitary, its adjoint is multiplication by $q^{-1}$, and it maps $H^2(C)$ onto itself. Computing first on $C^\infty(C)$ and then extending to $H^2(C)$ by density and continuity gives the Liouville identity
  \begin{equation}\label{eq:liouville}
    U_C L_\mu U_C^\ast=H_\mu^P.
  \end{equation}
  Consequently, $L_\mu$ is self-adjoint and $H_\mu^P$ is nonnegative.

  Since $\calD(-\Delta_{g_q})=H^2(\M_n)$ and multiplication by $q$ maps $H^2(\M_n)$ onto itself, the domain of $B$ is $H^2(\M_n)$. The operator $B$ is self-adjoint by unitary equivalence. It is also continuous from $H^2(\M_n)$ to $L^2(\M_n,dr\,dV_h)$, being a composition of smooth multipliers and $-\Delta_{g_q}$.

  Fix $k$, let $\Pi_k$ be the orthogonal projector onto $\calH_k$, and choose an orthonormal basis $\phi_1,\dots,\phi_d$ of $Y_k$, where $d:=\dim Y_k$. For $v\in L^2(\M_n,dr\,dV_h)$, the function $a_j[v]\in L^2(C,dr)$ is defined by $a_j[v](r):=\inner{v(r,\cdot)}{\phi_j}_{L^2(F,dV_h)}$ for a.e.\ $r\in C$, and $\Pi_kv=\sum_{j=1}^d a_j[v]\otimes\phi_j$. The maps $L^2(\M_n,dr\,dV_h)\ni v\mapsto a_j[v]\in L^2(C,dr)$ are bounded. The finite smooth basis ensures that $\Pi_k C^\infty(\M_n)\subset C^\infty(\M_n)$, and the eigenfunction equations and \cref{eq:warped-laplacian} give $\Pi_k\Delta_{g_q}u=\Delta_{g_q}\Pi_ku$ for smooth $u$. Since the weight $q^2$ depends only on $r$, $\Pi_k$ is also an orthogonal projector in $L^2(\M_n,dV_{g_q})$. Using the spectral $H^2(\M_n)$ norm, we obtain, for $u\in C^\infty(\M_n)$,
  \begin{equation}\label{eq:pi-ku-bounded}
    \norm{\Pi_ku}_{H^2(\M_n)}
    =\norm{\Pi_k(\id-\Delta_{g_q})u}_{L^2(\M_n,dV_{g_q})}
    \le\norm{u}_{H^2(\M_n)}.
  \end{equation}
  By density, $\Pi_k$ preserves $H^2(\M_n)$, satisfies the estimate~\labelcref{eq:pi-ku-bounded} for every $u\in H^2(\M_n)$, and commutes with $\Delta_{g_q}$ on that domain. Since it also commutes with multiplication by $q$ and $q^{-1}$, we obtain $\Pi_k\calD(B)\subset\calD(B)$ and $B\Pi_ku=\Pi_kBu$ for every $u\in\calD(B)$. Hence $\calH_k$ reduces $B$.

  To identify $\calH_k\cap\calD(B)$ and the action of $B$ on it, take $a_j\in C^\infty(C)$. The identities~\labelcref{eq:warped-laplacian,eq:liouville} give
  \begin{equation}\label{eq:B-action}
    B\left(\sum_{j=1}^d a_j\otimes\phi_j\right)
    =\sum_{j=1}^d(H_{\mu_k}^Pa_j)\otimes\phi_j.
  \end{equation}
  For $a_j\in H^2(C)$, approximate each coefficient by smooth functions in $H^2(C)$. By \cref{eq:B-action} and continuity of $H_{\mu_k}^P\colon H^2(C)\to L^2(C,dr)$, the approximating tensor sums and their images under $B$ converge in $L^2(\M_n,dr\,dV_h)$. Closedness of $B$ therefore gives $H^2(C)\otimes Y_k\subset\calH_k\cap\calD(B)$ and extends the displayed action to this space.

  Conversely, let $u\in\calH_k\cap\calD(B)$, choose smooth $v_\ell\to u$ in $H^2(\M_n)$, and put $u_\ell:=\Pi_kv_\ell$. Estimate~\labelcref{eq:pi-ku-bounded} gives $u_\ell\to u$ in $H^2(\M_n)$, so continuity of $B$ gives $Bu_\ell\to Bu$ in $L^2(\M_n,dr\,dV_h)$. Therefore, $a_j[u_\ell]\to a_j[u]$ and $H_{\mu_k}^Pa_j[u_\ell]=a_j[Bu_\ell]\to a_j[Bu]$ in $L^2(C,dr)$. Closedness of $H_{\mu_k}^P$ now gives $a_j[u]\in H^2(C)$ for every $j$, and hence $u\in H^2(C)\otimes Y_k$. This proves the reverse inclusion in the domain identity~\labelcref{eq:Delta-block-domain}, while \cref{eq:B-action} proves the operator identity~\labelcref{eq:Delta-block-operator}.
\end{proof}

For $\mu\ge0$ and $z\in\R\setminus\spec(H_\mu^P)$, define the cross-compressed resolvents on $C$ by
\begin{equation}\label{eq:cross-compressed-resolvent-P}
  \Rport{P}(z\portsC)
  :=\res_{C_\rcv}\circ(H_\mu^P-z)^{-1}\circ\ext_{C_\src}
  \colon L^2\bigl(C_\src,dr\bigr)\to L^2\bigl(C_\rcv,dr\bigr),
\end{equation}
where $\ext_{C_\src}\colon L^2(C_\src,dr)\to L^2(C,dr)$ and $\res_{C_\rcv}\colon L^2(C,dr)\to L^2(C_\rcv,dr)$ are the extension-by-zero and restriction operators, respectively.

Let $U_\src$ and $U_\rcv$ denote multiplication by $q|_\src$ and $q|_\rcv$, respectively. By \cref{eq:warped-laplacian}, the maps
\begin{align}
  U_\src&\colon L^2\bigl(\src,dV_{g_q}\bigr)\to L^2\bigl(\src,dr\,dV_h\bigr),\label{eq:U_src}\\
  U_\rcv&\colon L^2\bigl(\rcv,dV_{g_q}\bigr)\to L^2\bigl(\rcv,dr\,dV_h\bigr),\label{eq:U_rcv}
\end{align}
are unitary, with inverses given by multiplication by $q^{-1}$ restricted to $\src$ and $\rcv$, respectively.

We can now decompose $\RSR(z)$ into the cross-compressed resolvents~\labelcref{eq:cross-compressed-resolvent-P} on $C$.

\begin{lemma}\label{prop:RSR-resolvent-decomposition}
  For $z<0$, the cross-compressed resolvent decomposes as
  \begin{equation}\label{eq:cross-compressed-resolvent-decomposition}
    \RSR(z)
    =U_\rcv^\ast
    \left[\bigoplus_{k\ge0}\Rport[\mu_k]{P}(z\portsC)\otimes\id_{Y_k}\right]
    U_\src,
  \end{equation}
  where the operator in brackets maps $L^2\bigl(\src,dr\,dV_h\bigr)$ to $L^2\bigl(\rcv,dr\,dV_h\bigr)$, decomposed by \cref{eq:L2-decomposition} with $\Omega=C_\src$ and $\Omega=C_\rcv$, respectively.
\end{lemma}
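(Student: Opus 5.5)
The plan is to conjugate $-\Delta_{g_q}$ by the Liouville unitary and read off the block structure from \cref{prop:Delta-diagonalization}. Since $U_{\M_n}$ is unitary with $U_{\M_n}(-\Delta_{g_q})U_{\M_n}^\ast=B$, we have, for $z<0$ (which lies outside $\spec(B)$ because $B\ge0$),
\begin{equation*}
  (-\Delta_{g_q}-z)^{-1}=U_{\M_n}^\ast(B-z)^{-1}U_{\M_n}.
\end{equation*}
Next I will check that multiplication by $q$ commutes with restriction and extension by zero. Concretely, $U_{\M_n}\circ\ext_\src=\ext_\src\circ U_\src$ and $\res_\rcv\circ U_{\M_n}^\ast=U_\rcv^\ast\circ\res_\rcv$, where on the right the extension and restriction act in the unwarped measure $dr\,dV_h$. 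Both identities hold pointwise, because $q$ is a function. This gives
\begin{equation*}
  \RSR(z)=U_\rcv^\ast\,\res_\rcv(B-z)^{-1}\ext_\src\,U_\src.
\end{equation*}

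It remains to decompose $\res_\rcv(B-z)^{-1}\ext_\src$ along \cref{eq:L2-decomposition}. Each $\calH_k$ reduces $B$, and $B|_{\calH_k}=H^P_{\mu_k}\otimes\id_{Y_k}$ on $H^2(C)\otimes Y_k$. Hence $(B-z)^{-1}$ commutes with the projector $\Pi_k$ and acts on $\calH_k$ as $(H^P_{\mu_k}-z)^{-1}\otimes\id_{Y_k}$. To see this, note that $H^P_{\mu_k}\ge0$, so $(H^P_{\mu_k}-z)^{-1}\otimes\id_{Y_k}$ maps $\calH_k$ into $H^2(C)\otimes Y_k$. On that domain, $B-z$ inverts it. Injectivity of $B-z$ then identifies this operator with $(B-z)^{-1}|_{\calH_k}$.

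Finally, $\ext_\src$ and $\res_\rcv$ act only in the $r$-variable. For $\Omega=C_\src$ they are $\ext_{C_\src}\otimes\id_{Y_k}$ on the summand $L^2(\Omega,dr)\otimes Y_k$, and analogously $\res_{C_\rcv}\otimes\id_{Y_k}$ for $\Omega=C_\rcv$. This holds because the coefficient maps $a_j[\cdot]$ commute with multiplication by indicator functions of sets of the form $\Omega\times F$. Composing blockwise gives $\Rport[\mu_k]{P}(z\portsC)\otimes\id_{Y_k}$ on the $k$-th summand.

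Summing over $k$ gives the direct sum in \cref{eq:cross-compressed-resolvent-decomposition}. The sum converges strongly and defines a bounded operator, since the blocks are uniformly bounded: $\norm{(H^P_{\mu}-z)^{-1}}\le 1/\abs{z}$.

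The only slightly delicate step is identifying the restriction of $(B-z)^{-1}$ to $\calH_k$ with the tensor resolvent. That step rests on the domain identity \cref{eq:Delta-block-domain}. Everything else is bookkeeping with unitary multipliers.
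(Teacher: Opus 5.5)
Your proof is correct and follows essentially the same route as the paper: conjugate by the Liouville unitary $U_{\M_n}$, use that each $\calH_k$ reduces $B$ to identify $(B-z)^{-1}|_{\calH_k}$ with $(H_{\mu_k}^P-z)^{-1}\otimes\id_{Y_k}$, sum over $k$, and observe that restriction to $\rcv$ and extension by zero from $\src$ act fiberwise and intertwine $U_{\M_n}$ with $U_\rcv$ and $U_\src$. The differences are cosmetic. You apply the intertwining before the block decomposition rather than after. You also spell out the block-resolvent identification, which the paper leaves implicit and which in fact needs only the inclusion $H^2(C)\otimes Y_k\subset\calH_k\cap\calD(B)$ together with the action of $B$ there.
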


\begin{proof}
  Let $z<0$, and let $B$ and $\calH_k$ be as in \cref{prop:Delta-diagonalization}. By unitary equivalence and that lemma, $B$ and every $H_{\mu_k}^P$ are self-adjoint and nonnegative, so their resolvents at $z$ exist. Let $\Pi_k$ be the orthogonal projector onto $\calH_k$. Since $\calH_k$ reduces $B$, the projector $\Pi_k$ commutes with $(B-z)^{-1}$. The description of $B|_{\calH_k\cap\calD(B)}$ in \cref{prop:Delta-diagonalization} therefore gives
  \begin{equation}\label{eq:block-resolvent}
    (B-z)^{-1}|_{\calH_k}=(H_{\mu_k}^P-z)^{-1}\otimes\id_{Y_k}.
  \end{equation}

  Since $\sum_{k\ge0}\Pi_k=\id$ in the strong operator topology on $L^2(\M_n,dr\,dV_h)$ by \cref{eq:L2-decomposition} with $\Omega=C$, the identity~\labelcref{eq:block-resolvent} yields
  \begin{equation*}
    (B-z)^{-1}=\bigoplus_{k\ge0}(H_{\mu_k}^P-z)^{-1}\otimes\id_{Y_k}.
  \end{equation*}
  Conjugating by $U_{\M_n}$ gives
  \begin{equation}\label{eq:resolvent-decomposition}
    (-\Delta_{g_q}-z)^{-1}
    =U_{\M_n}^\ast
    \left[\bigoplus_{k\ge0}(H_{\mu_k}^P-z)^{-1}\otimes\id_{Y_k}\right]
    U_{\M_n}.
  \end{equation}
  Finally, restriction to $\rcv=C_\rcv\times F$ and extension by zero from $\src=C_\src\times F$ act fiberwise and intertwine $U_{\M_n}$ with $U_\rcv$ and $U_\src$. Applying them to \cref{eq:resolvent-decomposition} proves \cref{eq:cross-compressed-resolvent-decomposition}.
\end{proof}

Set $\wt{V}_\mu:=V_\mu(\cdot\,;\tq)$, and let $\wt{H}_\mu^P$ be the self-adjoint operator defined by \cref{eq:operator-H-P} with $V_\mu$ replaced by $\wt{V}_\mu$. Both $H_\mu^P$ and $\wt{H}_\mu^P$ act on $L^2(C,dr)$ with domain $H^2(C)$, independently of the warping profile. The operator $\tRSR(z)$, defined in \cref{eq:tRSR}, is the cross-compressed resolvent of $-\Delta_{g_{\tq}}$. Define $\tRport{P}(z\portsC)$ by \cref{eq:cross-compressed-resolvent-P} with $H_\mu^P$ replaced by $\wt{H}_\mu^P$. Since $\tq$ satisfies the same assumptions as $q$, the constructions and results of this subsection apply verbatim with tildes.

Together with \cref{prop:data-equivalence}, the following \namecref{prop:resolvent-reduction} reduces the equivalence of the cross-spectral data to a family of identities on the circle, one for each $z<0$ and each eigenvalue of $-\Delta_{h}$.

\begin{proposition}\label{prop:resolvent-reduction}
  Let $z<0$. The identity $\RSR(z)=\tRSR(z)$ holds if and only if, for every $k=0,1,\dots$,
  \begin{equation}\label{eq:resolvent-reduction}
    \Rport[\mu_k]{P}(z\portsC)=\tRport[\mu_k]{P}(z\portsC).
  \end{equation}
\end{proposition}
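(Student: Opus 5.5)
The plan is to apply \cref{prop:RSR-resolvent-decomposition} to both metrics and compare the two block decompositions. Because $q=\tq$ on $C_\src\cup C_\rcv$ by \cref{eq:q-matching}, the unitaries $U_\src$ and $U_\rcv$ are the same for $g_q$ and $g_{\tq}$. The spaces $L^2(\src,dV_{g_q})$ and $L^2(\src,dV_{g_{\tq}})$ also coincide, since both volume densities are $q^2\,dr\,dV_h$ on $\src$, and likewise on $\rcv$. Thus $\RSR(z)$ and $\tRSR(z)$ act between the same Hilbert spaces, and the identity $\RSR(z)=\tRSR(z)$ is meaningful. Conjugating by the common unitaries shows it is equivalent to equality of the bracketed operators
\begin{equation*}
  \bigoplus_{k\ge0}\Rport[\mu_k]{P}(z\portsC)\otimes\id_{Y_k}
  =\bigoplus_{k\ge0}\tRport[\mu_k]{P}(z\portsC)\otimes\id_{Y_k}
\end{equation*}
as maps $L^2(\src,dr\,dV_h)\to L^2(\rcv,dr\,dV_h)$.

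The ``if'' direction is immediate from this. If the blocks agree for every $k$, the direct sums agree, since both are defined blockwise on the decomposition \cref{eq:L2-decomposition}.

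For ``only if'', assume the direct sums agree and fix $k$. Pick a nonzero $\phi\in Y_k$ and an arbitrary $w\in L^2(C_\src,dr)$, and apply both sides to $w\otimes\phi$, which lies in the $k$-th summand for $\Omega=C_\src$. The results are $(\Rport[\mu_k]{P}(z\portsC)w)\otimes\phi$ and $(\tRport[\mu_k]{P}(z\portsC)w)\otimes\phi$. Their difference is $u\otimes\phi$ with $\phi\ne0$, and it vanishes only if $u=0$. Hence the two compressed resolvents agree on every $w$, which is \cref{eq:resolvent-reduction}.

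There is no real obstacle; the argument is bookkeeping. The one point to check carefully is that the weighted $L^2$ spaces and the multiplication unitaries on $\src$ and $\rcv$ genuinely coincide for the two metrics. That is exactly where \cref{eq:q-matching} enters, together with the fact that restriction and extension act fiberwise, as already noted in the proof of \cref{prop:RSR-resolvent-decomposition}.
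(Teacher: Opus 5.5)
Your proposal is correct and takes the same approach as the paper. Both arguments apply \cref{prop:RSR-resolvent-decomposition} to the two metrics, use \cref{eq:q-matching} to see that the unitaries $U_\src$, $U_\rcv$ and their domains coincide for $q$ and $\tq$, and then compare the direct sums block by block; your test on $w\otimes\phi$ with $0\neq\phi\in Y_k$ just spells out the paper's remark that the direct sums agree precisely when all corresponding summands do.
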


\begin{proof}
  The spaces in \cref{eq:U_src,eq:U_rcv} are the same for both warping profiles, because the codomains do not involve the profile and the domains agree by \cref{eq:q-matching}. For the same reason, $U_\src$ and $U_\rcv$ coincide with their tilded counterparts.

  If $z<0$, then $z$ is outside the spectra of the nonnegative operators $-\Delta_{g_q}$ and $-\Delta_{g_{\tq}}$. Therefore, the decomposition~\labelcref{eq:cross-compressed-resolvent-decomposition} holds, and it also holds if $\RSR(z)$ and $\Rport[\mu_k]{P}(z\portsC)$ are replaced by $\tRSR(z)$ and $\tRport[\mu_k]{P}(z\portsC)$, respectively. The operators are equal precisely when all the corresponding summands are, which is \cref{eq:resolvent-reduction}.
\end{proof}

\subsubsection*{Parity reduction from $C$ to the semicircle.}
Let $j_\pm$ parametrize the two closed semicircles:
\begin{equation}\label{eq:semicircle-parametrization}
  j_\pm\colon \interval{0}{L} \to C,
  \qquad
  j_\pm(x) := \pi_C(\pm x).
\end{equation}
We call $j_+(\interval{0}{L})$ the \emph{positive semicircle} and $j_-(\interval{0}{L})$ the \emph{negative semicircle}. We write $V_\mu(\cdot\,;q)$ also for its pullback along $j_+$, for any warping profile $q$, and in particular for $V_\mu$ and $\wt{V}_\mu$. The potentials are even as functions on $C$, so their values on $\interval{0}{L}$ determine them on $C$. In all interval equations on $\interval{0}{L}$ below, primes denote ordinary derivatives with respect to $x\in\interval{0}{L}$.

We aim first to express $H_\mu^P$ in terms of the Dirichlet and Neumann realizations of a differential equation on $\interval{0}{L}$, and then to relate the cross-compressed resolvents of the operators on $\interval{0}{L}$ to those of the operator on the circle (cf.\ \cref{prop:resolvent-reduction}). To that end, let $H_\mu^D$ and $H_\mu^N$ be the Dirichlet and Neumann realizations, respectively, of $f\mapsto -f''+V_\mu f$ on $\interval{0}{L}$, with the domains
\begin{align*}
  \calD(H_\mu^D) &:= \{w \in H^2(\interval{0}{L}) : w(0)=w(L)=0\}, \\
  \calD(H_\mu^N) &:= \{w \in H^2(\interval{0}{L}) : w'(0)=w'(L)=0\},
\end{align*}
where, for a compact interval $\interval{a}{b}$, we write $H^2(\interval{a}{b})$ for the Sobolev space $H^2\bigl(\ointerval{a}{b}\bigr)$ whose elements are identified with their $C^1(\interval{a}{b})$-representatives. For $L^2$-spaces on intervals, we always use Lebesgue measure and omit it from the notation. These operators are self-adjoint on $L^2(\interval{0}{L})$.

For $\Omega\in\{C,\,C_\src,\,C_\rcv\}$, write $\Leven{\Omega,dr}$ and $\Lodd{\Omega,dr}$ for the spaces of even and odd $L^2$-functions on $\Omega\subset C$. In each case the two subspaces are orthogonal and $L^2(\Omega,dr)$ is their orthogonal direct sum.

Let $E_N,E_D\colon L^2(\interval{0}{L})\to L^2(C,dr)$ be the normalized even and odd \emph{parity maps}, defined for $x\in\ointerval{0}{L}$ by
\begin{equation}\label{eq:parity-maps}
  \bigl(E_Nw\bigr)\bigl(j_\pm(x)\bigr) := \frac{1}{\sqrt{2}}\,w(x)
  \quad\text{and}\quad
  \bigl(E_Dw\bigr)\bigl(j_\pm(x)\bigr) := \pm\frac{1}{\sqrt{2}}\,w(x).
\end{equation}
The values at the two fixed points of $\iota_C$ are immaterial, and the subscripts $N$ and $D$ anticipate \cref{prop:parity-decomposition}. The operators $E_N$ and $E_D$ map $L^2(\interval{0}{L})$ isometrically onto $\Leven{C,dr}$ and onto $\Lodd{C,dr}$, respectively.

\begin{lemma}\label{prop:parity-decomposition}
  Fix $\mu\ge 0$ and recall that $V_\mu\in C^\infty(C)$ is even. The map
  \begin{equation*}
    E\colon L^2(\interval{0}{L})\oplus L^2(\interval{0}{L})\to L^2(C,dr),\quad
    E(f_N,f_D) := E_Nf_N+E_Df_D,
  \end{equation*}
  is unitary and satisfies
  \begin{equation}\label{eq:parity-expression}
    H_\mu^P E = E(H_\mu^N\oplus H_\mu^D).
  \end{equation}
  Consequently,
  \begin{equation}\label{eq:spectral-identity}
    \spec(H_\mu^P) = \spec(H_\mu^N)\cup\spec(H_\mu^D),
  \end{equation}
  and for every $z\in\R\setminus\spec(H_\mu^P)$,
  \begin{equation}\label{eq:resolvent-fold}
    (H_\mu^P-z)^{-1}
    = E_N(H_\mu^N-z)^{-1}E_N^\ast
    + E_D(H_\mu^D-z)^{-1}E_D^\ast.
  \end{equation}
\end{lemma}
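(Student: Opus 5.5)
The plan is to first verify that $E$ is unitary, then check the intertwining identity~\labelcref{eq:parity-expression} on domains and actions, and finally read off~\labelcref{eq:spectral-identity} and~\labelcref{eq:resolvent-fold} by unitary equivalence.

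\emph{Unitarity.} We already know that $E_N$ and $E_D$ are isometries onto $\Leven{C,dr}$ and $\Lodd{C,dr}$. These two spaces are orthogonal and sum to $L^2(C,dr)$. Hence $E$ is an isometry from the orthogonal direct sum onto $L^2(C,dr)$, so it is unitary. Its adjoint is $E^\ast u=(E_N^\ast u,E_D^\ast u)$, where concretely
\begin{equation*}
  (E_N^\ast u)(x)=\tfrac{1}{\sqrt2}\bigl(u(j_+(x))+u(j_-(x))\bigr)
  \quad\text{and}\quad
  (E_D^\ast u)(x)=\tfrac{1}{\sqrt2}\bigl(u(j_+(x))-u(j_-(x))\bigr).
\end{equation*}

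\emph{Domains.} This is the main point; it is routine but needs care at the fixed points $\pi_C(0)$ and $\pi_C(L)$. I would show that $E$ maps $\calD(H_\mu^N)\oplus\calD(H_\mu^D)$ onto $H^2(C)=\calD(H_\mu^P)$.

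Take $w\in H^2(\interval{0}{L})$ with $w'(0)=w'(L)=0$. The even extension of $w$ to $\interval{-L}{L}$, periodized, is $C^1$ across $0$ and $\pm L$: the values match by evenness, and the one-sided derivatives $\pm w'$ both vanish there. Its second derivative is the even extension of $w''$, which lies in $L^2$. A $C^1$ function whose piecewise second derivative is in $L^2$ lies in $H^2$, so $E_Nw\in H^2(C)$.

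Similarly, take $w\in H^2(\interval{0}{L})$ with $w(0)=w(L)=0$. The odd extension is continuous across the fixed points because $w$ vanishes there, and the derivative $w'$ extends evenly, hence continuously. Thus $E_Dw\in H^2(C)$.

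Conversely, let $u\in H^2(C)\subset C^1(C)$. Its even part $(u+u\circ\iota_C)/2$ lies in $H^2(C)$ and has odd derivative, so that derivative vanishes at the fixed points; its pullback along $j_+$ therefore satisfies the Neumann conditions. Likewise the odd part vanishes at the fixed points and gives Dirichlet data. So $E^\ast$ maps $H^2(C)$ into $\calD(H_\mu^N)\oplus\calD(H_\mu^D)$, and the domains correspond.

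\emph{Action.} On each open semicircle, $-\partial_r^2$ commutes with pullback along $j_\pm$, since the sign in $j_-$ appears twice in the second derivative. The potential $V_\mu$ is even, so multiplication by it commutes with $E_N$ and $E_D$. Hence $H_\mu^PE_Nw=E_NH_\mu^Nw$ and $H_\mu^PE_Dw=E_DH_\mu^Dw$ almost everywhere, which gives~\labelcref{eq:parity-expression}.

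\emph{Consequences.} Equation~\labelcref{eq:parity-expression} together with the domain correspondence says $H_\mu^P=E(H_\mu^N\oplus H_\mu^D)E^\ast$. The spectrum of a direct sum is the union of the spectra, which gives~\labelcref{eq:spectral-identity}. For $z$ outside this set, conjugating the resolvent of the direct sum by $E$ yields~\labelcref{eq:resolvent-fold}.
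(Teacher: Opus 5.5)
Your proposal is correct and follows essentially the same route as the paper. You get unitarity from the orthogonal even/odd splitting, identify the Neumann and Dirichlet domains with the even and odd parts of $H^2(C)$ by checking the $C^1$ gluing at the two fixed points of $\iota_C$, use the evenness of $V_\mu$ for the action, and then read off the spectrum and resolvent by unitary equivalence. The only difference is cosmetic: the paper first transports everything to $L^2([-L,L])$ with periodic boundary conditions, whereas you work directly on $C$.
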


\begin{proof}
  Pullback by $\pi_C$ provides a unitary identification
  \begin{equation*}
    J\colon L^2(C,dr)\to L^2(\interval{-L}{L}),
    \qquad
    (J w)(x) := w(\pi_C(x)).
  \end{equation*}
  Under this identification, $H_\mu^P$ transports to $JH_\mu^P J^{-1}=-\frac{d^2}{dx^2}+V_\mu\circ\pi_C$ with domain
  \begin{equation*}
    H_{\mathrm{per}}^2(\interval{-L}{L})
    := \Big\{ w\in H^2(\interval{-L}{L}) : w(-L)=w(L),\,w'(-L)=w'(L) \Big\}.
  \end{equation*}
  We continue to denote the transported operator and potential by $H_\mu^P$ and $V_\mu$, respectively.

  We also suppress $J$ in the notation for $JE$, $JE_N$, and $JE_D$. Thus, $E_N$ and $E_D$ become the normalized even and odd extension operators from $L^2(\interval{0}{L})$ to $L^2(\interval{-L}{L})$. Their ranges are the even and odd subspaces $\Leven{\interval{-L}{L}}$ and $\Lodd{\interval{-L}{L}}$, respectively, so $E$ is unitary. Since all assertions are invariant under this unitary identification, it suffices to prove the \namecref{prop:parity-decomposition} for the transported operators.

  Let $w\in H^2(\interval{0}{L})$. Its even and odd extensions are $H^2$ on $\ointerval{-L}{0}$ and on $\ointerval{0}{L}$. If $w'(0)=w'(L)=0$, then the even extension belongs to $H_{\mathrm{per}}^2(\interval{-L}{L})$: its one-sided values and first derivatives agree at $0$, and its values and first derivatives at $-L$ and $L$ coincide. Conversely, if the even extension belongs to $H_{\mathrm{per}}^2(\interval{-L}{L})$, then its derivative is odd, continuous at $0$, and takes the same value at $-L$ and $L$, so it vanishes at $0$ and at $L$; that is, $w'(0)=w'(L)=0$. Similarly, the odd extension belongs to $H_{\mathrm{per}}^2(\interval{-L}{L})$ if and only if $w(0)=w(L)=0$. Consequently, the operator domains satisfy
  \begin{align}
    E_N\calD(H_\mu^N)
    &= \calD(H_\mu^P)\cap \Leven{\interval{-L}{L}},\label{eq:even-domain}\\
    E_D\calD(H_\mu^D)
    &= \calD(H_\mu^P)\cap \Lodd{\interval{-L}{L}}.\label{eq:odd-domain}
  \end{align}

  The reflection $x\mapsto-x$ preserves $\calD(H_\mu^P)=H_{\mathrm{per}}^2(\interval{-L}{L})$, and it commutes with $H_\mu^P$ because $V_\mu$ is even. Together with \cref{eq:even-domain,eq:odd-domain}, this gives
  \begin{equation*}
    E^\ast H_\mu^P E=H_\mu^N\oplus H_\mu^D.
  \end{equation*}
  Since $E$ is unitary, this is equivalent to \cref{eq:parity-expression}, and \cref{eq:spectral-identity} follows.

  Let $z\in\R\setminus\spec(H_\mu^P)$. By \cref{eq:spectral-identity}, $z$ is outside $\spec(H_\mu^N)$ and $\spec(H_\mu^D)$, and \cref{eq:parity-expression} gives $(H_\mu^P-z)^{-1}=E\bigl((H_\mu^N-z)^{-1}\oplus(H_\mu^D-z)^{-1}\bigr)E^\ast$. Expanding $E$ and $E^\ast$ gives \cref{eq:resolvent-fold}.
\end{proof}

As illustrated in \cref{fig:nonuniqueness-geometry}, put
\begin{equation}\label{eq:interval-ports}
  I_\src:=\ointerval{0}{\eta},
  \qquad
  I_\rcv:=\ointerval{L-\eta}{L}.
\end{equation}
For $X\in\{D,N\}$ and $z\in\R\setminus\spec(H_\mu^X)$, the cross-compressed resolvent of $H_\mu^X$ is denoted by
\begin{equation*}
  \Rport{X}(z\portsI)
  :=\res_{I_\rcv}\circ\,(H_\mu^X-z)^{-1}\,\circ\,\ext_{I_\src}
  \colon L^2\bigl(I_\src\bigr)\to L^2\bigl(I_\rcv\bigr).
\end{equation*}
Here $\res_{I_\rcv}\colon L^2(\interval{0}{L})\to L^2(I_\rcv)$ and $\ext_{I_\src}\colon L^2(I_\src)\to L^2(\interval{0}{L})$ are the restriction and extension-by-zero operators. Define $\wt{H}_\mu^X$ and $\tRport{X}(z\portsI)$ analogously from $\tq$, the latter for $z\in\R$ outside the spectrum of $\wt{H}_\mu^X$.

Together with \cref{prop:data-equivalence,prop:resolvent-reduction}, the following \namecref{prop:parity-resolvent-reduction} reduces the equivalence of the cross-spectral data to the Dirichlet and Neumann problems on $\interval{0}{L}$.

\begin{proposition}\label{prop:parity-resolvent-reduction}
  Let $\mu\ge0$ and $z<0$. The following are equivalent:
  \begin{cond}
  \item\label{cond:circle-resolvents} $\Rport{P}(z\portsC)=\tRport{P}(z\portsC)$.
  \item\label{cond:interval-resolvents} $\Rport{N}(z\portsI)=\tRport{N}(z\portsI)$ and $\Rport{D}(z\portsI)=\tRport{D}(z\portsI)$.
  \end{cond}
\end{proposition}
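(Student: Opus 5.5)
The plan is to substitute the parity fold \cref{eq:resolvent-fold} of \cref{prop:parity-decomposition} into the definition \cref{eq:cross-compressed-resolvent-P}, and then use the parity splitting of $L^2(C_\src,dr)$ and $L^2(C_\rcv,dr)$ to read off the interval resolvents. Let $z<0$. Since $H_\mu^P$ and $\wt{H}_\mu^P$ are nonnegative (\cref{prop:Delta-diagonalization}), $z$ lies outside their spectra. By \cref{eq:spectral-identity}, $z$ also lies outside the spectra of $H_\mu^N$, $H_\mu^D$ and their tilded counterparts, so every operator in the statement is defined.

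The first step is to describe how the parity maps act on the ports. The sets $C_\src=j_+(\rinterval{0}{\eta})\cup j_-(\rinterval{0}{\eta})$ and $C_\rcv=j_\pm(\linterval{L-\eta}{L})$ are $\iota_C$-invariant. Hence $E_N$ and $E_D$, restricted to $L^2(I_\src)$ and $L^2(I_\rcv)$, give unitaries
\begin{equation*}
  E_{N}^{\src}\colon L^2(I_\src)\to\Leven{C_\src,dr},
  \qquad
  E_{D}^{\src}\colon L^2(I_\src)\to\Lodd{C_\src,dr},
\end{equation*}
and the analogous unitaries $E_N^{\rcv},E_D^{\rcv}$ onto the even and odd subspaces of $L^2(C_\rcv,dr)$. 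The fixed points have measure zero, so they play no role.

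These maps are compatible with extension and restriction:
\begin{equation*}
  \res_{C_\rcv}E_X=E_X^{\rcv}\res_{I_\rcv},
  \qquad
  E_X^\ast\ext_{C_\src}=\ext_{I_\src}(E_X^{\src})^\ast,
  \qquad X\in\{N,D\}.
\end{equation*}
The first identity follows pointwise from \cref{eq:parity-maps}. The second holds because $E_X^\ast$ folds $w$ to $x\mapsto\frac{1}{\sqrt2}\bigl(w(j_+(x))\pm w(j_-(x))\bigr)$, which is supported in $\overline{I_\src}$ when $w$ is supported in $C_\src$.

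Inserting these identities into \cref{eq:resolvent-fold} gives
\begin{equation*}
  \Rport{P}(z\portsC)
  =E_N^{\rcv}\,\Rport{N}(z\portsI)\,(E_N^{\src})^\ast
  +E_D^{\rcv}\,\Rport{D}(z\portsI)\,(E_D^{\src})^\ast,
\end{equation*}
and the same formula holds with tildes. The interval operators are the same for both profiles except through the potentials. The parity maps do not depend on the profile at all, so the unitaries $E_X^{\src}$ and $E_X^{\rcv}$ are common to both sides.

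The implication \cref{cond:interval-resolvents} $\Rightarrow$ \cref{cond:circle-resolvents} is then immediate. For the converse, the displayed formula shows that $\Rport{P}$ maps even functions to even functions and odd to odd, and that it is block-diagonal with respect to the orthogonal decompositions into even and odd parts. Equality of the two circle operators therefore forces equality of the blocks:
\begin{equation*}
  E_X^{\rcv}\Rport{X}(E_X^{\src})^\ast=E_X^{\rcv}\tRport{X}(E_X^{\src})^\ast .
\end{equation*}
Cancelling the unitaries gives \cref{cond:interval-resolvents}.

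The only delicate point is the support and intertwining bookkeeping in the second identity, namely checking that folding a function supported in $C_\src$ stays supported in $I_\src$, up to null sets. I expect this to be routine once the parametrization $j_\pm$ is written out.
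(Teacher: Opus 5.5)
Your proposal is correct and takes essentially the same route as the paper. The paper also localizes the parity maps to $E^{\src}_X$ and $E^{\rcv}_X$ and uses the same two intertwining identities to fold the circle cross-compressed resolvent into its Neumann and Dirichlet parts. For the converse, it extracts each block by multiplying by $(E^{\rcv}_X)^\ast$ on the left and $E^{\src}_X$ on the right, which is exactly your block-diagonal cancellation written out.
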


\begin{proof}
  The operators $H_\mu^P$ and $\wt{H}_\mu^P$ are nonnegative by \cref{prop:Delta-diagonalization}. Thus, $z<0$ lies outside their spectra, and by \cref{eq:spectral-identity}, it also lies outside the spectra of $H_\mu^N$, $H_\mu^D$, $\wt{H}_\mu^N$ and $\wt{H}_\mu^D$. All the resolvents below therefore exist.

  Define the four maps
  \begin{equation*}
    E^Z_X\colon L^2(I_Z)\to L^2(C_Z,dr),
    \qquad Z\in\{\src,\rcv\},\quad X\in\{D,N\},
  \end{equation*}
  by the formulas~\labelcref{eq:parity-maps}, restricted to $x\in I_Z$. For each $Z$, the operators $E^Z_N$ and $E^Z_D$ map $L^2(I_Z)$ isometrically onto $\Leven{C_Z,dr}$ and $\Lodd{C_Z,dr}$, respectively. Since the even and odd subspaces are orthogonal,
  \begin{equation}\label{eq:localized-parity-orthogonality}
    (E^Z_X)^\ast E^Z_Y=
    \begin{cases}
      \id_{L^2(I_Z)} & \text{if } X=Y,\\
      0   & \text{if } X\neq Y,
    \end{cases}
    \quad Z\in\{\src,\rcv\},\quad X,Y\in\{D,N\}.
  \end{equation}
  The following intertwining relations hold:
  \begin{equation}\label{eq:localized-parity-intertwining}
    \res_{C_\rcv}E_X=E^\rcv_X\,\res_{I_\rcv}
    \enspace\text{and}\enspace
    E_X^\ast\ext_{C_\src}=\ext_{I_\src}\,(E^\src_X)^\ast,
    \enspace X\in\{D,N\}.
  \end{equation}

  Substituting \cref{eq:resolvent-fold} into the definition~\labelcref{eq:cross-compressed-resolvent-P} and applying \cref{eq:localized-parity-intertwining} to each summand yields
  \begin{equation}\label{eq:localized-fold}
    \Rport{P}(z\portsC)
    = \sum_{X\in\{D,N\}} E^\rcv_X\,\Rport{X}(z\portsI)\,(E^\src_X)^\ast,
  \end{equation}
  and the analogous equation for $\tRport{P}(z\portsC)$ in terms of $\tRport{X}(z\portsI)$. This proves \labelcref{cond:interval-resolvents} $\Rightarrow$ \labelcref{cond:circle-resolvents}.

  Conversely, multiplying \cref{eq:localized-fold} from the left by $(E^\rcv_X)^\ast$ and from the right by $E^\src_X$ and using \cref{eq:localized-parity-orthogonality} gives
  \begin{equation*}
    \Rport{X}(z\portsI)=(E^\rcv_X)^\ast\,\Rport{P}(z\portsC)\,E^\src_X,
  \end{equation*}
  for $X\in\{D,N\}$, and likewise for the tilded operators. This proves \labelcref{cond:circle-resolvents} $\Rightarrow$ \labelcref{cond:interval-resolvents}.
\end{proof}

\subsubsection*{Reduction to the off-diagonal transfer-matrix entries.}
\Cref{prop:data-equivalence,prop:resolvent-reduction,prop:parity-resolvent-reduction} show that it only remains to compare the Dirichlet and Neumann cross-compressed resolvents associated with the two warping profiles. We first compute the Green's functions of $H_\mu^D$ and $H_\mu^N$, that is, the integral kernels of their resolvents, and then relate the corresponding cross-compressed resolvents to the off-diagonal entries of the transfer matrix across $\interval{0}{L}$ for the equation
\begin{equation}\label{eq:schrodinger}
  -f''+(V_\mu-z)f=0
  \qquad\text{on}\qquad\interval{0}{L}.
\end{equation}
Recall that $V_\mu\in C^\infty(\interval{0}{L})$ denotes the potential of \cref{eq:potential} pulled back along the parametrization $j_+$ defined in \cref{eq:semicircle-parametrization}.

For $\mu\ge0$, $z\in\R$, and $x_0,x_1\in\interval{0}{L}$, let $T_{x_1\shortleftarrow x_0}(\mu,z)\in\R^{2\times2}$ denote the transfer matrix of \cref{eq:schrodinger}, characterized by
\begin{equation}\label{eq:transfer-matrix}
  T_{x_1\shortleftarrow x_0}(\mu,z)
  \begin{bmatrix}
    f(x_0)\\f'(x_0)
  \end{bmatrix}
  =
  \begin{bmatrix}
    f(x_1)\\f'(x_1)
  \end{bmatrix},
\end{equation}
for every solution $f$ of this equation. We define $\wt{T}_{x_1\shortleftarrow x_0}(\mu,z)$ analogously using $\wt{V}_\mu$.

For $x_0,x_1,x_2\in\interval{0}{L}$, the characterization~\labelcref{eq:transfer-matrix} also gives
\begin{equation}\label{eq:transfer-composition}
  T_{x_2\shortleftarrow x_0}(\mu,z)=T_{x_2\shortleftarrow x_1}(\mu,z)\,T_{x_1\shortleftarrow x_0}(\mu,z).
\end{equation}

Let $f_1$ and $f_2$ be the solutions of \cref{eq:schrodinger} whose state vectors $(f_j,f'_j)$ at $x_0$ are the standard basis vectors $e_1$ and $e_2$ of $\R^2$, respectively. Then
\begin{equation*}
  T_{x_1\shortleftarrow x_0}(\mu,z)
  =
  \begin{bmatrix}
    f_1(x_1)&f_2(x_1)\\
    f_1'(x_1)&f_2'(x_1)
  \end{bmatrix}.
\end{equation*}
Since the equation has no first-order term, the Wronskian $W(f_1,f_2):=f_1f_2'-f_1'f_2$ is constant. Hence
\begin{equation}\label{eq:det-T-1}
  \det\bigl(T_{x_1\shortleftarrow x_0}(\mu,z)\bigr)
  =W(f_1,f_2)(x_1)
  =W(f_1,f_2)(x_0)
  =1.
\end{equation}

\begin{lemma}\label{prop:interval-green}
  Fix $\mu\ge 0$ and $z\in\R$, and let $d_0,d_L,n_0,n_L\in C^2(\interval{0}{L})$ be the unique solutions of \cref{eq:schrodinger} determined by the Cauchy data
  \begin{align*}
    \bigl(d_0(0),d_0'(0)\bigr) &= (0,1),
    &\bigl(d_L(L),d_L'(L)\bigr) &= (0,-1),\\
    \bigl(n_0(0),n_0'(0)\bigr) &= (1,0),
    &\bigl(n_L(L),n_L'(L)\bigr) &= (1,0),
  \end{align*}
  with their dependence on $\mu$ and $z$ suppressed in the notation. Then:
  \begin{enumerate}
    \item\label{item:G-D-green} if $d_0(L)\neq 0$, then $z\notin\spec(H_\mu^D)$ and $(H_\mu^D-z)^{-1}$ is an integral operator with symmetric Green's function $G_{\mu,z}^D\in C(\interval{0}{L}^2)$ given by
      \begin{equation}\label{eq:G-D-green}
        G_{\mu,z}^D(x,y) =
        \frac{1}{d_0(L)}\cdot
        \begin{cases}
          d_0(x)d_L(y)   & \text{if } 0\le x \le y \le L, \\
          d_0(y)d_L(x)  & \text{if } 0\le y < x \le L;
        \end{cases}
      \end{equation}
    \item\label{item:G-N-green} if $n_0'(L)\neq 0$, then $z\notin\spec(H_\mu^N)$ and $(H_\mu^N-z)^{-1}$ is an integral operator with symmetric Green's function $G_{\mu,z}^N\in C(\interval{0}{L}^2)$ given by
      \begin{equation}\label{eq:G-N-green}
        G_{\mu,z}^N(x,y) =
        \frac{1}{n_0'(L)}\cdot
        \begin{cases}
          n_0(x)n_L(y) & \text{if } 0\le x \le y \le L, \\
          n_0(y)n_L(x) & \text{if } 0\le y < x \le L;
        \end{cases}
      \end{equation}
    \item\label{item:spec} $z\in\spec(H_\mu^D)$ if and only if $d_0(L)=0$, and $z\in\spec(H_\mu^N)$ if and only if $n_0'(L)=0$.
  \end{enumerate}
\end{lemma}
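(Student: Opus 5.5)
The plan is the classical variation-of-parameters construction of Green's functions for a regular Sturm--Liouville operator on a compact interval. The Dirichlet and Neumann cases are treated in parallel, and the spectral characterization in \cref{item:spec} is deduced at the end.

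\textbf{Wronskians.} Since \cref{eq:schrodinger} has no first-order term, the Wronskian of any two solutions is constant, as noted before \cref{eq:det-T-1}. Evaluating at $x=0$ gives
\[
W(d_0,d_L)=d_0(0)d_L'(0)-d_0'(0)d_L(0)=-d_L(0).
\]
Evaluating at $x=L$ gives
\[
W(d_0,d_L)=d_0(L)\cdot(-1)-d_0'(L)\cdot 0=-d_0(L).
\]
Hence $d_L(0)=d_0(L)$, and $d_0,d_L$ are linearly independent exactly when $d_0(L)\neq0$. In the same way, $W(n_0,n_L)=n_L'(0)$ from $x=0$ and $W(n_0,n_L)=-n_0'(L)$ from $x=L$. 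So $n_0,n_L$ are independent exactly when $n_0'(L)\neq 0$.

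\textbf{Green's functions for \cref{item:G-D-green,item:G-N-green}.} Assume $d_0(L)\neq0$. For $f\in L^2(\interval{0}{L})$, define $u(x):=\int_0^L G^D_{\mu,z}(x,y)f(y)\,dy$ with $G^D_{\mu,z}$ as in \cref{eq:G-D-green}. Writing this as
\[
u(x)=\frac{1}{d_0(L)}\Bigl(d_L(x)\int_0^x d_0 f\,dy+d_0(x)\int_x^L d_L f\,dy\Bigr),
\]
one differentiates twice. The boundary terms cancel in $u'$, and the jump in $u''$ produces $-u''+(V_\mu-z)u=f$ almost everywhere, using the Wronskian identity $d_0 d_L'-d_0'd_L=-d_0(L)$. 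This shows $u\in H^2$. The boundary conditions hold because $u(0)$ is a multiple of $d_0(0)=0$ and $u(L)$ is a multiple of $d_L(L)=0$. Hence $(H^D_\mu-z)u=f$, so $H^D_\mu-z$ is surjective.

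Injectivity: a kernel element would be a solution vanishing at $0$, hence a multiple of $d_0$, and then $d_0(L)\ne0$ forces it to be zero. Since $H^D_\mu$ is self-adjoint, bijectivity gives $z\notin\spec(H^D_\mu)$, and the resolvent is the integral operator above. Symmetry and continuity of the kernel are visible from the formula.

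The Neumann case is identical, with $n_0,n_L$ in place of $d_0,d_L$. The boundary conditions follow from $n_0'(0)=n_L'(L)=0$, and the normalization constant comes from $W(n_0,n_L)=-n_0'(L)$. The only care needed is in the signs of these normalizations.

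\textbf{\cref{item:spec}.} The implications ``$d_0(L)\ne0\Rightarrow z\notin\spec$'' and ``$n_0'(L)\ne0\Rightarrow z\notin\spec$'' are already proved above. Conversely, if $d_0(L)=0$, then $d_0$ is a nonzero element of $\calD(H^D_\mu)$ with $H^D_\mu d_0=zd_0$, so $z$ is an eigenvalue. Likewise, if $n_0'(L)=0$, then $n_0$ is a Neumann eigenfunction.

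The main obstacle is the bookkeeping of signs in the Wronskian normalizations; the rest is routine.
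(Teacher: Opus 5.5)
Your proof is correct and follows essentially the same route as the paper. Both compute the constant Wronskian; your values $W(d_0,d_L)=-d_0(L)$ and $W(n_0,n_L)=-n_0'(L)$ agree with the paper's $W(d_L,d_0)=d_0(L)$ and $W(n_L,n_0)=n_0'(L)$. Both obtain the third item from $d_0$ or $n_0$ being an eigenfunction when $d_0(L)=0$ or $n_0'(L)=0$. The only difference is that the paper cites the standard regular Sturm--Liouville Green's function formula, whereas you verify it directly by variation of parameters and check injectivity by hand, which makes your argument self-contained.
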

\begin{proof}
  \begin{enumerate}
    \item Evaluating the Wronskian $W(d_L,d_0)$ at $L$ yields $W(d_L,d_0) = d_0(L)$. Since $d_0(L)\neq 0$, we have $W(d_L,d_0)\neq0$. The standard formula for the Green's function of a regular Sturm--Liouville problem~\cite[Lemma~9.7 and the example following it]{MR3243083} therefore gives $z\notin\spec(H_\mu^D)$ and shows that the Dirichlet resolvent has kernel~\labelcref{eq:G-D-green}. The displayed formula is symmetric and continuous.

    \item The Neumann case is proved in the same way, using the identity $W(n_L,n_0)=n_0'(L)$.

    \item If $d_0(L)=0$, then $d_0$ is a nonzero element of $\calD(H_\mu^D)$ with $H_\mu^D d_0=zd_0$, whence $z\in\spec(H_\mu^D)$. If $d_0(L)\neq 0$, then $z\notin\spec(H_\mu^D)$ by \cref{item:G-D-green}. The Neumann case is similar.\qedhere
  \end{enumerate}
\end{proof}

For the potentials $V_\mu$ and $\wt{V}_\mu$, we denote the off-diagonal entries of the transfer matrices across $\interval{0}{L}$, defined by \cref{eq:transfer-matrix}, as follows:
\begin{equation}\label{eq:transfer-matrix-entries}
  \Tmat{0}{L}(\mu,z) =
  \begin{bmatrix}
    \ast     & b(\mu,z) \\
    c(\mu,z) & \ast
  \end{bmatrix}
  \enspace\text{ and }\enspace
  \tTmat{0}{L}(\mu,z) =
  \begin{bmatrix}
    \ast                 & \tilde{b}(\mu,z) \\
    \tilde{c}(\mu,z) & \ast
  \end{bmatrix}
\end{equation}
(we will need no notation for the diagonal entries).

The definitions of the transfer matrix and of the solutions $d_0$ and $n_0$ in \cref{prop:interval-green} give
\begin{equation}\label{eq:b-c}
  d_0(L)=b(\mu,z)
  \quad\text{and}\quad
  n_0'(L)=c(\mu,z)
\end{equation}
for every $\mu\ge0$ and $z\in\R$.

\begin{lemma}\label{prop:localized-transfer-entries}
  For every $\mu\ge0$ and $z<0$,
  \begin{align}
    \Rport{D}(z\portsI)=\tRport{D}(z\portsI)
    &\Longleftrightarrow b(\mu,z)=\tilde{b}(\mu,z),\label{eq:localized-transfer-D}\\
    \Rport{N}(z\portsI)=\tRport{N}(z\portsI)
    &\Longleftrightarrow c(\mu,z)=\tilde{c}(\mu,z).\label{eq:localized-transfer-N}
  \end{align}
\end{lemma}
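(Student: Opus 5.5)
The plan is to compute the cross-compressed Dirichlet resolvent explicitly from the Green's function of \cref{prop:interval-green}, and to observe that on $I_\rcv\times I_\src$ it depends on the potential only through $b(\mu,z)$. Fix $\mu\ge0$ and $z<0$. As in the proof of \cref{prop:parity-resolvent-reduction}, $z$ lies outside $\spec(H_\mu^D)$, so by \cref{item:spec} of \cref{prop:interval-green} and \cref{eq:b-c} we have $b(\mu,z)=d_0(L)\neq0$. Since $\eta<L/2$, every $x\in I_\rcv$ and $y\in I_\src$ satisfy $y<x$. Hence the kernel of $\Rport{D}(z\portsI)$ is
\begin{equation*}
  G^D_{\mu,z}(x,y)=\frac{d_0(y)\,d_L(x)}{b(\mu,z)},\qquad x\in I_\rcv,\ y\in I_\src .
\end{equation*}

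The key point is that $d_0$ on $\interval{0}{\eta}$ and $d_L$ on $\interval{L-\eta}{L}$ are the same for $q$ and $\tq$. Indeed, by \cref{eq:q-matching}, $q=\tq$ on $C_\src\cup C_\rcv$, so $V_\mu=\wt{V}_\mu$ on $\interval{0}{\eta}$ and on $\interval{L-\eta}{L}$. Since $d_0$ and $d_L$ are fixed by Cauchy data at $0$ and at $L$, uniqueness for the ODE \cref{eq:schrodinger} on these subintervals gives $d_0=\tilde d_0$ on $\interval{0}{\eta}$ and $d_L=\tilde d_L$ on $\interval{L-\eta}{L}$. The two kernels therefore differ only through the scalar factors $1/b$ and $1/\tilde b$, which proves the implication $\Leftarrow$.

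For $\Rightarrow$, assume the two operators are equal. Their continuous kernels then agree on $I_\rcv\times I_\src$, so
\begin{equation*}
  \Bigl(\frac{1}{b}-\frac{1}{\tilde b}\Bigr)d_0(y)\,d_L(x)=0 .
\end{equation*}
Both $d_0$ and $d_L$ are nonzero solutions with nontrivial Cauchy data, so neither vanishes identically on an open interval (otherwise both its value and derivative would vanish at some point). Choosing $x,y$ with $d_0(y)d_L(x)\neq0$ gives $b=\tilde b$.

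The Neumann case \cref{eq:localized-transfer-N} is identical, using \cref{item:G-N-green} with $n_0,n_L$ and $c(\mu,z)=n_0'(L)$.

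The only step needing care is justifying that equality of the operators forces equality of the kernels pointwise. This holds because the kernels are continuous and the operators are integral operators from $L^2(I_\src)$ to $L^2(I_\rcv)$, so the kernels agree almost everywhere and hence everywhere.
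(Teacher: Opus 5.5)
Your proof is correct and follows the paper's argument essentially step by step. It uses the same ingredients in the same roles: nonvanishing of $b(\mu,z)$ from $z<0$, the second branch of the Green's function on $I_\rcv\times I_\src$, agreement of $d_0$ and $d_L$ near the endpoints by ODE uniqueness, and reduction to the scalar prefactor $1/b-1/\tilde b$. The only omission is that you should also state $\tilde b(\mu,z)\neq 0$, which follows in the same way.
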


\begin{proof}
  We prove \cref{eq:localized-transfer-D}; the proof of \cref{eq:localized-transfer-N} is the same with $n_0$, $n_L$ and $c$ in place of $d_0$, $d_L$ and $b$.

  By \cref{prop:Delta-diagonalization,eq:spectral-identity}, the operator $H_\mu^D$ is nonnegative, so $z<0$ lies outside its spectrum. Hence \cref{item:spec} of \cref{prop:interval-green} and \cref{eq:b-c} imply that $b(\mu,z)\neq 0$, and \cref{item:G-D-green} of the same lemma applies. The same argument gives $\tilde{b}(\mu,z)\neq 0$.

  Let $\wt{d}_0$, $\wt{d}_L$ denote the solutions defined as in \cref{prop:interval-green}, with $V_\mu$ replaced by $\wt{V}_\mu$. By \cref{eq:q-matching} and continuity, $V_\mu=\wt{V}_\mu$ on $\interval{0}{\eta}$ and on $\interval{L-\eta}{L}$. The solutions $d_0$ and $\wt{d}_0$ have the same Cauchy data at $0$, and $d_L$ and $\wt{d}_L$ at $L$, so uniqueness for the initial value problem gives
  \begin{equation}\label{eq:ends-agree}
    d_0=\wt{d}_0 \quad\text{on }\interval{0}{\eta}
    \qquad\text{and}\qquad
    d_L=\wt{d}_L \quad\text{on }\interval{L-\eta}{L}.
  \end{equation}

  Since $\eta<L/2$, we have $y<x$ whenever $y\in I_\src$ and $x\in I_\rcv$, so the second branch of \cref{eq:G-D-green} applies. Together with \cref{eq:b-c,eq:ends-agree}, this shows that $\Rport{D}(z\portsI)-\tRport{D}(z\portsI)$ is the integral operator whose kernel, for $x\in I_\rcv$ and $y\in I_\src$, is
  \begin{equation}\label{eq:difference-kernel}
    \biggl(\frac{1}{b(\mu,z)}-\frac{1}{\tilde{b}(\mu,z)}\biggr)d_L(x)d_0(y).
  \end{equation}

  Since $d_0$ and $d_L$ have nonzero Cauchy data at $0$ and $L$, respectively, neither $d_0|_{I_\src}$ nor $d_L|_{I_\rcv}$ vanishes identically. Hence the operator with kernel~\labelcref{eq:difference-kernel} is zero if and only if the scalar prefactor vanishes, that is, if and only if $b(\mu,z)=\tilde{b}(\mu,z)$.
\end{proof}

Combining this with the earlier reductions gives the main result of this subsection.

\begin{proposition}\label{prop:transfer-reduction}
  The following are equivalent:
  \begin{cond}[beginpenalty=10000]
  \item\label{cond:nonuniqueness-equivalent-data} The manifolds $(\M_n,g_q)$ and $(\M_n,g_{\tq})$ have equivalent cross-spectral data relative to the identity maps $\id_\src$ and $\id_\rcv$.
  \item\label{cond:b-and-c} The off-diagonal transfer-matrix entries in \cref{eq:transfer-matrix-entries} satisfy
    \begin{equation}\label{eq:b-and-c}
      b(\mu_k,z) = \tilde{b}(\mu_k,z)
      \quad\text{and}\quad
      c(\mu_k,z) = \tilde{c}(\mu_k,z)
    \end{equation}
    for every $k=0,1,\dots$ and every $z<0$.
  \end{cond}
\end{proposition}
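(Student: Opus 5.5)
The proof chains together the reductions established in this subsection. By \cref{prop:data-equivalence}, \cref{cond:nonuniqueness-equivalent-data} is equivalent to condition \cref{data:resolvents} for the identity identifications. Since $\Phi_\src=\id_\src$ and $\Phi_\rcv=\id_\rcv$ are Riemannian isometries by \cref{eq:q-matching}, matching with respect to them means equality of functions. The volume measures $dV_{g_q}$ and $dV_{g_{\tq}}$ agree on $\src$ and on $\rcv$, so $L^2(\src)$ and $L^2(\rcv)$ are the same Hilbert spaces for both metrics. Condition \cref{data:resolvents} therefore says that $\RSR(z)\psi=\tRSR(z)\psi$ for all $\psi\in\Cc{\src}$ and all $z<0$. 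By density of $\Cc{\src}$ in $\Lsrc$ and boundedness of both operators, this is equivalent to the operator identity $\RSR(z)=\tRSR(z)$ for every $z<0$.

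Next, I apply \cref{prop:resolvent-reduction}. For each fixed $z<0$, the identity $\RSR(z)=\tRSR(z)$ is equivalent to
\[
\Rport[\mu_k]{P}(z\portsC)=\tRport[\mu_k]{P}(z\portsC)\quad\text{for all } k.
\]
Then \cref{prop:parity-resolvent-reduction}, with $\mu=\mu_k$, converts each of these identities into the pair of Dirichlet and Neumann identities on the interval ports $I_\src$ and $I_\rcv$. Finally, \cref{prop:localized-transfer-entries} shows that the Dirichlet identity is equivalent to $b(\mu_k,z)=\tilde b(\mu_k,z)$ and the Neumann identity to $c(\mu_k,z)=\tilde c(\mu_k,z)$. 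Quantifying over all $k$ and all $z<0$ yields \cref{cond:b-and-c}. Each step is an equivalence, so both directions follow at once.

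The only point that needs care is the first step: translating \cref{data:resolvents} into an operator equality. I will check explicitly that the unweighted spaces coincide, which holds because $q=\tq$ on $C_\src\cup C_\rcv$ and therefore the densities $q^2\,dr\,dV_h$ agree there. The rest is pure bookkeeping of previously proved equivalences.
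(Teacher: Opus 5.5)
Your proof is correct and follows the paper's argument: you use condition~\ref{data:resolvents} with the identity identifications plus density to get $\RSR(z)=\tRSR(z)$ for all $z<0$, and then chain the resolvent decomposition, the parity reduction, and the transfer-entry lemma. The only difference is in the density step: you justify it directly from $q=\tq$ on $C_\src\cup C_\rcv$, so that the $L^2$ spaces over $\src$ and $\rcv$ coincide, whereas the paper cites the $L^2$-extension in Remark~\ref{rem:data-equivalence}.
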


\begin{proof}
  Since the identifications $\Phi_\src$ and $\Phi_\rcv$ are the identities, the characterization \cref{data:resolvents} of \cref{prop:data-equivalence} and density (see \cref{item:data-extension} of \cref{rem:data-equivalence}) show that \cref{cond:nonuniqueness-equivalent-data} of the current \namecref{prop:transfer-reduction} is equivalent to $\RSR(z)=\tRSR(z)$ for every $z<0$. By \cref{prop:resolvent-reduction,prop:parity-resolvent-reduction,prop:localized-transfer-entries}, this is equivalent to \cref{cond:b-and-c}.
\end{proof}

\subsection{Reduction of isometries to symmetries of the circle}\label{sec:isometry-reduction}

In this subsection, we prove that, under certain conditions on the fiber $(F,h)$, every Riemannian isometry $\Phi\colon(\M_n,g_q)\to(\M_n,g_{\tq})$ has the form $(r,y)\mapsto(\tau(r),\Psi(y))$, where $\tau$ and $\Psi$ are Riemannian isometries of $(C,dr^2)$ and $(F,h)$, respectively. Of the three standing assumptions~\labelcref{eq:q-positive,eq:q-even,eq:q-matching} on the warping profiles, only positivity~\labelcref{eq:q-positive} is needed here.

Some condition on the fiber is necessary. If $q=\tq\equiv 1$ and $(F,h)$ is isometric to $(C,dr^2)$, then $\M_2$ is a flat torus whose two factors are interchanged by an isometry, and that isometry is not of the above form.

\subsubsection*{Preliminaries.}
A \emph{loop} in a manifold is a piecewise smooth map $\gamma$ defined on $\interval{0}{1}$ such that $\gamma(0)=\gamma(1)$. Two loops $\gamma_0$ and $\gamma_1$ in the same manifold are \emph{freely homotopic} if there exists a continuous map $H$ from the square $\interval{0}{1}^2$ into the manifold such that, for all $s,t\in\interval{0}{1}$,
\begin{equation*}
  H(s,0)=\gamma_0(s),
  \qquad
  H(s,1)=\gamma_1(s),
  \qquad
  H(0,t)=H(1,t).
\end{equation*}
If $\gamma_0$ and $\gamma_1$ are freely homotopic, then
\begin{equation}\label{eq:freely-homotopic-integral}
  \int_{\gamma_0}\omega=\int_{\gamma_1}\omega
\end{equation}
for every closed $1$-form $\omega$ on the manifold~\cite[proof of Theorem~16.26, Equation~(16.9)]{MR2954043}.

The sets $\{r\}\times F$, $r\in C$, are the \emph{fibers} of $\M_n$. A loop on $\M_n=C\times F$ is called \emph{horizontal} if its image is included in $C\times\{y\}$ for some $y\in F$, and \emph{vertical} if its image is included in a fiber. For every loop $\sigma$ in the circle $C$,
\begin{equation}\label{eq:circle-loop-period}
  \int_\sigma dr \in 2L\Z,
\end{equation}
since a lift of $\sigma$ to $\R$ has endpoints differing by an element of $2L\Z$.

At every $(r,y)\in\M_n$, the tangent space $T_{(r,y)}\M_n$ decomposes into the \emph{horizontal subspace} $T_rC$ and the \emph{vertical subspace} $T_yF$:
\begin{equation*}
  T_{(r,y)}\M_n=T_rC\oplus T_yF.
\end{equation*}
The decomposition is orthogonal with respect to both metrics $g_q$ and $g_{\tq}$.

Recall that $\pr\colon\M_n\to C$ is the projection onto the first factor and define
\begin{equation*}
  dr_{\M_n}:=\pr^\ast dr.
\end{equation*}
Write
\begin{equation*}
  \partial_r^{\M_n}:=(\partial_r,0)
\end{equation*}
for the horizontal lift of $\partial_r$ to $\M_n$. This field spans the horizontal subspace at every point. Note that $dr_{\M_n}$ annihilates the vertical summand, satisfies $dr_{\M_n}(\partial_r^{\M_n})=1$, and has pointwise norm $1$ with respect to both $g_q$ and $g_{\tq}$.

Define a family $(\sigma_y)_{y\in F}$ of horizontal loops parametrized by $F$:
\begin{equation}\label{eq:horizontal-circle}
  \sigma_y\colon\interval{0}{1}\to\M_n=C\times F,
  \qquad
  \sigma_y(s)=(\pi_C(2Ls),y).
\end{equation}
Each $\sigma_y$ has velocity $\sigma_y'(s)=2L(\partial_r^{\M_n})_{\sigma_y(s)}$ and therefore length $2L$ with respect to both $g_q$ and $g_{\tq}$.

\subsubsection*{Splitting of isometries.}
The next \namecref{prop:unit-form-rigidity} identifies certain closed unit $1$-forms on $\M_n$. Applying it to the pullback of $dr_{\M_n}$ under an isometry will then yield the desired splitting.

\begin{lemma}\label{prop:unit-form-rigidity}
  Let $y_0\in F$, and let $\omega$ be a smooth closed $1$-form on $\M_n$ such that
  \begin{equation}\label{eq:unit-form-assumptions}
    \pointnorm{\omega}_{g_q}=1
    \quad\text{and}\quad
    \int_{\sigma_{y_0}}\omega\in 2L\Z.
  \end{equation}
  Suppose also that
  \begin{equation}\label{eq:vertical-loop-integral}
    \int_\zeta\omega=0
  \end{equation}
  for every vertical loop $\zeta$. Then there exists $\varepsilon\in\{-1,1\}$ such that
  \begin{equation*}
    \omega=\varepsilon\,dr_{\M_n}.
  \end{equation*}
\end{lemma}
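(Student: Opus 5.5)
Since $\omega$ is closed, vertical loop integrals vanish, and horizontal periods are controlled, the plan is to show that $\omega - k\,dr_{\M_n}$ is exact for some real $k$, then use the unit-norm condition to pin everything down. First, the vertical loop hypothesis \cref{eq:vertical-loop-integral} gives that $\omega$ restricted to each fiber $\{r\}\times F$ is exact. Every loop in $\M_n=C\times F$ is freely homotopic to a concatenation of a multiple of the horizontal loop $\sigma_{y_0}$ and a vertical loop, because $\pi_1(C\times F)=\Z\times\pi_1(F)$. By \cref{eq:freely-homotopic-integral}, every period of $\omega$ is therefore an integer multiple of $p:=\int_{\sigma_{y_0}}\omega\in 2L\Z$. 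Setting $k:=p/(2L)\in\Z$, the form $\omega-k\,dr_{\M_n}$ has all periods zero, so $\omega=k\,dr_{\M_n}+du$ for some $u\in C^\infty(\M_n)$.

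Next I will use the norm condition. Along each horizontal loop $\sigma_y$, the pointwise bound $\abs{\omega(\partial_r^{\M_n})}\le\pointnorm{\omega}_{g_q}=1$ gives
\[
  2L\abs{k}=\Bigl|\int_{\sigma_y}\omega\Bigr|\le 2L,
\]
so $k\in\{-1,0,1\}$. To exclude $k=0$, I argue that then $\omega=du$ with $\abs{du}_{g_q}=1$ everywhere, which is impossible on a closed manifold because $u$ attains a maximum, where $du=0$.

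So $k=\varepsilon\in\{\pm1\}$ and equality holds in the estimate for every $y$:
\[
  \int_{\sigma_y}\omega(\partial_r^{\M_n})\,dr=\varepsilon\,2L,
  \qquad \abs{\omega(\partial_r^{\M_n})}\le1.
\]
Hence $\omega(\partial_r^{\M_n})=\varepsilon$ identically by continuity. Since $\abs{\omega}^2_{g_q}=\omega(\partial_r^{\M_n})^2+\abs{\omega|_{TF}}^2_{a_q^{-2}h}$ by orthogonality of the splitting, the vertical part of $\omega$ vanishes. Thus $\omega=\varepsilon\,dr_{\M_n}$.

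The step I expect to need the most care is the first one: that all periods of $\omega$ are integer multiples of the horizontal period. I would handle it concretely rather than through $\pi_1$. Given a loop $\gamma=(\gamma_C,\gamma_F)$, I would homotope it freely to the concatenation of a loop in $C\times\{y_0\}$ and a vertical loop, moving the base point of $\gamma_F$ to $y_0$ along a path. Any loop in $C\times\{y_0\}$ is freely homotopic to an iterate of $\sigma_{y_0}$ via its degree, which is what \cref{eq:circle-loop-period} encodes. The rest is routine.
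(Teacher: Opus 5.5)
Your proof is correct and follows essentially the same route as the paper. Both arguments use the free-homotopy splitting of loops into horizontal and vertical parts, the bound $\abs{\int_{\sigma_y}\omega}\le 2L$ combined with integrality, the maximum-point argument ruling out an exact unit form, and the equality case on each $\sigma_y$ followed by the orthogonal splitting. The only difference is ordering: you first show that $\omega-k\,dr_{\M_n}$ is exact for $k=p/(2L)$ and then bound $k$, whereas the paper first pins down the horizontal period $2L\varepsilon$ and invokes exactness only to exclude $\varepsilon=0$.
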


\begin{proof}
  Since $F$ is connected, the loops $\sigma_y$, $y\in F$, are all freely homotopic. Hence \cref{eq:freely-homotopic-integral} shows that $\int_{\sigma_y}\omega$ is independent of $y\in F$. Moreover, $\pointnorm{\omega}_{g_q}=1$ implies
  \begin{equation*}
    \abs*{\int_{\sigma_y}\omega}
    \le 2L.
  \end{equation*}
  By \cref{eq:unit-form-assumptions}, the common value lies in $2L\Z$, hence in $2L\Z\cap\interval{-2L}{2L}$. Therefore there exists $\varepsilon\in\{-1,0,1\}$ such that
  \begin{equation}\label{eq:horizontal-period}
    \int_{\sigma_y}\omega=2L\varepsilon
    \qquad\text{for every }y\in F.
  \end{equation}

  Suppose that $\varepsilon=0$. Every loop $\gamma=(\alpha,\beta)$ in $\M_n=C\times F$ is freely homotopic to the reparametrized concatenation of the horizontal loop $\gamma_H:=(\alpha,\beta(0))$ and the vertical loop $\gamma_V:=(\alpha(0),\beta)$. For some $j\in\Z$, the loop $\gamma_H$ is freely homotopic within $C\times\{\beta(0)\}$ to $\sigma_{\beta(0)}^j$ (a negative power denoting an iterate of the reversed loop). Hence, by \cref{eq:freely-homotopic-integral}, equality~\labelcref{eq:horizontal-period} with $\varepsilon=0$ gives
  \begin{equation*}
    \int_{\gamma_H}\omega
    =j\int_{\sigma_{\beta(0)}}\omega
    =0.
  \end{equation*}
  Since $\gamma$ is freely homotopic to the concatenation of $\gamma_H$ and $\gamma_V$, \cref{eq:freely-homotopic-integral} and the hypothesis~\labelcref{eq:vertical-loop-integral} give
  \begin{equation*}
    \int_\gamma\omega
    =\int_{\gamma_H}\omega+\int_{\gamma_V}\omega
    =0.
  \end{equation*}
  It follows that $\omega=df$ for a smooth function $f$ on $\M_n$~\cite[Theorem~11.42]{MR2954043}. Since $\M_n$ is compact, $f$ attains a maximum at some point, where $\omega=df=0$. This contradicts $\pointnorm{\omega}_{g_q}=1$, so $\varepsilon\in\{-1,1\}$.

  Fix $y\in F$. By the velocity formula for $\sigma_y$, \cref{eq:horizontal-period} gives
  \begin{equation*}
    2L
    =2L\int_0^1\varepsilon\,\omega_{\sigma_y(s)}\bigl((\partial_r^{\M_n})_{\sigma_y(s)}\bigr)\,ds.
  \end{equation*}
  The continuous integrand satisfies
  \begin{equation*}
    \varepsilon\,\omega(\partial_r^{\M_n})
    \le \abs*{\omega(\partial_r^{\M_n})}
    \le \pointnorm{\omega}_{g_q}\pointnorm{\partial_r^{\M_n}}_{g_q}
    =1,
  \end{equation*}
  so the preceding integral equality forces $\omega(\partial_r^{\M_n})=\varepsilon$ on $C\times\{y\}$, hence everywhere. At a point $(r,y)$, let $\partial_r^{\M_n},e_1,\dots,e_m$ be a $g_q$-orthonormal basis with $e_j\in T_yF$. Then
  \begin{equation*}
    1=\pointnorm{\omega}_{g_q}^2
    =\abs*{\omega(\partial_r^{\M_n})}^2+\sum_{j=1}^m\abs*{\omega(e_j)}^2
    =1+\sum_{j=1}^m\abs*{\omega(e_j)}^2.
  \end{equation*}
  Thus $\omega$ vanishes on $T_yF$. Since $\omega(\partial_r^{\M_n})=\varepsilon$, we conclude that $\omega=\varepsilon\,dr_{\M_n}$.
\end{proof}

\begin{proposition}\label{prop:isometry-splitting}
  Recall that $a_q=q^{2/m}$ (see \cref{eq:warped-product-metrics}). Assume that one of the following holds:
  \begin{cond}
  \item\label{cond:fiber-circle} $m=1$ and, for some $L_F>0$, the manifold $(F,h)$ is isometric to $(\R/(2L_F\Z),d\theta^2)$, where $d\theta^2$ is induced by the Euclidean metric on $\R$, and $\min_{r\in C} a_q(r)<L/L_F$;
  \item\label{cond:fiber-simply-connected} $m\ge 2$ and $F$ is simply connected.
  \end{cond}
  Then every Riemannian isometry $\Phi\colon(\M_n,g_q)\to(\M_n,g_{\tq})$ is of the form
  \begin{equation}\label{eq:isometry-split}
    \Phi(r,y)=\bigl(\tau(r),\Psi(y)\bigr),
    \qquad (r,y)\in C\times F,
  \end{equation}
  where $\tau\colon C\to C$ and $\Psi\colon F\to F$ are Riemannian isometries of $(C,dr^2)$ and $(F,h)$, respectively. In particular,
  \begin{equation}\label{eq:isometry-profile-relation}
    q=\tq\circ\tau .
  \end{equation}
\end{proposition}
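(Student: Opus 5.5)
The plan is to apply \cref{prop:unit-form-rigidity} to $\omega:=\Phi^\ast dr_{\M_n}$. Here $dr_{\M_n}$ is the unit-norm closed $1$-form for $g_{\tq}$, and $\Phi^\ast g_{\tq}=g_q$. Hence $\omega$ is smooth, closed, and satisfies $\pointnorm{\omega}_{g_q}=1$.

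First, the period condition. The loop $\Phi\circ\sigma_{y_0}$ is a loop in $\M_n$, so
\begin{equation*}
  \int_{\sigma_{y_0}}\omega=\int_{\pr\circ\Phi\circ\sigma_{y_0}}dr\in 2L\Z
\end{equation*}
by \cref{eq:circle-loop-period}.

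Second, the vertical-loop condition $\int_\zeta\omega=0$ for every vertical loop $\zeta$. This is where the two cases of the statement enter.
\begin{itemize}
  \item Under \cref{cond:fiber-simply-connected}, every vertical loop is null-homotopic in its fiber, hence freely homotopic to a constant loop, and \cref{eq:freely-homotopic-integral} gives zero.
  \item Under \cref{cond:fiber-circle}, the fiber loops are generated, up to free homotopy, by the loop $\zeta_{r_0}$ that traverses $\{r_0\}\times F$ once. I choose $r_0$ where $a_q$ attains its minimum, so that $\zeta_{r_0}$ has $g_q$-length $2L_F\,a_q(r_0)<2L$. Then $\abs{\int_{\zeta_{r_0}}\omega}\le\text{length}<2L$. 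On the other hand, the integral equals $\int_{\pr\circ\Phi\circ\zeta_{r_0}}dr\in 2L\Z$, so it must vanish. Since all fibers are freely homotopic to each other and every vertical loop is freely homotopic to a power of $\zeta_{r_0}$, the condition holds for all vertical loops.
\end{itemize}
\Cref{prop:unit-form-rigidity} then yields $\Phi^\ast dr_{\M_n}=\varepsilon\,dr_{\M_n}$.

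Next, the splitting. Since $d(\pr\circ\Phi)$ annihilates vertical vectors and each fiber is connected, $\pr\circ\Phi$ is constant on fibers, so $\pr\circ\Phi(r,y)=\tau(r)$. The relation $\tau^\ast dr=\varepsilon\,dr$ makes $\tau$ a local isometry of $C$. Because $\Phi$ is bijective, $\tau$ is surjective; a local isometry of the circle that is a covering of degree given by its period is $\pm1$, hence a bijection, so $\tau$ is an isometry. Consequently $\Phi$ maps each fiber $\{r\}\times F$ onto the fiber $\{\tau(r)\}\times F$.

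The isometry $\Phi$ is the step I expect to be the main obstacle, namely showing that its $F$-component does not depend on $r$. For this I would use that $\Phi$ preserves the orthogonal complement of the vertical distribution, since the vertical distribution is mapped to itself. Hence $d\Phi(\partial_r^{\M_n})$ is horizontal, and it equals $\varepsilon\,\partial_r^{\M_n}$ by the unit-norm property. It follows that $\partial_r$ of the $F$-component vanishes, so that component is some $\Psi(y)$. Comparing the metrics on vertical vectors then gives
\begin{equation*}
  a_{\tq}(\tau(r))^2\,\Psi^\ast h=a_q(r)^2\,h.
\end{equation*}
Since $\Psi$ is a diffeomorphism of the closed manifold $F$, it preserves total volume. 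Therefore $a_{\tq}(\tau(r))=a_q(r)$, hence $\Psi^\ast h=h$ and $q=\tq\circ\tau$.
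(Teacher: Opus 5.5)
Your proposal is correct and follows the paper's proof essentially step for step. It applies \cref{prop:unit-form-rigidity} to $\Phi^\ast dr_{\M_n}$, uses the same case analysis for vertical loops (the shortest fiber in the circle case), and then splits $\Phi$ in the same way. The only deviations are minor: you show that $d\Phi(\partial_r^{\M_n})$ is horizontal by preservation of orthogonal complements rather than by the paper's norm identity, and you exclude the homothety $\Psi^\ast h=c^2h$ using total volume instead of diameter, which works equally well.
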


\begin{remark}\label{rem:fiber-circle-condition}
  A connected, closed Riemannian manifold of dimension $1$ is automatically isometric to the circle $(\R/(2L_F\Z),d\theta^2)$ for exactly one $L_F>0$. \Cref{cond:fiber-circle} of \cref{prop:isometry-splitting} compares the lengths of circles: the fiber $\{r\}\times F$ has length $2L_Fa_q(r)$ in $g_q$, while $C$ has length $2L$. So the condition says precisely that the shortest fiber is strictly shorter than $C$.
\end{remark}

\begin{proof}[Proof of \cref{prop:isometry-splitting}]
  Set $\omega:=\Phi^\ast dr_{\M_n}$ and fix $y_0\in F$. We first aim to apply \cref{prop:unit-form-rigidity} to $\omega$ to obtain $\varepsilon\in\{-1,1\}$ with
  \begin{equation}\label{eq:isometry-pullback-dr}
    \Phi^\ast dr_{\M_n}=\varepsilon\,dr_{\M_n}.
  \end{equation}
  Verifying the hypotheses of \cref{prop:unit-form-rigidity} is where the condition on $F$ enters.

  Since pullback commutes with exterior differentiation and $\Phi$ is an isometry, $\omega$ is closed and $\pointnorm{\omega}_{g_q}=1$. For every piecewise smooth loop $\gamma$ in $\M_n$,
  \begin{equation}\label{eq:isometry-period-winding}
    \int_\gamma\omega
    =\int_{\pr\circ\Phi\circ\gamma}dr
    \in 2L\Z,
  \end{equation}
  where the last inclusion follows from \cref{eq:circle-loop-period}. In particular, \cref{eq:isometry-period-winding} holds with $\gamma=\sigma_{y_0}$. Thus the conditions in \cref{eq:unit-form-assumptions} are satisfied.

  We next verify the remaining hypothesis~\labelcref{eq:vertical-loop-integral}. First suppose that \cref{cond:fiber-circle} holds. For $r\in C$, let $\zeta_r\colon\interval{0}{1}\to\M_n$ be the loop $\zeta_r(s)=(r,\pi_F(2L_F\,s))$, where $\pi_F\colon\R\to F$ is the composition of the quotient map $\R\to\R/(2L_F\Z)$ with a Riemannian isometry $(\R/(2L_F\Z),d\theta^2)\to(F,h)$, and choose $r_\ast\in C$ such that $a_q(r_\ast)=\min_Ca_q$. By $\pointnorm{\omega}_{g_q}=1$,
  \begin{equation*}
    \abs*{\int_{\zeta_{r_\ast}}\omega}
    \le 2L_F\,a_q(r_\ast)
    <2L,
  \end{equation*}
  while by \cref{eq:isometry-period-winding} the integral belongs to $2L\Z$. Hence the integral vanishes. Now let $\zeta$ be a vertical loop, included in the fiber $\{r\}\times F$ for some $r\in C$. Within its own fiber, $\zeta$ is freely homotopic to $\zeta_r^j$ for some $j\in\Z$, and, since $C$ is connected, $\zeta_r^j$ is in turn freely homotopic to $\zeta_{r_\ast}^j$ in $\M_n$. Hence
  \begin{equation*}
    \int_{\zeta}\omega=j\int_{\zeta_{r_\ast}}\omega=0.
  \end{equation*}

  If \cref{cond:fiber-simply-connected} holds, then every loop $\zeta$ included in a fiber $\{r\}\times F$ is contractible within that fiber, so $\int_\zeta \omega$ vanishes because $\omega$ is closed.

  This verifies the remaining hypothesis, so \cref{eq:isometry-pullback-dr} holds.

  We next deduce from \cref{eq:isometry-pullback-dr} that $\Phi$ has the form~\labelcref{eq:isometry-split}, with $\tau$ a Riemannian isometry of $C$ and $\Psi$ a diffeomorphism of $F$. Let $\Phi_H$ and $\Phi_V$ be the horizontal and vertical components of $\Phi$, so that $\Phi(r,y)=(\Phi_H(r,y),\Phi_V(r,y))$. Since $\Phi_H=\pr\circ\Phi$, \cref{eq:isometry-pullback-dr} says that $\Phi_H^\ast dr=\varepsilon\,dr_{\M_n}$. As $dr$ is nowhere vanishing on the one-dimensional manifold $C$, it follows that $d\Phi_H$ vanishes on the vertical subspaces and that $d\Phi_H(\partial_r^{\M_n})=\varepsilon\partial_r$. Since $F$ is connected, $\Phi_H$ is independent of $y$. Hence $\Phi_H(r,y)=\tau(r)$ for a smooth map $\tau\colon C\to C$ with $d\tau(\partial_r)=\varepsilon\partial_r$. Any such map is of the form
  \begin{equation*}
    \tau(\pi_C(x))=\pi_C(\varepsilon x+r_0)
  \end{equation*}
  for some $r_0\in\R$, so $\tau$ is a Riemannian isometry of $C$. Moreover, since
  \begin{equation*}
    d\Phi(\partial_r^{\M_n})=\varepsilon\partial_r^{\M_n}+d\Phi_V(\partial_r^{\M_n}),
  \end{equation*}
  we have
  \begin{equation*}
    1=\pointnorm[\big]{d\Phi(\partial_r^{\M_n})}_{g_{\tq}}^2
    =1+a_{\tq}(\tau(r))^2\pointnorm[\big]{d\Phi_V(\partial_r^{\M_n})}_h^2,
  \end{equation*}
  so $d\Phi_V(\partial_r^{\M_n})=0$. Thus $\Phi_V(r,y)=\Psi(y)$ for a smooth map $\Psi\colon F\to F$. Since $\Phi$ is a diffeomorphism, so is $\Psi$.

  It remains to show that $\Psi$ is a Riemannian isometry of $(F,h)$ and that \cref{eq:isometry-profile-relation} holds. Restricting $\Phi^\ast g_{\tq}=g_q$ to the vertical subspaces, on which $d\Phi$ acts as $d\Psi$, gives
  \begin{equation*}
    a_{\tq}(\tau(r))^2\Psi^\ast h=a_q(r)^2h.
  \end{equation*}
  Hence $\Psi^\ast h=c^2h$ and $a_q=c\,a_{\tq}\circ\tau$ for some constant $c>0$.

  Since $\Psi$ is a diffeomorphism, $(F,\Psi^\ast h)$ is isometric to $(F,h)$ and therefore has the same diameter, whereas $\diam(F,c^2h)=c\diam(F,h)$. The diameter is finite and nonzero, so $c=1$. Thus $\Psi^\ast h=h$, so $\Psi$ is a Riemannian isometry, and $a_q=a_{\tq}\circ\tau$. Finally, $a_q=q^{2/m}$ and $a_{\tq}=\tq^{2/m}$, so \cref{eq:isometry-profile-relation} follows.
\end{proof}

Thus, under either condition of \cref{prop:isometry-splitting}, proving that $(\M_n,g_q)$ and $(\M_n,g_{\tq})$ are not related by a Riemannian isometry reduces to showing that $q\ne\tq\circ\tau$ for every Riemannian isometry $\tau$ of $C$.

\subsection{Warping profiles from binary words}\label{sec:profile-construction}

The preceding two subsections isolate two requirements on the warping profiles: by \cref{prop:transfer-reduction}, their transfer matrices must have the same off-diagonal entries, and by \cref{prop:isometry-splitting}, no Riemannian isometry of $C$ may relate them. We construct profiles satisfying both requirements from binary words, one block per letter. The transfer matrix of such a profile factors into a product of the two single-block matrices, one factor per letter (\cref{prop:block-transfer}), and two such profiles are related by a Riemannian isometry of $C$ exactly when their words are equivalent (\cref{prop:profile-recovery}). The two requirements thus become, respectively, an identity between matrix products and inequivalence of the words; \cref{prop:nine-letter-words} supplies a pair of words meeting both.

\subsubsection*{Words, profiles, and transfer matrices.}
We begin with the required word notation.

A \emph{binary word}, or simply a \emph{word}, is a finite, possibly empty, string over the alphabet $\{\bzero,\bone\}$. A nonempty word $s$ of length $N\ge1$ is written as $s=s_1\cdots s_N$, where $s_j\in\{\bzero,\bone\}$ for $j=1,\dots,N$. We use typewriter font to distinguish letters and literal words, such as $\bzero$, $\bone$, and $\word{100\,101}$, from the numbers $0$, $1$, and $100101$. In long words we insert thin spaces for legibility; they are not part of the word.

The \emph{concatenation} of words $s=s_1\cdots s_N$ and $t=t_1\cdots t_{N'}$ is the word $st=s_1\cdots s_Nt_1\cdots t_{N'}$. The \emph{reverse complement} $s^\sharp$ of $s$ is the reverse of its bitwise complement, that is,
\begin{equation}\label{eq:word-sharp}
  s^{\sharp} := \overline{s_N} \cdots \overline{s_1},
  \quad\text{where}\quad
  \overline{\bzero} := \bone
  \quad\text{and}\quad
  \overline{\bone} := \bzero.
\end{equation}
The reverse complement of the empty word is the empty word.

We construct warping profiles piecewise from two functions $p_\bzero,p_\bone\in C^\infty(\interval{0}{1})$, called \emph{blocks}. The blocks are chosen to be positive and equal to $1$ near the endpoints, so that profiles assembled from multiple blocks are smooth across the seams. The two blocks are reflections of each other. A common central plateau identifies the block centers, while an off-center marker distinguishes the two orientations.

Fix $0<\rho<\delta<1/12$, and choose functions $\chi_{\mathrm{pl}},\chi_{\mathrm{mk}}\in \Cc{\R}$ such that
\begin{enumerate}
  \item the \emph{plateau function} $\chi_{\mathrm{pl}}$ is supported in $\interval{-\delta}{\delta}$, valued in $\interval{0}{1}$, and equal to $1$ exactly on $\interval{-\rho}{\rho}$;

  \item the \emph{marker function} $\chi_{\mathrm{mk}}$ is supported in $\interval{-\delta}{\delta}$, valued in $\interval{0}{1}$, with $\chi_{\mathrm{mk}}(0)=1$.
\end{enumerate}
Define $p_{\bzero},p_{\bone}\in C^\infty(\interval{0}{1})$ by
\begin{equation}\label{eq:marked-block}
  \begin{aligned}
    p_{\bzero}(x) &:= 1+2\,\chi_{\mathrm{pl}}\bigl(x-\tfrac{1}{2}\bigr)+\chi_{\mathrm{mk}}\bigl(x-\tfrac{1}{3}\bigr),\\
    p_{\bone}(x) &:= p_{\bzero}(1-x).
  \end{aligned}
\end{equation}
Note that $p_{\bzero}$ and $p_{\bone}$ are identically $1$ on $\interval{0}{1/4}\cup\interval{3/4}{1}$, and that they attain their maximum $3$ exactly on $\interval{1/2-\rho}{1/2+\rho}$. The marker distinguishes the two blocks: $p_{\bzero}(1/3)=2\neq1=p_{\bone}(1/3)$; in particular, $p_{\bzero}\neq p_{\bone}$.

\begin{definition}\label{def:word-profile}
  Fix an integer $L\ge 1$. Let $C=C(L):=\R/(2L\Z)$ be the circle as defined in \cref{sec:ingredients}, with its associated quotient map $\pi_C\colon\R\to C$ and reflection $\iota_C\colon C\to C$. Let $s=s_1\cdots s_L$ be a binary word. The \emph{warping profile associated with $s$} is the function $q_s\in C^\infty(C)$ determined by
  \begin{equation*}
    q_s\bigl(\pi_C(j+x)\bigr)=p_{s_{j+1}}(x),
    \qquad j=0,\dots,L-1,\quad x\in\interval{0}{1},
    \qquad q_s\circ\iota_C=q_s.
  \end{equation*}
\end{definition}

The construction is illustrated in \cref{fig:word-profile-construction}.

\begin{remark}\label{rem:word-profiles-admissible}
  Since the blocks are positive and identically $1$ near their endpoints, $q_s$ is a well-defined smooth positive function on $C$, and it is even by construction. Thus $q_s$ satisfies the requirements~\labelcref{eq:q-positive,eq:q-even} for warping profiles. With $\eta=1/4$ in \cref{eq:C-src-rcv}, the warping profiles associated with any two words $s$ and $t$ of equal length agree on $C_\src\cup C_\rcv$, and hence satisfy \cref{eq:q-matching}.
\end{remark}

\begin{figure}[tbp]
  \centering
  \input{nonuniqueness-word-profile.tikz}
  \caption{Construction of a warping profile associated with a word. (a) The two blocks $p_{\bzero}$ and $p_{\bone}$ are shown schematically. Their common central plateau identifies block centers, while the off-center marker distinguishes the two orientations. (b) The profile $q_s$ is shown on $\interval{0}{L}$ for $L=4$ and $s=\word{0010}$. (c) The even reflection completes the profile on $C$; reading in the positive direction from $\pi_C(0)$ gives $ss^\sharp=\word{0010\,1011}$.}
  \label{fig:word-profile-construction}
\end{figure}

Two words $s$ and $t$ are said to \emph{differ by a cyclic shift} if there exist words $u$ and $v$ such that $s=uv$ and $t=vu$. We define the following equivalence relation on all binary words:
\begin{equation*}
  s\sim_\sharp t
  \Longleftrightarrow
  ss^\sharp \quad\text{and}\quad tt^\sharp \quad \text{differ by a cyclic shift}.
\end{equation*}

Reading the profile $q_s$ block by block along $C$, in the positive direction from $\pi_C(0)$, yields the word $ss^\sharp$; that is,
\begin{equation*}
  q_s\bigl(\pi_C(j+x)\bigr)=p_{(ss^\sharp)_{j+1}}(x),
  \qquad j=0,\dots,2L-1,\quad x\in\interval{0}{1}.
\end{equation*}
Indeed, the positive semicircle carries the blocks $s_1,\dots,s_L$ in order, while the reflection $\iota_C$ reverses both their order and the orientation within each block, so the negative semicircle carries $\overline{s_L}\cdots\overline{s_1}=s^\sharp$. Consequently, for words $s$ and $t$ of equal length $L\ge 1$,
\begin{equation}\label{eq:equivalence-shift}
  s\sim_\sharp t
  \Longleftrightarrow
  q_t\circ\pi_C=q_s\bigl(\pi_C(\cdot+\xi)\bigr)
  \quad\text{for some}\quad \xi\in\Z.
\end{equation}

The plateau also rules out noninteger translations. Together with the evenness of the profiles, this yields the following recovery result.

\begin{lemma}\label{prop:profile-recovery}
  Let $s$ and $t$ be binary words of equal length $L\ge 1$, and let $q_s$ and $q_t$ be their associated warping profiles on $C=C(L)$. Then
  \begin{equation}\label{eq:profile-recovery}
    s\sim_\sharp t
    \Longleftrightarrow
    q_t=q_s\circ\tau
    \text{ for some Riemannian isometry }\tau\text{ of }(C,dr^2).
  \end{equation}
\end{lemma}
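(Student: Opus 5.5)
The forward direction ($\Rightarrow$) is immediate from \cref{eq:equivalence-shift}. If $s\sim_\sharp t$, there is $\xi\in\Z$ with $q_t\circ\pi_C=q_s(\pi_C(\cdot+\xi))$. The translation $\tau(\pi_C(x)):=\pi_C(x+\xi)$ is a well-defined Riemannian isometry of $(C,dr^2)$, and it satisfies $q_t=q_s\circ\tau$.

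For the reverse direction, recall the description of Riemannian isometries of $(C,dr^2)$ used in the proof of \cref{prop:isometry-splitting}: every such isometry has the form $\tau(\pi_C(x))=\pi_C(\varepsilon x+r_0)$ with $\varepsilon\in\{-1,1\}$ and $r_0\in\R$. Since $q_s$ is even, $q_s\circ\iota_C=q_s$. In the case $\varepsilon=-1$ we have $\tau=\tau'\circ\iota_C$ with $\tau'$ the translation by $r_0$. Then $q_s\circ\tau=q_s\circ\tau'\circ\iota_C$, and evenness of $q_t$ lets us replace $\tau$ by the translation $\tau'$. So we may assume that $\tau$ is a translation by some $r_0\in\R$. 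It then remains to show that $r_0\in\Z$ modulo $2L$; once this holds, \cref{eq:equivalence-shift} gives $s\sim_\sharp t$.

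The main step is to exclude noninteger translations using the plateau. The set where $q_s\circ\pi_C$ attains its maximum value $3$ can be read off from the block description $q_s(\pi_C(j+x))=p_{(ss^\sharp)_{j+1}}(x)$. Both $p_\bzero$ and $p_\bone$ attain their maximum $3$ exactly on $[1/2-\rho,1/2+\rho]$; for $p_\bone$ this follows because this interval is symmetric about $1/2$. Hence the maximum set of $q_s\circ\pi_C$ is exactly $\Z+[1/2-\rho,1/2+\rho]$, and the same holds for $q_t\circ\pi_C$. One should also check that the marker cannot raise any value to $3$: it is supported in $[1/3-\delta,1/3+\delta]$, which is disjoint from the plateau support since $\delta<1/12$, and there $p_\bzero\le 2$.

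Now $q_t\circ\pi_C=(q_s\circ\pi_C)(\cdot+r_0)$, so translation by $r_0$ maps one maximum set onto the other. In particular the left endpoint $1/2-\rho$ of a component is sent to a left endpoint of a component, which forces $r_0\in\Z$. I expect this plateau argument to be the only point requiring care, namely verifying that the maximum set is exactly this union of intervals. After it, the proof closes by applying \cref{eq:equivalence-shift} with $\xi=r_0$.
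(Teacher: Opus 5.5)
Your proposal is correct and follows essentially the same route as the paper. You reduce a reflection to a translation using evenness (you write $\tau=\tau'\circ\iota_C$ and use evenness of $q_t$, whereas the paper writes $\tau=\iota_C\circ\widehat{\tau}$ and uses evenness of $q_s$), then compare the sets where the profiles equal $3$ to force an integer shift, and finish with \cref{eq:equivalence-shift}; your explicit check that the marker never reaches the value $3$ is a detail the paper leaves implicit in its remark on the blocks.
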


\begin{proof}
  If $s\sim_\sharp t$, then \cref{eq:equivalence-shift} shows that the right-hand side of \cref{eq:profile-recovery} holds for the isometry that maps $\pi_C(x)$ to $\pi_C(x+\xi)$, where $\xi\in\Z$ is as in \cref{eq:equivalence-shift}.

  To show the reverse implication, assume that $q_t=q_s\circ\tau$, where $\tau\colon C\to C$ is an isometry. Then $\tau$ is of the form $\pi_C(x)\mapsto\pi_C(\varepsilon x+\xi)$ for some $\varepsilon\in\{-1,1\}$ and $\xi\in\R$.

  If $\varepsilon=-1$, define the isometry $\widehat{\tau}(\pi_C(x))=\pi_C(x-\xi)$. Then $\tau = \iota_C\circ\widehat{\tau}$, so using $q_s=q_s\circ\iota_C$, we obtain
  \begin{equation*}
    q_t = q_s\circ\tau = q_s\circ\widehat{\tau}.
  \end{equation*}
  After replacing $\tau$ by $\widehat{\tau}$ if necessary and renaming the translation parameter, we may assume that
  \begin{equation*}
    \tau(\pi_C(x))=\pi_C(x+\xi).
  \end{equation*}

  By construction,
  \begin{equation}\label{eq:reverse-1}
    (q_t\circ\pi_C)^{-1}(3) = \Z + \frac{1}{2} + \interval{-\rho}{\rho},
  \end{equation}
  and
  \begin{equation}\label{eq:reverse-2}
    (q_s\circ\tau\circ\pi_C)^{-1}(3) = \Z + \frac{1}{2}-\xi + \interval{-\rho}{\rho}.
  \end{equation}
  By assumption, the sets in \cref{eq:reverse-1,eq:reverse-2} are equal. This forces $\xi\in\Z$, and \cref{eq:equivalence-shift} gives $s\sim_\sharp t$.
\end{proof}

We next translate the reverse-complement operation into matrix algebra. Define an operation $\sharp$ on $\R^{2\times2}$ by
\begin{equation}\label{eq:matrix-sharp}
  \begin{bmatrix}
    a & b\\
    c & d
  \end{bmatrix}
  ^\sharp
  :=
  \begin{bmatrix}
    d & b\\
    c & a
  \end{bmatrix}.
\end{equation}
Clearly this operation is involutive and a calculation shows it reverses products: for all $P,Q\in\R^{2\times2}$,
\begin{equation}\label{eq:matrix-sharp-product}
  (PQ)^\sharp=Q^\sharp P^\sharp.
\end{equation}
Given matrices $M_\bzero,M_\bone\in\R^{2\times2}$ indexed by $\bzero$ and $\bone$, and a binary word $s=s_1\cdots s_N$, define the \emph{word matrix}
\begin{equation}\label{eq:word-matrix}
  M_s:=M_{s_N}\cdots M_{s_1},
\end{equation}
where the right-hand side is the identity matrix if $s$ is empty. Then
\begin{equation}\label{eq:word-matrix-product}
  M_{st}=M_tM_s
\end{equation}
for all words $s$ and $t$. If $M_\bone=(M_\bzero)^\sharp$, then involutivity and \cref{eq:matrix-sharp-product} give
\begin{equation}\label{eq:word-matrix-sharp}
  M_{s^\sharp}=(M_s)^\sharp
\end{equation}
for every word $s$.

Given $\mu\ge 0$, $z\in\R$, and a word $s$ of length $L\ge1$, let
\begin{equation*}
  T_{x_1\shortleftarrow x_0}(\mu,z; s) \in \R^{2\times 2},
  \qquad x_0,x_1\in\interval{0}{L},
\end{equation*}
denote the transfer matrices of \cref{eq:schrodinger} on $\interval{0}{L}$ with potential $V_\mu=V_\mu(\cdot\,;q_s)\in C^\infty(\interval{0}{L})$. For fixed $\mu$ and $z$, let
\begin{equation}\label{eq:block-matrices}
  T_\bzero:=T_{1\shortleftarrow0}(\mu,z;\bzero)
  \quad\text{and}\quad
  T_\bone:=T_{1\shortleftarrow0}(\mu,z;\bone)
\end{equation}
be the transfer matrices across $\interval{0}{1}$ of the two words of length one, omitting $\mu$ and $z$ from the notation. Let $T_s$ be the word matrix~\labelcref{eq:word-matrix} determined by the word $s$ and the matrices $T_\bzero$ and $T_\bone$.

The following \namecref{prop:block-transfer} identifies the transfer matrix of a word with its word matrix.

\begin{proposition}\label{prop:block-transfer}
  Fix $\mu\ge 0$ and $z\in\R$, and let $T_\bzero$ and $T_\bone$ be as in \cref{eq:block-matrices}.
  \begin{enumerate}
    \item\label{item:block-sharp} $T_\bone=(T_\bzero)^\sharp$.

    \item\label{item:word-transfer} For every word $s=s_1\cdots s_L$ of length $L\ge 1$,
    \begin{equation}\label{eq:block-transfer}
      T_{L\shortleftarrow0}(\mu,z; s)=T_s.
    \end{equation}
  \end{enumerate}
\end{proposition}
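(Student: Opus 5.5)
For \cref{item:block-sharp}, I will use that $p_\bone(x)=p_\bzero(1-x)$. The potentials of the one-letter words on $\interval{0}{1}$ are then related by $V_\mu(x;q_\bone)=V_\mu(1-x;q_\bzero)$. This holds because $\partial_r^2$ is invariant under $x\mapsto 1-x$ and $q^{-4/m}$ is composed pointwise. For a solution $f$ of \cref{eq:schrodinger} with potential $V_\mu(\cdot\,;q_\bzero)$ on $\interval{0}{1}$, the function $g(x):=f(1-x)$ then solves the equation with potential $V_\mu(\cdot\,;q_\bone)$. Its state vectors satisfy $(g(0),g'(0))=(f(1),-f'(1))$ and $(g(1),g'(1))=(f(0),-f'(0))$. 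With $J:=\diag(1,-1)$, this gives
\begin{equation*}
  T_\bone J\,T_\bzero=J,
  \qquad\text{i.e.}\qquad
  T_\bone=J\,T_\bzero^{-1}J.
\end{equation*}
By \cref{eq:det-T-1}, $\det T_\bzero=1$. Writing $T_\bzero=\begin{bmatrix}a&b\\c&d\end{bmatrix}$, we have $T_\bzero^{-1}=\begin{bmatrix}d&-b\\-c&a\end{bmatrix}$, and conjugating by $J$ flips the signs of the off-diagonal entries. This yields $\begin{bmatrix}d&b\\c&a\end{bmatrix}=T_\bzero^\sharp$ by \cref{eq:matrix-sharp}.

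One point needs care. The one-letter word profile $q_\bone$ is defined on $C(1)$ via \cref{def:word-profile}, and its pullback along $j_+$ to $\interval{0}{1}$ equals $p_\bone$. Its potential on $\interval{0}{1}$ is therefore computed from $p_\bone$ alone. The same holds for $\bzero$.

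For \cref{item:word-transfer}, I will use the composition rule \cref{eq:transfer-composition}:
\begin{equation*}
  T_{L\shortleftarrow0}(\mu,z;s)=T_{L\shortleftarrow L-1}\cdots T_{1\shortleftarrow0}.
\end{equation*}
It then suffices to show $T_{j+1\shortleftarrow j}(\mu,z;s)=T_{s_{j+1}}$ for $j=0,\dots,L-1$. On $\interval{j}{j+1}$ the profile is $q_s(\pi_C(j+x))=p_{s_{j+1}}(x)$. Since $V_\mu$ is determined locally by the profile and its second derivative, we get $V_\mu(j+x;q_s)=V_\mu(x;q_{s_{j+1}})$ for $x\in\interval{0}{1}$. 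Translating $x\mapsto x-j$ maps solutions to solutions with the same state vectors, so the transfer matrices across the unit interval agree. Multiplying in order gives $T_{s_L}\cdots T_{s_1}=T_s$ by \cref{eq:word-matrix}.

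The main point requiring attention is the locality and smoothness at the seams. The equality of potentials must hold on the closed intervals, including the endpoints where blocks meet. This is fine because each block is identically $1$ near its endpoints, so $q_s$ is smooth and equal to $1$ there. Both potentials then equal $\mu$ near every integer, and the pointwise identification is valid on the closed intervals. The rest is bookkeeping.
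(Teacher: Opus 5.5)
Your proposal is correct and follows the paper's proof essentially step for step. For the first item, the paper also uses the reflection $g(x)=f(1-x)$ to get $T_\bone\,\diag(1,-1)\,T_\bzero=\diag(1,-1)$, and then uses $\det T_\bzero=1$ to conclude $T_\bone=(T_\bzero)^\sharp$. For the second item, it likewise combines the composition rule with the observation that on each unit interval the potential is the translate of the corresponding single-block potential.
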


\begin{proof}
\begin{enumerate}
\item Write $V_\bzero:=V_\mu(\cdot\,;q_\bzero)$ and $V_\bone:=V_\mu(\cdot\,;q_\bone)$ for the potentials determining $T_\bzero$ and $T_\bone$. Explicitly,
  \begin{equation*}
    V_\bzero=\frac{p_\bzero''}{p_\bzero}+\mu\,p_\bzero^{-4/m}
    \quad\text{and}\quad
    V_\bone=\frac{p_\bone''}{p_\bone}+\mu\,p_\bone^{-4/m}.
  \end{equation*}
  Since $p_\bone(x)=p_\bzero(1-x)$, it follows that $V_\bone(x)=V_\bzero(1-x)$.

  Let $f$ solve \cref{eq:schrodinger} with potential $V_\bzero$, and set $g(x):=f(1-x)$ and $\Sigma:=\diag(1,-1)$. Then $g$ solves \cref{eq:schrodinger} with potential $V_\bone$, and the state vectors of $f$ and $g$ satisfy
  \begin{equation*}
    T_\bone \Sigma T_\bzero
    \begin{bmatrix}
      f(0)\\f'(0)
    \end{bmatrix}
    = T_\bone \Sigma
    \begin{bmatrix}
      f(1)\\f'(1)
    \end{bmatrix}
    = T_\bone
    \begin{bmatrix}
      g(0)\\g'(0)
    \end{bmatrix}
    =
    \begin{bmatrix}
      g(1)\\g'(1)
    \end{bmatrix}
    = \Sigma
    \begin{bmatrix}
      f(0)\\f'(0)
    \end{bmatrix}.
  \end{equation*}
  Consequently, $T_\bone\Sigma T_\bzero=\Sigma$. Since $\det T_\bzero=1$ by \cref{eq:det-T-1}, we obtain $T_\bone = \Sigma T_\bzero^{-1}\Sigma=(T_\bzero)^\sharp$.

  \item By the composition rule~\labelcref{eq:transfer-composition},
  \begin{equation*}
    T_{L\shortleftarrow0}(\mu,z; s)
    = T_{L\shortleftarrow(L-1)}(\mu,z; s) \cdots T_{1\shortleftarrow0}(\mu,z; s).
  \end{equation*}
  On $\interval{j-1}{j}$, the potential is the translate of the single-block potential of type $s_j$. Hence
  \begin{equation*}
    T_{j\shortleftarrow(j-1)}(\mu,z; s)=T_{s_j},
  \end{equation*}
  and \cref{eq:block-transfer} follows, since $T_s=T_{s_L}\cdots T_{s_1}$ by \cref{eq:word-matrix}.\qedhere
\end{enumerate}
\end{proof}

\subsubsection*{Word algebra and combinatorics.}
The following proposition provides two binary words that are inequivalent under $\sim_\sharp$, but whose word matrices have the same off-diagonal entries whenever $M_\bone=(M_\bzero)^\sharp$. The corresponding warping profiles, displayed in \cref{fig:nine-letter-profiles}, are the ones used for the nonuniqueness result in \cref{sec:nonuniqueness-result}.

\begin{proposition}\label{prop:nine-letter-words}
  Let $M_\bzero\in\R^{2\times 2}$, set $M_\bone:=(M_\bzero)^\sharp$, and define $M_s$ for each binary word $s$ by \cref{eq:word-matrix}. Define
  \begin{equation}\label{eq:nine-letter-words}
    w:=\word{001\,011\,100}
    \quad\text{and}\quad
    \wt{w}:=\word{011\,001\,110}.
  \end{equation}
  Then:
  \begin{enumerate}
    \item\label{item:off-diagonal} the off-diagonal entries of $M_w$ and $M_{\wt{w}}$ agree;
    \item\label{item:no-isometry} $w\not\sim_\sharp\wt{w}$.
  \end{enumerate}
\end{proposition}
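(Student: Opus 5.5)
Both items reduce to finite computations with $2\times2$ matrices and words of length $18$. For \cref{item:off-diagonal}, the plan is to exploit the block structure of the two words rather than to expand nine-fold products blindly. Write $A:=\word{001}$ and $C:=\word{100}$. By \cref{eq:word-sharp}, $A^\sharp=\word{011}$ and $C^\sharp=\word{110}$, so
\begin{equation*}
  w=A\,A^\sharp\,C
  \quad\text{and}\quad
  \wt{w}=A^\sharp\,A\,C^\sharp .
\end{equation*}
By \cref{eq:word-matrix-product,eq:word-matrix-sharp},
\begin{equation*}
  M_w=M_C\,(M_A)^\sharp M_A
  \quad\text{and}\quad
  M_{\wt{w}}=(M_C)^\sharp\, M_A(M_A)^\sharp .
\end{equation*}

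The key elementary observation is the following. For $P=\begin{bmatrix}a&b\\c&d\end{bmatrix}$, a direct multiplication gives
\begin{equation*}
  P^\sharp P=(ad+bc)\,\id+\begin{bmatrix}0&2bd\\2ac&0\end{bmatrix}
  \quad\text{and}\quad
  PP^\sharp=(ad+bc)\,\id+\begin{bmatrix}0&2ab\\2cd&0\end{bmatrix}.
\end{equation*}
Since $\sharp$ preserves off-diagonal entries, the scalar parts contribute equally to the off-diagonal entries of $M_w$ and $M_{\wt{w}}$. Write $M_A=\begin{bmatrix}a&b\\c&d\end{bmatrix}$ and $M_C=\begin{bmatrix}\alpha&\beta\\\gamma&\delta\end{bmatrix}$. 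Multiplying $M_C$ and $(M_C)^\sharp$ by the purely off-diagonal parts shows that the remaining contributions to the off-diagonal entries are $(2\alpha bd,\,2\delta ac)$ for $M_w$ and $(2\delta ab,\,2\alpha cd)$ for $M_{\wt{w}}$. Hence it suffices to prove the single identity
\begin{equation*}
  \alpha d=\delta a .
\end{equation*}

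To check this identity, put $X:=M_\bzero^2$. Then $M_A=M_\bone X$ and $M_C=XM_\bone$ with $M_\bone=(M_\bzero)^\sharp$. Writing $M_\bzero=\begin{bmatrix}p_1&p_2\\p_3&p_4\end{bmatrix}$, the entries are $X_{12}=p_2(p_1+p_4)$, $X_{21}=p_3(p_1+p_4)$, $X_{11}=p_1^2+p_2p_3$ and $X_{22}=p_4^2+p_2p_3$. Expanding $\alpha,\delta,a,d$ as polynomials in $p_1,\dots,p_4$ and verifying $\alpha d-\delta a=0$ is then a routine polynomial check. I expect this verification to be the only real obstacle. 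If a direct expansion is messy, I would first try the Cayley--Hamilton relation $X=(p_1+p_4)M_\bzero-\det(M_\bzero)\,\id$ to reduce both sides to expressions in $M_\bzero$ and $M_\bzero^\sharp$.

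For \cref{item:no-isometry}, we compute
\begin{equation*}
  ww^\sharp=\word{001011100\,110001011}
  \quad\text{and}\quad
  \wt{w}\wt{w}^\sharp=\word{011001110\,100011001}.
\end{equation*}
We then use the cyclic sequence of run lengths as an invariant under cyclic shifts. Both words start with $\bzero$ and end with $\bone$, so no run wraps around. The cyclic run-length sequences are $(2,1,1,3,2,2,3,1,1,2)$ and $(1,2,2,3,1,1,3,2,2,1)$. In each sequence the two runs of length $3$ split the cycle into two arcs. In the first these arcs are $(2,2)$ and $(1,1,2,2,1,1)$, and in the second they are $(1,1)$ and $(2,2,1,1,2,2)$. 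These multisets of arcs differ, so the two run-length cycles are not rotations of each other. Hence the words do not differ by a cyclic shift, and $w\not\sim_\sharp\wt{w}$.
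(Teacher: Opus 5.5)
Your proposal is correct. For item (i) it is the paper's argument: the same factorization $w=AA^\sharp C$ and $\wt{w}=A^\sharp AC^\sharp$, and the same observation that $P^\sharp P$ and $PP^\sharp$ are a common scalar matrix plus an off-diagonal part.

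The one step you leave open, $\alpha d=\delta a$, does hold, and in the stronger form that $M_{\word{001}}$ and $M_{\word{100}}$ have the same diagonal. Expanding in your coordinates gives
$a=\alpha=p_1^2p_4+p_1p_2p_3+2p_2p_3p_4$ and $d=\delta=p_1p_4^2+2p_1p_2p_3+p_2p_3p_4$.
The paper proves this equality of diagonals without coordinates, essentially along the lines of your fallback:
\begin{itemize}
\item By Cayley--Hamilton, $M_\bone M_\bzero^2-M_\bzero^2M_\bone=\trace(M_\bzero)\,(M_\bone M_\bzero-M_\bzero M_\bone)$.
\item Since $\sharp$ reverses products and swaps $M_\bzero$ and $M_\bone$, the commutator is fixed by $\sharp$, so its two diagonal entries are equal.
\item Being traceless, the commutator therefore has zero diagonal.
\end{itemize}
(Your reduction shows that proportional diagonals would already suffice, which the paper does not need.)

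For item (ii), your words $ww^\sharp$ and $\wt{w}\wt{w}^\sharp$ and their cyclic run-length sequences are computed correctly. Comparing the two arcs between the runs of length $3$ is a valid invariant under rotation. The paper instead anchors at the unique cyclic occurrence of $\word{111}$ and compares the second letters of the remaining words. Both checks are equally elementary finite verifications.
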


\begin{figure}[tbp]
  \centering
  \input{nonuniqueness-nine-letter-words.tikz}
  \caption{The warping profiles associated with the two nine-letter words. (a) The warping profiles $q_w$ and $q_{\wt{w}}$ on the positive semicircle; the blue and orange endpoint bars mark the intervals $I_\src$ and $I_\rcv$, on which both profiles equal $1$. (b) The words $ww^\sharp$ and $\wt{w}\wt{w}^\sharp$, arranged cyclically as they are read off the full profiles $q_w$ and $q_{\wt{w}}$ on $C$. The circled letters form the unique cyclic occurrence of $\word{111}$ in each word.}
  \label{fig:nine-letter-profiles}
\end{figure}

\begin{proof}
  \begin{enumerate}
    \item Let $\off(M)$ denote the off-diagonal part of a matrix $M$. We first prove the following auxiliary fact: if $P,Q\in\R^{2\times 2}$ have the same diagonal, then $QP^\sharp P$ and $Q^\sharp PP^\sharp$ have the same off-diagonal. Write $P=D+U$ and $Q=D+V$, where $D$ is diagonal and $U,V$ are off-diagonal. Then, for some $\alpha\in\R$,
      \begin{equation*}
        P^\sharp P=\alpha I_2+2D^\sharp U
        \quad\text{and}\quad
        PP^\sharp=\alpha I_2+2DU,
      \end{equation*}
      where $I_2$ is the $2\times 2$ identity matrix. Since $DD^\sharp=D^\sharp D$ and $V^\sharp=V$,
      \begin{align*}
        \off(QP^\sharp P)
        &= 2\off(DD^\sharp U) + \alpha \off(V)\\
        &= 2\off(D^\sharp DU) + \alpha \off(V^\sharp)\\
        &= \off(Q^\sharp PP^\sharp).
      \end{align*}

      Next, set $A:=M_\bzero$, $B:=M_\bone=A^\sharp$, $x:=\word{001}$, $\overleftarrow{x}:=\word{100}$, and $\overline{x}:=\word{110}$. Note that $x^\sharp=\word{011}$ and $\overline{x}=(\overleftarrow{x})^\sharp$, so $w=xx^\sharp\overleftarrow{x}$ and $\wt{w}=x^\sharp x\overline{x}$. Put $P:=M_x=BA^2$ and $Q:=M_{\overleftarrow{x}}=A^2B$. Then, by \cref{eq:word-matrix-product,eq:word-matrix-sharp},
      \begin{equation*}
        M_w = Q P^\sharp P
        \quad\text{and}\quad
        M_{\wt{w}} = Q^\sharp P P^\sharp,
      \end{equation*}
      so it only remains to show that $P$ and $Q$ have the same diagonal.

      Because
      \begin{equation*}
        (AB-BA)^\sharp
        = B^\sharp A^\sharp - A^\sharp B^\sharp
        = AB-BA,
      \end{equation*}
      the diagonal entries of $AB-BA$ are equal. Since $\trace(AB-BA)=0$, both diagonal entries vanish. The Cayley--Hamilton identity
      \begin{equation*}
        A^2-\trace(A)A+\det(A)I_2=0
      \end{equation*}
      gives
      \begin{equation*}
        P-Q = BA^2-A^2B = \trace(A)(BA-AB),
      \end{equation*}
      from which it follows that $P$ and $Q$ have the same diagonal.

    \item Observe that
      \begin{align*}
        ww^\sharp &= \word{001\,011\,100\,110\,001\,011},\\
        \wt{w}\wt{w}^\sharp &= \word{011\,001\,110\,100\,011\,001}.
      \end{align*}
      Note that both words contain a single cyclic occurrence of $\word{111}$. In other words, there exists a unique word $s$ such that $\word{111}s$ differs by a cyclic shift from $ww^\sharp$, and there exists a unique word $t$ such that $\word{111}t$ differs by a cyclic shift from $\wt{w}\wt{w}^\sharp$. Then $ww^\sharp$ and $\wt{w}\wt{w}^\sharp$ differ by a cyclic shift if and only if $s=t$. But $s_2=\bzero\neq\bone=t_2$, so that $w\not\sim_\sharp\wt{w}$.\qedhere
  \end{enumerate}
\end{proof}

\subsection{Nonisometric manifolds with equivalent cross-spectral data}\label{sec:nonuniqueness-result}

Recall that the construction of the pair of Riemannian manifolds $(\M_n,g_q)$ and $(\M_n,g_{\tq})$ in \cref{sec:ingredients} is determined by the parameters $n,L,\eta,(F,h),q,\tq$. We now specialize these parameters so that the pair realizes nonuniqueness.

\begin{param}
\item\label{par:dimension} The dimension $n\ge 2$ of $\M_n$ can be chosen freely.

\item\label{par:length} The length $L=9$, so that $C=\R/(2L\Z)=\R/(18\Z)$.

\item\label{par:width} The length $\eta=1/4$, so that the width of the source set $\src$ and receiver set $\rcv$ along $C$ is $1/2$.

\item\label{par:fiber} If $m=n-1\ge 2$, the fiber manifold $(F,h)$ is any smooth, closed, \emph{simply connected} Riemannian manifold of dimension $m$. If $m=1$, it is the unit circle $\bbS^1$ with the Riemannian metric induced by the Euclidean metric on $\R^2$.

\item\label{par:profiles} The functions $q=q_w$ and $\tq=q_{\wt{w}}$ are chosen to be the warping profiles associated with the nine-letter binary words $w,\wt{w}$ defined in \cref{eq:nine-letter-words} (see \cref{def:word-profile}). By \cref{rem:word-profiles-admissible}, they satisfy the requirements~\labelcref{eq:q-positive,eq:q-even,eq:q-matching} for warping profiles.
\end{param}
Substituting these parameters into \cref{eq:M-n,eq:src-rcv} gives the smooth $n$-manifold $\M$ and the open source and receiver sets $\src$ and $\rcv$:
\begin{equation}\label{eq:nonuniqueness-sets}
  \M = C\times F,
  \qquad
  \src=\pi_C\bigl(\ointerval{-\tfrac{1}{4}}{\tfrac{1}{4}}\bigr)\times F,
  \qquad
  \rcv=\pi_C\bigl(\ointerval{9-\tfrac{1}{4}}{9+\tfrac{1}{4}}\bigr)\times F,
\end{equation}
where $\pi_C\colon\R\to C$ is the quotient map. By \cref{eq:warped-product-metrics}, the corresponding warped-product metrics on $\M$ are
\begin{equation}\label{eq:nonuniqueness-metrics}
  g = g_{q_w} = dr^2 + a(r)^2h
  \quad\text{and}\quad
  \tg = g_{q_{\wt{w}}} = dr^2 + \wt{a}(r)^2h,
\end{equation}
where $a:=q_w^{2/m}$ and $\wt{a}:=q_{\wt{w}}^{2/m}$. The pair $(\M,g)$, $(\M,\tg)$ is thus the pair $(\M_n,g_q)$, $(\M_n,g_{\tq})$ of \cref{eq:standing-manifolds} with the parameters above, so the results of the preceding subsections apply to it. As noted in \cref{sec:ingredients}, the sets $\src$ and $\rcv$ are disjoint.

\begin{remark}\label{rem:uniform-fiber-choice}
  For every $n\ge 2$, the choice of $(F,h)$ as the sphere $\bbS^{n-1}$ with the metric induced by the Euclidean metric on $\R^n$ satisfies \cref{par:fiber}.
\end{remark}

Recall the definition of equivalent cross-spectral data in \cref{def:cross-spectral-data}, and that, by \cref{item:summary-equivalence} of \cref{prop:data-summary}, the same information can be expressed in terms of the local source-to-solution operator for the wave equation.
\begin{theorem}\label{prop:nonuniqueness}
  Let $n\ge 2$ be an integer.
  \begin{enumerate}
    \item\label{item:nonuniqueness} There exist a smooth, connected, closed $n$-manifold $\M$, two nonempty open sets $\src,\rcv\subset\M$, and two Riemannian metrics $g$ and $\tg$ on $\M$, such that the Riemannian manifolds $(\M,g)$ and $(\M,\tg)$ have equivalent cross-spectral data relative to the identity maps $\id_\src$ and $\id_\rcv$, but are not related by any Riemannian isometry.
    \item\label{item:nonuniqueness-concrete} Concretely, let the parameters be as in \crefrange{par:dimension}{par:profiles} above. Then the manifold $\M$, the sets $\src$ and $\rcv$, and the warped-product metrics $g$ and $\tg$ defined by \cref{eq:nonuniqueness-sets,eq:nonuniqueness-metrics} have the properties stated in \cref{item:nonuniqueness}.
  \end{enumerate}
\end{theorem}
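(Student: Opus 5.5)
Item (1) follows from item (2), so the plan is to verify the concrete construction from \crefrange{par:dimension}{par:profiles}. There are two parts. First, show that $(\M,g)$ and $(\M,\tg)$ have equivalent cross-spectral data relative to $\id_\src,\id_\rcv$. Second, show that no Riemannian isometry relates them. The standing requirements~\labelcref{eq:q-positive,eq:q-even,eq:q-matching} are given by \cref{rem:word-profiles-admissible} with $\eta=1/4$ and $L=9$. Note also that $\eta<L/2$, so $\src$ and $\rcv$ are disjoint and nonempty.

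For the data, I will apply \cref{prop:transfer-reduction}. It suffices to check that $b(\mu_k,z)=\tilde b(\mu_k,z)$ and $c(\mu_k,z)=\tilde c(\mu_k,z)$ for all $k$ and all $z<0$. In fact I will check this for every $\mu\ge0$ and every $z\in\R$.

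Fix $\mu$ and $z$. By \cref{item:word-transfer} of \cref{prop:block-transfer}, $\Tmat{0}{L}(\mu,z)=T_w$ and $\tTmat{0}{L}(\mu,z)=T_{\wt w}$, where both word matrices are built from the same block matrices $T_\bzero,T_\bone$. This holds because the single-block potentials depend only on $p_\bzero,p_\bone$, $\mu$ and $m$, not on the word. By \cref{item:block-sharp}, $T_\bone=(T_\bzero)^\sharp$. Hence \cref{item:off-diagonal} of \cref{prop:nine-letter-words}, applied with $M_\bzero=T_\bzero$, gives equality of the off-diagonal entries, which is exactly \cref{eq:b-and-c}.

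For the absence of an isometry, suppose $\Phi\colon(\M,g)\to(\M,\tg)$ is a Riemannian isometry. I will apply \cref{prop:isometry-splitting} to obtain an isometry $\tau$ of $C$ with $q_w=q_{\wt w}\circ\tau$. Then $q_{\wt w}=q_w\circ\tau^{-1}$, and \cref{prop:profile-recovery} (with $s=w$, $t=\wt w$, isometry $\tau^{-1}$) yields $w\sim_\sharp\wt w$. This contradicts \cref{item:no-isometry} of \cref{prop:nine-letter-words}.

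The only point requiring care, and the main obstacle, is verifying the hypothesis of \cref{prop:isometry-splitting}. If $m\ge2$, $F$ is simply connected by \cref{par:fiber}, so \cref{cond:fiber-simply-connected} holds. If $m=1$, $F=\bbS^1$ is isometric to $\R/(2\pi\Z)$, so $L_F=\pi$. We need $\min_C a_q<L/L_F=9/\pi$. Here $a_q=q_w^{2}$ and $q_w$ equals $1$ near block endpoints, so $\min a_q\le 1<9/\pi$, and \cref{cond:fiber-circle} holds. This completes both parts.
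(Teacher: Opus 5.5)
Your proposal is correct and follows essentially the same route as the paper. The paper also gets equivalent data from the transfer-matrix reduction, the block factorization with $T_{\mathtt{1}}=(T_{\mathtt{0}})^\sharp$, and the nine-letter off-diagonal identity, and it rules out isometries via the splitting proposition (with $L_F=\pi$ and $\min a=1<9/\pi$ when $n=2$) combined with profile recovery and $w\not\sim_\sharp\widetilde{w}$.
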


\begin{remark}
  \begin{enumerate}
    \item Note the asymmetry between statements of uniqueness and nonuniqueness: uniqueness provides a Riemannian isometry that restricts to the identifications $\Phi_\src$ and $\Phi_\rcv$ on $\src$ and $\rcv$, respectively; nonuniqueness says that no Riemannian isometry at all exists between the manifolds.
    \item By \cref{item:classical-result} of \cref{prop:main-corollaries}, a necessary condition for nonuniqueness is $\src\cap\rcv=\emptyset$; see also~\cite[Corollary~2]{MR3826551}.
  \end{enumerate}
\end{remark}

The restrictions on the fiber manifold $(F,h)$ in \cref{par:fiber}, beyond smoothness, closedness, and connectedness, are used only to rule out isometries via \cref{prop:isometry-splitting}. We first establish, without these additional restrictions, that the two manifolds have equivalent cross-spectral data and that no Riemannian isometry $\tau$ of $C$ satisfies $q=\tq\circ\tau$; the proof of \cref{prop:nonuniqueness} then follows.

\begin{proposition}\label{prop:word-manifolds}
  Let the parameters be as in \crefrange{par:dimension}{par:profiles} above, except for $(F,h)$, which may be any smooth, connected, closed Riemannian manifold of dimension $n-1$. Let the manifolds $(\M,g)$ and $(\M,\tg)$ and the sets $\src$ and $\rcv$ be defined by \cref{eq:nonuniqueness-sets,eq:nonuniqueness-metrics}.

  The Riemannian $n$-manifolds $(\M,g)$ and $(\M,\tg)$ have equivalent cross-spectral data relative to the identity maps $\id_\src$ and $\id_\rcv$. Moreover, no Riemannian isometry $\tau\colon C\to C$ satisfies $q=\tq\circ\tau$.
\end{proposition}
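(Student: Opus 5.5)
The plan is to establish the two claims separately: equivalence of the data through \cref{prop:transfer-reduction}, and absence of a profile isometry through \cref{prop:profile-recovery}.

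\emph{Equivalent data.} By \cref{rem:word-profiles-admissible} with $\eta=1/4$ and $L=9$, the profiles $q=q_w$ and $\tq=q_{\wt{w}}$ satisfy \cref{eq:q-positive,eq:q-even,eq:q-matching}. So the setting of \cref{sec:ingredients} applies, and \cref{prop:transfer-reduction} reduces the claim to \cref{eq:b-and-c} for every $k$ and every $z<0$. Fix $\mu=\mu_k\ge0$ and $z<0$, and let $T_\bzero,T_\bone$ be the single-block transfer matrices \cref{eq:block-matrices}. These are built from the same blocks $p_\bzero,p_\bone$ for both words, so they are common to $q_w$ and $q_{\wt{w}}$. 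By \cref{item:block-sharp} of \cref{prop:block-transfer}, $T_\bone=(T_\bzero)^\sharp$. By \cref{item:word-transfer} of the same proposition,
\begin{equation*}
\Tmat{0}{L}(\mu,z)=T_w \quad\text{and}\quad \tTmat{0}{L}(\mu,z)=T_{\wt{w}}.
\end{equation*}
We then apply \cref{item:off-diagonal} of \cref{prop:nine-letter-words} with $M_\bzero:=T_\bzero$. This gives that the off-diagonal entries of $T_w$ and $T_{\wt{w}}$ agree, that is, $b=\tilde b$ and $c=\tilde c$ at $(\mu_k,z)$.

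One point needs checking. The potential $V_\mu(\cdot\,;q_s)$ restricted to $\interval{j-1}{j}$ is the translate of the single-block potential of type $s_j$. This holds because $q_s$ on that interval is a translate of $p_{s_j}$, and $V_\mu$ depends only locally on $q$. The notation $T_{1\shortleftarrow 0}(\mu,z;\bzero)$ for a word of length one should be read on $C(1)$ via \cref{def:word-profile}. Its potential on $\interval{0}{1}$ is exactly $V_\mu$ built from $p_\bzero$, so the translation identity holds and these matrices are indeed shared.

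\emph{No isometry of $C$.} Suppose $\tau$ is a Riemannian isometry of $C$ with $q_w=q_{\wt{w}}\circ\tau$. Applying \cref{prop:profile-recovery} with $s=\wt{w}$ and $t=w$ yields $\wt{w}\sim_\sharp w$. This contradicts \cref{item:no-isometry} of \cref{prop:nine-letter-words}, since $\sim_\sharp$ is symmetric (being defined through cyclic shifts). So no such $\tau$ exists.

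The argument is essentially bookkeeping, because all substantive work sits in the cited results. The step I would double-check most carefully is the identification of the single-block matrices $T_\bzero,T_\bone$ as common to both profiles, together with the correct order of factors in the word matrix. If the order were reversed, the word identity would have to be reapplied with reversed words. The present conventions, \cref{eq:word-matrix} and the composition rule \cref{eq:transfer-composition}, are consistent with each other, so I expect this to check out.
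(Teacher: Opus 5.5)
Your proposal is correct and matches the paper's proof step for step. For the data you combine \cref{prop:transfer-reduction} with \cref{prop:block-transfer} and \cref{item:off-diagonal} of \cref{prop:nine-letter-words}; for the absence of a circle isometry you use \cref{prop:profile-recovery} together with $w\not\sim_\sharp\wt{w}$, and your explicit handling of the orientation there (taking $s=\wt{w}$, $t=w$ and using the symmetry of $\sim_\sharp$) is right.
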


\begin{proof}
  Fix $k=0,1,\dots$ and $z<0$, and set $\mu:=\mu_k$. By \cref{par:profiles}, $q=q_w$ and $\tq=q_{\wt{w}}$, so the transfer matrices in \cref{eq:transfer-matrix-entries} are $\Tmat{0}{L}(\mu,z)=T_{L\shortleftarrow0}(\mu,z;w)$ and $\tTmat{0}{L}(\mu,z)=T_{L\shortleftarrow0}(\mu,z;\wt{w})$. By \cref{item:word-transfer} of \cref{prop:block-transfer}, these are the word matrices $T_w$ and $T_{\wt{w}}$, and by \cref{item:block-sharp}, the equality $T_\bone=(T_\bzero)^\sharp$ holds. Applying \cref{item:off-diagonal} of \cref{prop:nine-letter-words} with $M_\bzero=T_\bzero$ and $M_\bone=T_\bone$ then shows that $T_w$ and $T_{\wt{w}}$ have equal off-diagonal entries. Since $k$ and $z$ were arbitrary, \cref{prop:transfer-reduction} implies that $(\M,g)$ and $(\M,\tg)$ have equivalent cross-spectral data relative to the identity maps $\id_\src$ and $\id_\rcv$.

  Now $w\not\sim_\sharp\wt{w}$ by \cref{item:no-isometry} of \cref{prop:nine-letter-words}, so \cref{prop:profile-recovery} implies that no isometry $\tau\colon C\to C$ satisfies $q=\tq\circ\tau$.
\end{proof}

\begin{proof}[Proof of \cref{prop:nonuniqueness}]
  Since \cref{item:nonuniqueness-concrete} implies \cref{item:nonuniqueness}, it suffices to prove \cref{item:nonuniqueness-concrete}.

  By \cref{prop:word-manifolds}, the manifolds $(\M,g)$ and $(\M,\tg)$ have equivalent cross-spectral data relative to the identity maps $\id_\src$ and $\id_\rcv$. By \cref{par:fiber}, one of \cref{cond:fiber-circle,cond:fiber-simply-connected} of \cref{prop:isometry-splitting} holds: for $n\ge 3$ the fiber is simply connected, and for $n=2$ it is the unit circle, for which $L_F=\pi$ and $\min_C a=1<9/\pi=L/L_F$. Consequently, any isometry between the manifolds would induce an isometry $\tau\colon C\to C$ such that $q=\tq\circ\tau$, but by \cref{prop:word-manifolds} there exists no such isometry $\tau$. Therefore, there exists no Riemannian isometry between the manifolds $(\M,g)$ and $(\M,\tg)$.
\end{proof}

%% file: nonuniqueness-geometry.tikz
\begingroup
\begin{tikzpicture}[
  line cap=round,
  line join=round,
  every node/.style={font=\small}
]
  \tikzset{
    source arc/.style={draw=blue!55, line width=6.5pt},
    receiver arc/.style={draw=orange!65, line width=6.5pt},
    source bar/.style={draw=blue!55, line width=3.5pt},
    receiver bar/.style={draw=orange!65, line width=3.5pt},
    base curve/.style={draw=black!65, line width=0.8pt},
    common profile/.style={draw=black, line width=1.05pt},
    first profile/.style={draw=black, line width=1.05pt},
    second profile/.style={draw=black!70, line width=1.05pt, densely dashed},
    guide/.style={draw=black!45, line width=0.6pt},
    figure arrow/.style={draw=black, line width=0.7pt},
    tick/.style={draw=black!70, line width=0.6pt},
    tick label/.style={anchor=north, text height=1.6ex, text depth=0.25ex}
  }

  \node[anchor=west, font=\small\bfseries] at (-6.10,3.02) {(a)};
  \begin{scope}[
    shift={(-3.90,1.55)},
    every node/.append style={font=\footnotesize}
  ]
    \def\CircleRadius{1.10}

    \draw[guide, dashed] (-1.32,0) -- (1.32,0);

    \draw[source arc] (-25:\CircleRadius) arc[start angle=-25,end angle=25,radius=\CircleRadius];
    \draw[receiver arc] (155:\CircleRadius) arc[start angle=155,end angle=205,radius=\CircleRadius];
    \draw[base curve] (0,0) circle[radius=\CircleRadius];
    \draw[figure arrow, ->]
      (35:\CircleRadius) arc[start angle=35,end angle=80,radius=\CircleRadius];

    \fill (-\CircleRadius,0) circle[radius=1.35pt];
    \fill ( \CircleRadius,0) circle[radius=1.35pt];

    \node at (0,0.64) {$j_+$};
    \node at (0,-0.64) {$j_-$};
    \node[orange!90!black, anchor=south east] at (-0.82,0.58) {$C_\rcv$};
    \node[blue!70!black, anchor=south west] at (0.82,0.58) {$C_\src$};
    \node[anchor=south] at (0,1.22) {$C$};

    \node[anchor=north, align=center] (fixedL) at (-1.50,-0.38)
      {$\pi_C(L)$\\[-2pt]$=\pi_C(-L)$};
    \node[anchor=north] (fixedzero) at (1.50,-0.38) {$\pi_C(0)$};
    \draw[figure arrow, ->] (fixedL.north) -- (-\CircleRadius,0);
    \draw[figure arrow, ->] (fixedzero.north) -- (\CircleRadius,0);
  \end{scope}

  \node[anchor=north, align=center, font=\footnotesize] at (-3.90,-0.02) {$\displaystyle
    \begin{aligned}
      \M_n&=C\times F,\\
      \src&=C_\src\times F,\\
      \rcv&=C_\rcv\times F.
    \end{aligned}$};

  \begin{scope}[shift={(1.90,0.25)}]
  \node[anchor=west, font=\small\bfseries] at (-3.82,2.65) {(b)};

  \def\xL{3.55}
  \def\xEta{1.00}
  \def\xLEta{2.55}

  \fill[orange!13] (-\xL,-0.06) rectangle (-\xLEta,2.42);
  \fill[blue!11]   (-\xEta,-0.06) rectangle ( \xEta,2.42);
  \fill[orange!13] ( \xLEta,-0.06) rectangle ( \xL,2.42);
  \node[orange!90!black, anchor=north] at (-3.05,2.30) {$C_\rcv$};
  \node[blue!70!black, anchor=north] at (0,2.30) {$C_\src$};
  \node[orange!90!black, anchor=north] at (3.05,2.30) {$C_\rcv$};

  \draw[first profile] (-1.45,2.64) -- (-0.98,2.64);
  \node[anchor=west] at (-0.90,2.64) {$q$};
  \draw[second profile] (0.12,2.64) -- (0.59,2.64);
  \node[anchor=west] at (0.67,2.64) {$\widetilde q$};

  \def\ProfileLevel{1.45}
  \def\ProfileAmplitude{0.30}
  \def\PerturbationAmplitude{0.20}
  \def\ProfileStart{1.18}
  \def\ProfileEnd{2.37}
  \def\ProfileWidth{1.19}

  \draw[common profile]
    plot[domain=-3.55:-2.37, samples=81, variable=\x]
      (\x,{\ProfileLevel-\ProfileAmplitude*cos(360*\x/3.55)});
  \draw[common profile]
    plot[domain=-1.18:1.18, samples=121, variable=\x]
      (\x,{\ProfileLevel-\ProfileAmplitude*cos(360*\x/3.55)});
  \draw[common profile]
    plot[domain=2.37:3.55, samples=81, variable=\x]
      (\x,{\ProfileLevel-\ProfileAmplitude*cos(360*\x/3.55)});

  \draw[second profile]
    ({-\ProfileEnd},
     {\ProfileLevel-\ProfileAmplitude*cos(360*(-\ProfileEnd)/3.55)})
    plot[domain=0.002:0.998, samples=201, variable=\t]
      ({-\ProfileEnd+\ProfileWidth*\t},
       {\ProfileLevel
        -\ProfileAmplitude*cos(360*(-\ProfileEnd+\ProfileWidth*\t)/3.55)
        +\PerturbationAmplitude
         *exp(1.6-0.40/(\t*(1-\t)))*sin(360*\t)})
    -- ({-\ProfileStart},
        {\ProfileLevel-\ProfileAmplitude*cos(360*(-\ProfileStart)/3.55)});
  \draw[second profile]
    (\ProfileStart,
     {\ProfileLevel-\ProfileAmplitude*cos(360*\ProfileStart/3.55)})
    plot[domain=0.002:0.998, samples=201, variable=\t]
      ({\ProfileStart+\ProfileWidth*\t},
       {\ProfileLevel
        -\ProfileAmplitude*cos(360*(\ProfileStart+\ProfileWidth*\t)/3.55)
        -\PerturbationAmplitude
         *exp(1.6-0.40/(\t*(1-\t)))*sin(360*\t)})
    -- (\ProfileEnd,
        {\ProfileLevel-\ProfileAmplitude*cos(360*\ProfileEnd/3.55)});

  \draw[first profile]
    ({-\ProfileEnd},
     {\ProfileLevel-\ProfileAmplitude*cos(360*(-\ProfileEnd)/3.55)})
    plot[domain=0.002:0.998, samples=201, variable=\t]
      ({-\ProfileEnd+\ProfileWidth*\t},
       {\ProfileLevel
        -\ProfileAmplitude*cos(360*(-\ProfileEnd+\ProfileWidth*\t)/3.55)
        -\PerturbationAmplitude
         *exp(1.6-0.40/(\t*(1-\t)))*sin(360*\t)})
    -- ({-\ProfileStart},
        {\ProfileLevel-\ProfileAmplitude*cos(360*(-\ProfileStart)/3.55)});
  \draw[first profile]
    (\ProfileStart,
     {\ProfileLevel-\ProfileAmplitude*cos(360*\ProfileStart/3.55)})
    plot[domain=0.002:0.998, samples=201, variable=\t]
      ({\ProfileStart+\ProfileWidth*\t},
       {\ProfileLevel
        -\ProfileAmplitude*cos(360*(\ProfileStart+\ProfileWidth*\t)/3.55)
        +\PerturbationAmplitude
         *exp(1.6-0.40/(\t*(1-\t)))*sin(360*\t)})
    -- (\ProfileEnd,
        {\ProfileLevel-\ProfileAmplitude*cos(360*\ProfileEnd/3.55)});

  \draw[base curve] (-3.70,0) -- (3.70,0);
  \foreach \x in {-3.55,-2.55,-1.00,0,1.00,2.55,3.55}
    \draw[tick] (\x,-0.07) -- (\x,0.07);
  \node[tick label] at (-3.55,-0.10) {$-L$};
  \node[tick label] at (-2.55,-0.10) {$-L+\eta$};
  \node[tick label] at (-1.00,-0.10) {$-\eta$};
  \node[tick label] at (0,-0.10) {$0$};
  \node[tick label] at (1.00,-0.10) {$\eta$};
  \node[tick label] at (2.55,-0.10) {$L-\eta$};
  \node[tick label] at (3.55,-0.10) {$L$};

  \draw[source bar] (0,-0.72) -- (1.00,-0.72);
  \draw[receiver bar] (2.55,-0.72) -- (3.55,-0.72);
  \node[blue!70!black, anchor=north] at (0.50,-0.82) {$I_\src$};
  \node[orange!90!black, anchor=north] at (3.05,-0.82) {$I_\rcv$};
  \draw[guide] (0,-1.22) -- (0,-1.38) -- (3.55,-1.38) -- (3.55,-1.22);
  \node[anchor=north] at (1.78,-1.43)
    {semicircle coordinate $x\in[0,L]$};
  \end{scope}
\end{tikzpicture}
\endgroup

%% file: nonuniqueness-word-profile.tikz
\begingroup
\definecolor{wordzero}{RGB}{163,218,201}
\definecolor{wordone}{RGB}{211,177,231}
\begin{tikzpicture}[
  line cap=round,
  line join=round,
  every node/.style={font=\small}
]
  \tikzset{
    profile/.style={draw=black, line width=1.05pt},
    axis/.style={draw=black!65, line width=0.65pt},
    guide/.style={draw=black!35, line width=0.5pt},
    figure arrow/.style={draw=black, line width=0.7pt},
    zero cell/.style={draw=black!55, fill=wordzero, line width=0.55pt},
    one cell/.style={draw=black!55, fill=wordone, line width=0.55pt}
  }

  \def\DrawZeroBlock#1#2#3#4{%
    \begin{scope}[shift={(#1,#2)},xscale=#3,yscale=#4]
      \draw[profile]
        (0,0) -- (0.27,0)
        .. controls (0.30,0) and (0.30,0.46) .. (0.333,0.46)
        .. controls (0.366,0.46) and (0.366,0) .. (0.40,0)
        -- (0.43,0)
        .. controls (0.455,0) and (0.445,1) .. (0.48,1)
        -- (0.52,1)
        .. controls (0.555,1) and (0.545,0) .. (0.57,0)
        -- (1,0);
    \end{scope}%
  }
  \def\DrawOneBlock#1#2#3#4{%
    \begin{scope}[shift={(#1,#2)},xscale=#3,yscale=#4]
      \draw[profile]
        (0,0) -- (0.43,0)
        .. controls (0.455,0) and (0.445,1) .. (0.48,1)
        -- (0.52,1)
        .. controls (0.555,1) and (0.545,0) .. (0.57,0)
        -- (0.60,0)
        .. controls (0.634,0) and (0.634,0.46) .. (0.667,0.46)
        .. controls (0.70,0.46) and (0.70,0) .. (0.73,0)
        -- (1,0);
    \end{scope}%
  }

  \node[anchor=west, font=\small\bfseries] at (-5.95,2.82) {(a)};
  \DrawZeroBlock{-5.45}{1.42}{2.20}{0.78}
  \DrawOneBlock{-5.45}{0.05}{2.20}{0.78}
  \draw[axis] (-5.50,1.25) -- (-3.20,1.25);
  \draw[axis] (-5.50,-0.12) -- (-3.20,-0.12);
  \foreach \y in {1.25,-0.12} {
    \draw[axis] (-5.45,\y-0.05) -- (-5.45,\y+0.05);
    \draw[axis] (-3.25,\y-0.05) -- (-3.25,\y+0.05);
  }
  \node[anchor=east] at (-5.62,1.77) {$p_{\mathtt{0}}$};
  \node[anchor=east] at (-5.62,0.40) {$p_{\mathtt{1}}$};
  \node[font=\footnotesize, anchor=north] at (-5.45,1.19) {$0$};
  \node[font=\footnotesize, anchor=north] at (-3.25,1.19) {$1$};
  \node[font=\footnotesize, anchor=north] at (-5.45,-0.18) {$0$};
  \node[font=\footnotesize, anchor=north] at (-3.25,-0.18) {$1$};
  \node[font=\footnotesize, anchor=south east] at (-4.95,2.04) {marker};
  \node[font=\footnotesize, anchor=south west, xshift=-2em]
    at (-3.98,2.36) {plateau};
  \draw[figure arrow, ->] (-4.93,2.02) -- (-4.72,1.80);
  \draw[figure arrow, ->] (-4.23,2.43) -- (-4.34,2.20);

  \node[anchor=west, font=\small\bfseries] at (-2.18,2.58) {(b)};
  \def\BlockWidth{0.92}
  \def\ProfileLeft{-2.00}
  \def\ProfileBase{1.30}
  \DrawZeroBlock{-2.00}{1.30}{0.92}{0.72}
  \DrawZeroBlock{-1.08}{1.30}{0.92}{0.72}
  \DrawOneBlock{-0.16}{1.30}{0.92}{0.72}
  \DrawZeroBlock{0.76}{1.30}{0.92}{0.72}
  \foreach \x in {-2.00,-1.08,-0.16,0.76,1.68}
    \draw[guide, densely dotted] (\x,0.78) -- (\x,2.08);
  \foreach \bit [count=\i from 0] in {0,0,1,0} {
    \pgfmathsetmacro{\cellleft}{-2.00+0.92*\i}
    \ifnum\bit=0
      \path[zero cell] (\cellleft,0.49) rectangle ++(0.92,0.31);
    \else
      \path[one cell] (\cellleft,0.49) rectangle ++(0.92,0.31);
    \fi
    \pgfmathsetmacro{\cellmid}{\cellleft+0.46}
    \node[font=\footnotesize] at (\cellmid,0.645) {$\mathtt{\bit}$};
  }
  \node[anchor=east] at (-2.14,1.66) {$q_s$};
  \node[anchor=north] at (-2.00,0.43) {$0$};
  \node[anchor=north] at (1.68,0.43) {$4$};
  \node[anchor=north] at (-0.16,0.02) {$s=\mathtt{0010}$};

  \node[anchor=west, font=\small\bfseries] at (2.56,2.58) {(c)};
  \begin{scope}[shift={(4.00,1.03)}]
    \def\OuterRadius{1.24}
    \def\InnerRadius{0.76}
    \foreach \bit [count=\i from 0] in {0,0,1,0,1,0,1,1} {
      \pgfmathsetmacro{\startangle}{45*\i}
      \pgfmathsetmacro{\endangle}{45*(\i+1)}
      \pgfmathsetmacro{\midangle}{45*(\i+0.5)}
      \ifnum\bit=0
        \path[zero cell]
          (\startangle:\InnerRadius) -- (\startangle:\OuterRadius)
          arc[start angle=\startangle,end angle=\endangle,radius=\OuterRadius]
          -- (\endangle:\InnerRadius)
          arc[start angle=\endangle,end angle=\startangle,radius=\InnerRadius]
          -- cycle;
      \else
        \path[one cell]
          (\startangle:\InnerRadius) -- (\startangle:\OuterRadius)
          arc[start angle=\startangle,end angle=\endangle,radius=\OuterRadius]
          -- (\endangle:\InnerRadius)
          arc[start angle=\endangle,end angle=\startangle,radius=\InnerRadius]
          -- cycle;
      \fi
      \node[font=\footnotesize] at (\midangle:1.00) {$\mathtt{\bit}$};
    }
    \draw[guide, dashed] (-1.40,0) -- (-0.70,0);
    \draw[guide, dashed] (0.70,0) -- (1.40,0);
    \fill (0:\OuterRadius) circle[radius=1.25pt];
    \fill (180:\OuterRadius) circle[radius=1.25pt];
    \draw[figure arrow, ->]
      (12:1.40) arc[start angle=12,end angle=38,radius=1.40];
    \node at (0,0.17) {$ss^\sharp$};
    \node[font=\footnotesize] at (0,-0.18) {$\mathtt{0010\,1011}$};
    \node[anchor=south] at (0,1.35) {$s$};
    \node[anchor=north] at (0,-1.35) {$s^\sharp$};
    \node[font=\footnotesize, anchor=west] at (1.31,-0.20) {$\pi_C(0)$};
    \node[font=\footnotesize, anchor=east] at (-1.24,-0.20) {$\pi_C(4)$};
  \end{scope}
\end{tikzpicture}
\endgroup

%% file: nonuniqueness-nine-letter-words.tikz
\begingroup
\definecolor{wordzero}{RGB}{163,218,201}
\definecolor{wordone}{RGB}{211,177,231}
\begin{tikzpicture}[
  line cap=round,
  line join=round,
  every node/.style={font=\small}
]
  \tikzset{
    profile/.style={draw=black, line width=1.05pt},
    axis/.style={draw=black!65, line width=0.65pt},
    guide/.style={draw=black!35, line width=0.5pt},
    figure arrow/.style={draw=black, line width=0.7pt},
    zero cell/.style={draw=black!55, fill=wordzero, line width=0.5pt},
    one cell/.style={draw=black!55, fill=wordone, line width=0.5pt},
    source bar/.style={draw=blue!55, line width=3.2pt},
    receiver bar/.style={draw=orange!65, line width=3.2pt},
    run mark/.style={draw=black, line width=1.1pt}
  }

  \def\DrawZeroBlock#1#2#3#4#5{%
    \begin{scope}[shift={(#1,#2)},xscale=#3,yscale=#4]
      \draw[#5]
        (0,0) -- (0.27,0)
        .. controls (0.30,0) and (0.30,0.46) .. (0.333,0.46)
        .. controls (0.366,0.46) and (0.366,0) .. (0.40,0)
        -- (0.43,0)
        .. controls (0.455,0) and (0.445,1) .. (0.48,1)
        -- (0.52,1)
        .. controls (0.555,1) and (0.545,0) .. (0.57,0)
        -- (1,0);
    \end{scope}%
  }
  \def\DrawOneBlock#1#2#3#4#5{%
    \begin{scope}[shift={(#1,#2)},xscale=#3,yscale=#4]
      \draw[#5]
        (0,0) -- (0.43,0)
        .. controls (0.455,0) and (0.445,1) .. (0.48,1)
        -- (0.52,1)
        .. controls (0.555,1) and (0.545,0) .. (0.57,0)
        -- (0.60,0)
        .. controls (0.634,0) and (0.634,0.46) .. (0.667,0.46)
        .. controls (0.70,0.46) and (0.70,0) .. (0.73,0)
        -- (1,0);
    \end{scope}%
  }

  \node[anchor=west, font=\small\bfseries] at (-5.55,2.95) {(a)};
  \def\ProfileLeft{-4.50}
  \def\BlockWidth{1.00}
  \def\FirstBase{2.35}
  \def\SecondBase{0.80}

  \foreach \bit [count=\i from 0] in {0,0,1,0,1,1,1,0,0} {
    \pgfmathsetmacro{\cellleft}{\ProfileLeft+\BlockWidth*\i}
    \ifnum\bit=0
      \path[zero cell] (\cellleft,1.75) rectangle ++(\BlockWidth,0.32);
    \else
      \path[one cell] (\cellleft,1.75) rectangle ++(\BlockWidth,0.32);
    \fi
    \pgfmathsetmacro{\cellmid}{\cellleft+0.5*\BlockWidth}
    \node[font=\footnotesize] at (\cellmid,1.91) {$\mathtt{\bit}$};
  }
  \foreach \bit [count=\i from 0] in {0,1,1,0,0,1,1,1,0} {
    \pgfmathsetmacro{\cellleft}{\ProfileLeft+\BlockWidth*\i}
    \ifnum\bit=0
      \path[zero cell] (\cellleft,0.20) rectangle ++(\BlockWidth,0.32);
    \else
      \path[one cell] (\cellleft,0.20) rectangle ++(\BlockWidth,0.32);
    \fi
    \pgfmathsetmacro{\cellmid}{\cellleft+0.5*\BlockWidth}
    \node[font=\footnotesize] at (\cellmid,0.36) {$\mathtt{\bit}$};
  }

  \foreach \i in {0,...,9} {
    \pgfmathsetmacro{\seam}{\ProfileLeft+\BlockWidth*\i}
    \draw[guide, densely dotted] (\seam,2.07) -- (\seam,3.08);
    \draw[guide, densely dotted] (\seam,0.52) -- (\seam,1.53);
  }

  \foreach \bit [count=\i from 0] in {0,0,1,0,1,1,1,0,0} {
    \pgfmathsetmacro{\blockleft}{\ProfileLeft+\BlockWidth*\i}
    \ifnum\bit=0
      \DrawZeroBlock{\blockleft}{\FirstBase}{\BlockWidth}{0.68}{profile}
    \else
      \DrawOneBlock{\blockleft}{\FirstBase}{\BlockWidth}{0.68}{profile}
    \fi
  }
  \foreach \bit [count=\i from 0] in {0,1,1,0,0,1,1,1,0} {
    \pgfmathsetmacro{\blockleft}{\ProfileLeft+\BlockWidth*\i}
    \ifnum\bit=0
      \DrawZeroBlock{\blockleft}{\SecondBase}{\BlockWidth}{0.68}{profile}
    \else
      \DrawOneBlock{\blockleft}{\SecondBase}{\BlockWidth}{0.68}{profile}
    \fi
  }

  \node[anchor=east] at (-4.58,2.60) {$q_w$};
  \node[anchor=east] at (-4.58,1.05) {$q_{\widetilde w}$};
  \node[font=\footnotesize, anchor=north] at (0,-0.04)
    {positive semicircle $[0,9]$};
  \node[anchor=north] at (-4.50,-0.04) {$0$};
  \node[anchor=north] at (4.50,-0.04) {$9$};

  \draw[source bar] (-4.50,1.61) -- (-4.25,1.61);
  \draw[receiver bar] (4.25,1.61) -- (4.50,1.61);
  \draw[source bar] (-4.50,0.06) -- (-4.25,0.06);
  \draw[receiver bar] (4.25,0.06) -- (4.50,0.06);

  \node[anchor=west, font=\small\bfseries] at (-5.55,-0.62) {(b)};

  \def\DrawWordCircle#1#2#3#4#5#6{%
    \begin{scope}[shift={(#1,#2)}]
      \def\OuterRadius{1.20}
      \def\InnerRadius{0.74}
      \foreach \bit [count=\i from 0] in {#3} {
        \pgfmathsetmacro{\startangle}{20*\i}
        \pgfmathsetmacro{\endangle}{20*(\i+1)}
        \pgfmathsetmacro{\midangle}{20*(\i+0.5)}
        \ifnum\bit=0
          \path[zero cell]
            (\startangle:\InnerRadius) -- (\startangle:\OuterRadius)
            arc[start angle=\startangle,end angle=\endangle,radius=\OuterRadius]
            -- (\endangle:\InnerRadius)
            arc[start angle=\endangle,end angle=\startangle,radius=\InnerRadius]
            -- cycle;
        \else
          \path[one cell]
            (\startangle:\InnerRadius) -- (\startangle:\OuterRadius)
            arc[start angle=\startangle,end angle=\endangle,radius=\OuterRadius]
            -- (\endangle:\InnerRadius)
            arc[start angle=\endangle,end angle=\startangle,radius=\InnerRadius]
            -- cycle;
        \fi
        \node[font=\scriptsize] at (\midangle:0.97) {$\mathtt{\bit}$};
      }
      \draw[guide, dashed] (-1.34,0) -- (-0.69,0);
      \draw[guide, dashed] (0.69,0) -- (1.34,0);
      \fill (0:\OuterRadius) circle[radius=1.15pt];
      \fill (180:\OuterRadius) circle[radius=1.15pt];
      \draw[figure arrow, ->]
        (8:1.34) arc[start angle=8,end angle=29,radius=1.34];
      \node at (18.5:1.63) {#5};
      \draw[figure arrow, ->]
        (188:1.34) arc[start angle=188,end angle=209,radius=1.34];
      \node at (198.5:1.63) {#6};
      \node[align=center] at (0,0) {#4};
    \end{scope}%
  }

  \DrawWordCircle{-2.35}{-2.15}{0,0,1,0,1,1,1,0,0,1,1,0,0,0,1,0,1,1}{$ww^\sharp$}{$w$}{$w^\sharp$}
  \DrawWordCircle{2.35}{-2.15}{0,1,1,0,0,1,1,1,0,1,0,0,0,1,1,0,0,1}{$\widetilde w\widetilde w^\sharp$}{$\widetilde w$}{$\widetilde w^\sharp$}

  \node[font=\footnotesize, anchor=west] at (-1.05,-2.26) {$\pi_C(0)$};
  \node[font=\footnotesize, anchor=west] at (3.65,-2.26) {$\pi_C(0)$};

  \begin{scope}[shift={(-2.35,-2.15)}]
    \foreach \angle in {90,110,130} {
      \draw[run mark] (\angle:0.97) circle[radius=0.16];
    }
  \end{scope}
  \begin{scope}[shift={(2.35,-2.15)}]
    \foreach \angle in {110,130,150} {
      \draw[run mark] (\angle:0.97) circle[radius=0.16];
    }
  \end{scope}
\end{tikzpicture}
\endgroup

%% file: appendix.tex
\section{Ball inclusions determine the manifold}\label{sec:HLOS-balls}

The following \namecref{prop:HLOS} isolates the geometric core of the argument of Helin, Lassas, Oksanen, and Saksala~\cite{MR3826551}. Their reconstruction is formulated in terms of a local source-to-solution operator for the wave equation, with sources and observations in the same set rather than in the possibly disjoint sets considered here, and its geometric ingredients are distributed across several statements. Here we assemble these ingredients into a result stated purely in terms of ball inclusions.

Related geometric reconstruction methods for manifolds with boundary were developed earlier in~\cite{MR1889089} and, for disjoint data, in~\cite{MR3192525}. In~\cite{MR4833220}, a different construction for closed manifolds with disjoint data uses domain-of-influence relations and normal geodesics from the boundary of the observation set. See~\cite{MR4969293} for approximate and stable reconstruction from noisy distance data.

\begin{theorem}[after Helin, Lassas, Oksanen, and Saksala~\cite{MR3826551}]\label{prop:HLOS}
  Let $(N,g_N)$ and $(\tN,g_{\tN})$ be smooth, connected, complete Riemannian manifolds without boundary, of dimension at least two. Write $d_N$ for the distance function of $N$ and $B_N(x,\rho)$ for the open geodesic ball of radius $\rho>0$ centered at $x\in N$, and write $d_{\tN}$ and $B_{\tN}$ for the corresponding objects on $\tN$. Let $U\subset N$ and $\wt{U}\subset\tN$ be nonempty open sets with a diffeomorphism $\Phi_U\colon U\to\wt{U}$.

  The following are equivalent:
  \begin{cond}

  \item\label{item:HLOS-1} There exists a Riemannian isometry $\Phi\colon(N,g_N)\to(\tN,g_{\tN})$ such that $\Phi|_U=\Phi_U$.

  \item\label{item:HLOS-2} For every $x_0,x_1,x_2\in U$ and every $\rho_0,\rho_1,\rho_2\in\ointerval{0}{\infty}$, with $\tx_j:=\Phi_U(x_j)$ for $j=0,1,2$, it holds that
    \begin{equation}\label{eq:HLOS-iff-1}
      B_N(x_0,\rho_0)
      \subset
      \overline{B_N(x_1,\rho_1) \cup B_N(x_2,\rho_2)}
    \end{equation}
    if and only if
    \begin{equation}\label{eq:HLOS-iff-2}
      B_{\tN}(\tx_0,\rho_0)
      \subset
      \overline{B_{\tN}(\tx_1,\rho_1) \cup B_{\tN}(\tx_2,\rho_2)}.
    \end{equation}
  \end{cond}
\end{theorem}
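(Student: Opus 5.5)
The implication \labelcref{item:HLOS-1}$\Rightarrow$\labelcref{item:HLOS-2} is immediate. A Riemannian isometry $\Phi$ with $\Phi|_U=\Phi_U$ maps $B_N(x,\rho)$ onto $B_{\tN}(\Phi(x),\rho)$ and commutes with closures and unions. Hence \cref{eq:HLOS-iff-1} holds if and only if \cref{eq:HLOS-iff-2} does. The real work is the converse. I would follow the strategy of Helin, Lassas, Oksanen, and Saksala: recover the distance functions $d_N(x,\cdot)$, $x\in U$, from the ball-inclusion relations, and then apply the classical reconstruction of a complete manifold from its interior distance functions.

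\textbf{Step 1: recovering $d_N|_{U\times U}$ and the metric on $U$.} First I would show that the inclusion data determine distances between points of $U$, and that $\Phi_U$ preserves them locally. For $x_0,x_1\in U$, taking $x_2=x_1$ and $\rho_2=\rho_1$, relation \cref{eq:HLOS-iff-1} says $B_N(x_0,\rho_0)\subset\overline{B_N(x_1,\rho_1)}$. By the triangle inequality this holds when $\rho_1\ge d_N(x_0,x_1)+\rho_0$. Conversely, if $\rho_1<d_N(x_0,x_1)+\rho_0$, one extends a minimizing geodesic from $x_1$ through $x_0$ a little beyond $x_0$ (possible by completeness). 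For small $\rho_0$ this gives a point of $B_N(x_0,\rho_0)$ at distance $>\rho_1$ from $x_1$, so the inclusion fails. The point to check is that this remains true for all $\rho_0$ up to the relevant scale. In any case, letting $\rho_0\to0$ gives
\begin{equation*}
  d_N(x_0,x_1)=\lim_{\rho_0\to0}\inf\{\rho_1-\rho_0 : B_N(x_0,\rho_0)\subset\overline{B_N(x_1,\rho_1)}\},
\end{equation*}
so $d_{\tN}(\Phi_U x_0,\Phi_U x_1)=d_N(x_0,x_1)$. Locally this forces $\Phi_U^\ast g_{\tN}=g_N$ on $U$, since a distance-preserving diffeomorphism between small balls is a Riemannian isometry (Myers--Steenrod).

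\textbf{Step 2: recovering the distance functions.} For $x\in U$ consider the function $r_y:=d_N(\cdot,y)|_U$ for $y\in N$. The set $\{r_y : y\in N\}$ is the image of the interior distance map $\calR\colon N\to C(U)$. The key lemma, essentially \cite[Lemma~10 and the surrounding discussion]{MR3826551}, should say that the two-ball relations determine this set. A candidate function $f$ on $U$ equals some $r_y$ precisely when, for all finite families of points $x_j\in U$, the intersection $\bigcap_j\overline{B}(x_j,f(x_j)+\epsilon)$ has nonempty interior for every $\epsilon>0$. In addition, $f$ must satisfy the appropriate minimality conditions that pin $y$ down.

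Nonemptiness of intersections of balls is not directly a union inclusion. Here the three-ball relations enter: $B(x_0,\rho_0)\not\subset\overline{B(x_1,\rho_1)\cup B(x_2,\rho_2)}$ detects that a small ball near $x_0$ meets the complement of two balls. I would iterate this to express statements such as ``there is a point $y$ with $d(y,x_j)\approx f(x_j)$'' using balls centered in $U$. This is done by covering $U$-neighbourhoods and using that a ball $B(x_0,\rho_0)$ with $x_0\in U$ and $\rho_0=f(x_0)+\epsilon$ contains $y$.

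\textbf{Step 3: conclusion and main obstacle.} Once $\calR(N)$ and $\wt{\calR}(\tN)$ correspond under pullback by $\Phi_U$, the reconstruction theorem of Katchalov--Kurylev--Lassas \cite{MR1889089} (in the complete, boundaryless version of \cite{MR3826551}) produces an isometry $\Phi$ with $\wt{\calR}\circ\Phi=\calR$ composed with this pullback. The distance functions from $U$ show that $\Phi|_U=\Phi_U$, because Step 1 identifies $d_N(x,\cdot)$ on $U$ with its tilde counterpart. I expect Step 2 to be the main obstacle. Encoding the existence of a point at prescribed distances from finitely many centers in $U$ purely through two-ball covering relations requires care with the boundary cases (closures, equality of radii, measure-zero spheres). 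I would first try to reproduce the proof of \cite[Lemma~10]{MR3826551} verbatim, replacing its wave-based inclusion tests by the abstract relations of \cref{item:HLOS-2}.
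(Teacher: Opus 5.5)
Your implication (1)$\Rightarrow$(2) and your overall architecture match the paper. That architecture is: distances on $U\times U$, then the metric on $U$, then the sets of restricted distance functions, then the reconstruction theorem, with $\Phi|_U=\Phi_U$ read off from Step~1.

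One remark on Step~1. Your ``conversely'' claim fails when $x_0$ is a cut point of $x_1$, for example antipodal points on a sphere. You do not need it, though. The infimal admissible $\rho_1$ is $\sup_{z\in B_N(x_0,\rho_0)}d_N(z,x_1)$, which lies in $[d_N(x_0,x_1),d_N(x_0,x_1)+\rho_0]$. So your limit formula holds by squeezing, exactly as in the paper.

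The genuine gap is Step~2, which is the heart of the theorem.
\begin{itemize}
\item Each relation in condition~2 concerns a single triple of balls: it tells you whether some point of $B_N(x_0,\rho_0)$ escapes two closed balls. Your criterion needs one point satisfying distance constraints with respect to many centers simultaneously. ``Iterating'' triple relations gives no mechanism forcing the witnesses of different triples to be the same point.
\item Your intersection criterion only bounds $f$ from one side: $r_y+c$ passes it. The ``minimality conditions'' that would carry all the content are left unspecified.
\item \cite[Lemma~10]{MR3826551} is the wrong lemma to reproduce. It translates ball inclusions into wave data, which is precisely what condition~2 replaces, and it contains no recovery of distance functions.
\end{itemize}

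What is missing is a geometric device by which two balls centered in $U$ isolate a single point of $N$, so that a third ball can measure that point's distance to $z\in U$. The paper uses geodesics issuing from $U$. Take $y\in U$, a unit vector $\xi\in T_yN$, and $x=\gamma_{y,\xi}(\sigma)$ with $\gamma_{y,\xi}([0,\sigma])\subset U$, and set $t=\sigma+\rho$.

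First, by \cite[Lemma~11]{MR3826551}, the cut distance satisfies $\tau(y,\xi)=\sigma+\inf\{\rho>0:\exists\varepsilon>0,\ B_N(x,\rho+\varepsilon)\subset\overline{B_N(y,\sigma+\rho)}\}$. This is a relation of the form in condition~2 with $x_1=x_2=y$.

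Second, for $t<\tau(y,\xi)$ the set $B_N(x,\rho+\varepsilon)\setminus\overline{B_N(y,\sigma+\rho)}$ concentrates at $p=\gamma_{y,\xi}(t)$ as $\varepsilon\to0$. Hence, for $z\in U$, $d_N(p,z)=\inf\{R>0:\exists\varepsilon>0,\ B_N(x,\rho+\varepsilon)\subset\overline{B_N(y,\sigma+\rho)\cup B_N(z,R)}\}$ (proof of \cite[Proposition~2]{MR3826551}). This is exactly where the two-ball union in condition~2 is essential.

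Third, by your Step~1, $\Phi_U$ is a Riemannian isometry. It therefore maps $\gamma_{y,\xi}([0,\sigma])$ onto the initial segment of $\wt{\gamma}_{\ty,\wt{\xi}}$, where $\ty=\Phi_U(y)$ and $\wt{\xi}=d\Phi_U(y)\xi$. So both infima transfer to $\tN$, the cut distances agree, and $d_N(p,z)=d_{\tN}(\wt{\gamma}_{\ty,\wt{\xi}}(t),\Phi_U(z))$.

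Finally, every $p\in N$ equals $\gamma_{y,\xi}(t)$ for some $y\in U$ and $t<\tau(y,\xi)$ \cite[Lemma~12]{MR3826551}, and symmetrically on $\tN$. This gives the equality of the sets of restricted distance functions that your Step~3 needs. The paper then runs the reconstruction on a compact $K\subset U$ with smooth boundary, as \cite[Theorem~8]{MR3826551} requires, rather than in $C(U)$.
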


\begin{proof}
  (\labelcref{item:HLOS-1} $\Rightarrow$ \labelcref{item:HLOS-2}) Fix $x_0,x_1,x_2\in U$ and $\rho_0,\rho_1,\rho_2\in\ointerval{0}{\infty}$, and let $\Phi$ be a Riemannian isometry that extends $\Phi_U$. Then it is an isometry from $(N,d_N)$ onto $(\tN,d_{\tN})$, and hence $\Phi\bigl(B_N(x_j,\rho_j)\bigr) = B_{\tN}(\tx_j,\rho_j)$ for $j=0,1,2$. Since $\Phi$ is a homeomorphism,
  \begin{align*}
    \Phi \left(\overline{B_N(x_1,\rho_1) \cup B_N(x_2,\rho_2)}\right)
    &= \overline{\Phi\bigl(B_N(x_1,\rho_1)\bigr) \cup \Phi\bigl(B_N(x_2,\rho_2)\bigr)}\\
    &= \overline{B_{\tN}(\tx_1,\rho_1) \cup B_{\tN}(\tx_2,\rho_2)}.
  \end{align*}
  Therefore, the inclusions in \cref{eq:HLOS-iff-1,eq:HLOS-iff-2} are equivalent.

  (\labelcref{item:HLOS-2} $\Rightarrow$ \labelcref{item:HLOS-1}) Assume that \cref{item:HLOS-2} holds. We show that \cref{item:HLOS-1} holds by a sequence of steps, following~\cite[Section~5]{MR3826551}. In our argument, \cref{item:HLOS-2} takes the place of~\cite[Lemma~10]{MR3826551}, which converts inclusions of balls into statements about the local source-to-solution operator.

  \Cref{step:HLOS-dist,step:HLOS-Riemannian-isometry} identify the distance function and Riemannian metric on $U$, providing the two geometric inputs to~\cite[Theorem~7]{MR3826551}. Following the proof of that theorem, \crefrange{step:HLOS-cut-distance}{step:HLOS-distance-data} identify the cut distances, then the point-to-$U$ distances along pre-cut geodesics, and finally the full families of restricted distance functions. \Cref{step:HLOS-isometry} then applies~\cite[Theorem~8]{MR3826551} to both manifolds and compares the resulting reconstructions, as in the proof of~\cite[Theorem~2]{MR3826551}. Unlike the conclusion of that theorem, \cref{item:HLOS-1} also asserts that the isometry extends $\Phi_U$.

  \begin{step}
  \item\label{step:HLOS-dist} For every $x,y\in U$, we have $d_N(x,y)=d_{\tN}(\Phi_U(x), \Phi_U(y))$.

    \begin{stepproof}
      Fix $x,y\in U$ and $\varepsilon>0$, and set $\tx:=\Phi_U(x)$ and $\ty:=\Phi_U(y)$. Define
      \begin{align*}
        r &:= \inf\left\{\rho>0 : B_N(x,\varepsilon) \subset \overline{B_N(y,\rho)}\right\},\\
        \tilde{r} &:= \inf\left\{\rho>0 : B_{\tN}(\tx,\varepsilon) \subset \overline{B_{\tN}(\ty,\rho)}\right\}.
      \end{align*}
      A straightforward argument shows that $r\in\interval{d_N(x,y)}{d_N(x,y)+\varepsilon}$ and analogously $\tilde{r}\in\interval{d_{\tN}(\tx,\ty)}{d_{\tN}(\tx,\ty)+\varepsilon}$. Applying \cref{item:HLOS-2} for every $\rho>0$ with $x_0=x$, $x_1=x_2=y$, $\rho_0=\varepsilon$, and $\rho_1=\rho_2=\rho$ shows that $r=\tilde{r}$. Therefore, $\abs{d_N(x,y)-d_{\tN}(\tx,\ty)}\le\varepsilon$, and the claim follows.
    \end{stepproof}

  \item\label{step:HLOS-Riemannian-isometry} The map $\Phi_U\colon U\to\wt{U}$ is a Riemannian isometry. In particular, $\Phi_U$ maps geodesic segments of $N$ whose images are included in $U$ to geodesic segments of $\tN$.

    \begin{stepproof}
      Fix $x\in U$ and $\xi\in T_xU$. Choose $\varepsilon>0$ and a smooth curve $\gamma\colon\ointerval{-\varepsilon}{\varepsilon}\to U$ such that $\gamma(0)=x$ and $\dot{\gamma}(0)=\xi$. Then $\wt{\gamma} := \Phi_U\circ\gamma\colon \ointerval{-\varepsilon}{\varepsilon}\to\wt{U}$ is a smooth curve with $\wt{\gamma}(0)=\Phi_U(x)=:\tx$ and $\dot{\wt{\gamma}}(0)=d\Phi_U(x)\xi=:\wt{\xi}\in T_{\tx}\wt{U}$. Using the standard infinitesimal relation between the Riemannian distance and the metric tensor together with \cref{step:HLOS-dist}, we obtain
      \begin{equation*}
        \pointnorm{\xi}_{g_N}
        = \lim_{t\to 0} \frac{d_N(\gamma(t),x)}{\abs{t}}
        = \lim_{t\to 0} \frac{d_{\tN}(\wt{\gamma}(t),\tx)}{\abs{t}}
        = \pointnorm[\big]{\wt{\xi}}_{g_{\tN}}.
      \end{equation*}
      Thus $d\Phi_U$ preserves norms and, by polarization, $\Phi_U^\ast g_{\tN} = g_{N}$ on $U$. That is, $\Phi_U$ is a Riemannian isometry.

      Because $U$ is open, a curve with image included in $U$ is a geodesic in $N$ if and only if it is a geodesic in $U$; the analogous statement holds for $\wt{U}$ and $\tN$. Riemannian isometries map geodesics to geodesics, so the final assertion follows.
    \end{stepproof}
  \end{step}

  For any point $(x,\xi)\in TN$, let $\gamma_{x,\xi}$ denote the maximal geodesic in $(N,g_N)$ with initial data $\gamma_{x,\xi}(0)=x$ and $\dot{\gamma}_{x,\xi}(0)=\xi$. For $(x,\xi)\in TN$ with $\pointnorm{\xi}_{g_N}=1$, define the cut distance $\tau(x,\xi) :=\sup\{t>0 : d_N(x,\gamma_{x,\xi}(t))=t\}\in\linterval{0}{\infty}$. Define $\wt{\gamma}_{\tx,\wt{\xi}}$ and $\wt{\tau}(\tx,\wt{\xi})$ analogously on $(\tN,g_{\tN})$.

  Even when $x\in U$, the geodesic $\gamma_{x,\xi}$ need not remain in $U$, so $\Phi_U(\gamma_{x,\xi}(t))$ is defined only for those $t$ with $\gamma_{x,\xi}(t)\in U$. Let $I\subset\R$ be the connected component containing $0$ of that set. By \cref{step:HLOS-Riemannian-isometry} and uniqueness of geodesics, $\Phi_U\circ\gamma_{x,\xi}=\wt{\gamma}_{\tx,\wt{\xi}}$ on $I$, where $\tx:=\Phi_U(x)$ and $\wt{\xi}:=d\Phi_U(x)\xi$. Outside $I$ the two geodesics are related only through their matched initial conditions.

  For use in the next two steps, note that if $y\in U$ and $\xi\in T_yN$ is a unit vector, then $\wt{\xi}:=d\Phi_U(y)\xi$ is also a unit vector by \cref{step:HLOS-Riemannian-isometry}.

  \begin{step}[resume]
  \item\label{step:HLOS-cut-distance} For every $y\in U$ and $\xi\in T_yN$ with $\pointnorm{\xi}_{g_N}=1$, we have $\tau(y,\xi)=\wt{\tau}(\ty,\wt{\xi})$, where $\ty:=\Phi_U(y)$ and $\wt{\xi}:=d\Phi_U(y)\xi$.

    \begin{stepproof}
      Fix $y\in U$ and $\xi\in T_yN$ with $\pointnorm{\xi}_{g_N}=1$. Choose $\sigma\in\ointerval{0}{\tau(y,\xi)}$ such that $\gamma_{y,\xi}(\interval{0}{\sigma})\subset U$, and set $x:=\gamma_{y,\xi}(\sigma)$. Then $\sigma=d_N(y,x)$ and $\gamma_{y,\xi}$ restricts to a minimizing geodesic from $y$ to $x$.

      Define $\ty:=\Phi_U(y)$, $\wt{\xi}:=d\Phi_U(y)\xi$, and $\tx:=\Phi_U(x)$. By \cref{step:HLOS-dist,step:HLOS-Riemannian-isometry}, $\sigma=d_{\tN}(\ty,\tx)$ and $\tx=\wt{\gamma}_{\ty,\wt{\xi}}(\sigma)$, hence $\wt{\gamma}_{\ty,\wt{\xi}}$ restricts to a minimizing geodesic from $\ty$ to $\tx$.

      For $\rho>0$, let $P(\rho)$ be the predicate
      \begin{equation*}
        \text{there exists }
        \varepsilon > 0
        \text{ such that }
        B_N(x,\rho+\varepsilon)\subset\overline{B_N(y,\sigma+\rho)}
      \end{equation*}
      (see~\cite[Equation~(31)]{MR3826551}), and let $\wt{P}(\rho)$ be the predicate obtained by replacing $N$, $x$, and $y$ in $P$ by $\tN$, $\tx$, and $\ty$, respectively. By \cref{item:HLOS-2}, $P(\rho)$ holds if and only if $\wt{P}(\rho)$ holds. Consequently,
      \begin{equation*}
        \tau(y,\xi)
        = \sigma + \inf\{\rho>0 : P(\rho)\}
        = \sigma + \inf\{\rho>0 : \wt{P}(\rho)\}
        = \wt{\tau}(\ty,\wt{\xi}).
      \end{equation*}
      The first and last equalities follow from the two implications in \cite[Lemma~11]{MR3826551}; see also~\cite[Equation~(32)]{MR3826551}. Here we use the convention $\inf\emptyset=+\infty$ (the sets in question are empty exactly when $\tau(y,\xi)=+\infty$).
    \end{stepproof}

  \item\label{step:HLOS-distance-functions} Let $y,z\in U$ and $\xi\in T_yN$ with $\pointnorm{\xi}_{g_N}=1$. Fix $t\in\rinterval{0}{\tau(y,\xi)}$ and define $p:=\gamma_{y,\xi}(t)$. Define also $\wt{z}:=\Phi_U(z)$ and $\wt{p}:=\wt{\gamma}_{\ty,\wt{\xi}}(t)$, where $\ty:=\Phi_U(y)$ and $\wt{\xi}:=d\Phi_U(y)\xi$. Then $d_N(p,z)=d_{\tN}(\wt{p},\wt{z})$.

    \begin{stepproof}
      For $t=0$ the claim reduces to \cref{step:HLOS-dist}, so we may assume $t>0$. Choose $\sigma\in\ointerval{0}{t}$ such that $\gamma_{y,\xi}(\interval{0}{\sigma})\subset U$, and define $x:=\gamma_{y,\xi}(\sigma)$ and $\rho:=t-\sigma$.

      For $R>0$, let $Q(R)$ be the predicate
      \begin{equation*}
        \text{there exists } \varepsilon > 0 \text{ such that }
        B_N(x,\rho+\varepsilon)
        \subset
        \overline{B_N(y,\rho+\sigma)\cup B_N(z,R)}
      \end{equation*}
      (see~\cite[Equation~(34)]{MR3826551}). The proof of~\cite[Proposition~2]{MR3826551} establishes that $d_N(p,z) = \inf\{R>0 : Q(R)\}$.

      By \cref{step:HLOS-cut-distance} we have $t<\wt{\tau}(\ty,\wt{\xi})$. Define $\tx:=\wt{\gamma}_{\ty,\wt{\xi}}(\sigma)$, and note that, by \cref{step:HLOS-Riemannian-isometry}, $\wt{\gamma}_{\ty,\wt{\xi}}(\interval{0}{\sigma})=\Phi_U\bigl(\gamma_{y,\xi}(\interval{0}{\sigma})\bigr)\subset\wt{U}$. In particular, $\tx=\Phi_U(x)$. Let $\wt{Q}(R)$ be the predicate obtained by replacing $N$, $x$, $y$, and $z$ in $Q(R)$ by $\tN$, $\tx$, $\ty$, and $\wt{z}$, respectively. Applying the same argument to $\tN$ gives $d_{\tN}(\wt{p},\wt{z}) = \inf\{R>0 : \wt{Q}(R)\}$.

      For every $R>0$, \cref{item:HLOS-2} implies that $Q(R)$ holds if and only if $\wt{Q}(R)$ holds. Hence the two infima coincide, proving the claim.
    \end{stepproof}

  \item\label{step:HLOS-distance-data} We have
    \begin{equation}\label{eq:HLOS-distance-data}
      \{d_N(p,\cdot)|_U : p\in N\}
      = \{d_{\tN}(\wt{p},\Phi_U(\cdot)) : \wt{p}\in\tN\}.
    \end{equation}

    \begin{stepproof}
      Let $p\in N$. By~\cite[Lemma~12]{MR3826551}, there exist $y\in U$, $\xi\in T_yN$ with $\pointnorm{\xi}_{g_N}=1$, and $t\in\rinterval{0}{\tau(y,\xi)}$ such that $p=\gamma_{y,\xi}(t)$. Then, by \cref{step:HLOS-distance-functions}, we have $d_N(p,\cdot)|_U = d_{\tN}(\wt{p},\Phi_U(\cdot))$, where $\ty:=\Phi_U(y)$, $\wt{\xi}:=d\Phi_U(y)\xi$, and $\wt{p}:=\wt{\gamma}_{\ty,\wt{\xi}}(t)\in\tN$. This proves that the left-hand side of \cref{eq:HLOS-distance-data} is included in the right-hand side.

      For the reverse inclusion, let $\wt{p}\in\tN$. By~\cite[Lemma~12]{MR3826551}, there exist $\ty\in\wt{U}$, $\wt{\eta}\in T_{\ty}\tN$ with $\pointnorm{\wt{\eta}}_{g_{\tN}}=1$, and $t\in\rinterval{0}{\wt{\tau}(\ty,\wt{\eta})}$ such that $\wt{p}=\wt{\gamma}_{\ty,\wt{\eta}}(t)$. Define $y:=\Phi_U^{-1}(\ty)\in U$ and $\eta:=(d\Phi_U(y))^{-1}\wt{\eta}\in T_yN$. Then $\pointnorm{\eta}_{g_N}=1$ by \cref{step:HLOS-Riemannian-isometry}. Hence $\wt{\tau}(\ty,\wt{\eta})=\tau(y,\eta)$ by \cref{step:HLOS-cut-distance}, and so $t\in\rinterval{0}{\tau(y,\eta)}$. Applying \cref{step:HLOS-distance-functions} to $y$, $\eta$, and $t$ gives $d_N(p,\cdot)|_U=d_{\tN}(\wt{p},\Phi_U(\cdot))$, where $p:=\gamma_{y,\eta}(t)$. This proves the reverse inclusion.
    \end{stepproof}
  \end{step}

  To prepare for the application of~\cite[Theorem~8]{MR3826551}, choose a nonempty compact set $K\subset U$ whose topological boundary $\partial K$ is a smooth embedded hypersurface and such that $\overline{\interior K}=K$ (for example, a sufficiently small closed geodesic ball with center in $U$). Note that $\partial(\interior K)=\partial K$. Set $\wt{K}:=\Phi_U(K)$. Then $K$ and $\wt{K}$ are compact smooth manifolds with boundary, of the same dimension as $N$ and $\tN$. The restriction $\Phi_K:=\Phi_U|_K\colon K\to\wt{K}$ is a homeomorphism. By \cref{step:HLOS-Riemannian-isometry}, its restriction $\Phi_K|_{\interior K}\colon\interior K\to\interior\wt{K}$ is a Riemannian isometry. Thus $\interior K$ and $\interior\wt{K}$ are bounded open sets with smooth boundary and closures $K$ and $\wt{K}$, respectively, and hence satisfy the hypotheses of that theorem.

  Define the distance representation maps $D\colon N\to C(K)$ by $D(p):=d_N(p,\cdot)|_K$, and $\wt{D}\colon \tN\to C(\wt{K})$ analogously. Endow $C(K)$ and $C(\wt{K})$ with the supremum norms. Define $(\Phi_K)_\ast\colon C(K)\to C(\wt{K})$ by $(\Phi_K)_\ast(f):=f\circ\Phi_K^{-1}$. Then $(\Phi_K)_\ast$ is a linear isometric bijection between these Banach spaces.

  \begin{step}[resume]
  \item\label{step:HLOS-isometry} There is a Riemannian isometry $\Phi\colon N\to\tN$ such that $\Phi|_U=\Phi_U$.

    \begin{stepproof}
      Since $K\subset U$, \cref{step:HLOS-distance-data} shows that $(\Phi_K)_\ast|_{D(N)}\colon D(N) \to \wt{D}(\tN)$ is a bijective isometry of metric spaces.

      Consider the maps $D\colon (N,g_N) \to D(N)$ and $\wt{D}\colon (\tN,g_{\tN}) \to \wt{D}(\tN)$, which are homeomorphisms by~\cite[Proposition~3]{MR3826551}. It is shown in~\cite[Proposition~4]{MR3826551} that smooth structures can be constructed on the topological manifolds $D(N)$ and $\wt{D}(\tN)$ from the distance data in~\cite[Equation~(35)]{MR3826551}, and that with respect to these smooth structures, the maps $D$ and $\wt{D}$ are diffeomorphisms. Tracing through the construction and using \cref{step:HLOS-Riemannian-isometry}, one verifies that the map $(\Phi_K)_\ast|_{D(N)}$ is a diffeomorphism.

      Similarly,~\cite[Proposition~5]{MR3826551} shows that Riemannian metrics can be constructed on $D(N)$ and $\wt{D}(\tN)$ for which $D$ and $\wt{D}$ are Riemannian isometries. The same reasoning, using \cref{step:HLOS-Riemannian-isometry}, shows that $(\Phi_K)_\ast|_{D(N)}$ is a Riemannian isometry.

      Consequently, the map $\Phi:=\wt{D}^{-1}\circ(\Phi_K)_\ast\circ D\colon (N,g_N)\to (\tN,g_{\tN})$ is a Riemannian isometry, so it is enough to show that it restricts to $\Phi_U$ on $U$.

      Let $x\in U$ and $\wt{z}\in\wt{K}$, and set $z:=\Phi_K^{-1}(\wt{z})\in K\subset U$. Then, by \cref{step:HLOS-dist},
      \begin{equation*}
        (\Phi_K)_\ast(D(x))(\wt{z})
        = d_N(x,z)
        = d_{\tN}(\Phi_U(x),\wt{z})
        = \wt{D}(\Phi_U(x))(\wt{z}).
      \end{equation*}
      Therefore $\wt{D}(\Phi(x))=\wt{D}(\Phi_U(x))$, and injectivity of $\wt{D}$ gives $\Phi(x)=\Phi_U(x)$. As $x\in U$ was arbitrary, $\Phi|_U=\Phi_U$.
    \end{stepproof}
  \end{step}

  Thus \cref{step:HLOS-isometry} establishes \cref{item:HLOS-1}, which completes the proof of the reverse implication.
\end{proof}